\documentclass[12 pt]{article}
\usepackage{amsmath,amsfonts,amsthm,amssymb,mathrsfs,latexsym,paralist,xy}
\usepackage{tikz}
\usepackage{soul}

\newtheorem{thm}{Theorem}[section]
\newtheorem{prop}[thm]{Proposition}
\newtheorem{cor}[thm]{Corollary}
\newtheorem{lemma}[thm]{Lemma}
\theoremstyle{remark}
\newtheorem{rmk}[thm]{Remark}
\newtheorem{example}[thm]{Example}
\theoremstyle{definition}
\newtheorem{defn}[thm]{Definition}

\DeclareMathOperator{\Ad}{Ad}

\newcommand{\bi}{\begin{itemize}}
\newcommand{\ei}{\end{itemize}}
\newcommand{\be}{\begin{enumerate}}
\newcommand{\ee}{\end{enumerate}}

\newcommand{\C}{\mathbb{C}}

\newcommand{\T}{\mathbb{T}}
\renewcommand{\H}{\mathcal{H}}

\newcommand{\R}{\mathbb{R}}
\newcommand{\N}{\mathbb{N}}
\newcommand{\Z}{\mathbb{Z}}

\newcommand{\BH}{\mathcal{B}(\mathcal{H})}

\providecommand{\keywords}[1]{{\textit{Keywords and phrases:}} #1}
\providecommand{\classification}[1]{{\textit{2010 Mathematics Subject Classification:}} #1}

\newcommand{\Ran}{\operatorname{Range}}

\def\IoIIdimdots(#1/#2/#3,#4){\node at (#1,#4) {$.$};\node at (#2,#4) {$.$};\node at (#3,#4) {$.$};}
\def\IIoIIdimdots(#1,#2/#3/#4){\node at (#1,#2) {$.$};\node at (#1,#3) {$.$};\node at (#1,#4) {$.$};}

\def\IoIIIdimdots(#1/#2/#3,#4,#5){\node at (#1,#4,#5) {$.$};\node at (#2,#4,#5) {$.$};\node at (#3,#4,#5) {$.$};}
\def\IIoIIIdimdots(#1,#2/#3/#4,#5){\node at (#1,#2,#5) {$.$};\node at (#1,#3,#5) {$.$};\node at (#1,#4,#5) {$.$};}
\def\IIIoIIIdimdots(#1,#2,#3/#4/#5){\node at (#1,#2,#3) {$.$};\node at (#1,#2,#4) {$.$};\node at (#1,#2,#5) {$.$};}

\usepackage[margin=1in]{geometry}
\AtEndDocument{\bigskip{\small%
  \textsc{Carla Farsi, Judith Packer : Department of Mathematics, University of Colorado at Boulder, Boulder, CO 80309-0395, USA.} \par
  \textit{E-mail address}: \texttt{carla.farsi@colorado.edu, packer@euclid.colorado.edu} \par
  \addvspace{\medskipamount}
  \textsc{Elizabeth Gillaspy : Department of Mathematics, University of Montana, 32 Campus Drive \#0864, Missoula, MT 59812-0864, USA.}\par
  \textit{E-mail address}: \texttt{elizabeth.gillaspy@mso.umt.edu}\par
  \addvspace{\medskipamount}
  \textsc{Palle Jorgensen : Department of Mathematics, 14 MLH, University of Iowa, Iowa City, IA 52242-1419, USA.}\par
  \textit{E-mail address}: \texttt{palle-jorgensen@uiowa.edu}\par
   \addvspace{\medskipamount}
  \textsc{Sooran Kang : College of General Education, Chung-Ang University, 84 Heukseok-ro, Dongjak-gu, Seoul, Republic of Korea.} \par
  \textit{E-mail address}, \texttt{sooran09@cau.ac.kr}
}}

\begin{document}

\title{Separable representations of higher-rank graphs}
\author{Carla Farsi, Elizabeth Gillaspy, Palle Jorgensen,  Sooran Kang, and Judith Packer}
\date{\today}
\maketitle

\begin{abstract} 
In this monograph  we  undertake a comprehensive study of 
separable representations (as well as  their unitary equivalence classes) of  $C^*$-algebras associated to strongly connected  finite $k$-graphs $\Lambda$.   We begin with the representations associated to the $\Lambda$-semibranching function systems introduced by Farsi, Gillaspy, Kang, and Packer in \cite{FGKP}, by giving an alternative characterization of these systems which is more easily verified in examples.  We present a variety of such examples, one of which we use to construct a new faithful separable representation of any row-finite source-free $k$-graph.  Next, we analyze the monic representations of $C^*$-algebras of finite $k$-graphs.  We completely characterize  these representations, generalizing results of Dutkay and Jorgensen \cite{dutkay-jorgensen-monic} and Bezuglyi and Jorgensen \cite{bezuglyi-jorgensen} for Cuntz and Cuntz-Krieger algebras respectively.  We also describe a universal  representation for non-negative monic representations of finite, strongly connected $k$-graphs.
To conclude, we characterize the purely atomic and permutative representations of $k$-graph $C^*$-algebras, and discuss the relationship between these representations and the classes of representations introduced earlier.

\end{abstract}

\classification{46L05, 46L55, 46K10, 16GXX}

\keywords{$C^*$-algebras, semibranching representations, higher-rank graphs, $k$-graphs, $k$-colored directed graphs, semibranching function systems, coding map, path-space measures, Markov measures, quasi-stationary
Markov measures, Perron-Frobenius, projective systems, non-commutative harmonic analysis, universal Hilbert space.}

\tableofcontents

\section{Introduction}
Commutative harmonic analysis can be thought of as the study of representations of abelian groups.
By extension, non-commutative harmonic analysis includes the study of representations of non-abelian groups.  To build a coherent theory of harmonic analysis, however, one must first develop a version of Pontryagin duality for non-abelian groups.  There are several natural and well-established options for such a duality theory, including Tannaka-Krein duality \cite{krein1, krein2} and Woronowicz' quantum groups \cite{wor1, wor2}.  However, in this monograph we use the $C^*$-algebraic perspective.
 One can associate a $C^*$-algebra to any locally compact topological group, via a construction which reflects Pontryagin duality in the abelian case; therefore, $C^*$-algebras are a natural context for non-commutative harmonic analysis. Indeed, the perspective that representations of $C^*$-algebras should form a non-commutative generalization of harmonic analysis has been a guiding principle throughout the development of $C^*$-algebra theory, since its initial formulation in \cite{connes}.   Its range of applications is wide, and includes (among other fields) quantum physics \cite{kribs2003}, the spectral theory of quasicrystals \cite{bellissard, kellendonk-putnam}, the analysis of fractals and self-similar structures \cite{palle-iterated, dutkay-jorgensen-four, MP}, and symbolic dynamics \cite{GPS, pask-raeburn-weaver, bezuglyi-k-m, skalski-zacharias}. Here, our focus is a non-commutative harmonic analysis of countable 
(discrete) graphs. 
For the present case of higher-rank graphs and their associated semibranching function systems, we show that the representation theoretic approach leads to a fruitful and versatile non-commutative harmonic analysis.  While our focus is non-commutative harmonic analysis, we stress that the literature dealing with analysis on 
 graphs is substantial.  In the paragraphs which follow, we discuss a few of the situations in which analysis on graphs has been profitably employed, and situate the present work in their context.


The research presented in the pages that follow develops a non-commutative harmonic analysis for higher-rank graphs.  More precisely, we  analyze several different types of separable representations for the $C^*$-algebras associated to higher-rank graphs (see Definition \ref{def-higher-rank-gr} below), and also present a variety of examples of such representations.   {The classes of representations that we focus on in this paper are the representations associated to $\Lambda$-semibranching function systems, first introduced in \cite{FGKP}; monic representations, introduced in Section \ref{sec:monic-results}; purely atomic representations, defined and studied in Section \ref{sec:atomic_repn}; and permutative representations, which we introduce in Section \ref{sec:permutative_repn}. Several sections (namely Sections \ref{sec:examples_a}, \ref{sec:examples_gen_measure}, and \ref{sec:monic-examples}) are devoted to illustrating our results via a broad spectrum of examples.}

Building on the work of Robertson and Steger \cite{RS-London, RS}, Kumjian and Pask introduced higher-rank graphs $\Lambda$ -- also known as $k$-graphs -- and their $C^*$-algebras $C^*(\Lambda)$  in \cite{KP} as
generalizations of the Cuntz and Cuntz-Krieger $C^*$-algebras associated to directed graphs (cf.~\cite{Cuntz, Cuntz-Krieger, enomoto-watatani,
kpr}). 
To see the Cuntz and Cuntz-Krieger algebras as graph algebras, note that the  Cuntz algebra $\mathcal O_N$ \cite{Cuntz} (which is perhaps the first example of graph $C^*$-algebras treated in the literature) is the graph $C^*$-algebra associated to the complete graph on $N$ vertices.  Similarly,  the Cuntz-Krieger algebra $\mathcal{O}_A$ \cite{Cuntz-Krieger} is the $C^*$-algebra associated to the directed graph with adjacency matrix $A$. In contrast, a $k$-graph $\Lambda$ (see Definition \ref{def:kgraph-algebra} below) has $k$ adjacency matrices associated to it; one can view $ \Lambda$  as a quotient of an edge-colored directed graph with $k$ colors of edges.

 In addition to their relevance for $C^*$-algebraic classification \cite{prrs, ruiz-sims-sorensen}, the $C^*$-algebras of higher-rank graphs  are closely linked with orbit equivalence for shift spaces \cite{carlsen-ruiz-sims} and with symbolic dynamics more generally \cite{pask-raeburn-weaver,skalski-zacharias-houston, pask-sierakowski-sims}, with fractals and self-similar structures \cite{FGJKP1, FGJKP2}, and with renormalization problems in physics \cite{figueroa-GB}. {More links between higher-rank graphs and symbolic dynamics can be seen via \cite{bezuglyi-k-m, bezuglyi-four-auth, bezuglyi-karpel} and the references cited therein.} A wavelet-type representation for higher-rank graphs was introduced in \cite{FGKP}; indeed, connections with wavelets, which had earlier been identified in certain special cases \cite{BJ-atomic, dutkay-jorgensen-monic, kribs2003,  dutkay-jorgensen-atomic, MP}, were a major source of inspiration for the research we present below.  In the case of the Cuntz algebra $\mathcal O_N$, its wavelet representations have relevance for Brownian motion \cite{AJP} and quantum computing \cite{jorgensen-kribs}, among other applications. 

Higher-rank graph $C^*$-algebras, and their twisted counterparts (developed in 	   \cite{KPS-twisted,KPS-homology}), share many of the important properties of graph $C^*$-algebras, including Cuntz-Krieger uniqueness theorems and realizations as (twisted) groupoid $C^*$-algebras. In addition to the manifold applications of higher-rank graphs already mentioned, many important examples
of $C^*$-algebras (such as noncommutative tori \cite{KPS-homology} and quantum Heegaard spheres \cite{hajac-sims-arxiv}) can be viewed as twisted $k$-graph $C^*$-algebras. 

Both $k$-graph $C^*$-algebras, and the Cuntz and Cuntz-Krieger $C^*$-algebras, often fall in a class of non-type I, and in fact purely infinite $C^*$-algebras. The significance of this for representation theory is that the unitary equivalence classes of irreducible representations of $k$-graph algebras  do not lend themselves to explicit parametrizations. In fact, on account of work by Glimm, Dixmier, and Effros  \cite{Glimm1,Glimm2,Dixmier,Effros1,Effros2}, we know that, for these $C^*$-algebras, the corresponding sets of equivalence classes of irreducible representations do not arise as Borel cross sections. In short, for these $C^*$-algebras, only subfamilies of irreducible representations admit ``reasonable'' parametrizations.

Nonetheless, various specific subclasses of representations of Cuntz and Cuntz-Krieger algebras have been extensively studied by many researchers, who were motivated by their applicability to a wide variety of fields.  In addition to the connections with wavelets which were indicated above  (cf.~also \cite{dutkay-jorgensen-mart,dutkay-jorgensen-four,MP,FGKPexcursions,FGJKP2}), representations of Cuntz-Krieger algebras have been linked to  fractals and  Cantor sets \cite{Str,kawamura-2016,FGJKP1,FGJKP2} and to the endomorphism group  of a Hilbert space \cite{bratteli-jorgensen-price,Laca-Thesis}. Indeed, the astonishing goal of identifying both discrete and continuous series of representations of Cuntz (and to some extent Cuntz-Krieger) algebras,  was accomplished in \cite{dutkay-jorgensen-monic,dutkay-jorgensen-atomic,bezuglyi-jorgensen}, building on the pioneering results of \cite{BJ-atomic}.

In the setting of higher-rank graphs, however, researchers have not yet begun the (admittedly daunting) task of analyzing the representation theory of $k$-graph $C^*$-algebras, save in certain specific cases. Indeed, in the representation of $C^*(\Lambda)$ which appears most commonly in the literature, $C^*(\Lambda)$ acts on $\ell^2(\Lambda^\infty)$.  The space $\Lambda^\infty$ of infinite paths in a $k$-graph $\Lambda$ is usually a Cantor set, making this representation highly nonseparable.
 Although the primitive ideal space of higher-rank graph $C^*$-algebras is well understood \cite{CKSS, Kang-Pask},    representations of these $C^*$-algebras have only been systematically studied in the one-vertex case \cite{dav-pow-yan-atomic,Yang-End,davidson-yang-represent}. 
 This motivated us to undertake 
the  detailed study of separable representations of $k$-graph $C^*$-algebras and their unitary equivalence classes which is contained in these pages.  The representations associated to $\Lambda$-semibranching function systems, which were introduced in \cite{FGKP} for finite higher-rank graphs $\Lambda$, form our jumping-off point.

   Readers familiar with the study of representations of non-type I $C^*$-algebras will appreciate the fact that insight is hard fought, and is gained primarily by a focus on particular cases.  The classes of representations of $k$-graph $C^*$-algebras that we consider in this paper have been chosen with an eye to applications.  As indicated in the preceding paragraphs, particular representations of Cuntz and Cuntz-Krieger algebras have contributed key insights to hundreds of questions in mathematics, physics, and even engineering.  

The present study of representations of higher-rank graphs goes far beyond these recent investigations. By expanding such fertile families of representations from the Cuntz algebra setting to the $k$-graph setting, which encompasses a much broader class of non-type I $C^*$-algebras, we anticipate that the research contained in these pages will facilitate further progress in a diverse and extensive range of fields. The following is a sample: branching laws for endomorphisms of fermions, see e.g., \cite{abe-kawamura}, Markov measures, transfer operators, wavelets and multiresolutions \cite{alpay-jorgensen-Lew}, fractional Brownian motion \cite{AJP}, quasi-crystals, see e.g., \cite{bellissard}, substitution dynamical systems and complexity, structure of invariant measures, Markov and more general path-space measures, measurable partitions \cite{bezuglyi-karpel}, \cite{bezuglyi-four-auth}, \cite{bezuglyi-jorgensen}, and \cite{BJ-atomic}, noncommutative geometry \cite{connes}, topological Markov chains \cite{Cuntz-Krieger}, martingales \cite{dutkay-jorgensen-mart}, fractals and self-similarity \cite{dutkay-jorgensen-four}, spectral triples \cite{FGJKP2} , renormalization theory (physics) \cite{figueroa-GB}, topological orbit equivalence \cite{GPS}, wavelet filters \cite{palle-advances}, continued fraction expansions \cite{kkawa,	
  kawamura-2016}, tilings and tiling space \cite{kellendonk-putnam}, quantum channels  \cite{kribs2003}, group actions on the boundary of triangle buildings  \cite{RS-London}, and entropy   \cite{skalski-zacharias-houston, skalski-zacharias}.

\subsection{Structure of the paper; main results}
We begin in Section \ref{sec-fund-mater} with an introduction to higher-rank graphs and their $C^*$-algebras, followed by a review of the $\Lambda$-semibranching function systems introduced in \cite{FGKP}.
Intuitively, a $\Lambda$-semibranching function system (see Definition \ref{def-lambda-SBFS-1} and Theorem \ref{thm:SBFS-edge-defn} below) encodes the generating relations of $C^*(\Lambda)$ into a measure space $(X, \mu)$, in such a way as to induce a representation of $C^*(\Lambda)$ on $L^2(X, \mu)$.  When $\Lambda$ is a directed graph (equivalently, a Cuntz-Krieger algebra), such semibranching function systems have been  studied  in \cite{gh-Per-Frob,ghr,MP,kawamura,bezuglyi-jorgensen} among others.  Section \ref{sec-fund-mater} concludes with several results related to the Kolmogorov Extension Theorem which we use repeatedly throughout this work.

By Theorem 3.5 of  \cite{FGKP}, when $\Lambda$ is finite,  any $\Lambda$-semibranching function
system on $L^2(X, \mu)$  induces  a  representation of  $C^*(\Lambda)$ on $L^2(X, \mu)$.  However, this representation will not usually be faithful (see Theorem 3.6 of the same paper).  The first main result of this paper is therefore Theorem \ref{pr:sep-faith}
 in Section \ref{sec-a-faithful-sep-repr}, which constructs a faithful separable representation of any row-finite, source-free $k$-graph.  
 The representation of Theorem \ref{pr:sep-faith} is based on a simpler construction (Theorem \ref{prop:sc-faithful}), which exemplifies many of the objects studied in this monograph.
 Although the representation of Theorem \ref{prop:sc-faithful} is initially defined on an inductive limit Hilbert space, rather than $L^2(X, \mu)$, we show in Proposition \ref{prop:sc-faithful-SBFS} that this representation can be viewed as a $\Lambda$-semibranching function system.  It also gives an example of a permutative representation (see Section \ref{sec:permutative_repn} below).

 After rephrasing (in Section \ref{sec:SBFS-revisited}) the definition of a $\Lambda$-semibranching function system into a format which is easier to check in examples, we present a variety of examples of $\Lambda$-semibranching function systems in   Sections \ref{sec:examples_a} (where $X$ is a Lebesgue measure space) and \ref{sec:examples_gen_measure} (where $X = \Lambda^\infty$ is the infinite path space of the higher-rank graph).  Through careful computations of the Radon-Nikodym derivatives associated to these $\Lambda$-semibranching function systems, we analyze the relationship between their associated representations and the $\Lambda$-semibranching representation on $L^2(\Lambda^\infty, M)$ which was introduced in Proposition 3.4 of \cite{FGKP}.\footnote{The measure $M$ was introduced in Definition 8.1 of \cite{aHLRS3}; Theorem 3.14 of \cite{FGJKP2} then established that $M$ is the Hausdorff measure on $\Lambda^\infty$ under mild hypotheses.}

Before turning our attention to a more theoretical and systematic analysis of the separable representations of $C^*(\Lambda)$ in the second half of this monograph, we spend Section \ref{sec-further-foundations} developing the technical tools we will need for this analysis: a refinement of a $\Lambda$-semibranching function system which we call a $\Lambda$-projective system (Definition \ref{def:lambda-proj-system}), and the projection-valued measure $P = P_\pi$ on $\Lambda^\infty$ which is canonically associated to each representation $\pi$ of $C^*(\Lambda)$.  The  projection-valued measure can also be viewed as a  spectral resolution of the infinite path space; the associated decomposition of the representation of the $k$-graph algebras will serve as a base for  analyzing monic and purely atomic representations in Sections \ref{sec:monic-results} and \ref{sec:atomic_repn} respectively.

Section \ref{sec:monic-results} then contains our next main result, Theorem \ref{thm-characterization-monic-repres}: when $\Lambda$ is finite, every monic representation of $C^*(\Lambda)$ arises from a $\Lambda$-projective system on $\Lambda^\infty$, and every such $\Lambda$-projective system gives rise to a monic representation.  A \emph{monic representation} of $C^*(\Lambda)$ is one whose restriction to $C(\Lambda^\infty) \subseteq C^*(\Lambda)$ admits a cyclic vector.  Such representations were first studied in \cite{dutkay-jorgensen-monic} for the Cuntz algebras, and subsequently in \cite{bezuglyi-jorgensen} in the Cuntz-Krieger case.  However, Theorem \ref{thm-characterization-monic-repres} implies  sharper results for the Cuntz and Cuntz-Krieger case than had previously been obtained.  

Having classified the monic representations of finite $k$-graphs in Section \ref{sec:monic-results}, we pause to revisit the examples of Section \ref{sec:examples_a}.  While many of them are monic, Example \ref{exonevthreeed} (see also Example \ref{ex-exonevthreeed-monic}) demonstrates that not all $\Lambda$-semibranching function systems give rise to monic representations.  {Our computations in Section \ref{sec:monic-examples} rely on Theorem \ref{prop-conv-to-8-7}, which gives an alternative characterization of when a $\Lambda$-semibranching function system gives rise to a monic representation.  In addition to streamlining our arguments in this section, we believe Theorem \ref{prop-conv-to-8-7} will be of independent interest.}

Section \ref{sec:univ_repn} relates monic representations to Nelson's universal Hilbert space, which we denote $\H(\Lambda^\infty)$.  In particular, Theorem \ref{prop-universal-name} implies that
every monic representation whose associated $\Lambda$-projective system consists of positive functions is unitarily equivalent to a sub-representation of the so-called ``universal representation'' of $C^*(\Lambda)$ on $\H(\Lambda^\infty)$ which is described in Proposition \ref{proj-valued-measure-universal-representation}.

In the final two sections, we develop analogues for higher-rank graphs of two classes of representations which have been particularly fruitful in the Cuntz algebra setting:  namely, permutative representations, introduced in \cite{BJ-atomic} for Cuntz algebras, and purely atomic representations \cite{dutkay-jorgensen-atomic}.  Permutative representations, in particular, have been linked to continued fractions \cite{kawamura-h} and to fermionic representations \cite{abe-kawamura}. Many of the results which we obtain for purely atomic representations in Section \ref{sec:atomic_repn}, and for permutative representations in Section \ref{sec:permutative_repn}, strongly echo the initial results obtained in the Cuntz algebra setting. Given the fundamental structural differences between Cuntz algebras and $k$-graph algebras (for example, the latter need not be simple, nor is their $K$-theory known in general), the relative ease with which the results for Cuntz algebras extend to the $k$-graph context is another indication of the importance of these classes of representations.

While we were in the process of writing up the results presented below, 
D. Gon\c{c}alves, H.  Li,  and D. Royer posted a manuscript \cite{GLR} on the arXiv in which they introduce a definition of $\Lambda$-semibranching function systems for more general finitely-aligned higher-rank graphs.    While there is some overlap between their work and ours, especially concerning the case of  $k$-graphs with one vertex, our systematic analysis of monic, purely atomic, and permutative  representations of $k$-graph $C^*$-algebras is completely new.  We also hope that our focus in this monograph on concrete examples of representations of finite higher-rank graphs will invite more researchers to join us in studying these fascinating objects.

\subsection{Acknowledgments}   	 
The authors would like to thank Daniel Gon\c calves for his contribution to the proof of Lemma \ref{lem:measure} below, and Janos Englander and Alex Kumjian  for helpful discussions.  E.G.~was partially supported by   the Deutsches Forschungsgemeinshaft via the SFB 878 ``Groups, Geometry, and Actions'' of the Westf\"alische-Wilhelms-Universit\"at M\"unster. 	 
S.K.~ was supported by Basic Science Research Program through the National Research Foundation of Korea (NRF) funded by the Ministry of Education (\#2017R1D2A1B03034697).
J.P.~was  partially supported by a grant from the Simons Foundation (\#316981).
P.J. thanks his colleagues in the Math Department at the University of Colorado, for making a week-long visit there possible, for support, and for kind hospitality.  

  The first three named co-authors (C.F., E.G., and P.J.) thank the Fields Institute, the University of Toronto, and George Elliott in particular, for hosting a Satellite Conference at Fields during the first week of August 2017 where the three of us benefited from discussions, related to the completion of our monograph, with the participants, especially with G. Elliott, Thierry Giordano, Adam Dor On,  Matthew Kennedy, Gilles de Castro, and Jianchao Wu. Moreover, C.F. and E.G. also thank the AMS for funding of travel to the Mathematical Congress of the Americas in Montreal, July  2017, where they worked toward the completion of this monograph.
\section{Foundational material}
\label{sec-fund-mater}

\subsection{Higher-rank graphs}

 We will now describe in detail higher-rank graphs and their $C^*$-algebras. 
 For this purpose we begin by recalling a few central points from the foundational work of Kumjian and Pask \cite{KP} on higher-rank graphs. 
 
Let $\N=\{0,1,2,\dots\}$ denote the monoid of natural numbers under addition, and let $k\in \N$ with $k\ge 1$. We write $e_1,\dots e_k$ for the standard basis vectors of $\N^k$, where $e_i$ is the vector of $\N^k$ with $1$ in the $i$-th position and $0$ everywhere else.

\begin{defn} \cite[Definition 1.1]{KP}
\label{def-higher-rank-gr}
A \emph{higher-rank graph} or \emph{$k$-graph} is a countable small category\footnote{Recall that a small category is one  in which  the collection of arrows is a  set.} $\Lambda$ 
 with a degree functor $d:\Lambda\to \N^k$ satisfying the \emph{factorization property}: for any morphism $\lambda\in\Lambda$ and any $m, n \in \N^k$ such that  $d(\lambda)=m+n \in \N^k$,  there exist unique morphisms $\mu,\nu\in\Lambda$ such that $\lambda=\mu\nu$ and $d(\mu)=m$, $d(\nu)=n$. 
\end{defn}

Readers who are new to the study of higher-rank graphs may wish to review the examples presented in Sections \ref{sec:examples_a} and \ref{sec:examples_gen_measure} below, before reading further.  The diagrams included with these examples, and the factorization rules described there, will give the reader more geometric, analytic, and combinatorial insight into the factorization property and its consequences.


When discussing $k$-graphs, we use the arrows-only picture of category theory; thus, objects in $\Lambda$ are identified with identity morphisms, and the notation $ \lambda \in \Lambda $ means $\lambda$ is a morphism in $\Lambda$.

We often regard $k$-graphs as a generalization of directed graphs, so we call morphisms $\lambda\in\Lambda$ \emph{paths} in $\Lambda$, and the objects (identity morphisms) are often called \emph{vertices}. For $n\in\N^k$ and vertices $v,w$ of $\Lambda$, we write
\begin{equation}
\label{eq:Lambda-n}
\Lambda^n:=\{\lambda\in\Lambda\,:\, d(\lambda)=n\}\
\end{equation}
With this notation, note that $\Lambda^0$ is the set of objects (vertices) of $\Lambda$. Occasionally, we call elements of $\Lambda^{e_i}$ (for any $i$) \emph{edges}.

We write $r,s:\Lambda\to \Lambda^0$ for the range and source maps in $\Lambda$ respectively, and 
\[v\Lambda w:=\{\lambda\in\Lambda\,:\, r(\lambda)=v,\;s(\lambda)=w\}.\]
  Combining this notational convention with that of Equation \eqref{eq:Lambda-n} results in abbreviations such as 
\[ v\Lambda^n:= \{ \lambda \in \Lambda: r(\lambda) = v, \ d(\lambda) = n\}\]
which we will use throughout the paper.

For $m,n\in\N^k$, we write $m\vee n$ for the coordinatewise maximum of $m$ and $n$. Given  $\lambda,\eta\in \Lambda$, we write
\begin{equation}\label{eq:lambda_min}
\Lambda^{\operatorname{min}}(\lambda,\eta):=\{(\alpha,\beta)\in\Lambda\times\Lambda\,:\, \lambda\alpha=\eta\beta,\; d(\lambda\alpha)=d(\lambda)\vee d(\eta)\}.
\end{equation}
Then we denote the set of \emph{minimal common extensions} of $\lambda,\eta \in \Lambda$ by
\[
\operatorname{MCE}(\lambda,\eta):=\{\lambda\alpha\,:\, (\alpha,\beta)\in \Lambda^{\operatorname{min}}(\lambda,\eta)\}
=\{\eta\beta\,:\, (\alpha,\beta)\in \Lambda^{\operatorname{min}}(\lambda,\eta)\}.
\]
If $k=1$, then $\Lambda^{\operatorname{min}}(\lambda, \eta)$ will have at most one element; this need not be true in a $k$-graph if $k > 1$.

We say that a $k$-graph $\Lambda$ is \emph{finite} if $\Lambda^n$ is a finite set for all $n\in\N^k$ and say that $\Lambda$  \emph{has no sources} or \emph{is source-free} if $v\Lambda^n\ne \emptyset$ for all $v\in\Lambda^0$ and $n\in\N^k$. It is well known that this is equivalent to the condition that $v\Lambda^{e_i}\ne \emptyset$ for all $v\in \Lambda$ and all basis vectors $e_i$ of $\N^k$. 
We say that $\Lambda$ is \emph{row-finite} if $|v\Lambda^n| < \infty$ for all $v\in\Lambda^0$ and $n\in\N^k$, and we say that $\Lambda$ is \emph{finitely aligned} if $\Lambda^{\operatorname{min}}(\lambda,\eta)$ is finite (possibly empty) for all $\lambda,\eta\in\Lambda$. We are mostly interested in finite (or row-finite) $k$-graphs; in fact all of our examples are finite $k$-graphs. For those interested in finitely-aligned $k$-graphs, we suggest consulting \cite{RSY2}; semibranching function systems for these $k$-graphs have recently been introduced in \cite{GLR}.  

We often visualize a $k$-graph as a (quotient of a) $k$-colored directed graph via the equivalence relation induced by the factorization rules.
To be precise, for each $1 \leq i \leq k$, we can define the \emph{$i$th vertex matrix} $A_i \in M_{\Lambda^0}(\N)$ by 
\begin{equation} A_i(v,w) = |v\Lambda^{e_i} w|.\label{eq:adj-mx}
\end{equation}
Observe that the factorization rules imply that $A_iA_j=A_jA_i$ for $1\le i,j\le k$.  Indeed, given a pair of composable edges $f_1 \in v\Lambda^{e_i} z, f_2 \in z \Lambda^{e_j} w$, the factorization rule implies that since $d(f_1 f_2) = e_i + e_j = e_j + e_i$, the morphism $f_1 f_2 \in \Lambda$ can also be described uniquely as 
\[ f_1 f_2 = g_2 g_1 \;\;\text{ where } g_2 \in v\Lambda^{e_i} , \ g_1 \in  \Lambda^{e_j} w 
.\] 
Thus, we think of the matrix $A_i$ as the adjacency matrix for a graph of color $i$ on the vertex set $\Lambda^0$.  The factorization rule tells us that we must ``collapse'' commuting bi-colored squares in this graph into a single morphism in $\Lambda$.

{We now describe two fundamental examples of higher-rank graphs which were first mentioned in the foundational paper \cite{KP}.}
\begin{example}
\begin{itemize}
\item[(a)] For any directed graph $E$, let $\Lambda_E$ be the category of its finite paths. Then $\Lambda_E$ is a 1-graph with the degree functor $d:\Lambda_E\to \N$ which takes a finite path $\eta$ to its length $|\eta|$ (the number of edges making up $\eta$).
\item[(b)] For $k\ge 1$, let $\Omega_k$ be the small category with
\[
\operatorname{Obj}(\Omega_k)=\N^k,\quad \text{and}\quad \operatorname{Mor}(\Omega_k)=\{(p,q)\in \N^k\times \N^k\,:\, p\le q\}.
\]
Again, we can also view elements of $\text{Obj}(\Omega_k)$ as identity morphisms, via the map $\text{Obj}(\Omega_k) \ni p \mapsto (p, p) \in \text{Mor}(\Omega_k)$.
The range and source maps $r,s:\operatorname{Mor}(\Omega_k)\to \operatorname{Obj}(\Omega_k)$ are given by $r(p,q)=p$ and $s(p,q)=q$. If we define $d:\Omega_k\to \N^k$ by $d(p,q)=q-p$, then one can check that $\Omega_k$ is a $k$-graph with degree functor $d$.
\end{itemize}

\end{example}

\begin{defn}[\cite{KP} Definitions 2.1]
\label{def:infinite-path}
Let $\Lambda$ be a $k$-graph. An \emph{infinite path} in $\Lambda$ is a $k$-graph morphism (degree-preserving functor) $x:\Omega_k\to \Lambda$, and we write $\Lambda^\infty$ for the set of infinite paths in $\Lambda$. 
Since $\Omega_k$ has a terminal object (namely $0 \in\N^k$) but no initial object, we think of our infinite paths as having a range $r(x) : = x(0)$ but no source.
For each $m\in \N^k$, we have a shift map $\sigma^m:\Lambda^\infty \to \Lambda^\infty$ given by
\begin{equation}\label{eq:shift-map}
\sigma^m(x)(p,q)=x(p+m,q+m)
\end{equation}
for $x\in\Lambda^\infty$ and $(p,q)\in\Omega_k$.

We say that a $k$-graph $\Lambda$ is \emph{aperiodic} if for each $v\in\Lambda^0$, there exits $x\in v\Lambda^\infty$ such that for all $m\ne n\in\N^k$ we have $\sigma^m(x)\ne \sigma^n(x)$.
\end{defn}
\begin{rmk}
The factorization rule implies an important property of infinite paths:  for any $x\in\Lambda^\infty$ and $m\in\N^k$, we have
\[
x=x(0,m)\sigma^m(x).
\]
Thus, every infinite path must contain infinitely many edges of each color.  Moreover, if we take $m = (n, n, \ldots, n) \in \N^k$ for some $n \geq 1$, the factorization rule tells us that $x(0,m)$ can be written uniquely as a ``rainbow sequence'' of edges:
\[ x(0,m) = f_1^1 f_2^1 \cdots f_k^1 f_1^2 \cdots f_k^2 f_1^3 \cdots  f_k^n,\]
where $d(f_i^j) = e_i$.
So if we have a 2-graph $\Lambda$, which can be visualized as a 2-colored graph (red and blue edges) with factorization rules, each infinite path $x\in\Lambda^\infty$ can be uniquely identified with  an infinite string of alternating blue and red edges (setting blue to be ``color 1'' and red to be ``color 2'').  On the other hand, the factorization rule implies that each composable pair of blue-red edges is equivalent to a unique pair of composable red-blue edges, so $x$ is also equivalent to an infinite string of composable edges which alternate in color in the following pattern: red, blue, red, blue, \ldots.
\label{rmk:rainbow}
\end{rmk}
{We stress that even finite $k$-graphs (our main objects of interest in this paper) may have nontrivial infinite paths; in an infinite path, the same edge may occur multiple times and even infinitely many times.}

It is well-known that the collection of cylinder sets 
\[
Z(\lambda)=\{x\in\Lambda^\infty\,:\, x(0,d(\lambda))=\lambda\},
\]
for $\lambda \in \Lambda$, form a compact open basis for a locally compact Hausdorff topology on $\Lambda^\infty$, under  reasonable hypotheses on $\Lambda$ (in particular, when $\Lambda$ is row-finite: see Section 2 of \cite{KP}). If a $k$-graph $\Lambda$ is  finite, then $\Lambda^\infty$ is compact in this topology.

According to Proposition 8.1 of \cite{aHLRS3}, for many finite higher-rank graphs there is a unique Borel probability measure $M$ on $\Lambda^\infty$ satisfying a certain self-similarity condition. To describe the measure $M$, we need more definitions.

\begin{defn}
We say that a $k$-graph is \emph{strongly connected} if, for all $v,w\in\Lambda^0$, $v\Lambda w\ne \emptyset$.
\end{defn}

If a $k$-graph $\Lambda$ is finite and strongly connected with vertex matrices $A_1,\dots A_k\in M_{\Lambda^0}(\N)$, then Proposition~3.1 of \cite{aHLRS3} implies that there is a unique positive vector $\kappa^\Lambda\in (0,\infty)^{\Lambda^0}$ such that $\sum_{v\in\Lambda^0}\kappa_v^{\Lambda}=1$ and for all $1\le i\le k$,
\[
A_i \kappa^{\Lambda}=\rho_i\, \kappa^{\Lambda},
\]
where $\rho_i$ denotes the spectral radius of $A_i$. The vector $\kappa^{\Lambda}$ is called the \emph{(unimodular) Perron-Frobenius eigenvector} of $\Lambda$. Then the measure $M$ on $\Lambda^\infty$ is given by
\begin{equation}
M(Z(\lambda))=(\rho(\Lambda))^{-d(\lambda)}\kappa^{\Lambda}_{s(\lambda)}\quad\text{for}\;\; \lambda\in\Lambda,
\label{eq:M}
\end{equation}
where $\rho(\Lambda)=(\rho_1,\dots \rho_k)$ and $(\rho(\Lambda))^n=\rho_1^{n_1}\dots \rho_k^{n_k}$ for $n=(n_1,\dots n_k)\in \N^k$. We call the measure $M$ the \emph{Perron-Frobenius measure} on $\Lambda^\infty$.  Proposition 8.1 of \cite{aHLRS3} establishes that if $\mu$ is a Borel probability measure on $\Lambda^\infty$ such that 
\[
\mu(Z(\lambda))=\rho(\Lambda)^{-d(\lambda)}\mu (Z(s(\lambda)))\quad\text{for all}\;\; \lambda\in\Lambda,
\]
then $\mu = M$.

Now we introduce the $C^*$-algebra associated to a $k$-graph $\Lambda$. Here we only consider row-finite $k$-graphs with no sources. For $C^*$-algebras associated to more general $k$-graphs, see for example  \cite{RSY2}.
\begin{defn}\label{def:kgraph-algebra}
Let $\Lambda$ be a row-finite $k$-graph with no sources. A \emph{Cuntz--Krieger $\Lambda$-family} is a collection  $\{t_\lambda:\lambda\in\Lambda\}$ of partial isometries in a $C^*$-algebra satisfying
\begin{itemize}
\item[(CK1)] $\{t_v\,:\, v\in\Lambda^0\}$ is a family of mutually orthogonal projections,
\item[(CK2)] $t_\lambda t_\eta=t_{\lambda\eta}$ if $s(\lambda)=r(\eta)$,
\item[(CK3)] $t^*_\lambda t_\lambda=t_{s(\lambda)}$ for all $\lambda\in\Lambda$,
\item[(CK4)] for all $v\in\Lambda$ and $n\in\N^k$, we have
\[
t_v=\sum_{\lambda\in v\Lambda^n} t_\lambda t^*_\lambda.
\]
The Cuntz--Krieger $C^*$-algebra $C^*(\Lambda)$ associated to $\Lambda$ is the universal $C^*$-algebra generated by a Cuntz--Krieger $\Lambda$-family.

\end{itemize}
\end{defn}
One can show that 
\[
C^*(\Lambda)=\overline{\operatorname{span}}\{t_\alpha t^*_\beta\,:\, \alpha,\beta\in\Lambda,\; s(\alpha)=s(\beta)\}.
\]
Also, (CK4) implies that for all $\lambda, \eta\in \Lambda$, we have
\begin{equation}\label{eq:CK4-2}
t_\lambda^* t_\eta=\sum_{(\alpha,\beta)\in \Lambda^{\operatorname{min}}(\lambda,\eta)} t_\alpha t^*_\beta. 
\end{equation}
The universal property implies that the $C^*$-algebra $C^*(\Lambda)$ carries a strongly continuous action  $\gamma$ of the $k$-torus $\T^k$, called the \emph{gauge action}, which is given by
\[
\gamma_z(t_\lambda)=z^{d(\lambda)}t_\lambda,
\]
where $z^n=\prod_{i=1}^k z_i^{n_i}$ for $z=(z_1,\dots, z_k)\in \T^k$ and $n = (n_1, \ldots, n_k) \in\N^k$. Note that we only  discuss the gauge action in Section \ref{sec:faithful-rep}.

\subsection{$\Lambda$-semibranching function systems and their representations}

In \cite{FGKP}, separable representations of $C^*(\Lambda)$ were constructed by using $\Lambda$-semibranching function systems on measure spaces. A $\Lambda$-semibranching function system is a generalization of the semibranching function systems studied by Marcolli and Paolucci in \cite{MP}.  These objects play an important role in our analysis of other classes of representations of $C^*(\Lambda)$, such as the monic representations we introduce in Section \ref{sec:monic-results}, 
{and the  permutative representations of Section \ref{sec:permutative_repn}}. Here we review basic definitions and introduce the standard example of  a $\Lambda$-semibranching function system on $(\Lambda^\infty,M)$ and its associated representation: see Example~\ref{example:SBFS-M}.

\begin{defn}
\label{def-1-brach-system}\cite[Definition~2.1]{MP}\label{defn:sbfs}
Let $(X,\mu)$ be a measure space. Suppose that, for each $1\le i\le N$, we have a measurable map $\sigma_i:D_i\to X$, for some measurable subsets $D_i\subset X$. The family $\{\sigma_i\}_{i=1}^N$ is a \emph{semibranching function system} if the following holds:
\begin{itemize}
\item[(a)] Setting $R_i = \sigma_i(D_i),$ we have 
\[
\mu(X\setminus \cup_i R_i)=0,\quad\quad\mu(R_i\cap R_j)=0\;\;\text{for $i\ne j$}.
\]
\item[(b)] For each $i$, the Radon-Nikodym derivative
\[
\Phi_{\sigma_i}=\frac{d(\mu\circ\sigma_i)}{d\mu}
\]
satisfies $\Phi_{\sigma_i}>0$, $\mu$-almost everywhere on $D_i$.
\end{itemize}
A measurable map $\sigma:X\to X$ is called a \emph{coding map} for the family $\{\sigma_i\}_{i=1}^N$ if $\sigma\circ\sigma_i(x)=x$ for all $x\in D_i$.
\end{defn}

\begin{defn}\cite[Definition~3.2]{FGKP}
\label{def-lambda-SBFS-1}
Let $\Lambda$ be a finite $k$-graph and let $(X, \mu)$ be a measure space.  A \emph{$\Lambda$-semibranching function system} on $(X, \mu)$ is a collection $\{D_\lambda\}_{\lambda \in \Lambda}$ of measurable subsets of $X$, together with a family of prefixing maps $\{\tau_\lambda: D_\lambda \to X\}_{\lambda \in \Lambda}$, and a family of coding maps $\{\tau^m: X \to X\}_{m \in \N^k}$, such that
\begin{itemize}
\item[(a)] For each $m \in \N^k$, the family $\{\tau_\lambda: d(\lambda) = m\}$ is a semibranching function system, with coding map $\tau^m$.
\item[(b)] If $ v \in \Lambda^0$, then  $\tau_v = id$,  and $\mu(D_v) > 0$.
\item[(c)] Let $R_\lambda = \tau_\lambda( D_\lambda)$. For each $\lambda \in \Lambda, \nu \in s(\lambda)\Lambda$, we have $R_\nu \subseteq D_\lambda$ (up to a set of measure 0), and
\[\tau_{\lambda} \tau_\nu = \tau_{\lambda \nu}\text{ a.e.}\]
 (Note that this implies that up to a set of measure 0, $D_{\lambda \nu} = D_\nu$ whenever $s(\lambda) = r(\nu)$).
\item[(d)] The coding maps satisfy $\tau^m \circ \tau^n = \tau^{m+n}$ for any $m, n \in \N^k$.  (Note that this implies that the coding maps pairwise commute.)
\end{itemize}
\end{defn}

\begin{rmk}
\label{rmk:abs-cts-inverse}
We pause to note that  condition (c) of Definition \ref{def-lambda-SBFS-1} above implies that $D_\lambda=D_{s(\lambda)}$ and $R_\lambda\subset R_{r(\lambda)}$ for $\lambda\in\Lambda$. Also, when $\Lambda$ is a finite 1-graph, the definition of a $\Lambda$-semibranching function system is not equivalent to Definition \ref{def-1-brach-system}. In particular, Definition~\ref{def-lambda-SBFS-1}(b) implies that the domain sets $\{D_v:v\in\Lambda^0\}$ must satisfy $\mu(D_v\cap D_w)=\mu(R_v\cap R_w)=0$ for $v\ne w\in\Lambda^0$, but  Definition~\ref{def-1-brach-system} does not require that the domain sets $D_i$ be mutually disjoint $\mu$-a.e. In fact, Definition~\ref{def-lambda-SBFS-1} implies what is called condition (C-K) in Section~2.4 of \cite{bezuglyi-jorgensen}: up to a measure zero set, 
\begin{equation}
\label{eq-partition}
D_v=\cup_{\lambda\in v\Lambda^m} R_\lambda
\end{equation}
for all $v\in\Lambda^0$ and $m\in\N,$ since $R_v=\tau_v(D_v)=id(D_v)=D_v.$ Also notice that in the above decomposition the  intersections $R_\lambda \cap R_{\lambda'},$ $R_\lambda \not= {\lambda'},$ have measure zero. This condition  is crucial to making sense of the representation of $C^*(\Lambda)$ associated to the $\Lambda$-semibranching function system (see Theorem \ref{thm:separable-repn} below). As established in Theorem~2.22 of \cite{bezuglyi-jorgensen}, in order to obtain a representation of a 1-graph algebra $C^*(\Lambda)$ from a semibranching function system, one must also assume that the semibranching function system satisfies condition (C-K).

Finally, we also observe that $(\tau^n)^{-1} (E) = \bigcup_{\lambda \in \Lambda^n} \tau_\lambda(E)$ for any measurable $E\subseteq X$.  Therefore, 
\[ \mu \circ (\tau^n)^{-1} << \mu\]
in any $\Lambda$-semibranching function system.
\end{rmk}

As established in \cite{FGKP}, any $\Lambda$-semibranching function system gives rise to a representation of $C^*(\Lambda)$ { via \lq prefixing' and \lq chopping off' operators that satisfy the Cuntz-Krieger relations}.  {Intuitively, a $\Lambda$-semibranching function system is a way of encoding the Cuntz-Krieger relations at the measure-space level: the prefixing map $\tau_\lambda$ corresponds to the partial isometry $s_\lambda \in C^*(\Lambda)$.  We give a precise formula for the representation in Theorem \ref{thm:separable-repn} below.} For brevity, we will often refer to  representations arising from $\Lambda$-semibranching function systems as \emph{$\Lambda$-semibranching representations.}  Note that  a $\Lambda$-semibranching representation will be separable whenever $L^2(X, \mu)$ is separable; this will be the case for all but one of the representations we consider in this paper.

\begin{thm}\cite[Theorem~3.5]{FGKP}\label{thm:separable-repn}
Let $\Lambda$ be a finite $k$-graph with no sources and suppose that we have a $\Lambda$-semibranching function system on a certain measure space $(X,\mu)$ with prefixing maps $\{\tau_\lambda\}$ and coding maps $\{\tau^m:m\in \N^k\}$. For each $\lambda\in\Lambda$, define an operator $S_\lambda$  on $L^2(X,\mu)$ by
\[
S_\lambda\xi(x)=\chi_{R_\lambda}(x)(\Phi_{\tau_\lambda}(\tau^{d(\lambda)}(x)))^{-1/2} \xi(\tau^{d(\lambda)}(x)).
\]
Then the operators $\{S_\lambda:\lambda\in\Lambda\}$ generate a representation $\pi$ of $C^*(\Lambda)$, and $\pi$ is separable.
\label{thm:SBFS-repn}
\end{thm}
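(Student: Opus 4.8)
The plan is to verify that the operators $S_\lambda$ defined by the stated formula satisfy the Cuntz-Krieger relations (CK1)--(CK4) from Definition \ref{def:kgraph-algebra}, since by the universal property of $C^*(\Lambda)$ this immediately yields a representation $\pi$ with $\pi(t_\lambda) = S_\lambda$. Separability will then follow since each $S_\lambda$ acts on $L^2(X,\mu)$, and any representation of a separable $C^*$-algebra on a separable Hilbert space is separable. Before checking the relations, I would first record the adjoint operator: a direct Radon-Nikodym computation should give
\[
S_\lambda^* \xi(x) = \chi_{D_\lambda}(x)\, (\Phi_{\tau_\lambda}(x))^{1/2}\, \xi(\tau_\lambda(x)),
\]
where the exponent $+1/2$ (rather than $-1/2$) arises from the change-of-variables formula applied to the inner product $\langle S_\lambda \xi, \eta\rangle$ using the coding map $\tau^{d(\lambda)}$ as the substitution and $\Phi_{\tau_\lambda}$ as the Jacobian. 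Getting the normalization exactly right here is what makes the partial-isometry relations work out, so I would do this calculation carefully.

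With the adjoint in hand, I would check the relations in order. For (CK1), the operators $S_v$ for $v \in \Lambda^0$ are multiplication by $\chi_{D_v}$ (using $\tau_v = \id$ and $\Phi_{\tau_v} = 1$ from Definition \ref{def-lambda-SBFS-1}(b)), hence projections; their mutual orthogonality follows from $\mu(D_v \cap D_w) = 0$ for $v \neq w$, as noted in Remark \ref{rmk:abs-cts-inverse}. For (CK3), I would compute $S_\lambda^* S_\lambda$ and use the chain rule for Radon-Nikodym derivatives together with $\tau^{d(\lambda)} \circ \tau_\lambda = \id$ on $D_\lambda$ to see that the $\Phi_{\tau_\lambda}$ factors cancel, leaving multiplication by $\chi_{D_\lambda} = \chi_{D_{s(\lambda)}}$, which is $S_{s(\lambda)}$. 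For (CK2), I would verify $S_\lambda S_\eta = S_{\lambda\eta}$ when $s(\lambda) = r(\eta)$ by composing the two operators; here the cocycle identity for the coding maps (Definition \ref{def-lambda-SBFS-1}(d)), the composition rule $\tau_\lambda \tau_\eta = \tau_{\lambda\eta}$ (part (c)), and the chain rule $\Phi_{\tau_{\lambda\eta}} = (\Phi_{\tau_\lambda} \circ \tau_\eta)\, \Phi_{\tau_\eta}$ combine to produce the correct single normalization factor.

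The main obstacle, and the step requiring the most care, will be relation (CK4): the sum $\sum_{\lambda \in v\Lambda^n} S_\lambda S_\lambda^* = S_v$. Computing $S_\lambda S_\lambda^*$ should yield multiplication by $\chi_{R_\lambda}$, so the relation reduces to the measure-theoretic partition statement
\[
\chi_{D_v} = \sum_{\lambda \in v\Lambda^n} \chi_{R_\lambda} \quad \mu\text{-a.e.},
\]
which is precisely the decomposition $D_v = \bigcup_{\lambda \in v\Lambda^n} R_\lambda$ with $\mu$-null pairwise intersections, guaranteed by the semibranching axioms (Definition \ref{defn:sbfs}(a), applied to the family $\{\tau_\lambda : d(\lambda) = n\}$) together with the containment $R_\lambda \subseteq R_{r(\lambda)} = D_v$ from Remark \ref{rmk:abs-cts-inverse}. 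The subtlety is bookkeeping the almost-everywhere qualifiers: one must ensure that the finitely many null sets arising from overlaps $R_\lambda \cap R_{\lambda'}$ and from the relations $\tau^n \circ \tau_\lambda = \id$ can be discarded simultaneously, which is legitimate because $v\Lambda^n$ is finite (as $\Lambda$ is finite). Throughout, finiteness of $\Lambda$ is what lets me pass freely between the edge-level and path-level formulations and keeps all sums finite.
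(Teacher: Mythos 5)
Your proposal is correct and follows essentially the same route as the paper: the theorem itself is imported from \cite{FGKP}, but the paper's Proposition \ref{prop:lambda-proj-repn} carries out exactly this verification of (CK1)--(CK4) for the more general $\Lambda$-projective systems, with the same adjoint formula $S_\lambda^*\xi = \chi_{D_\lambda}\,\Phi_{\tau_\lambda}^{1/2}\,(\xi\circ\tau_\lambda)$, the same cancellation of Radon--Nikodym factors via $\tau^{d(\lambda)}\circ\tau_\lambda=\id$, and the same reduction of (CK4) to the a.e.\ identity $\sum_{\lambda\in v\Lambda^n}\chi_{R_\lambda}=\chi_{D_v}$. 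The one caveat is your final sentence on separability: it is not automatic that $L^2(X,\mu)$ is separable for an arbitrary measure space, so this part of the conclusion implicitly requires that hypothesis, as the paper itself acknowledges in the remark preceding the theorem.
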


\begin{example}\label{example:SBFS-M}
Here we describe the standard $\Lambda$-semibranching function system on the measure space $(\Lambda^\infty, M)$ for a finite strongly connected $k$-graph $\Lambda$, using the measure $M$ of Equation \eqref{eq:M}. The prefixing maps $\{\sigma_\lambda:Z(s(\lambda))\to Z(\lambda)\}_{\lambda\in\Lambda}$ are given by
\[
\sigma_\lambda(x)=\lambda x,
\]
where $\lambda x \in \Lambda^\infty$ is defined by $\lambda x (0, m) = \lambda(0,m)$ if $d(\lambda ) \geq m$, and $\lambda x(0, m) = \lambda x( 0, m - d(\lambda))$ if $m \geq d(\lambda)$,
and the coding maps $\{\sigma^m:\Lambda^\infty\to \Lambda^\infty\}_{m\in \N^k}$ are given as in Definition \ref{def:infinite-path}:
\[
\sigma^m(x)(p,q)=x(m+p, m+q).
\]
Thus, for $\lambda\in\Lambda$, we let $D_\lambda=Z(s(\lambda))$ and $R_\lambda=\sigma_\lambda(D_\lambda)=Z(\lambda)$.  Proposition 3.4 of \cite{FGKP} establishes that $\{\sigma_\lambda:D_\lambda\to R_\lambda\}$ and $\{\sigma^m\}_{m\in \N^k}$ forms a $\Lambda$-semibranching function system on $(\Lambda^\infty,M)$. In particular, one can show that the Radon-Nikodym derivatives of $\sigma_\lambda$ are positive $M$-a.e. on $Z(s(\lambda))$ and they  are given by
\[
\Phi_{\sigma_\lambda}(x)=\rho(\Lambda)^{-d(\lambda)}.
\]

\begin{rmk}
\label{rmk:defn-standard-repn}
As seen in the above Theorem~\ref{thm:separable-repn}, there is a separable representation $\pi =: \pi_S$ of $C^*(\Lambda)$ associated to the $\Lambda$-semibranching function system on $(\Lambda^\infty, M)$ of Example \ref{example:SBFS-M}. In this case, $S_\lambda = \pi_S(t_\lambda)$ acts on characteristic functions of cylinder sets by
\[\begin{split}
S_\lambda \chi_{Z(\eta)}(x) &=\chi_{Z(\lambda)}(x)\rho(\Lambda)^{d(\lambda)/2}\chi_{Z(\eta)}(\sigma^{d(\lambda)}(x))\\
&=\rho(\Lambda)^{d(\lambda)/2}\chi_{Z(\lambda\eta)}(x).
\end{split}\]
Then the adjoint $S_\lambda^*$ is given by
\[
S_\lambda^*\chi_{Z(\eta)}(x)=\rho(\Lambda)^{-d(\lambda)/2}\sum_{(\alpha,\beta)\in \Lambda^{\operatorname{min}}(\lambda,\eta)}\chi_{Z(\alpha)}(x).
\]
See the detailed calculation in Section~5 of \cite{FGJKP2}. We call the separable representation $\pi_S$ associated to this $\Lambda$-semibranching function system on $(\Lambda^\infty, M)$ the \emph{standard $\Lambda$-semibranching representation} of $C^*(\Lambda)$.
\end{rmk}
\end{example}
The following Lemmas are well-known, and will be the technical tool we will use  in many of the Radon-Nikodym  derivative calculations presented in Section \ref{sec:examples_gen_measure}.	
In particular, we will apply these examples to the case where $X = \Lambda^\infty$ and 
$ \mathcal F_n$ is the $\sigma$-algebra generated by the cylinder sets $Z(\lambda)$ with $d(\lambda) \leq (n, \ldots, n)$.

\begin{lemma}[Kolmogorov Extension Theorem, \cite{kolmogorov, Tum}]
\label{lem-Kolm}
Let $(X, \mathcal{F}_n, \mu_n)_{n\in\N}$ be a sequence of probability measures $(\mu_n)_{n\in \N}$ on the same space $X$, each associated with a $\sigma$-algebra $\mathcal F_n$
; further assume that  $(X, \mathcal{F}_n, \mu_n)_{n\in\N}$  form a projective system, i.e., an inverse limit. Suppose that Kolmogorov's consistency condition holds:
\[ \mu_{n+1}|_{\mathcal F_n} = \mu_n.\]
Then there is a unique extension $\mu$ of the measures $(\mu_n)_{n\in \N}$ to the $\sigma$-algebra $\bigvee_{n\in \N} \mathcal F_n$ generated by $\bigcup_{n\in \N} \mathcal F_n$.
\label{lem:kolmogorov}
\end{lemma}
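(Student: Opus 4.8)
The plan is to reduce the statement to the Carath\'eodory extension theorem, by exhibiting $\mu$ as the unique extension of a countably additive premeasure defined on the algebra $\mathcal{A} := \bigcup_{n \in \N} \mathcal{F}_n$. The projective/inverse-limit hypothesis, together with the consistency condition $\mu_{n+1}|_{\mathcal{F}_n} = \mu_n$, encodes that the $\sigma$-algebras are nested, $\mathcal{F}_1 \subseteq \mathcal{F}_2 \subseteq \cdots$; from this it is immediate that $\mathcal{A}$ is an \emph{algebra} of subsets of $X$ (closed under complements and finite unions, since any two of its members lie in a common $\mathcal{F}_n$) and that the $\sigma$-algebra it generates is exactly $\bigvee_{n} \mathcal{F}_n$. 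First I would define a set function $\mu_0 : \mathcal{A} \to [0,1]$ by setting $\mu_0(A) = \mu_n(A)$ whenever $A \in \mathcal{F}_n$. The consistency condition guarantees this value is independent of the choice of $n$ with $A \in \mathcal{F}_n$, so $\mu_0$ is well defined, with $\mu_0(X) = 1$.

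Next I would check that $\mu_0$ is finitely additive: given disjoint $A, B \in \mathcal{A}$, the nesting lets me choose $n$ with $A, B \in \mathcal{F}_n$, and finite additivity of $\mu_0$ follows from that of the measure $\mu_n$. The content of the theorem, and the step I expect to be the main obstacle, is upgrading finite additivity to \emph{countable} additivity on $\mathcal{A}$ --- equivalently, continuity at $\emptyset$: for any decreasing sequence $A_1 \supseteq A_2 \supseteq \cdots$ in $\mathcal{A}$ with $\bigcap_k A_k = \emptyset$, one must show $\mu_0(A_k) \to 0$. Abstractly this can fail for finitely additive premeasures, and some compactness or inner-regularity input is exactly what the projective-system structure supplies: the underlying inverse limit of measurable spaces carries enough compactness that a decreasing sequence of nonempty members of $\mathcal{A}$ cannot shrink to $\emptyset$ without the corresponding measures tending to zero.

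In the concrete application to $X = \Lambda^\infty$ that the authors have in mind, this obstacle dissolves cleanly. There $\mathcal{F}_n$ is generated by the cylinder sets $Z(\lambda)$ with $d(\lambda) \leq (n, \ldots, n)$, each such $Z(\lambda)$ is compact open, and (for finite $\Lambda$) the whole space $\Lambda^\infty$ is compact. Hence every nonempty member of $\mathcal{A}$ is a finite union of compact sets, and a decreasing sequence $A_1 \supseteq A_2 \supseteq \cdots$ of \emph{nonempty} such sets has nonempty intersection by the finite intersection property. Thus $\bigcap_k A_k = \emptyset$ forces $A_k = \emptyset$ for all large $k$, giving $\mu_0(A_k) \to 0$ trivially and establishing continuity at $\emptyset$, hence countable additivity of $\mu_0$ on $\mathcal{A}$.

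With $\mu_0$ a countably additive probability premeasure on the algebra $\mathcal{A}$, the Carath\'eodory extension theorem produces a measure $\mu$ on $\sigma(\mathcal{A}) = \bigvee_{n} \mathcal{F}_n$ extending $\mu_0$, and in particular extending each $\mu_n$. For uniqueness I would invoke Dynkin's $\pi$--$\lambda$ theorem: $\mathcal{A}$ is a $\pi$-system generating $\bigvee_n \mathcal{F}_n$, and any two probability measures agreeing on $\mathcal{A}$ must agree on the generated $\sigma$-algebra. (Equivalently, uniqueness is the standard consequence of Carath\'eodory's theorem for a finite, hence $\sigma$-finite, premeasure.) This completes the extension and pins down $\mu$ uniquely.
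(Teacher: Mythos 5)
The paper does not actually prove this lemma: it is stated as a known result and cited to \cite{kolmogorov, Tum}, so there is no in-paper argument to compare against line by line. That said, your proposal is sound and, notably, it reconstructs almost exactly the argument the authors \emph{do} write out for the companion result, Lemma \ref{lem:measure} (finite additivity on square cylinder sets extends to a Borel measure on $\Lambda^\infty$), which the paper explicitly remarks is equivalent to the consistency hypothesis here. Both you and the paper reduce to Carath\'eodory: you pass through the nested algebra $\mathcal A=\bigcup_n\mathcal F_n$ and verify countable additivity via continuity at $\emptyset$, using compactness of the cylinder sets; the paper's Lemma \ref{lem:measure} instead verifies countable additivity directly on disjoint unions of cylinder sets, again using that each $Z(\lambda)$ is compact open so that a countable disjoint cover of $Z(\lambda_i)$ is essentially finite. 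These are two faces of the same compactness input. The one thing worth flagging is that you are right to be suspicious of the lemma as abstractly stated: for a general projective system of measure spaces, continuity at $\emptyset$ genuinely requires a regularity or compactness hypothesis (the ``inverse limit'' phrasing in the statement is doing that work implicitly), and your proof is only complete in the setting where the $\mathcal F_n$ are generated by compact sets --- which is precisely the setting ($X=\Lambda^\infty$ for $\Lambda$ finite or row-finite) in which the paper applies it. Your uniqueness step via the $\pi$--$\lambda$ theorem is standard and correct.
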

 In fact, $\mu$ is the unique probability measure which has the given sequence of measures $(\mu_n)_{n\in \N}$ as its marginal distributions with respect to the prescribed filtration $\bigcup_{n\in \N} \mathcal F_n$.

\begin{lemma} (cf.~\cite{bezuglyi-jorgensen}, \cite{SGI} Section 10.2)
\label{lem-RN-der-comp-limit}
	\label{lemma-limit-RN} Let $(X, \mathcal{F}_n, \mu_n)_{n\in \N}$ and $(X, \mathcal F_n, \nu_n)_{n\in \N}$
	be two  sequences of measures on the same space $X$ and same $\sigma$-algebras $(X,{\mathcal F}_n)$.  Suppose that both sequences  form a projective system 
	and satisfy Kolmogorov's consistency condition, so that by Lemma \ref{lem:kolmogorov}, we have induced measures $\mu, \nu$ on 
the $\sigma$-algebra  $\mathcal{F} := \bigvee_n \mathcal{F}_n$  generated by $\cup_n \mathcal{F}_n$.
  
 Suppose moreover that
	\begin{itemize}
		\item $\nu_n<< \mu_n$ for all $n \in \N$;
		\item The Radon-Nikodym derivative $R_n:= d\nu_n/d\mu_n$ exists and is finite for all $n \in \N$;
		\item $R:= \lim_{n\to \infty} R_n$ exists and is finite.
			\end{itemize}
Then 	$\nu << \mu$ if and only if $R >0$, and $R = d\nu/d\mu$. 
	\end{lemma}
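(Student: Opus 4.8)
The plan is to recognize the sequence $(R_n)$ as a martingale for $\mu$ with respect to the filtration $(\mathcal F_n)$, apply the martingale convergence theorem, and then identify the limit with the Radon--Nikodym density, the key difficulty being to control the singular part of $\nu$. First I would record the consequence of the Kolmogorov Extension Theorem (Lemma \ref{lem-Kolm}) that $\mu|_{\mathcal F_n} = \mu_n$ and $\nu|_{\mathcal F_n} = \nu_n$. Each $R_n = d\nu_n/d\mu_n$ is non-negative, $\mathcal F_n$-measurable, and satisfies $\int_X R_n\, d\mu = \nu_n(X) = 1$, so the family is $L^1(\mu)$-bounded. The martingale property comes from a direct consistency computation: for $A \in \mathcal F_n \subseteq \mathcal F_{n+1}$ one has $\int_A R_{n+1}\, d\mu = \nu_{n+1}(A) = \nu(A) = \nu_n(A) = \int_A R_n\, d\mu$, and since both $E_\mu[R_{n+1}\mid \mathcal F_n]$ and $R_n$ are $\mathcal F_n$-measurable with the same integral over every $A \in \mathcal F_n$, we get $E_\mu[R_{n+1}\mid \mathcal F_n] = R_n$ $\mu$-a.e.

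Next I would invoke Doob's martingale convergence theorem: a non-negative, $L^1(\mu)$-bounded martingale converges $\mu$-a.e., which is consistent with the hypothesis that $R = \lim_n R_n$ exists and is finite, and identifies this $R$ as the $\mu$-a.e.\ limit. To locate $R$ relative to $\nu$, I would fix $A$ in the generating algebra $\bigcup_m \mathcal F_m$, note that $\int_A R_n\, d\mu = \nu(A)$ for all large $n$, and apply Fatou's lemma to obtain $\int_A R\, d\mu \le \nu(A)$. A monotone-class argument (the collection $\{A \in \mathcal F : \int_A R\, d\mu \le \nu(A)\}$ is a monotone class containing the generating algebra) then upgrades this to $\int_A R\, d\mu \le \nu(A)$ for all $A \in \mathcal F$. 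Thus $R\, d\mu \le \nu$ as measures, so $R\, d\mu$ is (at most) the absolutely continuous part of $\nu$ and $\nu - R\, d\mu \ge 0$ is the singular remainder.

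The equivalence then reduces to deciding whether the inequality $\int_X R\, d\mu \le 1$ is in fact an equality. For the direction assuming $\nu << \mu$, writing $g = d\nu/d\mu$ and comparing integrals over $\mathcal F_n$-sets shows $R_n = E_\mu[g \mid \mathcal F_n]$; L\'evy's upward theorem then gives $R_n \to g$ both $\mu$-a.e.\ and in $L^1(\mu)$, so $R = g = d\nu/d\mu$, $\int_X R\, d\mu = 1$, and $\nu(\{R=0\}) = \int_{\{R=0\}} R\, d\mu = 0$, i.e.\ $R>0$ $\nu$-a.e. Conversely, once one knows that the mass is preserved, $\int_X R\, d\mu = 1 = \nu(X)$ together with $R\, d\mu \le \nu$ forces $R\, d\mu = \nu$ (the non-negative measure $\nu - R\, d\mu$ has total mass zero), hence $\nu << \mu$ and $R = d\nu/d\mu$; so the positivity condition on $R$ is exactly the statement that the limiting density carries all of $\nu$.

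The main obstacle is precisely this last passage: a.e.\ convergence of $R_n$ alone yields only the inequality $\int_X R\, d\mu \le 1$ via Fatou, and a genuine singular part of $\nu$ can survive (this is the non-uniformly-integrable case, where $R_n \to \infty$ $\nu$-a.e.\ on the singular set). Ruling this out — equivalently, establishing uniform integrability of $(R_n)$, or that the singular part of $\nu$ is concentrated off $\{R > 0\}$ — is the delicate point, and it is handled by the conditional-expectation identification $R_n = E_\mu[g\mid\mathcal F_n]$ in the $\nu << \mu$ direction and by the mass-comparison argument in the converse.
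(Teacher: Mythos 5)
The paper itself offers no proof of this lemma --- it is cited as a known result (\cite{SGI}, Section 10.2) --- so there is no in-paper argument to compare against; your martingale route is the standard one from those references. Your forward direction is complete and correct: the restrictions $\mu|_{\mathcal F_n}=\mu_n$, $\nu|_{\mathcal F_n}=\nu_n$, the martingale property of $(R_n)$, the identification $R_n = E_\mu[\,d\nu/d\mu \mid \mathcal F_n\,]$ and L\'evy's upward theorem give $R = d\nu/d\mu$ and $\nu(\{R=0\})=0$ whenever $\nu \ll \mu$. The Fatou plus monotone-class step establishing $R\,d\mu \le \nu$ on all of $\mathcal F$ is also sound.

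The gap is the converse, and you have in effect flagged it without closing it. Your ``mass-comparison argument'' says: if $\int_X R\,d\mu = \nu(X)$, then $\nu - R\,d\mu$ is a nonnegative measure of total mass zero, hence $\nu = R\,d\mu \ll \mu$. That is correct logic, but the hypothesis of the converse is $R>0$, not $\int_X R\,d\mu = \nu(X)$, and nothing in your write-up derives the latter from the former. Indeed, if the limit is only taken $\mu$-a.e.\ (which is all Doob's theorem supplies), the implication is false: for $\nu = \tfrac12(\mu + \delta)$ with $\delta \perp \mu$ one has $R_n \to \tfrac12 > 0$ $\mu$-a.e.\ while $\nu \not\ll \mu$. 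What actually kills the singular part is the requirement that $\lim_n R_n$ exist and be \emph{finite} $\nu$-a.e.\ (or everywhere, which is how the lemma is used in Sections 5 and 6): writing $\rho = \tfrac12(\mu+\nu)$ and comparing the martingales of $d\mu/d\rho$ and $d\nu/d\rho$, the singular piece $\nu_s$ in the decomposition $\nu = R\,d\mu + \nu_s$ is concentrated on the set where $R_n \to \infty$, so $\nu$-a.e.\ finiteness of the limit forces $\nu_s = 0$; positivity of $R$ is then automatic in that direction and is really the criterion for $\mu \ll \nu$. To complete the proof you must either supply this argument (reading the hypotheses accordingly) or establish uniform integrability of $(R_n)$; neither is currently done.
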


Before we finish the section, we 
observe that  Carath\'eodory's theorem implies that finite additivity on ``square'' cylinder sets -- that is, cylinder sets $Z(\lambda)$ with $d(\lambda) = (n, \ldots, n)$ for some $n \in \N$ -- is enough to  obtain a measure on $\Lambda^\infty$. The factorization
rule for $k$-graphs implies that
this hypothesis is equivalent to the consistency hypothesis of Lemma \ref{lem:kolmogorov}, so the following Lemma could also be viewed as a corollary of the Kolmogorov Extension Theorem (Lemma \ref{lem:kolmogorov}).

 \begin{lemma}\label{lem:measure}
 Suppose that $\Lambda$ is a row-finite $k$-graph with no sources.
 If a positive real-valued function $\mu$ defined on the cylinder sets of $\Lambda^\infty$ is finitely additive on square cylinder sets, then $\mu$ extends uniquely to a Borel measure on $\Lambda^\infty$.
  \end{lemma}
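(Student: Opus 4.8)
The plan is to build the measure locally, on each of the compact clopen pieces into which $\Lambda^\infty$ decomposes, and then reassemble. First I would use the partition $\Lambda^\infty = \bigsqcup_{v \in \Lambda^0} Z(v)$ (every infinite path lies in exactly one $Z(v)$, namely for $v = r(x)$) to reduce to a single vertex. Fixing $v \in \Lambda^0$, the cylinders contained in $Z(v)$ are exactly the $Z(\lambda)$ with $r(\lambda) = v$, and for each $n$ the square cylinders $\{Z(\lambda) : \lambda \in v\Lambda^{(n,\dots,n)}\}$ form a \emph{finite} partition of $Z(v)$ — finite because $\Lambda$ is row-finite, and a partition because each $x \in Z(v)$ has a unique initial segment $x(0,(n,\dots,n)) \in v\Lambda^{(n,\dots,n)}$. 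Let $\mathcal{A}_v$ be the collection of finite unions of cylinders contained in $Z(v)$. By the factorization property, any cylinder $Z(\lambda)$ with $d(\lambda) \le (n,\dots,n)$ refines as the finite disjoint union $Z(\lambda) = \bigsqcup_\alpha Z(\lambda\alpha)$ over $\alpha$ with $d(\lambda\alpha) = (n,\dots,n)$; hence after passing to a common large degree, every element of $\mathcal{A}_v$ is a finite disjoint union of square cylinders of that degree. From this it follows that $\mathcal{A}_v$ is an algebra of subsets of $Z(v)$: it is closed under finite unions and intersections, and the complement in $Z(v)$ of a finite union of degree-$(n,\dots,n)$ squares is the finite union of the remaining degree-$(n,\dots,n)$ squares.

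Next I would promote $\mu$ to a finitely additive set function $\mu_0$ on $\mathcal{A}_v$ by declaring $\mu_0\!\left( \bigsqcup_i Z(\lambda_i) \right) = \sum_i \mu(Z(\lambda_i))$ for disjoint square cylinders $Z(\lambda_i)$. Well-definedness is the first place the hypothesis enters: two square-cylinder representations of the same set have a common square refinement (pass to a larger degree), and the assumed finite additivity on square cylinders — i.e.\ that refining a square cylinder into finitely many square subcylinders preserves total $\mu$-mass, and more generally that $\mu(Z(\lambda)) = \sum_\alpha \mu(Z(\lambda\alpha))$ for any square refinement of a cylinder — guarantees that the two sums agree and that $\mu_0$ restricts back to the given values of $\mu$ on all cylinders. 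Finite additivity of $\mu_0$ on $\mathcal{A}_v$ is then immediate after refining any two disjoint elements to a common square degree.

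The crux is upgrading finite to countable additivity, so that $\mu_0$ is a premeasure; this is where compactness does the work and is the only nontrivial step. Suppose $A = \bigsqcup_{n=1}^\infty A_n$ with $A, A_n \in \mathcal{A}_v$. Every element of $\mathcal{A}_v$ is a finite union of cylinders, hence simultaneously compact and open (for row-finite $\Lambda$ the cylinders are compact open, as recalled above). Thus $\{A_n\}_n$ is a cover of the compact set $A$ by open sets, so finitely many of them already cover $A$; since the $A_n$ are pairwise disjoint with union $A$, all but finitely many must be empty. Hence the countable union is really finite, and countable additivity of $\mu_0$ collapses to its finite additivity. With $\mu_0$ a premeasure on the algebra $\mathcal{A}_v$, Carath\'eodory's extension theorem produces a finite (since $\mu_0(Z(v)) < \infty$) Borel measure $\mu_v$ on $Z(v)$ extending $\mu_0$; uniqueness of $\mu_v$ follows from the uniqueness clause of Carath\'eodory for finite premeasures, together with the fact that $\mathcal{A}_v$ generates the Borel $\sigma$-algebra of $Z(v)$ (the cylinders form a basis and $\Lambda$ is countable). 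Finally I would set $\mu(E) := \sum_{v \in \Lambda^0} \mu_v(E \cap Z(v))$ for Borel $E \subseteq \Lambda^\infty$. Since the $Z(v)$ are countably many disjoint clopen sets covering $\Lambda^\infty$, this is a well-defined $\sigma$-finite Borel measure extending the original $\mu$; global uniqueness follows because any Borel measure extending $\mu$ must restrict on each $Z(v)$ to a finite extension of $\mu_0$, hence to $\mu_v$.

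The main obstacle is the premeasure step of the previous paragraph: everything else is bookkeeping with the factorization property and row-finiteness, but the passage from finite to countable additivity genuinely requires the compact-open structure of cylinder sets (equivalently, local compactness of $\Lambda^\infty$). As the discussion preceding the statement notes, one can package the same content differently: the square cylinders of degree $(n,\dots,n)$ generate $\sigma$-algebras with $\mathcal{F}_n \subseteq \mathcal{F}_{n+1}$, the finite-additivity hypothesis is exactly the Kolmogorov consistency condition $\mu_{n+1}|_{\mathcal{F}_n} = \mu_n$, and the Kolmogorov Extension Theorem (Lemma \ref{lem:kolmogorov}) then yields the extension directly; I would mention this as an alternative route.
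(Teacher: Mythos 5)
Your proof is correct and follows essentially the same route as the paper's: refine everything to a common square degree via the factorization property and row-finiteness, invoke the finite-additivity hypothesis there, use compactness of the cylinder sets to reduce countable disjoint unions to finite ones, and finish with Carath\'eodory's extension theorem. The only differences are organizational --- you localize to each $Z(v)$ and observe that a countable disjoint decomposition within the algebra must have all but finitely many pieces empty, whereas the paper works globally with the ring of finite disjoint unions of cylinders and runs a double sum over minimal common extensions --- but the underlying mechanism (compactness plus square refinement) is identical.
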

 \begin{proof}
 We first observe that for any row-finite $k$-graph, the collection of finite disjoint unions of cylinder sets is a ring: it is closed under unions and relative complements.
To see that $\mu$ extends to a measure on the Borel $\sigma$-algebra $\mathcal{B}_o(\Lambda^\infty)$ of $\Lambda^\infty$, Carath\'eodory's theorem therefore tells us that it suffices to check that $\mu$ is countably additive on disjoint unions of cylinder sets.  In order to define $\mu$ unambiguously on countable disjoint unions, we must check that
 \[ 
 \bigsqcup_{i \in \N} Z(\lambda_i) = \bigsqcup_{j\in \N} Z(\eta_j) \Rightarrow \sum_{i\in \N} \mu(Z(\lambda_i)) = \sum_{j\in \N}\mu(Z(\eta_j)).
 \]
 To this end, note that if $\bigsqcup_{i \in \N} Z(\lambda_i) = \bigsqcup_{j\in \N} Z(\eta_j) $, then for each fixed $i \in \N$ we have 
 \[
 Z(\lambda_i) = \bigsqcup_{j\in \N} Z(\lambda_i)\cap Z(\eta_j),
 \]
 and that $
 Z(\lambda_i) \cap Z(\eta_j) = \bigsqcup_{\zeta \in \text{MCE}(\lambda_i, \eta_j)} Z(\zeta).$ If $n = n(i,j) \in \N$ is such that $(n, \ldots, n) \geq d(\zeta) = d(\lambda_i) \vee d(\eta_j)$, write $I_{\zeta, n} := s(\zeta) \Lambda^{(n, \ldots, n) - d(\zeta)}$; then,
 \[ Z(\lambda_i) \cap Z(\eta_j) = \bigsqcup_{\zeta \in \text{MCE}(\lambda_i, \eta_j)} \bigsqcup_{\nu \in I_{\zeta, n}} Z(\zeta \nu)\]
 is a disjoint union of square cylinder sets.  In fact, for each choice of $i$ and $j$, this union is finite; the row-finiteness of $\Lambda$ implies that $v \Lambda^m$ is finite for all $m\in \N^k$, and that $\Lambda^{\operatorname{min}}(\lambda_i, \eta_j)$ (equivalently,  $\text{MCE}(\lambda_i, \eta_j)$) is finite for all $i,j$.  
 
 Moreover,  each cylinder set $Z(\lambda)$ is compact and open.  Since $\bigsqcup_{j\in \N} Z(\eta_j)$ is a cover for $Z(\lambda_i)$, it follows that there are only finitely many indices $j$ such that $Z(\lambda_i) \cap Z(\eta_j) \not= \emptyset.$ Choose $n= n(i)$ such that $(n, \ldots, n) \geq d(\lambda_i) \vee d(\eta_j)$ for all such $j$; then, the finite additivity of $\mu$ on square cylinder sets implies that
 \[ \mu(Z(\lambda_i)) = \sum_{j \in \N} \sum_{\zeta \in \text{MCE}(\lambda_i, \eta_j)} \sum_{\nu \in I_{\zeta, n}} \mu(Z(\zeta \nu)).\]
 By symmetry, 
 \[ \mu(Z(\eta_j)) = \sum_{i \in \N} \sum_{\zeta \in \text{MCE}(\lambda_i, \eta_j)} \sum_{\nu \in I_{\zeta, m}} \mu(Z(\zeta \nu)),\]
 where $m = m(j)$ is such that $(m, \ldots, m) \geq d(\lambda_i) \vee d(\eta_j)$ for all $i$ such that $Z(\lambda_i) \cap Z(\eta_j) \not= \emptyset$.
 Define $N(i,j) = n(i) \vee m(j)$; then 
 \[ \sum_{i \in \N} \mu(Z(\lambda_i)) = \sum_{i,j \in \N} \sum_{\zeta \in \text{MCE}(\lambda_i, \eta_j)} \sum_{\nu \in I_{\zeta, N(i,j)}} \mu(Z(\zeta \nu)) = \sum_{j \in \N} \mu(Z(\eta_j)),\]
  as desired.
The proof concludes by applying 
the Carath\'eodory extension theorem
 to extend $\mu$ uniquely to give a measure (also denoted $\mu$) on the Borel $\sigma$-algebra of $\Lambda^\infty$.
 \end{proof}

\section{A separable faithful  representation of $C^*(\Lambda)$}
\label{sec:faithful-rep}

In this section, we study two types of separable representations of $C^*(\Lambda)$, which both arise from $\Lambda$-semibranching function systems.  We first revisit the representation $\pi_S$ associated to the standard $\Lambda$-semibranching function system (see Remark \ref{rmk:defn-standard-repn} above); 
Theorem 3.6 of \cite{FGKP} asserts that this representation is faithful iff $\Lambda$ is aperiodic.   The proof given in \cite{FGKP} was flawed, so we offer a corrected proof here.  We next construct a separable representation for $C^*(\Lambda)$ (see Theorem \ref{prop:sc-faithful} and Theorem \ref{pr:sep-faith} below)
which is faithful even when $\Lambda$ is not necessarily aperiodic.  The underlying Hilbert space $\H_x$ of this representation is defined via an inductive limit, but we show in Proposition \ref{prop:sc-faithful-SBFS} that $\H_x \cong \ell^2(X)$ for a discrete measure space $X$ (with counting measure).  This perspective enables us to realize the representations of Theorem \ref{prop:sc-faithful} and Theorem \ref{pr:sep-faith} as  $\Lambda$-semibranching representations (in fact as permutative representations; see Section \ref{sec:permutative_repn} below).  Incidentally, the same arguments used in Proposition \ref{prop:sc-faithful-SBFS} also enable us to show, in Proposition \ref{prop:sc-faithful-SBFS-Lambda} that the standard representation of $C^*(\Lambda)$ on $\ell^2(\Lambda^\infty)$ is a $\Lambda$-semibranching representation, although not a separable one.

\subsection{The separable faithful  representation $\pi_S$ of \cite{FGKP}, {revisited}}
\label{sec-aperiod-faith-repres}
Recall that a higher-rank graph $\Lambda$ is \emph{aperiodic} if for each vertex  $v \in \Lambda^0$, there exists $x \in Z(v)$ such that for all $m \not= n \in \N^k$, we have $\sigma^n(x) \not= \sigma^m(x)$.

\begin{defn}
 If $\Lambda$ is not aperiodic, we write 
\[\text{Per}\,(v) = \{ m-n: \forall \ x \in Z(v), \ \sigma^m(x) = \sigma^n(x)\} \leq \Z^k.\]
If $\Lambda$ is strongly connected, then $\text{Per}\,(v) = \text{Per}\,(w)$ for all $v, w\in \Lambda^0$; we write $\text{Per}\, \Lambda$ for this group.
\end{defn}

The following Proposition is Theorem 3.6 of \cite{FGKP}.
\begin{prop}
\label{prop:pi-S-faithful}
Let $\Lambda$ be a finite, strongly connected $k$-graph.  The standard $\Lambda$-semibranching representation $\pi_S$ of $C^*(\Lambda)$ on $L^2(\Lambda^\infty, M)$, described in Remark \ref{rmk:defn-standard-repn} above, and
Proposition 3.4 and Theorem 3.5 of \cite{FGKP}, is faithful if and only if $\Lambda$ is aperiodic.
\end{prop}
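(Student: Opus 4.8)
The plan is to prove the two directions separately: for "aperiodic $\Rightarrow$ faithful" I would invoke the Cuntz--Krieger uniqueness theorem for $k$-graphs, while for the converse I would exhibit an explicit nonzero element of $C^*(\Lambda)$ killed by $\pi_S$, detecting its nonvanishing via the gauge action.

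For the forward implication, recall the Cuntz--Krieger uniqueness theorem (\cite{KP}): if $\Lambda$ is row-finite, source-free, and aperiodic, then any Cuntz--Krieger $\Lambda$-family whose vertex projections are all nonzero generates a faithful representation of $C^*(\Lambda)$. Applying the formula of Remark \ref{rmk:defn-standard-repn} with $d(\lambda)=0$ shows that $\pi_S(t_v)$ is multiplication by $\chi_{Z(v)}$ on $L^2(\Lambda^\infty,M)$, and $M(Z(v))=\kappa_v^\Lambda>0$ by the strict positivity of the Perron--Frobenius eigenvector. Hence each $\pi_S(t_v)\neq 0$, and the uniqueness theorem gives faithfulness. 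This direction is routine once the correct uniqueness theorem is cited; the only point to verify is nonvanishing of the vertex projections.

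For the converse I would argue the contrapositive. If $\Lambda$ is not aperiodic then, being strongly connected, $\Per\Lambda\neq\{0\}$; choose $0\neq p\in\Per\Lambda$ and write $p=m-n$ with $m\neq n\in\N^k$, so that $\sigma^m(x)=\sigma^n(x)$ for $M$-a.e.\ $x\in\Lambda^\infty$. The key computation is that, since $\{Z(\lambda):\lambda\in\Lambda^m\}$ partitions $\Lambda^\infty$ (so $\sum_{\lambda\in\Lambda^m}\chi_{Z(\lambda)}=1$), the formula of Remark \ref{rmk:defn-standard-repn} yields
\[ \sum_{\lambda\in\Lambda^m} S_\lambda\,\xi = \rho(\Lambda)^{m/2}\,(\xi\circ\sigma^m) \]
for every $\xi\in L^2(\Lambda^\infty,M)$, and likewise with $m$ replaced by $n$. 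Setting
\[ a = \rho(\Lambda)^{-m/2}\sum_{\lambda\in\Lambda^m} t_\lambda \; - \; \rho(\Lambda)^{-n/2}\sum_{\eta\in\Lambda^n} t_\eta \in C^*(\Lambda), \]
the a.e.\ identity $\sigma^m=\sigma^n$ forces $\pi_S(a)=0$. It then remains to show $a\neq 0$, and this is where the gauge action enters: since $\gamma_z(t_\lambda)=z^{d(\lambda)}t_\lambda$, the two summands lie in the distinct spectral subspaces $\{x:\gamma_z(x)=z^m x \ \forall z\}$ and $\{x:\gamma_z(x)=z^n x\ \forall z\}$. Using (CK2)--(CK3) and \eqref{eq:CK4-2} one computes $\big(\sum_{\lambda\in\Lambda^m}t_\lambda\big)^*\big(\sum_{\lambda\in\Lambda^m}t_\lambda\big)=\sum_{w\in\Lambda^0}|\Lambda^m w|\,t_w\geq \sum_w t_w=1$, so each summand is nonzero; and since $m\neq n$, an element lying in both spectral subspaces must vanish (choose $z\in\T^k$ with $z^m\neq z^n$), so the two nonzero summands cannot cancel. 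Thus $a\neq 0$ while $\pi_S(a)=0$, and $\pi_S$ is not faithful.

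I expect the main obstacle to lie in the converse, specifically in justifying that periodicity holds $M$-almost everywhere on all of $\Lambda^\infty$ (so that the operator identities above are valid as genuine equalities of bounded operators), and in the clean verification that the two spectral-subspace summands are nonzero and cannot cancel. The forward direction is essentially immediate modulo citing the Cuntz--Krieger uniqueness theorem and checking positivity of $\kappa^\Lambda$.
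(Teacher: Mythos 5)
Your proof is correct, but the converse direction takes a genuinely different route from the paper's. For the forward implication the two arguments are essentially the same: the paper invokes simplicity (Theorem 11.1 of \cite{aHLRS3}) plus the gauge-invariant uniqueness theorem, while you invoke the Cuntz--Krieger uniqueness theorem; either way the content is the single check that $\pi_S(t_v)=M_{\chi_{Z(v)}}\neq 0$, which you do exactly as the paper does. For the converse, the paper works with a \emph{localized} element $b=t_\mu t_\mu^*-\rho(\Lambda)^{(m-n)/2}t_\nu t_\mu^*$, where $\mu\in\Lambda^m$, $\nu\in\Lambda^n$ is the matched path with $\mu z=\nu z$ for all $z\in s(\mu)\Lambda^\infty$; it proves $b\neq 0$ by evaluating on basis vectors of $\ell^2(\Lambda^\infty)$, with a case split according to whether $\rho(\Lambda)^{(m-n)/2}=1$ (only in the degenerate case does the gauge action enter). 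You instead use the averaged element $a=\rho(\Lambda)^{-m/2}\sum_{\Lambda^m}t_\lambda-\rho(\Lambda)^{-n/2}\sum_{\Lambda^n}t_\eta$, whose image under $\pi_S$ is $\xi\mapsto\xi\circ\sigma^m-\xi\circ\sigma^n$, and prove $a\neq 0$ by observing the two summands are nonzero and live in distinct spectral subspaces of the gauge action; this is cleaner (no case analysis, no auxiliary representation) and your computation $A^*A=\rho(\Lambda)^{-m}\sum_w|\Lambda^m w|\,t_w\neq 0$ is valid since finite strongly connected $k$-graphs have no sinks. The one point you should make explicit is the existence of a \emph{single} pair $(m,n)$ with $\sigma^m=\sigma^n$ on all of $\Lambda^\infty$ (exactly, not merely $M$-a.e.): this follows from $\operatorname{Per}(v)$ being independent of $v$ together with the observation that $\sigma^m=\sigma^n$ on $Z(v)$ implies $\sigma^{m+\ell}=\sigma^{n+\ell}$ there, so one may pass to a common pair over the finitely many vertices (or cite Proposition 5.2 of \cite{aHLRS3} directly). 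The paper's choice of the matched pair $(\mu,\nu)$ rests on the same structure theory, so neither approach is cheaper in its inputs; yours simply packages the nonvanishing argument more efficiently.
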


If $\Lambda$ is aperiodic, Theorem 11.1 of \cite{aHLRS3} shows that $C^*(\Lambda)$ is simple, and so, by Theorem 2.1 of \cite{bprz}  (the gauge-invariant uniqueness theorem), the proof of the Proposition reduces to checking that $\pi_S(t_v) \not= 0$ for any vertex $v \in \Lambda^0$.
When $\Lambda$ is not aperiodic, the proof consists of finding a nonzero element $a \in C^*(\Lambda)$ such that $\pi_S(a) = 0$.  The element $a$ chosen in \cite{FGKP} does not, in fact, have $\pi_S(a) = 0$; we rectify that here.

Recall that, for a $k$-graph $\Lambda$ with vertex matrices $A_1, \ldots, A_k$, we define 
\[ \rho(\Lambda) := (\rho(A_1), \rho(A_2), \ldots, \rho(A_k) ) \in \R^k_+,\]
where $\rho(A_i)$ denotes the spectral radius of the matrix $A_i$.  For any $m = (m_1, m_2, \ldots, m_k) \in \Z^k$, we write $\rho(\Lambda)^m := \prod_{i=1}^k \rho(A_i)^{m_i}$.

\begin{prop}
Suppose that $\Lambda$ is a finite, strongly connected $k$-graph and that $0 \not= m-n \in \text{Per } \Lambda$.  Fix $\mu \in \Lambda^m$ and let $\nu \in \Lambda^n$ be such that 
{$\mu z = \nu z \ \forall \ z\in s(\mu)\Lambda^\infty$}.  Then 
\[ b := t_\mu t_\mu^* - \rho(\Lambda)^{(m-n)/2} t_\nu t_\mu^*\]
is a nonzero element of $C^*(\Lambda)$.  Moreover, $\pi_S(b) = 0$.
\end{prop}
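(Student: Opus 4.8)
The statement contains two independent claims, which I would treat separately: that $b\neq 0$ in $C^*(\Lambda)$, and that $\pi_S(b)=0$. For the first I would exploit the gauge action $\gamma$, and for the second I would compute directly with the operators $S_\lambda=\pi_S(t_\lambda)$ attached to the standard $\Lambda$-semibranching function system on $(\Lambda^\infty,M)$, the periodicity relation $\mu z=\nu z$ entering at the decisive step. Note first that the defining property of $\nu$ forces $s(\nu)=s(\mu)$ and $r(\nu)=r(\mu)$ (so that $t_\nu t_\mu^*$ makes sense), and also that $Z(\mu)=\{\mu z: z\in s(\mu)\Lambda^\infty\}=\{\nu z: z\in s(\mu)\Lambda^\infty\}=Z(\nu)$.

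To see that $b\neq 0$, I would apply the canonical conditional expectation $\Phi_0=\int_{\T^k}\gamma_z(\cdot)\,dz$ onto the fixed-point algebra $C^*(\Lambda)^\gamma$. Since $\gamma_z(t_\mu t_\mu^*)=t_\mu t_\mu^*$ has gauge-degree $0$, while $\gamma_z(t_\nu t_\mu^*)=z^{n-m}t_\nu t_\mu^*$ has gauge-degree $n-m\neq 0$, and $\int_{\T^k}z^{n-m}\,dz=0$ for $n\neq m$, I obtain $\Phi_0(b)=t_\mu t_\mu^*$. Because $t_\mu^*t_\mu=t_{s(\mu)}$ is a nonzero projection (so $t_\mu t_\mu^*\neq 0$), this gives $\Phi_0(b)\neq 0=\Phi_0(0)$, whence $b\neq 0$.

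To see that $\pi_S(b)=0$, I would use the formulas $S_\lambda\xi(x)=\chi_{Z(\lambda)}(x)\rho(\Lambda)^{d(\lambda)/2}\xi(\sigma^{d(\lambda)}(x))$ and $S_\lambda^*\xi(y)=\chi_{Z(s(\lambda))}(y)\rho(\Lambda)^{-d(\lambda)/2}\xi(\lambda y)$. A short computation shows that $S_\mu S_\mu^*$ is multiplication by $\chi_{Z(\mu)}$ (on $Z(\mu)$ one has $\mu\sigma^m(x)=x$ and the Radon-Nikodym powers cancel). The same bookkeeping, together with $s(\nu)=s(\mu)$, gives $S_\nu S_\mu^*\xi(x)=\chi_{Z(\nu)}(x)\rho(\Lambda)^{(n-m)/2}\xi(\mu\sigma^n(x))$, so that $\rho(\Lambda)^{(m-n)/2}S_\nu S_\mu^*\xi(x)=\chi_{Z(\nu)}(x)\xi(\mu\sigma^n(x))$. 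Here the periodicity enters: for $x\in Z(\nu)$ we have $x=\nu\sigma^n(x)$, and setting $z=\sigma^n(x)\in s(\mu)\Lambda^\infty$ the hypothesis yields $\mu\sigma^n(x)=\mu z=\nu z=x$, whence $\xi(\mu\sigma^n(x))=\xi(x)$. Combined with $Z(\mu)=Z(\nu)$, this shows $\rho(\Lambda)^{(m-n)/2}S_\nu S_\mu^*$ is again multiplication by $\chi_{Z(\mu)}$, so $\pi_S(b)=S_\mu S_\mu^*-\rho(\Lambda)^{(m-n)/2}S_\nu S_\mu^*=0$.

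I expect the main obstacle to be the second computation: one must track the powers of $\rho(\Lambda)$ carefully through the adjoint and the composition, and recognize that the normalizing factor $\rho(\Lambda)^{(m-n)/2}$ built into $b$ is precisely what is needed for $\rho(\Lambda)^{(m-n)/2}S_\nu S_\mu^*$ to coincide with the projection $S_\mu S_\mu^*$ once the periodicity relation $\mu z=\nu z$ collapses $\mu\sigma^n(x)$ to $x$ on $Z(\nu)$. By contrast, the nonvanishing of $b$ is a routine application of the $\Z^k$-grading induced by the gauge action.
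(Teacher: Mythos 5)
Your argument that $\pi_S(b)=0$ is in substance the same as the paper's: both evaluate $S_\mu S_\mu^* - \rho(\Lambda)^{(m-n)/2}S_\nu S_\mu^*$ pointwise, observe that every $x\in Z(\mu)=Z(\nu)$ has the form $x=\mu z=\nu z$ with $z=\sigma^m(x)=\sigma^n(x)$, and watch the two terms cancel; your bookkeeping of the powers of $\rho(\Lambda)$ and your preliminary observation that the periodicity hypothesis forces $s(\nu)=s(\mu)$, $r(\nu)=r(\mu)$, and $Z(\mu)=Z(\nu)$ are all correct. Where you genuinely diverge is in proving $b\neq 0$. The paper exhibits a concrete representation — the standard representation $\pi_T$ on $\ell^2(\Lambda^\infty)$ — and computes $\pi_T(b)\xi_x=(1-\rho(\Lambda)^{(m-n)/2})\xi_x$ for $x=\mu z=\nu z$; this settles the matter unless $\rho(\Lambda)^{(m-n)/2}=1$, in which case the paper must run a second argument, choosing $\omega\in\T^k$ with $\omega^{n-m}=-1$ and noting $\pi_T(b+\gamma_\omega(b))\xi_x=2\xi_x\neq 0$. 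Your route via the conditional expectation $\Phi_0=\int_{\T^k}\gamma_z(\cdot)\,dz$ onto the gauge-fixed-point algebra avoids the case split entirely: $\Phi_0(b)=t_\mu t_\mu^*$ because $t_\nu t_\mu^*$ sits in the nonzero graded component $n-m$, and $t_\mu t_\mu^*\neq 0$ since $t_\mu^*t_\mu=t_{s(\mu)}$ is a nonzero vertex projection in the universal algebra (a standard fact, witnessed for instance by the very representation $\pi_T$ the paper uses). Your argument is shorter and uniform in $\rho(\Lambda)^{(m-n)/2}$; the paper's is more concrete in that it displays an explicit vector on which $b$ (or $b+\gamma_\omega(b)$) acts nontrivially. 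Both are valid.
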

\begin{proof}
To see that $b$ is nonzero, we consider the standard representation $\pi_T$ of $C^*(\Lambda)$ on $\ell^2(\Lambda^\infty)$ given in Equation (3.1) on page 342 of  \cite{robertson-sims}.  If $x \in \Lambda^\infty$, let $\xi_x$ denote the associated basis vector of $\ell^2(\Lambda^\infty)$; then 
\begin{equation}
\label{eq:std-rep-on-ell-2}
\pi_T(t_\lambda) x =   \begin{cases} \xi_{\lambda x}, & r(x) = s(\lambda) \\ 
0, & \text{ else.} \end{cases}\end{equation}

Choose an element $x \in \Lambda^\infty$ so that $x = \nu z = \mu z$.  Then, 
\[ \pi_T(b)(\xi_x) = (1-\rho(\Lambda)^{(m-n)/2} )\xi_x,\]
which is nonzero as long as $\rho(\Lambda)^{(m-n)/2} \not= 1.$  

If $\rho(\Lambda)^{(m-n)/2} = 1$, then choose $\omega \in \T^k$ such that $\omega^{n-m} = -1$.  Let $\gamma$ denote the gauge action on $C^*(\Lambda): $ for $z \in \T^k$, $\gamma_z(t_\lambda) = z^{d(\lambda)} t_\lambda$.
Observe that 
\[ b + \gamma_\omega(b) = (t_\mu t_\mu^* - t_\nu t_\mu^*) + t_\mu t_\mu^* - \omega^{n-m} t_\nu t_\mu^* = 2t_\mu t_\mu^*.\]
Now, for any $x \in Z(\mu) \subseteq \Lambda^\infty$,
\[ \pi_T(b + \gamma_\omega(b) ) \xi_x = 2 \xi_x,\]
so $b + \gamma_\omega(b) \not= 0$ and consequently $b \not= 0$.    Thus, regardless of the value of $\rho(\Lambda)^{(m-n)/2}$, we see that $b \not= 0$.

Now, we show that if $\Lambda$ is a strongly connected $k$-graph, $\pi_S(b) = 0$.  Fix $\xi \in L^2(\Lambda^\infty, M)$; then 
\begin{align*}
\pi_S(b)\xi(x) &= S_\mu S_\mu^*(\xi)(x) - \rho(\Lambda)^{(m-n)/2} S_\nu S_\mu^*(\xi)(x) \\
&= \chi_{Z(\mu)}(x) \rho(\Lambda)^{m/2} S_\mu^*\xi (\sigma^m(x)) - \chi_{Z(\nu)}(x)\rho(\Lambda)^{m/2} S_\mu^*\xi(\sigma^n(x)).
\end{align*}
Note that if $x \in Z(\mu)$ then $x = \mu z$ for some infinite path $z$, and hence $x = \nu z$ as well by our choice of $\mu, \nu$.  In this case, 
\[\pi_S(b) \xi(\mu z) = \rho(\Lambda)^{m/2} \left( S_\mu^*\xi(z)- S_\mu^*\xi(z) \right)  = 0.\]
On the other hand, if $x \not\in Z(\mu)$ then $x \not\in Z(\nu)$ also, so $\pi_S(b)\xi(x) = 0$ as well.  Thus, $\pi_S(b)\xi = 0$ for all $\xi \in L^2(\Lambda^\infty, M)$.
\end{proof}

Proposition \ref{prop:pi-S-faithful} led us to look for faithful separable representations of periodic $k$-graphs, and hence to the results in the following section.

\subsection{A new  faithful separable representation}
\label{sec-a-faithful-sep-repr}
Let $\Lambda$ be a strongly connected $k$-graph.  Fix $x \in \Lambda^\infty$  and write $x = x_1 x_2 x_3 \cdots $, where $d(x_i) = (1, 1, \ldots, 1)$ for all $i$.  Let $v_i = r(x_i)$.

For each $i$, write $F_i = \Lambda v_i$ for the set of all morphisms (i.e., finite paths) in $\Lambda$ with source $v_i$.  Then $\ell^2(F_i)$ has basis $\{\xi^i_\lambda: \lambda \in F_i\}$.  Define  $\rho_i \in B(\ell^2(F_i) , \ell^2(F_{i+1}))$ by $\rho_i(\xi^i_\lambda) = \xi^{i+1}_{\lambda x_i} \in \ell^2(F_{i+1})$, and form the inductive limit Hilbert space
\[\H_x := \varinjlim (\ell^2(F_i), \rho_i) = \left(\bigsqcup_{i \in \N} \ell^2(F_i) \right) /\sim,\]
where $\xi^i_\lambda \sim \xi^j_\mu$ (with $i \leq j$) iff $\mu = \lambda x_i x_{i+1} \cdots x_{j-1}$.
For a generator $\xi^i_\lambda$ of $\ell^2(F_i)$, we will denote its equivalence class in $\H_x$ by $[\xi^i_\lambda]$.

Observe that $\H_x$ is separable, because $F_i$ is countable for all $i$. 
Moreover, the same $\lambda \in \Lambda$ may appear in both $F_i$ and $F_j$ without having $[\xi^i_\lambda] = [\xi^j_\lambda]$, if the infinite path $x$ passes through the same vertex multiple times.

For any fixed $\lambda  \in \Lambda,$ we define an operator $T_\lambda \in B(\H_x)$ by 
\begin{equation}
T_\lambda [\xi^i_\mu] = \left\{ \begin{array}{cl}
[\xi^i_{\lambda \mu}], & s(\lambda) = r(\mu) \\
0, & \text{ else.}
\end{array} \right. \label{eq:sc-faithful}
\end{equation}

\begin{thm}\label{prop:faithful-repn}
Let $\Lambda$ be a strongly connected $k$-graph and $x \in \Lambda^\infty$.  The operators $\{T_\lambda\}_{\lambda \in \Lambda}$ of Equation \eqref{eq:sc-faithful} define a faithful separable representation of $C^*(\Lambda)$ on $\H_x$.
\label{prop:sc-faithful}
\end{thm}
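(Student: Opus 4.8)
The plan is to verify that $\{T_\lambda\}_{\lambda\in\Lambda}$ (Equation~\eqref{eq:sc-faithful}) is a Cuntz--Krieger $\Lambda$-family of operators on $\H_x$, so that the universal property of $C^*(\Lambda)$ produces a representation $\pi$ with $\pi(t_\lambda)=T_\lambda$, and then to deduce faithfulness from the gauge-invariant uniqueness theorem (Theorem~2.1 of \cite{bprz}). First I would check that each $T_\lambda$ descends to a well-defined bounded operator on the inductive limit. If $[\xi^i_\mu]=[\xi^j_{\mu'}]$ with $i\le j$ and $\mu'=\mu x_i\cdots x_{j-1}$, and $s(\lambda)=r(\mu)=r(\mu')$, then $\lambda\mu\in F_i$ and $[\xi^i_{\lambda\mu}]=[\xi^j_{(\lambda\mu)x_i\cdots x_{j-1}}]=[\xi^j_{\lambda\mu'}]$, so $T_\lambda$ is well defined on classes. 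Since each connecting map $\rho_i$ is isometric, $\{[\xi^i_\mu]:\mu\in F_i\}$ is orthonormal at every fixed level, and on it $T_\lambda$ sends the subset with $r(\mu)=s(\lambda)$ injectively (as $\mu\mapsto\lambda\mu$ is injective) to basis vectors and annihilates the rest; hence $T_\lambda$ is a partial isometry, with adjoint $T_\lambda^*[\xi^i_\nu]=[\xi^i_{\mu}]$ when $\nu=\lambda\mu$ and $0$ otherwise.

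Relations (CK1)--(CK3) are then direct: $T_v$ is the orthogonal projection onto $\overline{\operatorname{span}}\{[\xi^i_\mu]:r(\mu)=v\}$, and distinct vertices give orthogonal ranges, giving (CK1); the computations $T_\lambda T_\eta[\xi^i_\mu]=[\xi^i_{\lambda\eta\mu}]=T_{\lambda\eta}[\xi^i_\mu]$ and $T_\lambda^* T_\lambda=T_{s(\lambda)}$ give (CK2) and (CK3). The one genuinely delicate relation is (CK4), and this is where the inductive-limit structure does the essential work. The operator $T_\lambda T_\lambda^*$ is the projection onto the closed span of those $[\xi^i_\nu]$ for which $\nu$ begins with $\lambda$, so on $\ell^2(\Lambda)$ alone the identity $T_v=\sum_{\lambda\in v\Lambda^n}T_\lambda T_\lambda^*$ would fail for paths $\nu$ with $d(\nu)\not\ge n$. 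In $\H_x$, however, every class $[\xi^i_\mu]$ has representatives of arbitrarily large degree: choosing $j\ge i$ with $d(\mu x_i\cdots x_{j-1})\ge n$ (possible since the degree grows by $\mathbf 1=(1,\dots,1)$ at each stage), the factorization property factors $\mu':=\mu x_i\cdots x_{j-1}$ uniquely as $\mu'=\lambda\mu''$ with $\lambda=\mu'(0,n)\in v\Lambda^n$. Thus exactly one summand acts nontrivially and returns $[\xi^j_{\mu'}]=[\xi^i_\mu]=T_v[\xi^i_\mu]$; row-finiteness guarantees the sum is finite. This establishes (CK4), and the universal property yields the representation $\pi$, which is separable because $\H_x$ is.

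For faithfulness I would apply the gauge-invariant uniqueness theorem, which requires that $\pi(t_v)=T_v\neq 0$ for every $v\in\Lambda^0$ and that $\pi$ carries a compatible gauge action. The first point follows from strong connectivity: choosing $\mu\in v\Lambda v_1\subseteq F_1$ gives $T_v[\xi^1_\mu]=[\xi^1_\mu]\neq 0$. For the gauge action, the naive assignment $[\xi^i_\mu]\mapsto z^{d(\mu)}[\xi^i_\mu]$ is \emph{not} well defined on $\H_x$, since passing from level $i$ to level $j$ changes $d(\mu)$ by $(j-i)\mathbf 1$. I would instead define $U_z[\xi^i_\mu]=z^{d(\mu)-i\mathbf 1}[\xi^i_\mu]$ for $z\in\T^k$; the compensating factor $z^{-i\mathbf 1}$ exactly cancels this discrepancy, so $U_z$ is a well-defined unitary, strongly continuous in $z$, and a short computation gives $U_z T_\lambda U_z^*=z^{d(\lambda)}T_\lambda$. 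Hence $\operatorname{Ad}(U_z)$ implements the gauge action on $\pi(C^*(\Lambda))$, and the gauge-invariant uniqueness theorem forces $\pi$ to be faithful.

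The main obstacle is relation (CK4) together with the construction of the gauge action: both naive attempts (treating a class by a single fixed representative, or weighting by $z^{d(\mu)}$) fail precisely because of the identifications built into $\H_x$, and the resolution in each case is to exploit that every class has representatives of unboundedly high degree and to twist by the level-dependent factor $z^{-i\mathbf 1}$.
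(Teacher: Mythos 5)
Your proposal is correct and follows essentially the same route as the paper's proof: well-definedness via the factorization property, direct verification of (CK1)--(CK4) with (CK4) handled by passing to a representative of degree at least $n$, nonvanishing of $T_v$ from strong connectivity, and the level-corrected unitaries $U_z[\xi^i_\mu]=z^{d(\mu)-(i,\dots,i)}[\xi^i_\mu]$ implementing the gauge action so that the gauge-invariant uniqueness theorem applies. The only nitpick is that your adjoint formula should be read at the level of equivalence classes (as in the paper's Equation \eqref{eq:T_adjoint}, $T^*_\lambda[\xi^i_\mu]=[\xi^j_\nu]$ whenever $[\xi^i_\mu]=[\xi^j_{\lambda\nu}]$ for some level $j$), not only for representatives at the fixed level $i$; your subsequent treatment of (CK4) shows you are aware of this.
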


\begin{proof}
This proof was inspired by Section 3 of \cite{davidson-power-yang-dilation}.

We first check that the operators $T_\lambda$ are well-defined.  Recall, then,
that if $[\xi^i_\nu] = [\xi^j_\mu]\in \H_x$, then there exists $k \geq i,j$ such that $\mu x_j \cdots x_k = \nu x_i \cdots x_k$. Assuming $i \leq  j$, the factorization property then forces $\mu = \nu x_i \cdots x_{j-1}$ (if $i < j$; if $i=j$, we have $\mu = \nu$). In either case, $[\xi^j_{\lambda \nu}]  = [\xi^i_{\lambda \mu}]$, and hence $T_\lambda$ is well defined.

We now check that the operators $\{T_\lambda\}_{\lambda \in \Lambda}$ define a representation of $C^*(\Lambda)$.  To that end, observe that
\[\begin{split}
\langle T^*_\lambda [\xi_\mu^i] \mid [\xi_\nu^j]\rangle &=\langle [\xi_\mu^i] \mid T_\lambda [\xi_\nu^j]\rangle\\
&=\begin{cases}\langle [\xi_\mu^i]\mid [\xi_{\lambda\nu}^j]\rangle\quad\text{if $s(\lambda)=r(\nu)$}\\0\quad\quad\quad\text{otherwise}\end{cases}\\
&=\begin{cases} 1\quad\text{if $[\xi_\mu^i]=[\xi_{\lambda\nu}^j]$ and $s(\lambda)=r(\nu)$}\\ 0\quad\quad\quad\text{otherwise}.\end{cases}
\end{split}\]
Thus 
\begin{equation}\label{eq:T_adjoint}
T^*_\lambda[\xi_\mu^i]=\begin{cases} [\xi_\nu^j]\quad\quad\text{if $[\xi_\mu^i]=[\xi_{\lambda\nu}^j]$}\\
0\quad\quad\quad\text{otherwise}\end{cases}
\end{equation}

One checks immediately that for any $v, w \in \Lambda^0$, $T_v = T_v^* = T_v^2$  and  $T_v T_w = \delta_{v,w} T_v$. A similarly straightforward check shows that $T_\lambda^* T_\lambda = T_{s(\lambda)}$ and that $T_\lambda T_\mu = \delta_{s(\lambda), r(\mu)} T_{\lambda \mu}$.

It remains to check that for any $n \in \N^k, v \in \Lambda^0$, we have $\sum_{\lambda \in v\Lambda^n}T_\lambda T_\lambda^* = T_v$.  To that end, 
 fix $\lambda$ and $[\xi_\mu^i]$, and  compute 
\[\begin{split}
T_\lambda T^*_\lambda[\xi_\mu^i]&=\begin{cases} T_\lambda[\xi_\nu^j]\quad\quad\text{if $[\xi_\mu^i]=[\xi_{\lambda\nu}^j]$}\\ 0 \quad\quad\quad\text{otherwise}\end{cases}\\
&=\begin{cases} [\xi_{\lambda\nu}^j]\quad\quad\text{if $[\xi_\mu^i]=[\xi_{\lambda\nu}^j]$, $s(\lambda)=r(\nu)$}\\
0\quad\quad\quad\text{otherwise}\end{cases}\\
&=\begin{cases} [\xi_\mu^i]\quad\quad\text{if $[\xi_\mu^i]=[\xi_{\lambda\nu}^j]$ and $s(\lambda)=r(\nu)$}\\ 0\quad\quad\text{otherwise}\end{cases}
\end{split}\]
Now, fix $n \in \N^k$ and $v \in \Lambda^0$.  Observe that 
\[\begin{split}
\Big( \sum_{\lambda\in v \Lambda^n} T_\lambda T_\lambda^* \Big) [\xi^i_\mu]&=\begin{cases} \sum_{\lambda\in v \Lambda^n} [\xi^i_\mu]\quad\quad\text{if $[\xi_\mu^i]=[\xi_{\lambda\nu}^j]$ and $s(\lambda)=r(\nu)$}\\
0\quad\quad\text{otherwise.}\end{cases}\\
\end{split}\]
If $r(\mu) = v$, then choose $k>i$ large enough so that $d(\mu x_i \cdots x_{k-1}) \geq n$.  Then, $[\xi^k_{\mu x_i \cdots x_{k-1}}] = [\xi^i_\mu]$, and the factorization property tells us we can write $\mu x_i \cdots x_{k-1} = \lambda \nu$ for a unique $\lambda \in v\Lambda^n$.  Thus,
\[\begin{split}
\Big( \sum_{\lambda\in v \Lambda^n} T_\lambda T_\lambda^* \Big) [\xi^i_\mu]&=\begin{cases}[\xi^i_\mu]\quad\quad\text{if } r(\mu) = v\\
0\quad\quad\text{otherwise.}\end{cases}\\
\end{split}\]
In other words, $\sum_{\lambda \in v\Lambda^n} T_\lambda T_\lambda^* = T_v$ as claimed.

It now follows that the operators $\{T_\lambda\}_{\lambda \in \Lambda}$ satisfy the Cuntz--Krieger relations, and thus generate a representation $\pi_x$ of $C^*(\Lambda)$ on $\H_x$.

We would like to use the gauge-invariant uniqueness theorem (Theorem 3.4 of \cite{KP}) to show that this representation is faithful.  We begin by checking that $T_v$ is nonzero for each $v \in \Lambda^0$.  To see this, fix $v \in \Lambda^0$.  Since $\Lambda$ is strongly connected, there exists $\lambda \in v\Lambda r(x_1)$.  We have $T_v [\xi^1_\lambda] = [\xi^1_\lambda]$; since $[\xi^1_\lambda]$ is a nontrivial element of $\H_x$, the operator $T_v$ is nonzero, as desired.

In order to apply the gauge-invariant uniqueness theorem, we must establish the existence of a gauge action on $\pi_x(C^*(\Lambda))$.  We do this by defining, for each $z \in \T^k$, a unitary $U_z \in B(\H_x)$:
\[U_z[\xi^i_\mu] = z^{d(\mu) -(i, \ldots, i)} [\xi^i_\mu].\]
Note that $U_z$ is well defined, because if $[\xi^i_\mu] = [\xi^j_\nu]$ with $i < j$, then $\mu x_i \cdots x_{j-1} = \nu$, and
\[z^{d(\nu) - (j, \ldots, j)} = z^{d(\mu) + (j-i)(1, \ldots, 1) - (j, \ldots, j)} = z^{d(\mu) - (i, \ldots, i)}.\]
(Recall that if $i = j$ and $[\xi^i_\mu] = [\xi^i_\nu]$,  the factorization property implies that $\mu=\nu$.)
Since $U_z$ is evidently a unitary, we can define an action of $\T^k$ on $\pi_x(C^*(\Lambda))$ by $z \cdot T_\lambda := \Ad U_z (T_\lambda).$  We calculate:
\begin{align*}
z \cdot T_\lambda [\xi^i_\mu] &= U_z T_\lambda U_z^* [\xi_\mu^i] = z^{(i, \ldots, i) - d(\mu)} U_z T_\lambda [\xi^i_\mu] \\
&= \delta_{s(\lambda), r(\mu)} z^{(i, \ldots, i) - d(\mu)} U_z [\xi^i_{\lambda \mu}] = \delta_{s(\lambda), r(\mu)} z^{(i, \ldots, i) - d(\mu)} z^{d(\lambda \mu) - (i, \ldots, i)} [\xi^i_{\lambda\mu}] \\
&= z^{d(\lambda)} T_\lambda [\xi^i_\mu] = \pi_x( \gamma_z (t_\lambda)) [\xi^i_\mu].
\end{align*}
Thus, $\pi_x \circ \gamma_z = \Ad U_z$ for any $z \in \T^k$, where $\gamma_z$ denotes the gauge action of $\T^k$ on $C^*(\Lambda)$.  The gauge invariant uniqueness theorem now tells us that $\pi_x$ is a faithful representation of $C^*(\Lambda)$ on the separable Hilbert space $\H_x$, as claimed.
\end{proof}

\begin{prop}
\label{prop:sc-faithful-SBFS}
Let $\Lambda$ be a strongly connected $k$-graph and fix $x \in \Lambda^\infty$.  The Hilbert space $\H_x$ is of the form $\ell^2(X)$ {with counting measure; for the definition of $X$ see Equation \eqref{def-space-count-measure}}. 
 Moreover, the faithful separable representation of Theorem \ref{prop:sc-faithful} is a $\Lambda$-semibranching representation.
\end{prop}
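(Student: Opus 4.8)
The plan is to describe $\H_x$ concretely as a space $\ell^2(X)$ with counting measure, and then to verify directly that the operators $T_\lambda$ of Equation \eqref{eq:sc-faithful} fit the template of Theorem \ref{thm:separable-repn}. First I would identify the underlying set $X$. An element of $\H_x$ is an equivalence class $[\xi^i_\mu]$, where $\xi^i_\mu \sim \xi^j_\nu$ exactly when the longer path is obtained from the shorter by appending an initial segment of the tail $x_i x_{i+1} \cdots$. Thus each equivalence class is a direct limit along the sequence of right-multiplications by $x_i$, and I would take $X$ to be the set of these equivalence classes; concretely one may present it as
\begin{equation}
\label{def-space-count-measure}
X := \left( \bigsqcup_{i \in \N} F_i \right)/\sim, \qquad F_i = \Lambda v_i,
\end{equation}
so that $\{[\xi^i_\mu] : [\xi^i_\mu] \in X\}$ is precisely an orthonormal basis for $\H_x$. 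Because distinct equivalence classes give orthonormal basis vectors (the $\xi^i_\mu$ are orthonormal within each $\ell^2(F_i)$ and the connecting maps $\rho_i$ send basis vectors to basis vectors isometrically), the inductive limit $\H_x$ is canonically isometrically isomorphic to $\ell^2(X)$ with counting measure, with $[\xi^i_\mu] \leftrightarrow \delta_{[\xi^i_\mu]}$. This establishes the first assertion.

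Next I would exhibit the $\Lambda$-semibranching function system structure on $(X,\mu)$, where $\mu$ is counting measure. For each $\lambda \in \Lambda$ I set $D_\lambda = \{[\xi^i_\mu] \in X : r(\mu) = s(\lambda)\}$ and define the prefixing map $\tau_\lambda : D_\lambda \to X$ by $\tau_\lambda([\xi^i_\mu]) = [\xi^i_{\lambda\mu}]$; this is exactly the action of $T_\lambda$ read off on basis elements, and it is well defined and injective by the same factorization-property argument already used to show $T_\lambda$ is well defined in Theorem \ref{prop:sc-faithful}. The range is $R_\lambda = \tau_\lambda(D_\lambda) = \{[\xi^i_{\lambda\mu}]\}$. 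For the coding maps, for $m \in \N^k$ I define $\tau^m : X \to X$ so that it inverts the prefixing maps of degree $m$: given $[\xi^i_\nu] \in X$, if $d(\nu) \geq m$ then factor $\nu = \nu' \nu''$ with $d(\nu') = m$ and set $\tau^m([\xi^i_\nu]) = [\xi^i_{\nu''}]$; and if $d(\nu) \not\geq m$, one first replaces $[\xi^i_\nu]$ by an equivalent representative $[\xi^k_{\nu x_i \cdots x_{k-1}}]$ with $k$ large enough that the degree exceeds $m$ (which is always possible since each $x_j$ has degree $(1,\dots,1)$) before truncating. I would check this is well defined independent of the representative and satisfies $\tau^m \circ \tau^n = \tau^{m+n}$.

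The verification that $\{D_\lambda, \tau_\lambda, \tau^m\}$ satisfies conditions (a)--(d) of Definition \ref{def-lambda-SBFS-1} is then mostly bookkeeping: for each fixed $m$, partition \eqref{eq-partition}-style arguments show that the ranges $R_\lambda$ over $d(\lambda) = m$ are disjoint and cover $D_{r(\lambda)}$, giving the semibranching condition (a); condition (b) holds because $\tau_v = \id$ and the singletons have positive counting measure; condition (c) is the relation $\tau_\lambda \tau_\nu = \tau_{\lambda\nu}$, immediate from the definition; and (d) is the commuting-coding-map property just noted. Since $\mu$ is counting measure and every $\tau_\lambda$ is an injection between subsets of $X$, the Radon-Nikodym derivatives $\Phi_{\tau_\lambda}$ are identically $1$, so the formula of Theorem \ref{thm:separable-repn} reduces to $S_\lambda \delta_{[\xi^i_\mu]} = \chi_{R_\lambda} \delta_{[\xi^i_{\lambda\mu}]}$, which agrees precisely with $T_\lambda$ under the identification $\H_x \cong \ell^2(X)$; this identifies $\pi_x$ as the associated $\Lambda$-semibranching representation. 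I expect the main obstacle to be defining the coding maps $\tau^m$ cleanly and checking that they are genuinely well defined on equivalence classes --- in particular handling representatives $[\xi^i_\nu]$ whose degree $d(\nu)$ does not dominate $m$, where one must pass to a longer equivalent representative and confirm the result is independent of all choices while still inverting every $\tau_\lambda$ of degree $m$ on $R_\lambda$.
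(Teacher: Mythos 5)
Your proposal is correct and follows essentially the same route as the paper: the paper realizes $X$ by choosing canonical minimal-index representatives $G_i = F_i \setminus \bigcup_{j<i} F_j x_j\cdots x_{i-1}$ and setting $X = \bigsqcup_i G_i$, which is just a concrete presentation of your quotient $(\bigsqcup_i F_i)/\sim$, and the prefixing maps, truncation-style coding maps, trivial Radon--Nikodym derivatives, and the final identification $S_\lambda \delta_\eta = \delta_{\tau_\lambda(\eta)} = T_\lambda$ all match. The only detail you elide is that verifying Condition (b) requires $D_v \neq \emptyset$ for every vertex $v$, which is exactly where the strong-connectedness hypothesis enters (via $v\Lambda v_i \neq \emptyset$), as the paper notes explicitly.
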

\begin{proof}
For each $i \geq 1$, define $G_i = \Lambda v_i \backslash \Lambda x_{i-1} \subseteq F_i.$  Equivalently,
\[G_i = F_i \backslash \left( \bigcup_{j < i} F_j x_j \cdots x_{i-1} \right).\]
 Thus by definition of $\mathcal{H}_x$ we have $\mathcal{H}_x \supseteq \oplus_{i\ge 1} \ell^2(G_i)$. To see that $\mathcal{H}_x\subseteq \oplus _{i\ge 1} \ell^2(G_i)$, first note that any vector of $\mathcal{H}_x$ is of the form $[\xi^i_\mu]$, where $\mu\in F_i$ for some $i\ge 1$ by definition. If $\mu\in F_i \setminus  \left( \bigcup_{j < i} F_j x_j \cdots x_{i-1}\right)$, then $\xi^i_\mu\in \ell^2(G_i)$. If $\mu\in F_i$ and $\mu$ lies in  $\cup_{j<i}F_j x_j \cdots x_{i-1}$, then there exists a unique $\ell \le i$ and $\widetilde{\mu}\in F_\ell \setminus \cup_{j<\ell} F_j x_j \cdots x_{\ell -1}$ such that $\mu=\widetilde{\mu}x_\ell x_{\ell+1}\dots x_{i-1}$. So $\xi^i_\mu \sim \xi^{\ell}_{\widetilde{\mu}}$ and $\xi^{\ell}_{\widetilde{\mu}}\in \ell^2(G_\ell)$, and hence $\mathcal{H}_x\subseteq \oplus_{i\ge 1} \ell^2(G_i)$. 
Consequently,
\[ \H_x = \bigoplus_{i \geq 1} \ell^2(G_i).\]

Set 

\begin{equation}
\label{def-space-count-measure}
X : = \bigsqcup_{i \geq 1} G_i,
\end{equation} 
and let $m$ denote counting measure on $X$: 
\[ m\left( \nu \right) = 1 \ \forall \ \nu \in G_i.\]
Then $\H_x = \bigoplus_{i\geq 1} \ell^2(G_i) = \ell^2(X) = L^2(X, m).$ 

We now describe the $\Lambda$-semibranching function system on $(X, m)$ which gives rise to the representation $\{T_\lambda\}_{\lambda \in \Lambda}$.
For a vertex $v \in \Lambda^0$, define $D_v = \{ \nu \in \bigsqcup_{i \geq 1} G_i: r(\nu) = v\},$ and for $\lambda \in \Lambda$ set
$\tau_\lambda: D_{s(\lambda)} \to D_{r(\lambda)}$ by 
\[ \tau_\lambda(\nu) = \rho, \text{ where } \rho \in G_j \text{ and } \lambda \nu = \rho  x_j \cdots x_{i-1} \text{ if } \nu \in G_i.\]
To see that $\tau_\lambda$ is well-defined, fix $\nu\in G_i$ and suppose that there exist $j_1\ne j_2 \le i$ and $\rho_1\in G_{j_1}$, $\rho_2\in G_{j_2}$ such that
\[
\lambda \nu=\rho_1 x_{j_1} x_{j_1+1}\dots x_{i-1} = \rho_2 x_{j_2} x_{j_2+1}\dots x_{i-1}
\]
Then by the factorization property, assuming without loss of generality that $j_2\ge j_1$, we must have $\rho_1 x_{j_1}\dots x_{j_2-1}=\rho_2$. Thus there exists a unique $j$ and $\rho\in G_{j}$ such that $\tau_\lambda(\nu)=\rho$, and hence $\tau_\lambda$ is well-defined.

It follows that
\[ R_\lambda = Ran(\tau_\lambda) = \{ \rho: \rho \in G_j \text{ for some } j \text{ and } \rho x_j \cdots x_i (0, d(\lambda)) = \lambda \text{ for some } i\}.\]
 If $ d(\lambda)=n$, then for $\rho\in G_j \cap R_\lambda$ find the smallest $i\geq j$ such that
 \[
 d(\rho)+(i-j)(1,\dots,1)\ge n.
 \]
 Then define the coding map $\tau^n$ on $G_j \cap R_\lambda$ by\footnote{If $i=j$ then we take $\tau^n(\rho) = \rho(n, d(\rho))$.}
 \begin{equation}
\label{eq:faithful-tau-n}
\tau^n(\rho) = \rho x_j \cdots x_{i-1}(n, d(\rho) + (i-j)(1, \ldots, 1)) \in G_i.
\end{equation}

Now it is straightforward to see that
\[ \tau^n \circ \tau_\lambda (\nu) = \nu,\]
justifying the name ``coding map.''

We claim that the sets and maps described above satisfy Conditions (a) - (d) of Definition \ref{def-lambda-SBFS-1} and hence define a $\Lambda$-semibranching function system on $X$.

First, we fix $n \in \N^k$ and check that for each $\nu \in \bigsqcup_{i\geq 1} G_i$ we have $\nu \in R_\lambda$ for precisely one $\lambda \in \Lambda^n$, which implies that $X = \bigsqcup_{\lambda \in \Lambda^n} R_\lambda$ for any $n\in \N^k$. Given $\nu \in G_i$, let $j \geq i$ be the smallest integer such that $d(\nu) + (j-i)(1, \ldots, 1) \geq n$.  Set $\lambda = \nu x_i \cdots x_{j-1}(0, n)$; then $\nu \in R_\lambda$. Moreover, for any other $\lambda' \in \Lambda^n$, we have $\nu x_i \cdots x_{j-1}(0, n) \not= \lambda'$, so $\nu \in R_\lambda$ for a unique $\lambda \in \Lambda^n$.  Since we are working in a discrete measure space, the Radon-Nikodym derivatives of the prefixing maps $\tau_\lambda$ are constantly equal to 1 on $D_{s(\lambda)}$.  This completes the check of Condition (a) of Definition \ref{def-lambda-SBFS-1}. 

By our   hypothesis that $\Lambda$ is strongly connected,  if $v \not= v_i$ for any $i$, we have  $\emptyset \not= v \Lambda v_i \subseteq F_i$ for all $i$.  
This implies the existence of at least one $\nu \in v \Lambda \cap \bigsqcup_{j \geq 1} G_j$, so $m(D_v) > 0$.  On the other hand, if $v=v_i$ then $v_i \in G_i$ is an element of $D_{v_i}$. Again, we have $m(D_{v_i}) > 0$, so  Condition (b) is satisfied.

The description in Equation \eqref{eq:faithful-tau-n} of the coding map $\tau^n$ makes it easy to check that, for $\rho \in G_j$ and for any $m, n\in \N^k$,
\[ \tau^{m+n}(\rho) = \rho x_j \cdots x_{\ell-1}(m+n, d(\rho) + (\ell-j)(1, \ldots, 1)) = \tau^m \circ \tau^n(\rho),\]
 where $\ell$ is the smallest such that $d(\rho)+(\ell-j)(1,\dots, 1)\ge m+n$.
In other words, Condition (d) holds.  

Similarly, if
$\tau_\lambda \circ \tau_\nu(\rho) = \alpha \in G_\ell$ for some $\rho \in G_j$, then there exist $j \geq i \geq \ell $ and $\eta \in G_i$ with $\alpha x_\ell \cdots x_{i-1} = \lambda \eta$ and $\eta x_i \cdots x_{j-1} = \nu \rho$.  On the other hand, if $\tau_{\lambda \nu}(\rho) = \beta \in G_n$, then 
\[ \beta x_n \cdots x_{j-1} = \lambda \nu \rho = \lambda \eta x_i \cdots x_{j-1} = \alpha x_\ell \cdots x_{j-1}.\]
Since $\alpha$ and $\beta$ are both in $\bigsqcup_{i\geq 1} G_i$, the factorization rule now implies that $n = \ell $ and $\alpha = \beta$.  
It follows that Condition (c) of Definition \ref{def-lambda-SBFS-1} is also satisfied, so the sets $D_v, R_\lambda$ with the coding and prefixing maps $\tau_\lambda, \tau^n$ determine a $\Lambda$-semibranching function system.

Since $(X, m)$ is a discrete measure space, the representation $\{S_\lambda\}_{\lambda\in \Lambda}$ of $C^*(\Lambda)$ given by this $\Lambda$-semibranching function system, described in Theorem \ref{thm:SBFS-repn} above, has the following formula.  Given $\eta \in G_i$, write $\delta_\eta \in L^2(X, m)$ for the indicator function supported at $\eta$. For $\nu \in G_j$, we have 
\begin{align*}
 S_\lambda (\delta_\eta)(\nu) &= \begin{cases}
  0, & \nu \not \in R_\lambda \\
  \delta_\eta(\tau^{d(\lambda)}(\nu)), & \text{ else.}
 \end{cases} 
\end{align*}
By construction, we have $\tau^{d(\lambda)}(\nu) = \nu x_j \cdots x_{\ell-1}(d(\lambda), d(\nu) + (\ell-j)(1, \ldots, 1)) \in G_\ell$.  Thus, the above formula becomes 
\begin{align*} S_\lambda(\delta_\eta)(\nu) &= \begin{cases} 
0, & \nu \not\in R_\lambda \text{ or } \tau^{d(\lambda)}(\nu) \not\in G_i \\
\delta_\eta(\tau^{d(\lambda)}(\nu))), & \text{ else.}
\end{cases}\\
& =\begin{cases} 
1, & \lambda \eta = \nu x_j \cdots x_{i-1} \\
0, & \text{ else.}
\end{cases}
\end{align*}
Since $\lambda \eta = \nu x_j \cdots x_{i-1}$ iff $\nu = \tau_\lambda(\eta)$, we can rewrite this as  
\begin{equation}
\label{eq:faithful-rep-SBFS-formula}
S_\lambda (\delta_\eta) = \delta_{\tau_\lambda(\eta)}.\end{equation}

To finish the proof, we observe that, under the isomorphism $\H_x \cong L^2(X, m)$, Equation \eqref{eq:faithful-rep-SBFS-formula} agrees with the formula for $T_\lambda$ given in Equation \eqref{eq:sc-faithful}.  This follows from the observation that $\tau_\lambda(\eta) \sim \lambda \eta$ by construction, so $[\xi^j_{\tau_\lambda(\eta)}] = [\xi^i_{\lambda \eta}]$.

\end{proof}

Often, the trickiest part in checking that a family of subsets and coding/prefixing maps constitutes a $\Lambda$-semibranching function system is computing the Radon-Nikodym derivatives.  On a discrete measure space, this computation is rendered trivial, as we saw above.  Thus, in the spirit of the above Proposition, we also have the following:

\begin{prop}\label{prop:sc-faithful-SBFS-Lambda}
Let $\Lambda$ be a finite, strongly connected $k$-graph.  The standard representation of $C^*(\Lambda)$ on $\ell^2(\Lambda^\infty)$, given in Equation \eqref{eq:std-rep-on-ell-2},
is a $\Lambda$-semibranching representation.
\end{prop}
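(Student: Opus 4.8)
The plan is to mirror the proof of Proposition \ref{prop:sc-faithful-SBFS}, replacing the inductive-limit Hilbert space $\H_x$ by $\ell^2(\Lambda^\infty)$ directly. Concretely, I take $X = \Lambda^\infty$ equipped with counting measure $m$, so that $L^2(X,m) = \ell^2(\Lambda^\infty)$ (only countably-supported functions have finite norm). The prefixing maps, coding maps, and domain/range sets will be exactly those of the standard system of Example \ref{example:SBFS-M}: for $\lambda \in \Lambda$ set $D_\lambda = Z(s(\lambda))$, let $\tau_\lambda = \sigma_\lambda : D_\lambda \to Z(\lambda)$ be the concatenation $\tau_\lambda(x) = \lambda x$, so that $R_\lambda = Z(\lambda)$, and take the coding maps $\tau^m = \sigma^m$ to be the shift maps of Definition \ref{def:infinite-path}. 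The only change from Example \ref{example:SBFS-M} is the measure: we use counting measure rather than the Perron--Frobenius measure $M$.

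The verification of Conditions (a)--(d) of Definition \ref{def-lambda-SBFS-1} is then nearly automatic. The combinatorial content --- that $\tau_\lambda \tau_\nu = \tau_{\lambda\nu}$ and $R_\nu \subseteq D_\lambda$ for $\nu \in s(\lambda)\Lambda$ (Condition (c)), and $\tau^m \circ \tau^n = \tau^{m+n}$ (Condition (d)) --- is identical to the corresponding facts for the system of Example \ref{example:SBFS-M} (Proposition 3.4 of \cite{FGKP}), since these statements concern the maps and sets themselves and are independent of the measure. For Condition (a), the key point is that for each $m \in \N^k$ the family $\{Z(\lambda) : d(\lambda) = m\}$ is an \emph{exact} partition of $\Lambda^\infty$: every $x$ satisfies $x(0,m) = \lambda$ for a unique $\lambda \in \Lambda^m$. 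Because a set of counting-measure zero is empty, the a.e.-disjointness and a.e.-covering requirements of Definition \ref{def-1-brach-system}(a) must hold on the nose, and they do. Finally --- and this is what makes the discrete setting so convenient --- each $\tau_\lambda$ is injective, so under counting measure $m \circ \tau_\lambda = m$ on $D_\lambda$ and the Radon--Nikodym derivative $\Phi_{\tau_\lambda} \equiv 1 > 0$, giving Condition (b) of Definition \ref{def-1-brach-system}; moreover $m(D_v) = m(Z(v)) \geq 1 > 0$ since $Z(v) \neq \emptyset$ for each vertex of a finite strongly connected $k$-graph, giving Condition (b) of Definition \ref{def-lambda-SBFS-1} together with $\tau_v = \mathrm{id}$.

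With the system in hand, the last step is to identify the resulting representation $\{S_\lambda\}$ of Theorem \ref{thm:SBFS-repn} with the standard representation $\pi_T$ of Equation \eqref{eq:std-rep-on-ell-2}. Since $\Phi_{\tau_\lambda} \equiv 1$, the formula of Theorem \ref{thm:SBFS-repn} collapses to $S_\lambda \xi(x) = \chi_{Z(\lambda)}(x)\, \xi(\sigma^{d(\lambda)}(x))$. Evaluating on a basis vector $\xi_y = \delta_y$, this is nonzero (and equal to $1$) exactly when $x \in Z(\lambda)$ and $\sigma^{d(\lambda)}(x) = y$, i.e. precisely when $x = \lambda y$ (which requires $r(y) = s(\lambda)$); hence $S_\lambda \xi_y = \xi_{\lambda y}$ when $r(y) = s(\lambda)$ and $0$ otherwise, matching \eqref{eq:std-rep-on-ell-2}.

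I do not anticipate a genuine obstacle here: the whole point is that passing to counting measure trivializes the Radon--Nikodym derivative computations that are usually the crux of checking the $\Lambda$-semibranching axioms. The one subtlety to handle carefully is the interpretation of the measure-theoretic hypotheses of Definitions \ref{def-1-brach-system} and \ref{def-lambda-SBFS-1} when $\Lambda^\infty$ is uncountable, so that the representation is nonseparable: ``$\mu$-a.e.'' must be read as ``everywhere,'' and one must therefore confirm that the partition and composition identities hold exactly rather than merely almost everywhere --- which they do, as a consequence of the factorization property.
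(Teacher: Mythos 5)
Your proposal is correct and follows essentially the same route as the paper: both take $\Lambda^\infty$ with counting measure, use the standard prefixing and coding maps with $D_\lambda = Z(s(\lambda))$, observe that discreteness forces the Radon--Nikodym derivatives to be identically $1$ (the paper likewise notes that Conditions (c) and (d) are immediate from the factorization property), and then verify that $S_\lambda \delta_y = \delta_{\lambda y}$ matches Equation \eqref{eq:std-rep-on-ell-2}. Your extra care about reading ``$\mu$-a.e.'' as ``everywhere'' is a reasonable elaboration but does not change the argument.
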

\begin{proof}
We first define subsets $\{D_v\}_{v\in \Lambda^0}$ of $\Lambda^\infty$ and prefixing and coding maps $\tau_\lambda, \tau^n$ which give rise to a $\Lambda$-semibranching function system on $\Lambda^\infty$.  Namely, we have 
\[ D_v = v\Lambda^\infty, \qquad \tau_\lambda(x) = \lambda x, \qquad \tau^n(x) = \sigma^n(x).\]
The fact that Condition (a) of Definition \ref{def-lambda-SBFS-1} holds for these sets follows from the fact that, for fixed $n \in \N^k$, every infinite path $x$ is of the form $\lambda y$ for a unique $\lambda \in \Lambda^n$.  Since $\Lambda^\infty$, in this setting, is a discrete measure space, the Radon-Nikodym derivatives are again  constantly equal to 1, and moreover Condition (b) holds.   Conditions (c) and (d) are immediate consequences of the factorization property.

Thus, the sets $\{D_v\}_{v\in \Lambda^0}$, together with the prefixing and coding maps $\{\tau_\lambda, \tau^n: \lambda \in \Lambda, n \in \N^k\}$, constitute a  $\Lambda$-semibranching function system on $\Lambda^\infty$, viewed as a discrete measure space.  The associated representation $\{S_\lambda\}_{\lambda \in \Lambda}$ is given by (for $x, y \in \Lambda^\infty$)
\[ S_\lambda(\delta_y)(x) = \chi_{Z(\lambda)}(x) \delta_y(\sigma^{d(\lambda)}(x)) = \delta_{\lambda y}(x),\]
so $S_\lambda(\delta_y) = \delta_{\lambda y}$ agrees with the formula for the standard representation \eqref{eq:std-rep-on-ell-2}.
\end{proof}
For a similar result in a more general context, see Theorem 3.3 of  \cite{GLR}.

\begin{prop}
Let $\Lambda$ be a strongly-connected $k$-graph.
 If $x,y \in \Lambda^\infty$ are infinite paths such that $\sigma^m(x) = \sigma^n(y)$ for some $m, n \in \N^k$, then $\pi_x$ is equivalent to $\pi_y$.
\end{prop}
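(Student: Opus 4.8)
The plan is to realize every $\pi_x$ as a \emph{cyclic} representation and to identify its GNS state explicitly, so that the desired unitary equivalence follows from equality of states. First I would check that $\Omega_x := [\xi^1_{r(x)}]\in\H_x$ is a cyclic vector for $\pi_x$. With $v_i = r(x_i)$ as in the construction, the relations $T_{x_i}[\xi^{i+1}_{v_{i+1}}] = [\xi^{i+1}_{x_i}] = [\xi^i_{v_i}]$ give $\Omega_x = T_{x(0,(i-1)(1,\dots,1))}[\xi^i_{v_i}]$, whence the Cuntz--Krieger relation $T^*_\eta T_\eta = T_{s(\eta)}$ yields $[\xi^i_{v_i}] = T^*_{x(0,(i-1)(1,\dots,1))}\Omega_x$. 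Applying $T_\mu$ then produces $[\xi^i_\mu] = T_\mu T^*_{x(0,(i-1)(1,\dots,1))}\Omega_x$ for every $\mu\in F_i$, so the orbit of $\Omega_x$ under $\pi_x(C^*(\Lambda))$ is dense.

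The key device for handling an \emph{arbitrary} shift is a family of cyclic vectors adapted to every degree $c\in\N^k$, not merely to multiples of $(1,\dots,1)$. For $c\in\N^k$ I would set $\Psi^c_x := T^*_{x(0,c)}\Omega_x$. Since $\Omega_x = [\xi^j_{x(0,(j-1)(1,\dots,1))}]$ for all large $j$, formula \eqref{eq:T_adjoint} shows $\Psi^c_x = [\xi^j_{x(c,(j-1)(1,\dots,1))}]$, a unit vector, and $T_{x(0,c)}\Psi^c_x = \Omega_x$ shows $\Psi^c_x$ is again cyclic. I would then compute the vector state $\psi^c_x(t_\eta t^*_\zeta) = \langle T^*_\zeta\Psi^c_x, T^*_\eta\Psi^c_x\rangle$. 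Again by \eqref{eq:T_adjoint}, $T^*_\zeta\Psi^c_x$ is nonzero exactly when $\zeta$ is an initial segment of $\sigma^c(x)$, in which case it equals $[\xi^j_{x(c+d(\zeta),(j-1)(1,\dots,1))}]$; comparing the two strings at a common large level $j$ gives $\psi^c_x(t_\eta t^*_\zeta) = 1$ if $\eta=\zeta$ is a prefix of $\sigma^c(x)$, and $0$ otherwise. In particular $\psi^c_x$ depends only on the infinite path $\sigma^c(x)$.

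Finally, since $\{t_\eta t^*_\zeta : s(\eta)=s(\zeta)\}$ spans a dense subalgebra of $C^*(\Lambda)$, the state $\psi^c_x$ is completely determined by $\sigma^c(x)$. Given $\sigma^m(x) = \sigma^n(y)$, this forces $\psi^m_x = \psi^n_y$; as $\Psi^m_x$ and $\Psi^n_y$ are cyclic for $\pi_x$ and $\pi_y$ respectively, the uniqueness part of the GNS construction produces a unitary $W:\H_x\to\H_y$ with $W\Psi^m_x = \Psi^n_y$ intertwining the two representations, i.e.\ $\pi_x\cong\pi_y$. The main obstacle is exactly the generality of the shift: the block decomposition $x=x_1 x_2\cdots$ into pieces of degree $(1,\dots,1)$ is compatible with shifts by multiples of $(1,\dots,1)$ (for which a direct cofinality-of-inductive-limits argument would already give $\pi_x\cong\pi_{\sigma^{\ell(1,\dots,1)}(x)}$), but a general $\sigma^m$ cuts across this block structure and resists a direct identification of the inductive systems. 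Introducing the cyclic vectors $\Psi^c_x$ for \emph{every} $c\in\N^k$ and passing to GNS states dissolves this difficulty, since $\psi^c_x$ sees only the tail $\sigma^c(x)$ and not how it is partitioned into blocks.
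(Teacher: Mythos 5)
Your proof is correct, and it takes a genuinely different route from the paper. The paper constructs the intertwining unitary explicitly, setting $\phi([\xi^i_\mu]_x) = [\xi^j_{\mu\lambda_{i,j}}]_y$ where $\lambda_{i,j}$ is the common segment of $x$ and $y$ between $r(x_i)$ and $r(y_j)$, and then verifies well-definedness, injectivity, surjectivity, and the intertwining property by hand; most of the labor there is index bookkeeping with coordinatewise maxima of $n-m+\vec{i}$ to make the two block decompositions line up. You instead exhibit, for each $c\in\N^k$, a cyclic vector $\Psi^c_x = T^*_{x(0,c)}\Omega_x$ and compute its vector state, $\psi^c_x(t_\eta t^*_\zeta) = \delta_{\eta,\zeta}$ when $\eta$ is an initial segment of $\sigma^c(x)$ and $0$ otherwise, so that $\psi^c_x$ depends only on the tail $\sigma^c(x)$; GNS uniqueness then finishes the argument. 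Your computations check out against Equation \eqref{eq:T_adjoint} (in particular the key identity $\Omega_x = T_{x(0,(i-1)(1,\dots,1))}[\xi^i_{v_i}]$ and hence $[\xi^i_\mu] = T_\mu T^*_{x(0,(i-1)(1,\dots,1))}\Omega_x$, which gives cyclicity), and the orthogonality of distinct basis vectors $[\xi^j_\mu]$ at a common level $j$ is exactly what makes the state formula clean. What your approach buys is brevity, a conceptual explanation of \emph{why} the result is true (the representation is determined by the tail class of $x$ through its GNS state), and immunity to the block-structure difficulty you correctly identify; what the paper's approach buys is an explicit formula for the unitary on the natural spanning vectors, which can be useful if one wants to track where specific basis elements go.
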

\begin{proof}
Suppose that there are infinite paths $x,y$ such that $\sigma^m(x)=\sigma^n(y)$ for some $m,n\in \N^k$.
We write $x=x_0x_1\cdots$ and $y=y_0y_1\cdots$, where $d(x_i)=d(y_i)=(1,1,\cdots,1)$ for all $i$.
Recall that in this setting, we have $x(\vec{k},\vec{\ell})=x_k x_{k+1}\cdots x_\ell$ for $\vec{k}=(k,k\cdots,k) \leq \vec{\ell}=(\ell,\ell,\cdots,\ell)$

To construct an isomorphism $\phi: \H_x \to \H_y$,  fix $[\xi^i_\mu] \in \H_x$. 
Without loss of generality, assume $\vec{i}\ge m$.  Then $\sigma^n(y)=\sigma^m(x)$ implies that $\sigma^{n-m+\vec{i}}(y)=\sigma^{\vec{i}}(x)$, and hence $y(n-m+\vec{i},\infty)=x(\vec{i},\infty)$. Choose the minimum $j\in \N$ such that $\vec{j}\ge n$ and $\vec{j}-n \ge \vec{i}-m$. Then let 
\[
\lambda_{i,j}=y(n-m+\vec{i},\vec{j}).
\]
Note that, if we write $q = \vec{j} - n - (\vec{i} - m)\in \N$, then $\lambda_{i,j}=x(\vec{i},q)$.
Thus, $\lambda_{i,j}$ is the common segment of $x$ and $y$ that lies between the vertices $r(x_i)$ and $r(y_j) $. It follows that multiplying by $\lambda_{i,j}$ on the right takes  $F_{i,x}$ to $F_{j,y}$. 

To be precise, we define
\begin{equation}
\phi([\xi^i_\mu]_x) : = [\xi^j_{\mu \lambda_{i,j}}]_y.\label{eq:phi-1}
\end{equation}


We first verify that $\phi$ is well defined: suppose that $[\xi^i_\mu]_x = [\xi^k_{\mu \mu'}]_x$, where $\vec{k} > \vec{i} \geq m$. We then have $\mu' = x_i \cdots x_{k-1}$ and 
\[\phi([\xi^i_\mu]_x) = [\xi^j_{\mu \lambda_{i,j}}]_y \text{ and } \phi([\xi^k_{\mu \mu'}]_x) = [\xi^\ell_{\mu \mu' \lambda_{k, \ell}}]_y,\]
where $j$ is the coordinatewise maximum of $n-m + \vec{i}$ and $\ell$ is the coordinatewise maximum of $n-m +\vec{k}$.

 This definition of $j, \ell$ implies that  $\ell - k = j-i$ is the coordinatewise maximum of $n-m$; consequently, 
 \[ d(\lambda_{i,j}) =\vec{j} - n + m - \vec{i} = \vec{\ell} -n+m - \vec{k} = d(\lambda_{k, \ell}).\]
  Also, $\ell - j = k -i > 0$, so we can write 
\[[\xi^j_{\mu \lambda_{i,j}}]_y = [\xi^\ell_{\mu \lambda_{i,j} \eta}]_y\]
where $\eta = y(\vec{j}, \vec{\ell})$.  In other words, $d(\eta) = \vec{\ell} - \vec{j} = \vec{k} - \vec{i} = d(\mu').$

Since $\vec{i} \geq m$, the finite paths $\mu' \lambda_{k,\ell}$ and $\lambda_{i,j} \eta$ lie on both $x$ and $y$.  In fact, 
\[s(\mu' \lambda_{k, \ell}) = r(y_\ell)= s(\lambda_{i,j} \eta) \text{ and } r(\mu' \lambda_{k,\ell}) = r(x_i) = r(\lambda_{i,j} \eta).\]
Moreover, $d(\mu' \lambda_{k,\ell}) = d(\mu') + d(\lambda_{k,\ell}) = d(\eta) + d(\lambda_{i,j}).$  The factorization property then tells us that \[\mu' \lambda_{k, \ell} = \lambda_{i,j} \eta.\]  

It now follows that 
\[\phi([\xi^i_\mu]_x) = [\xi^j_{\mu \lambda_{i,j}}]_y = [\xi^\ell_{\mu \lambda_{i,j}\eta}]_y = [\xi^\ell_{\mu \mu' \lambda_{k, \ell}}]_y = \phi([\xi^k_{\mu\mu'}]_x),\]
so $\phi$ is well defined.

To see that $\phi$ is surjective, fix $\nu \in( F_j)_y$ and consider the associated element $[\xi^j_\nu]_y \in \H_y$.  Pick $t \geq j$ large enough to ensure the existence of $\ell \in \N$ with $m \leq \vec{\ell} \leq  m + \vec{t} - n $: in other words, $\vec{t} - n \geq (\max_m - \min_m) \cdot (1, \ldots, 1)$. Since $\sigma^{\vec{t}}(y) = \sigma^{m+\vec{t} - n}(x)$, our choice of $t$ and $\ell$ ensure that $\lambda_{\ell, t}$ is a sub-path of $\nu y_j \cdots y_{t-1}$.  We can therefore write 
\[ \nu y_j \cdots y_{t-1} = \tilde{\nu} \lambda_{\ell, t}\]
for some $\tilde{\nu} \in (F_\ell)_x$.  It follows that $[\xi^j_\nu]_y= \phi([\xi^\ell_{\tilde{\nu}}]_x)$, so $\phi$ is surjective as claimed.

To see that $\phi$ is injective, suppose that $\phi([\xi^i_\mu]_x) = \phi([\xi^\ell_\nu]_x)$.  Without loss of generality, suppose that $\vec{i}\geq \vec{\ell}  \geq m$, so that 
\[\phi([\xi^i_\mu]_x) = [\xi^j_{\mu \lambda_{i,j}}]_y \quad \text{ and } \quad \phi([\xi^\ell_\nu]_x) = [\xi^{h}_{\nu \lambda_{\ell, h}}]_y \]
where $h$ is the coordinatewise maximum of $n-m+\vec{\ell}$ and $j$ is the coordinatewise maximum of $n-m+\vec{i}$.

Since $i \geq \ell$, we can write $i = \ell + q$ for $q \in \N$. Consequently, the coordinatewise maximum $j$ of $n-m+\vec{i}$ is the same as the sum of the coordinatewise maximum of $n-m+\vec{\ell}$ and $q$. In other words, $j=h+q$.
It follows that 
\[d(\lambda_{i,j}) = \vec{j} - n+m -\vec{i} = \vec{q} + \vec{h} - n +m - \vec{i} = \vec{h} - n +m - \vec{\ell} = d(\lambda_{\ell, h}).\]

Since  $j \geq h$, the equivalence relation on $\H_y$  implies that 
\[\nu \lambda_{\ell, h}y_h \cdots y_{j-1}  = \mu \lambda_{i,j}  \text{ if $j>h$; }\]
if $h=j$ then $i=\ell$ and we must have $\nu = \mu$. 

Observe that $j-h = i-\ell = q$.  Assuming that $j>h$, we can write 
\[\nu \lambda_{\ell, h} y_h \cdots y_{j-1} = \nu x_\ell \cdots x_{\ell + j-h-1} \lambda_{\ell +j-h, j} = \nu x_\ell \cdots x_{i-1} \lambda_{i,j}.\]

Then the factorization property implies that 
\[\nu x_\ell \cdots x_{i-1} = \mu,\]
and consequently $[\xi^\ell_\nu]_x = [\xi^i_\mu]_x$.  In other words, $\phi$ is injective.

Finally it is straightforward to check that $\phi\circ T^x_\lambda=T^y_\lambda\circ\phi$ for $\lambda\in\Lambda$, and hence the representations $\pi_x, \pi_y$ are equivalent, as claimed.
\end{proof}

Observe that the representation $\pi_x$ of Theorem \ref{prop:sc-faithful} is in fact well-defined for any row-finite 
source-free  higher-rank graph $\Lambda$ and any $x \in \Lambda^\infty$, even if $\Lambda$ is not strongly connected. We only required the hypothesis that $\Lambda$ be strongly connected in order to ensure that $T_v$ was nonzero for each $v$.  However, a similar construction will give us a separable faithful representation of $C^*(\Lambda)$ for any row-finite, source-free $k$-graph $\Lambda$.
\begin{thm}
Let $\Lambda$ be a row-finite source-free $k$-graph.  There is a faithful separable representation of $C^*(\Lambda)$.
\label{pr:sep-faith}
\end{thm}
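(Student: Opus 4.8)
The plan is to build the desired representation as a countable direct sum of the representations $\pi_x$ constructed in Theorem \ref{prop:sc-faithful}, choosing the infinite paths so that every vertex projection survives. As observed in the paragraph preceding the statement, the well-definedness of the operators $\{T_\lambda\}_{\lambda \in \Lambda}$ in Equation \eqref{eq:sc-faithful}, the verification of the Cuntz--Krieger relations, and the construction of the implementing unitaries $U_z$ all rely only on the factorization property and on row-finiteness (the latter only to make the sums appearing in (CK4) finite); strong connectedness was used \emph{solely} to guarantee that $T_v \neq 0$ for every $v \in \Lambda^0$. Thus for any row-finite, source-free $\Lambda$ and any $x \in \Lambda^\infty$, the same recipe yields a representation $\pi_x$ of $C^*(\Lambda)$ on the separable Hilbert space $\H_x$, together with a strongly continuous action $z \mapsto \Ad U_z^x$ of $\T^k$ satisfying $\pi_x \circ \gamma_z = \Ad U_z^x$.

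First I would use source-freeness to fix, for each vertex $v \in \Lambda^0$, an infinite path $x^{(v)} \in v\Lambda^\infty$ (such paths exist since $v\Lambda^n \neq \emptyset$ for all $n$). Since $\Lambda$ is a countable category, $\Lambda^0$ is countable, so I can form
\[ \H := \bigoplus_{v \in \Lambda^0} \H_{x^{(v)}}, \qquad \pi := \bigoplus_{v \in \Lambda^0} \pi_{x^{(v)}}, \qquad U_z := \bigoplus_{v \in \Lambda^0} U_z^{x^{(v)}}.\]
Each summand is separable and the index set is countable, so $\H$ is separable; and $\pi$ is a representation of $C^*(\Lambda)$ because each $\pi_{x^{(v)}}$ is. The unitaries $U_z$ implement a strongly continuous gauge action on $\pi(C^*(\Lambda))$, since $\pi \circ \gamma_z = \Ad U_z$ holds summand by summand.

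Next I would verify that $\pi(t_v) \neq 0$ for every $v \in \Lambda^0$. Here indexing by vertices is exactly what makes the argument work: in the summand $\pi_{x^{(v)}}$ the range of $x^{(v)}$ is $v$, so $v = r(x^{(v)}_1)$ lies in $F_1 = \Lambda v$, and the basis vector $[\xi^1_v]$ is a nonzero element of $\H_{x^{(v)}}$ with $T_v [\xi^1_v] = [\xi^1_v]$. Hence the $v$-component of $\pi(t_v)$ is nonzero, and therefore so is $\pi(t_v)$. With $\pi(t_v) \neq 0$ for all $v$ and the gauge action implemented by $\{U_z\}$ in hand, the gauge-invariant uniqueness theorem (Theorem 3.4 of \cite{KP}) applies and shows that $\pi$ is faithful, completing the construction.

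This argument is essentially a bookkeeping assembly of pieces already built in Theorem \ref{prop:sc-faithful}, so I do not expect a serious obstacle. The only points requiring genuine care are confirming that nothing in the construction of $\pi_x$ and $U_z$ secretly used strong connectedness (it does not — only the nonvanishing of each $T_v$ did), and checking that passing to a countable direct sum preserves both separability and the implemented gauge action, so that the hypotheses of the gauge-invariant uniqueness theorem remain in force.
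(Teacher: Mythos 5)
Your proposal is correct and follows essentially the same route as the paper: choose for each vertex $v$ an infinite path with range $v$, take the countable direct sum of the representations $\pi_x$ from Theorem \ref{prop:sc-faithful}, observe that the only role of strong connectedness was to make the vertex projections nonzero (which your choice of paths now guarantees), and apply the gauge-invariant uniqueness theorem via the diagonal unitaries $U_z$. The only cosmetic difference is that the paper verifies $\pi(t_\mu)\neq 0$ for every $\mu\in\Lambda$ using the summand indexed by $s(\mu)$, whereas you check only the vertex projections, which is exactly what the uniqueness theorem requires.
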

\begin{proof}
For each vertex $v \in \Lambda^0$, choose an infinite path $y_v$ with 
$r(y_v) = v$. (The fact that $\Lambda$ is source-free implies we can always do this.) Since $\Lambda$ is a countable category, there will be at most countably many such paths.  Define
\[ \H := \bigoplus_{v \in \Lambda^0} \H_{y_v}, \quad \pi := \bigoplus_{v\in \Lambda^0} \pi_{y_v}.\]
Then $\H$ is a separable Hilbert space and $\pi$ is a representation of $C^*(\Lambda)$ on $\H$.  We know that $\pi(t_\mu)$ is nonzero for each $\mu \in \Lambda$, because
\[\pi_{y_{s(\mu)}}(t_\mu) [\xi^1_{s(\mu)}] = [\xi^1_\mu]\]
is a nonzero generator of $\H_{y_{s(\mu)}}$ and hence of $\H$.

Moreover, the unitary action {$\gamma$} of $\T^k$ on $\H_{y_v}$ discussed in Theorem \ref{prop:sc-faithful} extends to a unitary action of $\T^k$ on $\H$ via the diagonal action.  Similarly, the fact that each representation $\pi_{y_v}$ intertwines this action with the gauge action on $C^*(\Lambda)$ implies that we again have
\[z \cdot \pi(T) = \pi (\gamma_z(T)),\]
so again  by Theorem 2.1 of \cite{bprz} (the gauge-invariant uniqueness theorem) tells us that $\pi$ is a faithful separable representation of $C^*(\Lambda)$.
\end{proof}

\section{A new perspective on $\Lambda$-semibranching function systems}
\label{sec:SBFS-revisited}

In order  to construct examples of $\Lambda$-semibranching function systems for a finite $k$-graph $\Lambda$ more readily, we will recast the original definition of $\Lambda$-semibranching function system from \cite{FGKP} in a way that  only involves the $k$-colored edges of $\Lambda$. We present in Theorem~\ref{thm:SBFS-edge-defn} a definition of $\Lambda$-semibranching function systems equivalent to the original definition (Definition \ref{def-lambda-SBFS-1} above).

The following theorem shows that checking Conditions (a) and (c)  of Definition~\ref{def-lambda-SBFS-1} for arbitrary  $m\in\N^k$ is equivalent to checking the equivalent conditions for the basis elements $e_1,\dots,e_k$ of $\N^k$.

\begin{thm}\label{thm:SBFS-edge-defn}
Let $\Lambda$ be a finite $k$-graph and let $(X,\mathcal{F}, \mu)$ be a measure space. Let $\{e_1,\dots, e_k\}$ be the standard basis of $\N^k$.
For  $1 \leq i \leq k$, suppose we have a semibranching function system $\{ \tau_\lambda: D_\lambda \to R_\lambda \}_{d(\lambda) = e_i}$ on $X$, with associated coding maps 
 $\tau^{e_i}:X\to X$. 
For $\eta \in \Lambda$, write $\eta = \eta_1 \eta_2 \cdots \eta_\ell$ as a sequence of edges, and define
\begin{equation}
\label{eq:tau-lambda-edge-defn}
\tau_\eta := \tau_{\eta_1} \circ \tau_{\eta_2} \circ \cdots \circ \tau_{\eta_\ell}.
\end{equation}
 Then the semibranching function systems $\{\tau_\lambda:d(\lambda) = e_i\}_{i=1}^k$ and coding maps $\{ \tau^{e_i}\}_{i=1}^k$ satisfy Conditions (i) - (v) below if and only if the operators $\{\tau_\eta: \eta \in \Lambda\}$ form a $\Lambda$-semibranching function system, 
   with coding maps $\tau^m:= (\tau^{e_1})^{m_1} \circ (\tau^{e_2})^{m_2} \circ \cdots \circ (\tau^{e_k})^{m_k}$ for $m = (m_1, \ldots, m_k) \in \N^k$.
\begin{itemize}
\item[(i)] 
 For any edges $\lambda, \nu$ with $s(\lambda) = s(\nu) $, we have $D_\lambda = D_\nu$.  Writing $v= s(\lambda) = s(\nu)$, we set 
 \[D_v := D_\lambda = D_\nu,\]
 and we require  $\mu(D_v) > 0$ for all $v \in \Lambda^0$.
\item[(ii)]  
For $v\ne w\in\Lambda^0$, $\mu(D_v\cap D_w)=0$. 
\item[(iii)] Fix $i,j\in \{1,\dots, k\}$. If $\lambda\alpha=\nu\beta$ for $\lambda,\beta\in \Lambda^{e_i}$ and $\nu,\alpha\in \Lambda^{e_j}$, then $R_\alpha\subset D_\lambda$, $R_\beta\subset D_\nu$, and
    \[
    \tau_\lambda\circ \tau_\alpha=\tau_\nu\circ \tau_\beta.
    \]   
\item[(iv)]
For any $1 \leq i, j \leq k$, we have 
$\tau^{e_i} \circ \tau^{e_j} = \tau^{e_j} \circ \tau^{e_i}$.
\item[(v)] For $v\in \Lambda$ and $1\le i\le k$, we have
\[
\mu(D_v\setminus \cup_{g\in v\Lambda^{e_i}} R_{g})=0.
\]
\end{itemize}
\end{thm}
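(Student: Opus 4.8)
The plan is to prove both implications by transferring information between the edge level (degrees $e_i$) and the level of arbitrary degrees $m \in \N^k$. The forward implication is the routine direction: assuming $\{\tau_\eta : \eta\in\Lambda\}$ is already a $\Lambda$-semibranching function system, Conditions (i)--(v) are simply the specializations of Definition \ref{def-lambda-SBFS-1} and Remark \ref{rmk:abs-cts-inverse} to edges and to commuting bicolored squares. Concretely, (i) and (ii) restate $D_\lambda = D_{s(\lambda)}$ together with Definition \ref{def-lambda-SBFS-1}(b); (iii) is the restriction of Definition \ref{def-lambda-SBFS-1}(c) to a square $\lambda\alpha = \nu\beta$; (iv) is the $m = e_i$, $n = e_j$ case of Definition \ref{def-lambda-SBFS-1}(d); and (v) is the containment half of the partition identity $D_v = \bigcup_{g\in v\Lambda^{e_i}} R_g$ noted in Remark \ref{rmk:abs-cts-inverse}. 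I would dispatch this direction first and quickly.

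The substance is the converse. Here the key point, and the first thing I would establish, is that $\tau_\eta$ in \eqref{eq:tau-lambda-edge-defn} is well defined, i.e.\ independent of the way $\eta$ is written as a string of edges. An edge factorization of a path $\eta$ with $d(\eta)=m$ is determined by a colour word (a sequence in $\{1,\dots,k\}$ with $m_i$ entries equal to $i$), and by the factorization property each colour word yields exactly one factorization; two colour words with the same content are joined by a sequence of transpositions of adjacent, distinct colours, each of which corresponds to replacing a factor $\eta_p\eta_{p+1}$ by the other diagonal $\eta'_p\eta'_{p+1}$ of a commuting square. Condition (iii) says precisely that $\tau_{\eta_p}\circ\tau_{\eta_{p+1}} = \tau_{\eta'_p}\circ\tau_{\eta'_{p+1}}$ across such a square, so each move leaves the full composite $\tau_\eta$ unchanged; hence $\tau_\eta$ is well defined. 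The same combinatorics applied to the coding maps, together with Condition (iv), shows that $\tau^m := (\tau^{e_1})^{m_1}\circ\cdots\circ(\tau^{e_k})^{m_k}$ is independent of the order of the factors and satisfies the additivity $\tau^m\circ\tau^n = \tau^{m+n}$ of Definition \ref{def-lambda-SBFS-1}(d). With well-definedness in hand, the composition law $\tau_\lambda\circ\tau_\nu = \tau_{\lambda\nu}$ of Definition \ref{def-lambda-SBFS-1}(c) is immediate by concatenating factorizations, and Definition \ref{def-lambda-SBFS-1}(b) holds because $\tau_v$ is the empty composite $\id$ and $\mu(D_v)>0$ by (i).

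It then remains to promote the semibranching function system structure from edges to every degree $m$, which is Definition \ref{def-lambda-SBFS-1}(a); I would do this by induction on $|m| = m_1+\cdots+m_k$. The inductive step rests on a one-step refinement: for $d(\lambda)=m$ and a colour $i$, Condition (v) partitions $D_{s(\lambda)}$ (mod $\mu$-null) as $\bigsqcup_{g\in s(\lambda)\Lambda^{e_i}} R_g$, and applying the prefixing map $\tau_\lambda$---which is injective and sends null sets to null sets because $\mu\circ(\tau^m)^{-1}\ll\mu$ (Remark \ref{rmk:abs-cts-inverse})---yields $R_\lambda = \bigsqcup_{g\in s(\lambda)\Lambda^{e_i}} R_{\lambda g}$ mod null. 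Iterating refines the base-case covering $X = \bigsqcup_{d(g)=e_i} R_g$ into $X = \bigsqcup_{d(\lambda)=m} R_\lambda$ with pairwise null intersections, which is Condition (a) of Definition \ref{def-1-brach-system} at degree $m$. Positivity of the Radon--Nikodym derivative $\Phi_{\tau_\lambda}$ follows from the chain rule $\Phi_{\tau_\eta} = \prod_j \big(\Phi_{\tau_{\eta_j}}\circ\tau_{\eta_{j+1}}\circ\cdots\circ\tau_{\eta_\ell}\big)$, a product of a.e.-positive edge derivatives; and the coding-map identity $\tau^m\circ\tau_\lambda = \id$ on $D_\lambda$ follows by peeling off edges one at a time using the edge-level coding maps and Condition (iv). The containments $R_\nu\subseteq D_\lambda$ required by Definition \ref{def-lambda-SBFS-1}(c) are obtained along the way from Condition (iii) (and its same-colour instance).

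I expect the main obstacle to be twofold. Conceptually, the crux is the well-definedness argument: one must verify that the equivalence classes of edge factorizations of a fixed path are exactly the orbits of the adjacent-transposition moves controlled by (iii), so that $\tau_\eta$ does not depend on any choices. Technically, the delicate part is the measure-zero bookkeeping in the inductive refinement---ensuring that the exceptional null sets do not accumulate, that prefixing maps genuinely preserve disjointness and nullity, and that the domain/range containments $R_\nu\subseteq D_\lambda$ hold mod null---all of which lean on the absolute continuity in Remark \ref{rmk:abs-cts-inverse} together with Conditions (iii) and (v).
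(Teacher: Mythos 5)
Your proposal is correct and follows essentially the same route as the paper's proof: the forward direction by specializing Definition \ref{def-lambda-SBFS-1} to edges, and the converse by establishing well-definedness of $\tau_\eta$ from Condition (iii) and the factorization property, obtaining the coding maps from Condition (iv), and then running an induction on $|m|$ in which the range sets are refined one edge at a time and the Radon--Nikodym derivative of a composite is computed by the chain rule as a product of positive edge-level derivatives. The only cosmetic difference is that you spell out the adjacent-transposition combinatorics for well-definedness more explicitly than the paper does.
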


\begin{proof}
First, suppose we are given a $\Lambda$-semibranching function system as in Definition \ref{def-lambda-SBFS-1}.  Condition (c) of Definition \ref{def-lambda-SBFS-1} guarantees Conditions (i) and (iii) in the statement of this Theorem; Condition (ii) follows from Condition (b) and the fact that the maps $\{\tau_v: v \in \Lambda^0\}$ form a semibranching function system.  Condition (d) of Definition \ref{def-lambda-SBFS-1} implies Condition (iv) above. 
To see (v), fix $i\in \{1,\dots,k\}$ and note that Condition (c) of Definition \ref{def-lambda-SBFS-1}  implies that for $g\in \Lambda^{e_i}$, $R_g\subseteq D_{r(g)}$. Thus, $\cup_{g\in v\Lambda^{e_i}} R_g\subseteq D_v$, and hence $\mu(D_v\setminus \cup_{g\in v\Lambda^{e_i}} R_g)=0$.

For the other direction, suppose that we are given $k$ semibranching function systems $\{\tau_\lambda:  \lambda \in \Lambda, d(\lambda) = e_i\}_{i=1}^k$ with coding maps $\{\tau^{e_i}\}_{i=1}^k$ 
satisfying Conditions (i) - (v) above.  
First fix $\eta\in\Lambda$ and write $\eta=\eta_1\eta_2\dots \eta_{\ell}$ as a sequence of edges. Then Condition (iii) implies that $R_{\eta_j}\subseteq D_{\eta_{j-1}}$ for $2\le j\le \ell$, and hence the formula for $\tau_\eta$ given in \eqref{eq:tau-lambda-edge-defn} is well-defined.
\footnote{Note that if $\lambda = \lambda_1 \lambda_2$ with $d(\lambda_1) = \ell, \ d(\lambda_2) = e_j$, then  $R_{\lambda_2} \subseteq D_{s(\lambda_1)}$ by Conditions (iii) and (i), and hence   the composition $\tau_{\lambda_1} \circ \tau_{\lambda_2}$ is well defined.}  
In fact, 
Condition (iii) and the factorization property of $k$-graphs imply that $\tau_\eta$ is independent of the decomposition of $\eta$ into edges.
Moreover, recall that since each $\{\tau_\lambda: d(\lambda) = e_i\}$ 
  is a semibranching function system, we have $\tau^{e_i} \circ \tau_\lambda = id_{D_\lambda}$ for all $\lambda \in \Lambda^{e_i}$. 
  Consequently, if $\eta \in \Lambda^m$, write $\eta$ as a sequence of edges,  $\eta = \eta_1 \eta_2 \cdots \eta_\ell$ where we list the $m_k$ edges of color $k$ first, then all $m_{k-1}$ edges of color $k-1$, etc.   Also note that $id_{D_\alpha}\circ\tau_\beta$ is well defined for edges $\alpha, \beta$ whenever $s(\alpha)=r(\beta)$, and $id_{D_\alpha}\circ\tau_\beta=\tau_\beta$. Then
\[\tau^m \circ \tau_\eta = (\tau^{e_1})^{m_1} \circ \cdots \circ (\tau^{e_k})^{m_k} \circ \tau_{\eta_1} \circ \cdots \circ \tau_{\eta_\ell} = id_{D_\lambda},\]
since $\tau_k^{m_k} \circ \tau_{\lambda_1} \circ \cdots \circ \tau_{\lambda_{m_k}} = id_{D_{\lambda_{m_k}}}$, and similarly for the other colors.
Hence, $\tau^m$ is a coding map for $\{\tau_\lambda:d(\lambda)=m\}$.

To see that $\{\tau_\lambda: d(\lambda) = m\}$ forms a semibranching function system for each $m \in \N^k$, we proceed by induction.  Note that the case 
$m=e_i$ for $1\le i\le k$ holds by the hypotheses of the Theorem.  For the case  $m = 0$, we begin by defining 
\[\tau_v = id: D_v \to D_v \ \text{for}\;\;  v \in \Lambda^0.\]
Then $\Phi_v(x)  := \frac{d(\mu \circ \tau_v)}{d\mu} (x)= 1$ for all  $x \in D_v$.  By Condition (ii), in order to check that $\{\tau_v: v \in \Lambda^0\}$ is a semibranching function system, it merely remains to check that $\mu(X \backslash \cup_{v\in \Lambda^0} D_v) = 0$.  By Conditions (ii) and (v), and the fact that $\{\tau_{\lambda}: d(\lambda) = e_i\}$ is a semibranching function system,
\begin{align*}
\mu\left(\bigcup_{v\in \Lambda^0} D_v \right) &= \sum_{v\in \Lambda^0} \mu(D_v) = \sum_{v\in \Lambda^0} \sum_{\lambda \in v\Lambda^{e_i}} \mu(R_\lambda) = \mu(X)
\end{align*}
as desired.

Now, suppose that for every $\ell=(\ell_1,\dots,\ell_k) \in \N^k$ with $|\ell |=\ell_1+\ell_2+\dots +\ell_k \leq n$, we have $\{\tau_\lambda: d(\lambda) = \ell\}$ is a semibranching function system with coding map $\tau^{\ell}$.  Let $m = \ell + e_j$.  Given $\lambda \not= \nu \in \Lambda^m$, write $\lambda = \lambda_1 \lambda_2 , \ \nu = \nu_1 \nu_2$, with $d(\lambda_1) = \ell = d(\nu_1)$ and $d(\nu_2) = d(\lambda_2) = e_j$. 
Then  $\tau_\lambda=\tau_{\lambda_1}\circ \tau_{\lambda_2}$ is well-defined and 
\[R_\lambda := \tau_{\lambda_1}(R_{\lambda_2}) \subseteq R_{\lambda_1}.\]  If $\nu_1 \not= \lambda_1$, then $R_{\lambda} \cap R_\nu \subseteq R_{\lambda_1} \cap R_{\nu_1}$ and hence 
\[\mu(R_\lambda \cap R_\nu) \leq \mu( R_{\lambda_1} \cap R_{\nu_1}) = 0.\]
If $\nu_1 = \lambda_1$, then since $\lambda \not= \nu$ we must have that $\lambda_2 \not= \nu_2$.  Thus, since $\Phi_{\lambda_1} = \frac{d(\mu \circ \tau_{\lambda_1})}{d\mu}$ and {$\mu(R_{\lambda_2}\cap R_{\nu_2})=0$}, we have
\begin{align*}
\mu(R_\lambda \cap R_\nu)=   \mu(\tau_{\lambda_1}( R_{\lambda_2} \cap R_{\nu_2})) &  =  \int_{R_{\lambda_2} \cap R_{\nu_2}} 1 \, d(\mu \circ \tau_{\lambda_1}) = \int_{R_{\lambda_2} \cap R_{\nu_2}} 
\Phi_{\lambda_1}\, d\mu = 0.
\end{align*}

To see that $\mu(X \backslash \cup_{\lambda \in \Lambda^m} R_\lambda) = 0$, note that
\[
\bigcup_{\lambda \in \Lambda^m} R_\lambda = \bigcup_{\lambda = \lambda_1 \lambda_2 \in \Lambda^m} \tau_{\lambda_1} (R_{\lambda_2}) = \bigcup_{d(\lambda_1) = \ell} \tau_{\lambda_1}\left( \cup_{\lambda_2 \in s(\lambda_1) \Lambda^{e_j}} R_{\lambda_2} \right) \\
\]
Then Condition (i) and (v) gives 
\[\begin{split}
\bigcup_{d(\lambda_1) = \ell} \tau_{\lambda_1}\left( \cup_{\lambda_2 \in s(\lambda_1) \Lambda^{e_j}} R_{\lambda_2} \right) 
&= \bigcup_{d(\lambda_1) = \ell} \tau_{\lambda_1} (D_{s(\lambda_1)}) \ \text{ almost everywhere} \\
&= \bigcup_{d(\lambda_1) = \ell} R_{\lambda_1} = X \ \text{ almost everywhere.}
\end{split}\]
Thus, $\mu(X \backslash \cup_{\lambda \in \Lambda^m} R_\lambda) = 0$. 

To conclude that $\{\tau_\lambda: d(\lambda) = m\}$ is a semibranching function system,
 we need to show that it satisfies Condition (b) of Definition~\ref{def-1-brach-system}, which states that the Radon-Nikodym derivative $\Phi_\lambda := \Phi_{\tau_{\lambda_1}\circ \tau_{\lambda_2}}$ exists and is positive for all $\lambda = \lambda_1 \lambda_2$ with $d(\lambda_1)=\ell$, $d(\lambda_2)=e_j$. Since $\mu\circ \tau_{\lambda_1} <<\mu$ and $\mu\circ \tau_{\lambda_2}<<\mu$, it is straightforward to see that $\mu\circ \tau_{\lambda_1}\circ \tau_{\lambda_2} << \mu\circ \tau_{\lambda_2}$.
Now we fix a Borel set $E\subset D_{\lambda_2}$, otherwise the following integral is zero, 
and consider 
\[
\int_{X}\chi_E(x)\, d(\mu\circ \tau_{\lambda_1}\circ \tau_{\lambda_2}).
\]
Since $E \subset D_{\lambda_2}$, if $x \in E$ then $\tau_{\lambda_2}(x) =: y \in R_{\lambda_2}$, and so (since $\tau^{e_j} \circ \tau_{\lambda_2} = id_{D_{\lambda_2}}$) we see that we can write every $x \in E$ as  $x = \tau^{e_j}(y)$ for precisely one $y \in R_{\lambda_2}$.  Moreover, the fact that 
$\tau_{\lambda_2} = \tau_{\lambda_2} \circ \tau^{e_j} \circ \tau_{\lambda_2}$ implies that 
 $\tau_{\lambda_2} \circ \tau^{e_j} = id_{R_{\lambda_2}}$, so
\[\begin{split}
\int_{X}\chi_E(x)\, d(\mu\circ \tau_{\lambda_1}\circ \tau_{\lambda_2})(x)
&=\int_X \chi_E(\tau^{e_j}(y))\, d(\mu\circ \tau_{\lambda_1}\circ \tau_{\lambda_2})(\tau^{e_j}(y))\\
&=\int_X(\chi_E\circ \tau^{e_j})(y)\, d(\mu\circ \tau_{\lambda_1})(y).
\end{split}\]
Since $\mu_\circ \tau_{\lambda_1} << \mu$, the above integral becomes
\[
\int_X(\chi_E\circ \tau^{e_j}(y) \Phi_{\tau_{\lambda_1}}(y)\, d\mu(y).
\]
Returning to our original notation, write $y=\tau_{\lambda_2}(x)$ for some $x\in E\subset D_{\lambda_2}$; now we have
\[\begin{split}
\int_X(\chi_E\circ \tau^{e_j})(y) \Phi_{\tau_{\lambda_1}}(y)\, d\mu(y)
&=\int_X (\chi_E\circ \tau^{e_j})(\tau_{\lambda_2}(x)) \Phi_{\tau_{\lambda_1}}(\tau_{\lambda_2}(x))\, d\mu (\tau_{\lambda_2}(x))\\
&=\int_X \chi_E(x) (\Phi_{\tau_{\lambda_1}}\circ \tau_{\lambda_2})(x)\, d(\mu\circ \tau_{\lambda_2})(x).
\end{split}\]
So we have
\[
\int_{X}\chi_E(x)\, d(\mu\circ \tau_{\lambda_1}\circ \tau_{\lambda_2})
=\int_X \chi_E(x) (\Phi_{\tau_{\lambda_1}}\circ \tau_{\lambda_2})(x)\, d(\mu\circ \tau_{\lambda_2})(x).
\]
Thus, by uniqueness of Radon-Nikodym derivatives and the fact that $ \mu \circ \tau_{\lambda_1} \circ \tau_{\lambda_2} << \mu \circ \tau_{\lambda_2}$, we have   
\[
\frac{d(\mu\circ \tau_{\lambda_1}\circ\tau_{\lambda_2})}{d(\mu\circ \tau_{\lambda_2})}=\Phi_{\tau_{\lambda_1}}\circ \tau_{\lambda_2}.
\]
Therefore $\Phi_\lambda := \Phi_{\tau_{\lambda_1}\circ \tau_{\lambda_2}}$ exists and
\[
\Phi_\lambda = \Phi_{\tau_{\lambda_1}\circ \tau_{\lambda_2}}=\frac{d(\mu\circ \tau_{\lambda_1}\circ\tau_{\lambda_2})}{d(\mu\circ \tau_{\lambda_2})}\, \frac{d(\mu\circ \tau_{\lambda_2})}{d\mu}=(\Phi_{\tau_{\lambda_1}}\circ \tau_{\lambda_2})(\Phi_{\tau_{\lambda_1}}),
\]
which is positive since $\Phi_{\tau_{\lambda_1}}$ and $\Phi_{\tau_{\lambda_2}}$ are positive. Hence $\{\tau_{\lambda}:d(\lambda)=\ell + e_j\}$ forms a semibranching function system.
Therefore by induction $\{\tau_\lambda: d(\lambda) =m\}$ forms a semibranching function system for all $m \in \N^k$. This completes the proof that Condition (a) holds.   Note that Condition (b) holds by construction and by Condition (i); Condition (c) holds by construction, Condition (v), and the fact that $\tau_\lambda$ is well defined.  Similarly, Condition (d) holds by construction and by Condition (iv), completing the proof of the Theorem.
\end{proof}

\begin{cor}\label{cor:repn_S}
Let $\Lambda$ be a finite $k$-graph with no sources and let $(X,\mathcal{F}, \mu)$ be a measure space. 
For  each $1 \leq i \leq k$, suppose we have a semibranching function system  $\{\tau_f: D_f\to R_f\}_{d(f)=e_i}$ on $(X,\mu)$ with associated coding map  $\tau^{e_i}:X\to X$ satisfying Conditions (i)--(v) in  Theorem  \ref{thm:SBFS-edge-defn}. For each $1\le i \le k$ and $f\in \Lambda^{e_i}$, define $S_f\in B(L^2(X,\mu))$ by
\begin{equation}\label{eq:standard_S_f}
S_f \xi(x)=\chi_{R_f}(x)(\Phi_{\tau_f}(\tau^{e_i}(x))^{-1/2}\xi(\tau^{e_i}(x)).
\end{equation}
Then the collection of operators $\{S_f: d(f)=e_i\}_{i=1}^k$ generate a representation of $C^*(\Lambda)$.
\end{cor}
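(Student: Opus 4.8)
The plan is to deduce the Corollary directly from the two results already in hand, namely Theorem \ref{thm:SBFS-edge-defn} and Theorem \ref{thm:SBFS-repn}, so that essentially no new computation is required. First I would invoke Theorem \ref{thm:SBFS-edge-defn}: the hypothesis that the edge-level semibranching function systems $\{\tau_f : d(f) = e_i\}_{i=1}^k$ together with their coding maps $\{\tau^{e_i}\}_{i=1}^k$ satisfy Conditions (i)--(v) is exactly equivalent to the assertion that the maps $\{\tau_\eta : \eta \in \Lambda\}$ defined by \eqref{eq:tau-lambda-edge-defn}, together with the coding maps $\tau^m := (\tau^{e_1})^{m_1} \circ \cdots \circ (\tau^{e_k})^{m_k}$, constitute a $\Lambda$-semibranching function system in the sense of Definition \ref{def-lambda-SBFS-1}. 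This step does the heavy lifting: the nontrivial content — that the composite maps $\tau_\eta$ are well defined independently of the factorization of $\eta$ into edges, and that $\{\tau_\lambda : d(\lambda) = m\}$ is a semibranching function system for every $m \in \N^k$ — is precisely what Theorem \ref{thm:SBFS-edge-defn} supplies.

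Next I would apply Theorem \ref{thm:SBFS-repn} to the $\Lambda$-semibranching function system just obtained. This produces a representation $\pi$ of $C^*(\Lambda)$ on $L^2(X, \mu)$ with $\pi(t_\lambda) = S_\lambda$, where $S_\lambda$ is given by the formula of Theorem \ref{thm:SBFS-repn}. Specializing that formula to an edge $\lambda = f$ with $d(f) = e_i$, so that $\tau^{d(\lambda)} = \tau^{e_i}$ and $\Phi_{\tau_\lambda} = \Phi_{\tau_f}$, recovers \eqref{eq:standard_S_f} verbatim; hence the operators defined in the statement of the Corollary are exactly $\pi(t_f)$ for $f$ an edge.

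It then remains only to check that the operators $\{S_f : d(f) = e_i\}_{i=1}^k$ generate the image $\pi(C^*(\Lambda))$, equivalently that $C^*(\Lambda)$ itself is generated by its edge partial isometries $\{t_f\}$. This is where the source-free hypothesis enters. By (CK4), for any $v \in \Lambda^0$ and any $i$ we have $t_v = \sum_{f \in v\Lambda^{e_i}} t_f t_f^*$, and this sum ranges over a nonempty set precisely because $\Lambda$ is source-free; thus each vertex projection lies in the $C^*$-subalgebra generated by the edges. Combining this with (CK2), which writes $t_\eta = t_{\eta_1} \cdots t_{\eta_\ell}$ for any decomposition $\eta = \eta_1 \cdots \eta_\ell$ into edges, shows that every generator $t_\lambda$ of $C^*(\Lambda)$ lies in $C^*(\{t_f\})$. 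Applying the $*$-homomorphism $\pi$ yields $\pi(C^*(\Lambda)) = C^*(\{S_f\})$, which is the desired conclusion.

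I do not expect any serious obstacle here: essentially all of the analytic difficulty (the Radon--Nikodym derivatives, the consistency of the coding maps) has been absorbed into Theorem \ref{thm:SBFS-edge-defn} and Theorem \ref{thm:SBFS-repn}. The one point worth stating with care is the generation claim in the previous paragraph, since it is the single place where the source-free assumption is genuinely used; the remainder is a direct transport of the two cited theorems.
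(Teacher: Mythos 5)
Your proposal is correct and follows essentially the same route as the paper: apply Theorem \ref{thm:SBFS-edge-defn} to obtain a $\Lambda$-semibranching function system, then Theorem \ref{thm:SBFS-repn} (Theorem 3.5 of \cite{FGKP}) to get the representation, and specialize the formula to edges. Your explicit check that the edge operators generate the image — recovering the vertex projections via (CK4) and the source-free hypothesis — is a welcome bit of care that the paper's proof leaves implicit, focusing instead on the well-definedness of $S_\lambda$ via (CK2).
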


\begin{proof}
By Theorem \ref{thm:SBFS-edge-defn}, we obtain a $\Lambda$-semibranching function system on $(X,\mu)$, and thus by Theorem 3.5 of \cite{FGKP}, we have an associated representation of $C^*(\Lambda)$ on $L^2(X, \mu)$.  When we evaluate the formula from \cite{FGKP} Theorem 3.5 on paths $f \in \Lambda$  with $|d(f)| = 1$ we obtain the formula for $S_f$ given in the statement of the Corollary.  Moreover, using (CK2), we can compute $S_\lambda$ for any $\lambda \in \Lambda$ once we know the formulas for $\{S_f: f\in \Lambda, |d(f)| = 1\}$.  The fact that the operators $S_\lambda$ arise from the $\Lambda$-semibranching function system induced by $\{ \{\tau_f: D_f\to R_f\}_{d(f)=e_i}\}_{i=1}^k$ guarantees the necessary commutativity properties to ensure that $S_\lambda$ is well defined.  Namely, suppose $\lambda = f_1 f_2 = g_2 g_1$ for $f_i, g_i$ edges of degree $e_i$ in $\Lambda$.  Then Theorem 3.5 of \cite{FGKP} tells us that   $ S_\lambda = S_{f_1} \circ S_{f_2} = S_{g_2} \circ S_{g_1}$, so writing $S_\lambda$ as a composition of operators $S_f$ for an edge $f$ gives the same formula as in \cite{FGKP}, and moreover is independent of the choice of factorization of $\lambda$ into edges.
\end{proof}

\section{{Examples of $\Lambda$-semibranching function systems on Lebesgue measure spaces}}
\label{sec:examples_a}
{In this section, we describe a number of examples of $\Lambda$-semibranching function systems for finite $k$-graphs $\Lambda$.  In confirming that our examples are indeed $\Lambda$-semibranching function systems, we rely heavily on the characterization given in Theorem \ref{thm:SBFS-edge-defn}.

Our main focus in the present Section \ref{sec:examples_a} is to hint at the flexibility and diversity offered by the $\Lambda$-semibranching function systems, by showcasing a variety of  examples of $\Lambda$-semibranching function systems on familiar measure spaces for 1- and 2-graphs $\Lambda$.  Indeed, the measure spaces we use below are primarily the unit interval or unit square equipped with Lebesgue measure.  We also show that the standard constructions of 2-graphs from 1-graphs (product and double graphs) are compatible with our $\Lambda$-semibranching function systems.  See Definition~\ref{def-double-graph} and Example~\ref{ex-3.3-Kawamura_product} for more details. }

\begin{example} 
\label{exonevthreeed}
Consider the following $1$-graph $\Lambda$ with two vertices $v_1$ and $v_2$ and three edges $f_1,f_2$ and $f_3$.
\[
\begin{tikzpicture}[scale=1.5]
 \node[inner sep=0.5pt, circle] (v) at (0,0) {$v_1$};
    \node[inner sep=0.5pt, circle] (w) at (1.5,0) {$v_2$};
    \draw[-latex, thick] (w) edge [out=50, in=-50, loop, min distance=30, looseness=2.5] (w);
    \draw[-latex, thick] (v) edge [out=130, in=230, loop, min distance=30, looseness=2.5] (v);
\draw[-latex, thick] (w) edge [out=150, in=30] (v);
\node at (-0.75, 0) {\color{black} $f_1$}; 
\node at (0.7, 0.45) {\color{black} $f_2$};
\node at (2.25, 0) {\color{black} $f_3$};
\end{tikzpicture}
\]
To find a $\Lambda$-semibranching function system, we let $X$ be the closed unit interval $[0,1]$ of $\R$ with the usual  Lebesgue $\sigma$-algebra and measure $\mu$. For $v_1$ and $v_2$, let $D_{v_1}=[0, \frac{1}{2}]$ and $D_{v_2}=(\frac{1}{2}, 1]$. Also for each edge $f\in \Lambda$, let $D_{f}=D_{s(f)}$, and hence $D_{f_1}=D_{v_1}=[0,\frac{1}{2}]$,  $D_{f_2}=D_{v_2}=(\frac{1}{2}, 1]$ and $D_{f_3}=D_{v_2}=(\frac{1}{2}, 1]$.
Thus, the set of domains satisfy Conditions (i) and (ii) of Theorem~\ref{thm:SBFS-edge-defn} automatically. 
Now define prefixing maps for $f_1, f_2$ and $f_3$ by
\[\begin{split}
\tau_{f_1}(x)=-\frac{1}{2}x+\frac{1}{2}\quad\quad &\text{for $x\in D_{f_1}=\big[0,\frac{1}{2}\big]$},\\
\tau_{f_2}(x)=-\frac{1}{2}x+\frac{1}{2}\quad\quad&\text{for $x\in D_{f_2}=\big(\frac{1}{2},1\big]$},\\
\tau_{f_3}(x)=x\quad\quad &\text{for $x\in D_{f_3}=\big(\frac{1}{2},1\big]$}.
\end{split}\]
Then $R_{f_1}=\big[\frac{1}{4},\frac{1}{2}\big]$, $R_{f_2}=\big[0,\frac{1}{4}\big)$ and $R_{f_3}=\big(\frac{1}{2},1\big]$. Then the ranges of the prefixing maps are mutually disjoint and $X=R_{f_1}\cup R_{f_2}\cup R_{f_3}$.
For each $f_i$, since Lebesgue measure is regular, the Radon-Nikodym derivative of $\tau_{f_i}$ 
is given by
\begin{equation*}
\Phi_{{f_i}}(x) = \inf_{x \in E \subseteq D_{f_i}} \frac{(\mu\circ \tau_{f_i})(E)}{\mu(E)}\\
= \inf_{x \in E \subseteq D_{f_i}}\left\{ \begin{array}{cl}\frac{\frac{1}{2} \mu(E)}{\mu(E)}, & i=1,2, \\
\frac{\mu(E)}{\mu(E)}, & i=3
\end{array}\right.
\end{equation*}
\begin{equation*}
  = \left\{ \begin{array}{cl} \frac{1}{2}, & i =1,2 \\
1, & i=3.
\end{array}\right.
\end{equation*}
Thus, the Radon-Nikodym derivatives are positive on their respective domains, as desired.  
Now define $\tau^1:X\to X$ by
\[
\tau^1(x)=\begin{cases} \tau^{-1}_{f_1}(x)\quad\text{for $x\in R_{f_1}$}\\
\tau^{-1}_{f_2}(x)\quad\text{for $x\in R_{f_2}$} \\
\tau^{-1}_{f_3}(x)\quad\text{for $x\in R_{f_3}$} \end{cases}
\]
Since the sets $R_{f_i}$ are mutually disjoint, $\tau^1$ is well defined on $X$.
Then $\tau^1$ is the coding map satisfying $\tau^1\circ \tau_{f_i}(x)=x$ for all $x\in D_{f_i}$. This shows that $\{\tau_{f_i}:D_{f_i}\to R_{f_i}, i=1,2,3\}$ is a semibranching function system for $(X,\mu)$. Since $\Lambda$ is $1$-graph, Condition (iv) is trivially satisfied. Also Condition (v) is satisfied since our construction gives 
\[
D_{v_1}=R_{f_1}\cup R_{f_2},\quad\quad D_{v_2}=R_{f_3}.
\]
Since our graph $\Lambda$ is a $1$-graph, Condition (iii) is equivalent to the following:  
\begin{equation}\label{eq:cond_v_edge}
R_g\subseteq D_f \quad\text{whenever $s(f)=r(g)$ for edges $f,g\in \Lambda$}.
\end{equation}
In this example, there are only three composable pairs of edges to check. In particular, the pair $(f_1, f_2)$ has $s(f_1)=r(f_2)$ and $R_{f_2}\subseteq D_{f_1}$, and the pair $(f_3,f_3)$ has $s(f_3)=r(f_3)$ and  $R_{f_3}\subseteq D_{f_3}$; similarly, $(f_1, f_1)$ satisfies $s(f_1) = r(f_1)$ and hence $R_{f_1} \subseteq D_{f_1}$.  This shows \eqref{eq:cond_v_edge}.  Therefore $\{\tau_{f_1},\tau_{f_2}, \tau_{f_3}\}$ with the coding map $\tau^1$ gives a $\Lambda$-semibranching function system on $([0,1],\mu)$.
\end{example}

\begin{example}
\label{exonevtwoe}
Consider the following 2-colored graph (cf. Example 4.1 of \cite{FGKPexcursions}).
\[
\begin{tikzpicture}[scale=1.7]
 \node[inner sep=0.5pt, circle] (v) at (0,0) {$v$};
\draw[-latex, blue, thick] (v) edge [out=140, in=190, loop, min distance=15, looseness=2.5] (v);
\draw[-latex, blue, thick] (v) edge [out=120, in=210, loop, min distance=40, looseness=2.5] (v);
\draw[-latex, red, thick, dashed] (v) edge [out=-30, in=60, loop, min distance=30, looseness=2.5] (v);
\node at (-0.6, 0.1) {$f_1$}; \node at (-1,0.3) {$f_2$}; \node at (0.75,0.15) {$e$};
\end{tikzpicture}
\]
Then there is a $2$-graph $\Lambda$ with the above skeleton and  factorization rules given by
\begin{equation}\label{eq:ex_facto}
f_1e=ef_2\quad\text{and}\quad ef_1=f_2e.
\end{equation}
Let $X=(0,1)$ be the unit interval with Lebesgue $\sigma$-algebra and measure $\mu$. We construct two semibranching function systems on $(X,\mu)$, namely $\{\tau_g:d(g)=e_i\}_{i=1}^2$ with coding maps $\{\tau^{e_i}:X\to X\}_{i=1}^2$, which satisfy Conditions (i)--(v) of Definition~\ref{thm:SBFS-edge-defn}. 

Let $D_v=(0,1)$. Since $D_\lambda=D_{s(\lambda)}$, $s(f_1)=s(f_2)=s(e)=v$ implies that $D_{f_1}=D_{f_2}=D_{e}=(0,1)$. Since this example has only one vertex $v$, the conditions (i) and (ii) of Theorem~\ref{thm:SBFS-edge-defn} are trivially satisfied. We define $\tau_{f_1}$, $\tau_{f_2}$ and $\tau_{e}$ on $(0,1)$ by
\[
\tau_{f_1}(x)=\frac{1-x}{2}, \quad \tau_{f_2}(x)=\frac{2-x}{2}\;\;\;\text{and}\;\;\; \tau_{e}(x)=-x+1.
\]
Then $R_{f_1}=(0,\frac{1}{2})$, $R_{f_2}=(\frac{1}{2},1)$ and $R_{e}=(0,1)$. 
To compute the Radon-Nikodym derivatives, since the functions $\tau_{f_i}, \tau_{e}$ are linear, we have (for $g=f_1, f_2, e$)
\begin{equation*}
\Phi_{{g}}(x) = \inf_{x \in E \subseteq D_{g}} \frac{(\mu\circ \tau_{g})(E)}{\mu(E)}\\
= \inf_{x \in E \subseteq D_{e}}\left\{ \begin{array}{cl}\frac{\frac{1}{2} \mu(E)}{\mu(E)}, & g = f_1, f_2, \\
\frac{\mu(E)}{\mu(E)}, & g = e
\end{array}\right.
\end{equation*}
\begin{equation*}
  = \left\{ \begin{array}{cl} \frac{1}{2}, & g = f_1, f_2 \\
1, & g= e.
\end{array}\right.
\end{equation*}
Thus, the Radon-Nikodym derivatives are positive on their respective domains, as desired.  

Also it is evident that 
\[
\mu(X\setminus (R_{f_1}\cup R_{f_2}))=0\quad\text{and}\quad \mu(X\setminus R_{e})=0.
\]
Since there is only one vertex $v$, this implies condition (v) of Theorem~\ref{thm:SBFS-edge-defn}.
Define coding maps $\tau^{e_1}$ and $\tau^{e_2}$ by
\[
\tau^{e_1}(x)=\begin{cases} \tau^{-1}_{f_1}(x) \quad\text{if $x\in R_{f_1}$}\\ \tau^{-1}_{f_2}(x) \quad\text{if $x\in R_{f_2}$},\end{cases}\quad\text{and}\quad \tau^{e_2}(x)=\tau^{-1}_{e}(x)\quad\text{for $x\in R_{e}$}.
\]
Since $R_{f_1}$ and $R_{f_2}$ are disjoint, $\tau^{e_1}$ is well-defined on $X$, and similarly $\tau^{e_2}$ is well-defined on $X$.
Then it is straightforward to check that the coding maps satisfy condition (iv) of Theorem~\ref{thm:SBFS-edge-defn},
and that $\{\tau_{f_1},\tau_{f_2}\}$ and $\{\tau_{e}\}$ (together with $\{\tau^{e_1},\tau^{e_2}\}$) form a  semibranching function systems on $(0,1)$ with Lebesgue measure.

Since there are only two factorization rules, to verify condition (iii) of Theorem~\ref{thm:SBFS-edge-defn}, we need to show that
\begin{equation}\label{eq:cond3_1}
R_{e}\subset D_{f_1},\quad R_{f_2}\subset D_{e},\quad R_{f_1}\subset D_{e},\quad R_{e}\subset D_{f_2},
\end{equation}
and 
\begin{equation}\label{eq:cond3_2}
\tau_{f_1}\circ \tau_{e}=\tau_{e}\circ \tau_{f_2}, \quad \tau_{e}\circ \tau_{f_1}=\tau_{f_2}\circ \tau_{e}.
\end{equation}
Since there is only one vertex $v$ and for any edge $g\in \Lambda^{e_i}$, $D_{g}=D_{s(g)}=D_v=(0,1)$, the conditions in \eqref{eq:cond3_1} are trivially satisfied. Also straightforward calculations give \eqref{eq:cond3_2}.  Therefore the semibranching function systems $\{\tau_{f_1},\tau_{f_2}\}$ and $\{\tau_{e}\}$ with coding maps $\tau^{e_1}$, $\tau^{e_2}$ give a $\Lambda$-semibranching function system by Theorem~\ref{thm:SBFS-edge-defn}.
\end{example}

\begin{example}
\label{ex:non-cst-RN}
We present here an example of a $\Lambda$-semibranching function system for the 2-graph of Example \ref{exonevtwoe} for which the Radon-Nikodym derivatives are not constant.

Let $X = [0,1]^2$ and let $D_v = (0,1)^2$.  Define 
\[ \tau_{f_1}(x, y) = (x, x+y-xy), \quad \tau_{f_2}(x,y) = (x, xy), \quad \tau_e(x,y) = (1-x, 1-y).\]
Then $R_{f_1} = \{ (x, y): 0 < x < y\}$ and $R_{f_2} = \{ (x, y): 0 < y < x \}$, and 
\[ \tau^{e_2} = \tau_e, \quad \tau^{e_1}(x, y)  = \begin{cases}
(x, y/x) & \text{ if }0 < y < x \\
\left(x, \frac{y-x}{1-x}\right) & \text{ if } 0 < x < y
\end{cases}\]
To see that these functions satisfy Theorem 3.3, we must check that 
\[ \tau^{e_2} \circ \tau^{e_1} = \tau^{e_1} \circ \tau^{e_2} \quad \text{ and } \quad \tau_{f_i} \circ \tau_{e} = \tau_e \tau_{f_{i+1}}.\]
These equations follow from straightforward calculations.

We now compute the Radon-Nikodym derivatives associated to this $\Lambda$-semibranching function system.  Consider a rectangle $E \subseteq X$ with lower left vertex $(a, b)$ and upper right vertex $(a+\epsilon, b + \delta)$.  Then $\mu(E) = \epsilon \delta$, whereas $ \tau_{f_1}(E) $ is the quadrilateral 
bounded by the lines 
\[x = a, \quad  x = a+\epsilon, \quad y = (1-b-\delta) x + b + \delta, \quad y = (1-b)x + b,\]
so a straightforward calculation tells us that 
\[\mu(\tau_{f_1}(E)) = \delta \epsilon ( 1 - a - \epsilon/2),\] and hence 
$\frac{\mu(\tau_{f_1}(E))}{\mu(E)} = 1-a-\epsilon/2.$
Thus, 
\[\Phi_{f_1}(x,y) = \lim_{E \ni ( x, y)} \frac{\mu_{\tau_{f_1}}(E)}{\mu(E)} = 1-x.\]
Similar calculations to the above show that $\tau_{f_2}(E)$ is the quadrilateral bounded by the lines 
\[x=a, \quad x= a+\epsilon, \quad y =( b+\delta) x , \quad y = b x,\]
and hence 
$\frac{\mu({\tau_{f_2}}(E))}{\mu(E)} = a + \epsilon/2$.  Consequently, 
\[ \Phi_{f_2}(x,y) = x.\]

Since $\tau_e$ is linear, $\Phi_e(x,y) = 1$ for all $(x, y) \in D_v$.

\end{example}

\begin{example}
\label{ex:sbfs-3v8e}
Consider the $2$-graph $\Lambda$ given in Example~7.7 of \cite{LLNSW} with the following skeleton.
\[
\begin{tikzpicture}[scale=2.5]
\node[inner sep=0.5pt, circle] (u) at (0,0) {$u$};
\node[inner sep=0.5pt, circle] (v) at (1.5,0) {$v$};
\node[inner sep=0.5pt, circle] (w) at (3,0) {$w$};
\draw[-latex, thick, blue] (v) edge [out=150, in=30] (u); 
\draw[-latex, thick, blue] (u) edge [out=-30, in=210] (v); 
\draw[-latex, thick, blue] (v) edge [out=30, in=150] (w); 
\draw[-latex, thick, blue] (w) edge [out=210, in=-30] (v); 
\draw[-latex, thick, red, dashed] (v) edge [out=120, in=60] (u); 
\draw[-latex, thick, red, dashed] (u) edge [out=-60, in=240] (v); 
\draw[-latex, thick, red, dashed] (v) edge [out=60, in=120] (w); 
\draw[-latex, thick, red, dashed] (w) edge [out=240, in=-60] (v); 
\node at (0.65, 0.12) {\color{black} $a_0$}; 
\node at (0.85, -0.12) {\color{black} $c_0$};
\node at (2.15, 0.12) {\color{black} $a_1$};
\node at (2.4, -0.12) {\color{black} $c_1$};
\node at (0.65, 0.55) {\color{black} $d_0$}; 
\node at (0.85, -0.55) {\color{black} $b_0$};
\node at (2.15, 0.55) {\color{black} $d_1$};
\node at (2.4, -0.55) {\color{black} $b_1$};
\end{tikzpicture}
\]
Here the blue and solid edges have degree $e_1$, and the red and dashed edges have degree $e_2$.
The factorization property of $\Lambda$ is given by, for $i=0,1$,
\[
a_ib_i=d_ic_i, \quad a_ib_{1-i}=d_ic_{1-i},\quad\text{and} \quad c_id_i=b_{1-i}a_{1-i}.
\]
In particular, 
\begin{equation}\label{eq:ex3_FP}
\begin{split}
&a_0b_0=d_0c_0, \quad a_1b_1=d_1c_1, \quad a_1b_0=d_1c_0,\\
&a_0b_1=d_0c_1, \quad c_0d_0=b_1a_1,\quad c_1d_1=b_0a_0
\end{split}
\end{equation}
Let $X=(0,1)$ be the unit open interval with  Lebesgue $\sigma$-algebra and measure  $\mu$.

 Let $D_u=(0,\frac{1}{3})$, $D_v=(\frac{1}{3}, \frac{2}{3})$ and $D_w=(\frac{2}{3}, 1)$. Then $\mu(X\setminus (D_u\cup D_v\cup D_w))=0$ and $\mu(D_i\cap D_j)=0$ for $i\ne j$ and $i,j\in \{u,v,w\}$, which gives Condition (i) and (ii) of Theorem~\ref{thm:SBFS-edge-defn}. We first define prefixing maps for blue (solid) edges;
\[\begin{split}
\tau_{a_0}(x)=\frac{3x-1}{3}\quad\;\;&\text{for $x\in D_{a_0}=D_v=\big(\frac{1}{3},\frac{2}{3}\big)$,}\\
\tau_{a_1}(x)=\frac{3x+1}{3}\quad\;\;&\text{for $x\in D_{a_1}=D_v=\big(\frac{1}{3},\frac{2}{3}\big)$,}\\
\tau_{c_0}(x)=\frac{x+1}{2}\quad\;\;&\text{for $x\in D_{c_0}=D_u=\big(0,\frac{1}{3}\big)$,}\\
\tau_{c_1}(x)=\frac{x}{2}\quad\;\;&\text{for $x\in D_{c_1}=D_w=\big(\frac{2}{3},1\big)$.}\\
\end{split}\]
Then the ranges of them are
\[
R_{a_0}=\big(0,\frac{1}{3}\big), \quad R_{a_1}=\big(\frac{2}{3},1\big),\quad R_{c_0}=\big(\frac{1}{2}, \frac{2}{3}\big), \quad\text{and}\quad \ R_{c_1}=\big(\frac{1}{3}, \frac{1}{2}\big).
\]
Thus, up to sets of measure zero, $D_u=R_{a_0}$, $D_v=R_{c_0}\cup R_{c_1}$, and $D_w=R_{a_1}$. So Condition (v) is satisfied for the degree $e_1$. 
 Moreover, for $e\in \{a_0, a_1, c_0, c_1\}$, the Radon-Nikodym derivative of $\tau_e$ on $D_e$ is given by
\begin{equation*}
\Phi_{{e}}(x) = \inf_{x \in E \subseteq D_{e}} \frac{(\mu\circ \tau_{e})(E)}{\mu(E)}\\
= \inf_{x \in E \subseteq D_{e}}\left\{ \begin{array}{cl}\frac{\frac{1}{2} \mu(E)}{\mu(E)}, & e = c_0, c_1 \\
\frac{\mu(E)}{\mu(E)}, & e = a_0, a_1
\end{array}\right.
\end{equation*}
\begin{equation*}
  = \left\{ \begin{array}{cl} \frac{1}{2}, & e = c_0, c_1 \\
1, & e = a_0, a_1
\end{array}\right.
\end{equation*}
since $\tau_e$ is linear for all $e \in \{a_0, a_1, c_0, c_1\}$.
Now define $\tau^{e_1}$ by
\[
\tau^{e_1}(x)=\begin{cases} \tau_{a_0}^{-1}(x)\quad\text{for}\;\; x\in R_{a_0} \\ \tau_{a_1}^{-1}(x)\quad\text{for}\;\; x\in R_{a_1}\\ \tau_{c_0}^{-1}(x)\quad\text{for}\;\; x\in R_{c_0} \\ \tau_{c_1}^{-1}(x)\quad\text{for}\;\; x\in R_{c_1}
\end{cases}
\]
Then $\tau^{e_1}$ is a coding map for $\{\tau_f: d(f)=e_1\}$. Therefore $\{\tau_f:D_f\to R_f, d(f)=e_1\}$ is a semibranching function system on $(X,\mu)$. Similarly, we define a semibranching function system for red (dashed) edges as follows.
\[\begin{split}
\tau_{d_0}(x)=\frac{-3x+2}{3}\quad\;\;&\text{for $x\in D_{d_0}=D_v=\big(\frac{1}{3},\frac{2}{3}\big)$,}\\
\tau_{d_1}(x)=\frac{-3x+4}{3}\quad\;\;&\text{for $x\in D_{d_1}=D_v=\big(\frac{1}{3},\frac{2}{3}\big)$,}\\
\tau_{b_0}(x)=\frac{-x+1}{2}\quad\;\;&\text{for $x\in D_{b_0}=D_u=\big(0,\frac{1}{3}\big)$,}\\
\tau_{b_1}(x)=\frac{-x+2}{2}\quad\;\;&\text{for $x\in D_{b_1}=D_w=\big(\frac{2}{3},1\big)$.}\\
\end{split}\]
Then 
\[
R_{d_0}=\big(0,\frac{1}{3}\big),\quad R_{d_1}=\big(\frac{2}{3},1\big),\quad R_{b_0}=\big(\frac{1}{3},\frac{1}{2}\big),\quad \text{and}\quad R_{b_1}=\big(\frac{1}{2},\frac{2}{3}\big).
\]
Thus, $D_u=R_{d_0}$, $D_v=R_{b_0}\cup R_{b_1}$ and $D_w=R_{d_1}$. So Condition (v) is satisfied. Also we have $\mu(X\setminus (R_{d_0}\cup R_{d_1}\cup R_{b_0}\cup R_{b_1}))=0$ and $\mu(R_i\cap R_j)=0$ for $i\ne j$ and $i,j\in \{d_0,d_1,b_0,b_1\}$. Also, for $e\in \{d_0,d_1,b_0,b_1\},$ the Radon-Nikodym derivative $\Phi_g$ is given by 
\begin{equation*}
\Phi_{{e}}(x) = \inf_{x \in E \subseteq D_{e}} \frac{(\mu\circ \tau_{e})(E)}{\mu(E)}\\
= \inf_{x \in E \subseteq D_{e}}\left\{ \begin{array}{cl}\frac{\frac{1}{2} \mu(E)}{\mu(E)}, & e = b_0,b_1 \\
\frac{\mu(E)}{\mu(E)}, & e = d_0, d_1
\end{array}\right.
\end{equation*}
\begin{equation*}
  = \left\{ \begin{array}{cl} \frac{1}{2}, & e = b_0, b_1 \\
1, & e = d_0, d_1.
\end{array}\right.
\end{equation*}
Now we define $\tau^{e_2}$ by
\[
\tau^{e_2}(x)=\begin{cases} \tau_{d_0}^{-1}(x)\quad\text{for}\;\; x\in R_{d_0} \\ \tau_{d_1}^{-1}(x)\quad\text{for}\;\; x\in R_{d_1} \\ \tau_{b_0}^{-1}(x)\quad\text{for}\;\; x\in R_{b_0} \\ \tau_{b_1}^{-1}(x)\quad\text{for}\;\; x\in R_{b_1}
\end{cases}
\]
Then $\tau^{e_2}$ is a coding map for $\{\tau_g:d(g)=e_2\}$. Thus, $\{\tau_g:D_g\to R_g, d(g)=e_2\}$ is a semibranching function system on $(X,\mu)$. 
To conclude that the above collection of   prefixing maps for blue edges and red edges gives a $\Lambda$-semibranching function system, we need to verify  conditions (iii) and (iv) of Theorem~\ref{thm:SBFS-edge-defn}. For a composable pair of edges $(\lambda,\eta)$,
it is straightforward to see that $R_{\eta}\subseteq D_{\lambda}$. 
Also it is straightforward to check that the prefixing maps satisfy the factorization properties given in \eqref{eq:ex3_FP}:
\[\begin{split}
&\tau_{a_0b_0}=\tau_{d_0c_0},\quad \tau_{a_1b_1}=\tau_{d_1c_1},\quad \tau_{a_1b_0}=\tau_{d_1c_0}\\
&\tau_{a_0b_1}=\tau_{d_0c_1},\quad \tau_{c_0d_0}=\tau_{b_1a_1},\quad \tau_{c_1d_1}=\tau_{b_0a_0}.
\end{split}\]
For example, 
\[
\tau_{a_0b_0}(x)=\tau_{a_0}\circ \tau_{b_0}(x)=\frac{3\tau_{b_0}(x)-1}{3}=(-\frac{x}{2}+\frac{1}{2})-\frac{1}{3}=-\frac{x}{2}+\frac{1}{6}
\]
for $x\in D_{b_0}=D_u=(0,\frac{1}{3})$. Also
\[
\tau_{d_0c_0}(x)=\tau_{d_0}\circ \tau_{c_0}(x)=\frac{-3\tau_{c_0}(x)+2}{3}=-(\frac{x}{2}+\frac{1}{2})+\frac{2}{3}=-\frac{x}{2}+\frac{1}{6}
\]
for $x\in D_{c_0}=D_u=(0,\frac{1}{3})$. 
Therefore the above prefixing maps give a $\Lambda$-semibranching function system on $(0,1)$  with Lebesgue measure by Theorem~\ref{thm:SBFS-edge-defn}.
\end{example}

We now  slightly modify an example from Kawamura (see Example~3.3 of \cite{kawamura}).  {Our main use for this example will be }to construct a $\Lambda$-semibranching function system of a double 2-graph $\Lambda$; {see Example \ref{ex-3.3-Kawamura_product} below.} 
(Note that one can easily modify these examples further in order   to obtain examples with quadratic nonconstant Radon-Nikodym derivative, see \cite{kawamura}.)

\begin{example}
\label{ex-3.3-Kawamura_modified} 
Let $A$ be a vertex matrix given by
\[
{\color{black} A= \begin{pmatrix} 1 & 1 \\
 1 & 0
\end{pmatrix}},
\]
The associated 1-graph $E$ is 
\[
\begin{tikzpicture}[scale=1.5]
 \node[inner sep=0.5pt, circle] (v) at (0,0) {$v$};
    \node[inner sep=0.5pt, circle] (w) at (1.5,0) {$w$};
    \draw[-latex, thick] (v) edge [out=130, in=230, loop, min distance=30, looseness=2.5] (v);
\draw[-latex, thick] (v) edge [out=30, in=150] (w);
\draw[-latex, thick] (w) edge [out=210, in=-30] (v);
\node at (-0.78, 0) {\color{black} $e$}; 
\node at (0.7, 0.45) {\color{black} $f$};
\node at (0.7, -0.45) {\color{black} $g$};
\end{tikzpicture}
\]

Let $X= [0,1]$ be endowed with the  Lebesgue $\sigma$-algebra and measure $\mu$. Fix a number $a\in (0,1)$ and define
\[
{\color{black} D_v = (0,a ),\quad D_w = (a, 1) \subseteq X.}
\]

Define prefixing maps for the edges $e,f$ and $g$ by
\[\begin{split}
&\tau_{e}(x)=\frac{x}{2}\quad \text{for}\;\; x\in D_v,\\
&\tau_{f}(x)=\frac{(1-a)x}{a}+a\quad \text{for}\;\; x\in D_v,\\
&\tau_{g}(x)=-\frac{ax}{2(a-1)}+\frac{a(2a-1)}{2(a-1)} \quad \text{for}\;\; x\in D_w.
\end{split}\]
Then 
\[
R_{e}=(0,\frac{a}{2}),\quad R_f=(a,1), \quad R_g=(\frac{a}{2}, a).
\]
Then the ranges are mutually disjoint and
\[
\mu(D_v\setminus (R_{e}\cup R_{g}))=0\quad \text{and}\quad D_w=R_{f}.
\]
Now define a coding map $\tau^1:X\to X$  by
\[
\tau^1(x)=\begin{cases} \tau^{-1}_{e}\quad \text{for}\;\; x\in R_{e}\\
\tau^{-1}_{f}\quad \text{for}\;\; x\in R_{f}\\
\tau^{-1}_g\quad \text{for}\;\; x\in R_{g}
\end{cases}
\]
Since all of the prefixing maps are linear, the corresponding Radon-Nikodym derivatives are constant and positive. Thus, $\{\tau_{e}, \tau_{f},\tau_{g}\}$ is a semibranching function system on $[0,1]$ with  coding map $\tau^1$.
\end{example}

{\subsection{Representations of double graphs and $\Lambda$-semibranching function systems}
\label{sec:double}
{In keeping with our focus on examples in this section, we indicate how to construct $\Lambda$-semibranching function systems for double graphs (Definition \ref{def-double-graph} below) by showing how to use  Example \ref{ex-3.3-Kawamura_modified}
to build a $\Lambda$-semibranching function system for the associated double graph.  Having clarified the construction via this example, we show in Proposition \ref{prop:double-graph-SBFS} how to construct a $\Lambda$-semibranching function system for a general double graph.}

\begin{defn}
\label{def-double-graph}
Let $E = (E^0, E^1)$ be a 1-graph.  We define the \emph{double graph} of $E$ to be the 2-graph $\Lambda_E$ with $\Lambda_E^0 = E^0$ and $\Lambda^{e_i}_E \cong E^1$ for $i=1, 2$; the factorization rules are trivial.  That is, if $e, f \in E^1$ and $s(e) = r(f)$, denote by $e^i ( f^i)$ the copy of $e$ (respectively $f$) in $\Lambda^{e_i}_E$.  Then we define 
\[ e^1 f^2 = e^2 f^1. \]
Note that all pairs of composable edges in $\Lambda_E^{e_1} \times \Lambda_E^{e_2}$ are of the form $e^1 f^2$ for $e, f$ as above, so the formula above completely defines the factorization rules for a double graph.
\end{defn}

\begin{example}
\label{ex-3.3-Kawamura_product}
{We now describe the double graph associated to the 1-graph $E$ with adjacency matrix
$
{ A= \begin{pmatrix} 1 & 1 \\
 1 & 0
\end{pmatrix}},
$
discussed in Example \ref{ex-3.3-Kawamura_modified} above.}
 The double graph $\Lambda_E$ is a $2$-graph  whose $1$-skeleton is given by
\[
\begin{tikzpicture}[scale=1.8]
 \node[inner sep=0.5pt, circle] (v) at (0,0) {$v$};
    \node[inner sep=0.5pt, circle] (w) at (1.5,0) {$w$};
    \draw[-latex, thick, blue] (v) edge [out=70, in=150, loop, min distance=25, looseness=2.5] (v);
\draw[-latex, thick, blue] (v) edge [out=30, in=150] (w);
\draw[-latex, thick, blue] (w) edge [out=210, in=-30] (v);
\draw[-latex, thick, red, dashed] (v) edge [out=-70, in=-150, loop, min distance=25, looseness=2.5] (v);
\draw[-latex, thick, red, dashed] (v) edge [out=60, in=120] (w);
\draw[-latex, thick, red, dashed] (w) edge [out=240, in=-60] (v);
\node at (-0.6, 0.4) {\color{black} $\alpha_{11}$}; 
\node at (-0.6, -0.4) {\color{black} $\beta_{11}$}; 
\node at (1, 0.1) {\color{black} $\alpha_{12}$};
\node at (0.6, -0.1) {\color{black} $\alpha_{21}$};
\node at (0.7, 0.55) {\color{black} $\beta_{12}$};
\node at (0.9, -0.55) {\color{black} $\beta_{21}$};
\end{tikzpicture}
\]
and the factorization rules  are given by
\[ \alpha_{11} \beta_{11} = \beta_{11} \alpha_{11}, \quad 
\alpha_{11} \beta_{21}=\beta_{11}\alpha_{21}, \quad \alpha_{21}\beta_{12}=\beta_{21}\alpha_{12},\quad \alpha_{12}\beta_{21}=\beta_{12}\alpha_{21}.
\]

As in Example~\ref{ex-3.3-Kawamura_modified}, let $X=(0,1)$ with Lebesgue measure $\mu$. Fix $a\in (0,1)$, then define $D_v, D_w$ and $\tau_{\alpha_{11}}:= \tau_e$, $\tau_{\alpha_{12}} := \tau_f$, $\tau_{\alpha_{21}}:= \tau_g$ as in Example~\ref{ex-3.3-Kawamura_modified}. Then $\{\tau_{\alpha_{11}},\tau_{\alpha_{12}},\tau_{\alpha_{21}}\}$ forms an $E$-semibranching function system on $(X,\mu)$ with the coding map $\tau^{e_1}:=\tau^1$, where $\tau^1$ is the coding map given in Example~\ref{ex-3.3-Kawamura_modified}. Similarly, for red edges $\beta_{11}$, $\beta_{12}$, $\beta_{21}$, define the prefixing maps by
\[
\tau_{\beta_{11}}:=\tau_{\alpha_{11}} = \tau_e, \quad \tau_{\beta_{12}}:=\tau_{\alpha_{12}} = \tau_f, \quad \tau_{\beta_{21}}:=\tau_{\alpha_{21}} = \tau_g.
\]
Then $\{\tau_{\beta_{11}},\tau_{\beta_{12}},\tau_{\beta_{21}}\}$ forms an $E$-semibranching function system on $(X,\mu)$ with the coding map $\tau^{e_2}:=\tau^{e_1} = \tau^1$. Then Conditions (i), (ii), (iv) and (v) of Theorem~\ref{thm:SBFS-edge-defn} are automatically satisfied by construction. To see that Condition (iii) is also satisfied, note that it is straightforward to see 
\[\begin{split}
&R_{\beta_{12}}\subseteq D_w=D_{\alpha_{21}}, \quad R_{\alpha_{21}}\subseteq D_v=D_{\beta_{11}} \quad R_{\beta_{21}}\subseteq D_v=D_{\alpha_{12}}\\
&R_{\alpha_{12}}\subseteq D_w=D_{\beta_{21}}, \quad R_{\beta_{21}}\subseteq D_v=D_{\alpha_{12}}, \quad R_{\alpha_{21}}\subseteq D_v=D_{\beta_{12}}.
\end{split}\]

Also by construction it is straightforward to see the prefixing maps satisfy the factorization rules:
\[
\tau_{\alpha_{11}}\circ \tau_{\beta_{21}}=\tau_{\beta_{11}}\circ \tau_{\alpha_{21}}, \quad 
\tau_{\alpha_{21}}\circ \tau_{\beta_{12}}=\tau_{\beta_{21}}\circ \tau_{\alpha_{12}},\quad 
\tau_{\alpha_{12}}\circ \tau_{\beta_{21}}=\tau_{\beta_{12}}\circ \tau_{\alpha_{21}}.
\]
Therefore, Condition (iii) is satisfied, and hence $\{\tau_{\alpha_{11}},\tau_{\alpha_{12}},\tau_{\alpha_{21}}\}$ and $\{\tau_{\beta_{11}},\tau_{\beta_{12}},\tau_{\beta_{21}}\}$ give a  $\Lambda_E$-semibranching function system on $(X,\mu)$. 

\end{example}

More generally, we have the following:

\begin{prop}\label{prop:double-graph-SBFS}
 Let $E$ be a finite $1$-graph. Suppose there is an $E$-semibranching function system on a measure space $(X,\mathcal{F},\mu)$ with prefixing maps $\{\tau_f: f\in E^1\}$ and coding map $\tau^1$. Let $\Lambda$ be the double graph associated to $E$. Let $\phi_i:(\Lambda^0, \Lambda^{e_i})\to (E^0, E^1)$ be the graph isomorphism for $i=1,2$.
For $f^i\in \Lambda^{e_i}$, define the  prefixing map by
\[\begin{split}
&\tau_{f^i}:=\tau_{\phi_i(f^i)},
\end{split}\]
and define the corresponding coding map by $\tau^{e_i}:=\tau^1$ for $i=1,2$.
Then the collection of prefixing maps $\{\tau_{f^i}: f^i\in \Lambda^{e_i}\}_{i=1,2}$ gives a $\Lambda$-semibranching function system on $(X,\mu)$ with coding maps $\{\tau^{e_i}:X\to X\}_{i=1,2}$. 
\end{prop}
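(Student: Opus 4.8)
The plan is to verify Conditions (i)--(v) of Theorem~\ref{thm:SBFS-edge-defn} for the prefixing maps $\{\tau_{f^i}\}$ and coding maps $\{\tau^{e_i}\}$, since that theorem then yields a $\Lambda$-semibranching function system and we are done. The key simplification is that, by construction, the two colors in the double graph $\Lambda$ are \emph{identical copies} of the same underlying $E$-semibranching function system: both $\{\tau_{f^1}: f^1 \in \Lambda^{e_1}\}$ and $\{\tau_{f^2}: f^2 \in \Lambda^{e_2}\}$ are literally the original family $\{\tau_f: f \in E^1\}$ (transported via the graph isomorphisms $\phi_i$), and both coding maps $\tau^{e_1}, \tau^{e_2}$ equal $\tau^1$. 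So essentially everything reduces to known facts about the single $E$-semibranching function system we started with.

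First I would fix notation: for $v \in \Lambda^0 = E^0$ set $D_v := D_{\phi_i^{-1}(v)}^{E}$ (consistent across $i$ since the domain sets depend only on the source vertex), and note $R_{f^i} = \tau_{f^i}(D_{f^i}) = R_{\phi_i(f^i)}^E$. Then Conditions (i) and (ii) follow immediately: (i) holds because $D_{f^i} = D_{s(f^i)}$ by the source-dependence of domains in the $E$-system, and $\mu(D_v) > 0$ is inherited from the $E$-semibranching function system; (ii) holds because the $E$-system already has $\mu(D_v \cap D_w) = 0$ for $v \neq w$ (this is built into the one-vertex-disjointness required of any $\Lambda$-SBFS, or can be assumed as part of the $E$-SBFS hypothesis). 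Condition (v) is likewise inherited: for each color $i$ and vertex $v$, $\bigcup_{g \in v\Lambda^{e_i}} R_g$ is exactly $\bigcup_{f \in vE^1} R_f^E$, which covers $D_v$ up to measure zero because the $E$-maps of that color form a semibranching function system. Condition (iv), $\tau^{e_1} \circ \tau^{e_2} = \tau^{e_2} \circ \tau^{e_1}$, is trivial because both coding maps equal $\tau^1$, so the two sides are identical.

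The only condition requiring genuine argument is Condition (iii), the compatibility with the factorization rules. Here I would use the defining factorization rule of the double graph, $e^1 f^2 = e^2 f^1$ whenever $s(e) = r(f)$ in $E$. Writing this in the form of Theorem~\ref{thm:SBFS-edge-defn}(iii), a relation $\lambda\alpha = \nu\beta$ with $\lambda, \beta \in \Lambda^{e_1}$, $\nu, \alpha \in \Lambda^{e_2}$ corresponds to $e^1 f^2 = e^2 f^1$, so $\lambda = e^1$, $\alpha = f^2$, $\nu = e^2$, $\beta = f^1$. The containments $R_\alpha \subseteq D_\lambda$ and $R_\beta \subseteq D_\nu$ reduce to $R_f^E \subseteq D_e^E$, which holds because $s(e) = r(f)$ makes $ef$ a composable path in $E$ and the $E$-semibranching function system satisfies the corresponding containment. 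For the equality of maps, I would compute
\[
\tau_\lambda \circ \tau_\alpha = \tau_{e^1} \circ \tau_{f^2} = \tau_e \circ \tau_f = \tau_{e^2} \circ \tau_{f^1} = \tau_\nu \circ \tau_\beta,
\]
where the middle equality holds because $\tau_{e^1} = \tau_{e^2} = \tau_e$ and $\tau_{f^1} = \tau_{f^2} = \tau_f$ by the definition of the transported maps.

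I do not anticipate a real obstacle, since the double-graph construction is designed precisely so that the two colors duplicate one system and the factorization rules are trivial; the main point to state carefully is the bookkeeping of how the graph isomorphisms $\phi_i$ identify $\Lambda^{e_i}$ with $E^1$ and how the factorization rule $e^1 f^2 = e^2 f^1$ translates into the hypothesis of Condition (iii). Having checked (i)--(v), I would conclude by invoking Theorem~\ref{thm:SBFS-edge-defn} to assemble the edge-level data into a full $\Lambda$-semibranching function system on $(X, \mu)$ with coding maps $\tau^m = (\tau^{e_1})^{m_1}(\tau^{e_2})^{m_2} = (\tau^1)^{m_1 + m_2}$, completing the proof.
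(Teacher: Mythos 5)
Your proposal is correct and follows essentially the same route as the paper's proof: Conditions (i), (ii), (iv), (v) of Theorem~\ref{thm:SBFS-edge-defn} are inherited from the $E$-semibranching function system, and Condition (iii) reduces via the factorization rule $e^1f^2=e^2f^1$ to the identity $\tau_{\phi_1(\lambda)}\circ\tau_{\phi_2(\alpha)}=\tau_{\phi_2(\nu)}\circ\tau_{\phi_1(\beta)}$. Your explicit check of the range containments $R_\alpha\subseteq D_\lambda$, $R_\beta\subseteq D_\nu$ is a small welcome addition that the paper leaves implicit.
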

\begin{proof}
Conditions (i) and (iv) of Theorem \ref{thm:SBFS-edge-defn} are satisfied by construction, and Conditions (ii) and (v) hold thanks to the hypothesis that the prefixing and coding maps $\{\tau_f, \tau^1\}$ form an $E$-semibranching function system on $(X, \mu)$. For Condition (iii), we simply observe that if $\lambda \alpha = \nu \beta$ for $\lambda, \beta \in \Lambda^{e_1}, \alpha, \nu \in \Lambda^{e_2}$, then the factorization rule in $\Lambda$ implies that 
\[\phi_1(\lambda) = \phi_2(\nu) \text{ and } \phi_1(\beta) = \phi_2(\alpha).\]
Thus, $\tau_{\lambda} \circ \tau_\alpha = \tau_{\phi_1(\lambda)} \circ \tau_{\phi_2(\alpha)} = \tau_{\phi_2(\nu)} \circ \tau_{\phi_1(\beta)} = \tau_\nu \circ \tau_\beta$ as desired, so Condition (iii) also holds.
\end{proof}

\subsection{$\Lambda$--semibranching function systems for  product graphs and their representations}
\label{sec:product}
{In this section we describe the well-known procedure for constructing products of higher-rank graphs, and show how to construct a $\Lambda$-semibranching function system for the product graph, starting from $\Lambda$-semibranching function systems for the initial graphs.}

\begin{defn}(See \cite[Proposition~1.8]{KP} and \cite[Proposition~5.1]{Kang-Pask})
Let $(\Lambda_1, d_1)$ and $(\Lambda_2, d_2)$ be $k_1$- and $k_2$-graphs respectively. We  define the \emph{product graph}  $(\Lambda_1\times\Lambda_2, d_1\times d_2)$ to consist of the product category $\Lambda_1 \times \Lambda_2$,  with degree map $d_1\times d_2 : \Lambda_1\times \Lambda_2\to \N^{k_1+k_2}$ given by $d_1\times d_2(\lambda_1,\lambda_2)=(d(\lambda_1),d(\lambda_2))\in \N^{k_1}\times \N^{k_2}$ for $\lambda_1\in\Lambda_1$ and $\lambda_2\in \Lambda_2$.
\end{defn}
The product graph $\Lambda_1\times \Lambda_2$ is a $(k_1+k_2)$-graph by Proposition~1.8 of \cite{KP}  and  
\[C^*(\Lambda_1\times\Lambda_2)\cong C^*(\Lambda_1)\otimes C^*(\Lambda_2)\] by Corollary~3.5 of \cite{KP}. 
Also Theorem~5.3 of \cite{Kang-Pask} implies that $\Lambda_1\times\Lambda_2$ is a finite $(k_1+k_2)$-graph with no sources if and only if $\Lambda_i$ is a finite $k_i$-graph with no sources for $i=1,2$.  

Notice that $(\Lambda_1 \times \Lambda_2)^0 = \Lambda_1^0 \times \Lambda_2^0$; moreover, if $e_i$ is a basis vector for $\N^{k_1}$,
\[(v_1, v_2)( \Lambda_1 \times \Lambda_2)^{e_i} (w_1, w_2) = \begin{cases} \emptyset, & w_2 \not= v_2 \\ v_1 \Lambda_1^{e_i} w_1 , & w_2 = v_2. \end{cases} \]
Similarly,  if $e_j$ is a basis vector for $\N^{k_2}$, then  
\[(v_1, v_2)( \Lambda_1 \times \Lambda_2)^{e_j} (w_1, w_2) = \begin{cases} \emptyset, & w_1 \not= v_1 \\ v_2 \Lambda_2^{e_j} w_2 , & w_1= v_1. \end{cases} \]
 Thus, if we choose an ordering of the vertices of $\Lambda_i$ for $i= 1, 2$ and then list the vertices of $\Lambda_1 \times \Lambda_2$ lexicographically, the vertex matrices $A_i$ of $\Lambda_1 \times \Lambda_2$ are given by 
 \[A_i = M_i \otimes I_{k_2}, \ 1 \leq i \leq k_1; \qquad A_{k_1 + j} = I_{k_1} \otimes N_j, \ 1 \leq j \leq k_2,\]
 where $\{M_i\}_{i=1}^{k_1}$ are the vertex matrices for $\Lambda_1$ and $\{N_j\}_{j=1}^{k_2}$ are the vertex matrices for $\Lambda_2$.
 
 To describe the factorization rule in $\Lambda_1 \times \Lambda_2$, suppose that $\lambda \in (v_1, v_2) (\Lambda_1 \times \Lambda_2)^{e_j}(w_1, v_2)$ where $e_j$ is a basis vector for $\N^{k_1}$, and $\nu \in (w_1, v_2) (\Lambda_1 \times \Lambda_2)^{e_\ell}(w_1, w_2)$ where $e_\ell$ is a basis vector for $\N^{k_2}$.  
So $\lambda$ and $\nu$ are composable since $s(\lambda)=(w_1,v_2)=r(\nu)$, and
 then $\lambda$ corresponds to a morphism $\lambda_1 \in v_1 \Lambda_1 w_1$, and $\nu$ corresponds to a morphism $\nu_2 \in v_2 \Lambda_2 w_2$.  
 
 Note that $\nu_2$ also induces $\tilde{\nu}\in (v_1,v_2)(\Lambda_1\times \Lambda_2)^{e_\ell} (v_1,w_2)$, and $\lambda_1$ induces $\tilde \lambda \in (v_1, w_2)(\Lambda_1 \times \Lambda_2)^{e_j}(w_1, w_2)$.  The factorization rule is then given by 
 \[ \lambda \nu = \tilde{\nu} \tilde{\lambda}.\]
 
 For example, let $E$ be the 1-graph of Example \ref{ex-3.3-Kawamura_modified}.  Then the product graph $\Lambda :=E \times E$ has skeleton
\[
\begin{tikzpicture}[scale=1.8]
 \node[inner sep=0.5pt, scale=0.75] (vv) at (0,0) {$(v, v)$};
    \node[inner sep=0.5pt, scale=0.75] (vw) at (1.5,0) {$(v, w)$};
 \node[inner sep=0.5pt,scale=0.75] (wv) at (0,-1.5) {$(w, v)$};
    \node[inner sep=0.5pt, scale=0.75] (ww) at (1.5,-1.5) {$(w, w)$};
    \draw[->, thick, blue] (vv) ..  node[above]{\color{black}$e^2_{v}$} controls (1,1) and (-1, 1) ..  (vv);
    \draw[dashed, ->, thick, red] (vv) .. node[left]{\color{black}$e^1_{v}$} controls (-0.5, 0) and (-0.5, 1.5) .. (vv);
\draw[->, thick, blue] (vv) to[bend left]  node[above] {\color{black}$f^2_{v}$}(vw);
\draw[->, thick, blue] (vw) to[bend left] node[below]{\color{black}$g^2_{v}$}(vv);
\draw[->, thick, blue] (wv) to[bend left] node[above]{\color{black}$f^2_w$} (ww);
\draw[->, thick, blue] (ww) to[bend left] node[below]{\color{black}$g^2_w$} (wv);
\draw[->, thick, dashed, red] (vw) to[bend right] node[left]{\color{black}$f^1_w$}(ww);
\draw[->, thick, dashed, red] (ww) to[bend right] node[right]{\color{black}$g^1_w$}(vw);
\draw[->, thick, dashed, red] (vv) to[bend right] node[left]{\color{black}$f^1_v$}(wv);
\draw[->, thick, dashed, red] (wv) to[bend right] node[right]{\color{black}$g^1_v$}(vv);
\draw[->, thick, dashed, red] (vw) ..  node[right]{\color{black}$e^1_w$} controls (1, 1) and (2.5, 1) .. (vw);
\draw[->, thick, blue] (wv) .. node[left]{\color{black}$e^2_w$} controls (1, -2.5) and (-1, -2.5) .. (wv);
\end{tikzpicture}
\]
Here the dashed red edges correspond to the first 
copy of $E$ and the solid blue edges correspond to the second. Note that for any $j \in \{ e, f, g\}, \ u \in \{ v, w\}$ we have 
\[ r(j^1_u) = (r(j), u), \ s(j^1_u) = (s(j), u); \qquad r(j^2_u) = (u, r(j)), \ s(j^2_u) = (u, s(j)).\]
 The factorization rule in this case is given by (for $\{i , j\}=\{ 1, 2\}$, $i\ne j$)
\[e^1_v e^2_v = e^2_v e^1_v \quad e^i_v g^j_v = g^j_v e^i_w \quad f^i_v e^j_v = e^j_w f^i_v \quad g^j_w f^i_w = f^i_v g^j_v \quad f^1_w f^2_v = f^2_w f^1_v \quad g^1_v g^2_w= g^2_v g^1_w.\]

Note that there is no factorization when $i=j$ because then the edges are the same color.

Let $\mu$ denote Lebesgue measure on $(0,1) \subseteq \R$. We can define a $\Lambda$-semibranching function system on $((0,1)^2, \mu\times \mu)$ by using the $E$-semibranching function system of Example \ref{ex-3.3-Kawamura_modified}.  Namely, fix $a \in (0,1)$ and define 
\[D_{(v,v)} = (0,a)^2; \quad  D_{(v,w)} = (0,a) \times (a, 1); \quad D_{(w,v)} = (a, 1) \times (0,a) ; \quad D_{(w,w)} = (a, 1)^2,\]
and for any edge $j$ and for $u = v,w$, define  
\[ \tau_{j^1_u}(x,y) = \chi_{D_u}(y) \cdot (\tau_j(x), y) ; \quad \tau_{j^2_u}(x,y) = \chi_{D_u}(x) \cdot (x,\tau_j(y));\]
\[\tau^{e_1}(x, y) = ( \tau^1(x),y), \quad  \tau^{e_2}(x, y) = (x, \tau^1( y)). \]
Thus, $R_{j^1_u} =  R_j \times D_u$ and $R_{j^2_u} = D_u \times R_j$.  

The fact that our original $E$-semibranching function system satisfies Conditions (i) - (v) of Theorem \ref{thm:SBFS-edge-defn} implies immediately that the maps $\{ \tau_{j^i_u}, \tau^{e_i}\}$ satisfy Condition (v) of said Theorem.
It is evident that $\tau^{e_1}$ and $\tau^{e_2}$ commute, so Condition (iv) of Theorem \ref{thm:SBFS-edge-defn} is also satisfied.  Conditions (i) and (ii) follow from our description above of the sets $D_{(p, q)}$ and the observation that the domain of $\tau_{j^1_u}$ is $D_{s(j)} \times D_u = D_{(s(j), u)} = D_{s(j^1_u)}$, and the domain of $\tau_{j^2_u}$ is $D_{(u, s(j))} = D_{s(j^2_u)}$, and Condition (iii) follows from a quick computation.

In more generality, we have:

 \begin{prop}\label{prop:product-graph-SBFS}
For $i=1,2$, let $\Lambda_i $ be a $k_i$-graph with a $\Lambda_i$-semibranching function system $\{\tau^i_{\lambda}: \lambda \in \Lambda_i\}$ on $(X_i, \mu_i)$, with coding maps $\tau^{i, e_j}$ for $1 \leq j \leq k_i$.  For $\lambda \in \Lambda_1$ with $|d(\lambda)|=1$, let $\{\lambda^1_v: v \in \Lambda_2^0\}$ denote the corresponding edges in $\Lambda_1 \times \Lambda_2$, with $s(\lambda_v^1) = (s(\lambda), v)$ and $r(\lambda_v^1) = (r(\lambda), v)$; similarly for $\nu\in \Lambda_2$ but with $s(\nu^2_u)=(u,s(\nu))$, $r(\nu^2_u)=(u,r(\nu))$, where $u\in \Lambda_1^0$.    For $w \in \Lambda_1^0, v \in \Lambda_2^0$, define $D_{w,v} \subseteq X_1 \times X_2$ by 
\[D_{w, v} := D_w \times D_v \subseteq X_1 \times X_2.\]
 Then, define prefixing maps $\tau_{\lambda_v^1}, \tau_{\eta_w^2}$ on $X_1 \times X_2$  by 
\[\tau_{\lambda^1_v} (x, y) := \chi_{D_v}(y ) \cdot (\tau_\lambda^1(x), y), \quad  \tau_{\eta^2_w}(x,y) := \chi_{D_w}(x) \cdot (x, \tau^2_\eta(y) ),\]
and coding maps $\tau^{e_j}(x, y) = (\tau^{1, e_j}(x), y)$ if $1 \leq j \leq k_1$, or $\tau^{e_j}(x,y) = (x, \tau^{2, e_{j -k_1}}(y))$ if $k_1 < j \leq k_1 + k_2$.
The prefixing maps $\{\tau_{\lambda_v^1}, \tau_{\eta_w^2}: v \in \Lambda_1^0, w \in \Lambda_2^0, |d(\lambda)| = |d(\eta)| = 1\}$ and the coding maps $\{ \tau^{e_j}\}$ satisfy Conditions (i) - (v) of Theorem \ref{thm:SBFS-edge-defn} and thus give rise to a $\Lambda_1 \times \Lambda_2$-semibranching function system.
\end{prop}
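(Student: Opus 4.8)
The plan is to verify the two hypotheses of Theorem \ref{thm:SBFS-edge-defn} for the proposed data on the product space $(X_1 \times X_2, \mu_1 \times \mu_2)$: first, that for each color the given prefixing maps form a semibranching function system (Definition \ref{def-1-brach-system}) with the stated coding map, and second, that Conditions (i)--(v) hold. The structural observation driving every step is that the product construction separates the two groups of coordinates: a prefixing or coding map of a color $i \leq k_1$ acts as the corresponding $\Lambda_1$-map on the first coordinate and leaves the second coordinate fixed (apart from the domain restriction imposed by $\chi_{D_v}$), and dually for colors $i > k_1$. Throughout I would treat the case $i \le k_1$; the case $i > k_1$ is symmetric upon exchanging the two factors.

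First I would check that $\{\tau_{\lambda^1_v} : \lambda \in \Lambda_1^{e_i},\ v \in \Lambda_2^0\}$ is a semibranching function system with coding map $\tau^{e_i}$. Since $R_{\lambda^1_v} = R^1_\lambda \times D_v$, essential disjointness and covering follow by Fubini from the corresponding properties of the $\Lambda_1$-system together with the fact that $\bigcup_{v \in \Lambda_2^0} D_v = X_2$ up to $\mu_2$-measure zero (which holds because $\{\tau^2_v : v \in \Lambda_2^0\}$ is a semibranching function system for the $\Lambda_2$-system, the $m=0$ case of Definition \ref{def-lambda-SBFS-1}(a)). The Radon--Nikodym derivative factors as $\Phi_{\tau_{\lambda^1_v}}(x,y) = \Phi_{\tau^1_\lambda}(x)$, which is positive $(\mu_1\times\mu_2)$-a.e.\ because $\Phi_{\tau^1_\lambda} > 0$ holds $\mu_1$-a.e.; and $\tau^{e_i} \circ \tau_{\lambda^1_v}(x,y) = (\tau^{1,e_i}(\tau^1_\lambda(x)), y) = (x,y)$ on the domain, so $\tau^{e_i}$ is a coding map.

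Conditions (i), (ii), (iv), and (v) are then quick consequences of this coordinate separation. Condition (i) holds because the domain of $\tau_{\lambda^1_v}$ is $D_{s(\lambda)} \times D_v = D_{(s(\lambda),v)}$, depending only on $s(\lambda^1_v)$, and $(\mu_1\times\mu_2)(D_{(w,v)}) = \mu_1(D_w)\mu_2(D_v) > 0$. Condition (ii) holds since $D_{(w,v)} \cap D_{(w',v')} = (D_w \cap D_{w'}) \times (D_v \cap D_{v'})$ is null whenever $(w,v) \neq (w',v')$. For Condition (iv), two coding maps of colors both $\le k_1$ commute because they act only on the first coordinate, where $\tau^{1,e_i}$ and $\tau^{1,e_j}$ commute (Condition (iv) for $\Lambda_1$); colors both $> k_1$ commute by symmetry; and a mixed pair commutes trivially, acting on different coordinates. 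For Condition (v), the identity $\bigcup_{\lambda \in w\Lambda_1^{e_i}} R_{\lambda^1_v} = \big(\bigcup_\lambda R^1_\lambda\big) \times D_v$ gives $D_{(w,v)} \setminus \bigcup_\lambda R_{\lambda^1_v} = \big(D_w \setminus \bigcup_\lambda R^1_\lambda\big) \times D_v$, which is null by Condition (v) for $\Lambda_1$.

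The main work, and the step I expect to be the principal obstacle, is Condition (iii), where the product factorization rule must be matched against the prefixing maps. There are three cases. When $\lambda\alpha = \nu\beta$ with all four edges of colors $\le k_1$ (so the $\Lambda_2$-vertex label is fixed throughout), the identity reduces coordinatewise to Condition (iii) for the $\Lambda_1$-system; colors all $> k_1$ is symmetric. The genuinely new case is the mixed one: matching the product factorization rule $\lambda\nu=\tilde\nu\tilde\lambda$ described before the proposition, the four edges come from a single $a \in \Lambda_1$ and a single $b \in \Lambda_2$, namely $\lambda = a^1_{v_2}$, $\alpha = b^2_{w_1}$, $\nu = b^2_{v_1}$, $\beta = a^1_{w_2}$. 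Here I would compute both composites directly: on the common domain $D_{w_1}\times D_{w_2}$ each of $\tau_\lambda \circ \tau_\alpha$ and $\tau_\nu \circ \tau_\beta$ sends $(x,y) \mapsto (\tau^1_a(x), \tau^2_b(y))$, the agreement being automatic once one verifies that the vertex bookkeeping enforced by the characteristic functions $\chi_{D_v}$ matches on both sides --- which is exactly what the product factorization rule guarantees. The containments $R_\alpha \subseteq D_\lambda$ and $R_\beta \subseteq D_\nu$ follow from the same product description of domains and ranges. With all hypotheses of Theorem \ref{thm:SBFS-edge-defn} verified, that theorem delivers the desired $\Lambda_1 \times \Lambda_2$-semibranching function system.
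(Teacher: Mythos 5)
Your proof is correct and follows essentially the same route as the paper's: Conditions (i), (ii), (iv), and (v) fall out of the coordinate-separated product structure, and Condition (iii) is verified by matching the mixed-color product factorization rule $\lambda^1_{r(\nu)}\nu^2_{s(\lambda)}=\nu^2_{r(\lambda)}\lambda^1_{s(\nu)}$ against the composite prefixing maps, both of which act as $(x,y)\mapsto(\tau^1_a(x),\tau^2_b(y))$. You are somewhat more explicit than the paper in checking the single-color semibranching hypothesis and the same-color cases of Condition (iii), but these are routine and the underlying argument is identical.
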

\begin{proof}
By construction, $D_{\lambda_u^1} = D_{s(\lambda)} \times D_u = D_{s(\lambda), u} =D_{s(\lambda_u^1)}$ and $D_{\lambda_u^2} = D_u \times D_{s(\lambda)} = D_{u, s(\lambda)}=D_{s(\lambda^2_u)}$; since we began with $\Lambda_i$-semibranching function systems on $X_i$, for $i=1,2$, Conditions (i), (ii), (iv), and (v) of Theorem \ref{thm:SBFS-edge-defn} immediately follow.
To see Condition (iii), observe  that any pair $\lambda \in \Lambda_1, \nu \in \Lambda_2$ gives rise to exactly two composable pairs in $\Lambda_1 \times \Lambda_1$, namely $( \lambda_{r(\nu)}^1,\nu_{s(\lambda)}^2)$ and $(\nu_{r(\lambda)}^2 ,\lambda_{s(\nu)}^1)$ since $s(\lambda^1_{r(\nu)})=(s(\lambda),r(\nu))=r(\nu^2_{s(\lambda)})$ and $s(\nu^2_{r(\lambda)})=(r(\lambda),s(\nu))=r(\lambda^2_{s(\nu)})$.
The factorization rule for product graphs implies that
 \[ \lambda_{r(\nu)}^1\nu_{s(\lambda)}^2 = \nu_{r(\lambda)}^2 \lambda_{s(\nu)}^1 \in \Lambda_1 \times \Lambda_2.\]
 Consequently, $\tau_{\lambda_{r(\nu)}^1} \circ \tau_{\nu_{s(\lambda)}^2} = (\tau_\lambda, \tau_\nu) = \tau_{\nu_{r(\lambda)}^2} \circ \tau_{\lambda_{s(\nu)}^1 }$, so Condition (iii) holds.
 
\end{proof}

\section{New classes  of $\Lambda$-semibranching function systems associated to probability measures on $\Lambda^\infty$}
\label{sec:examples_gen_measure}

In this section, {
we change our focus to $\Lambda$-semibranching functions on the infinite path space $\Lambda^\infty$.  We indicate the variety of possible measures on $\Lambda^\infty$ which give rise to $\Lambda$-semibranching function systems, by using Lemmas \ref{lem-Kolm}, \ref{lem-RN-der-comp-limit}, and \ref{lem:measure} to construct many such measures.  These measures, and the associated $\Lambda$-semibranching representations,  will have significance for symbolic dynamics, for probability theory of Markov measures, and quasi-stationary Markov measures, and the associated stochastic processes.

To be precise, in the pages that follow, }
 we describe a variety of examples of $\Lambda$-semibranching function systems on measure spaces of the form $(\Lambda^\infty, \mathcal{B}_\Lambda, \mu)$, using the standard prefixing and coding maps 
\begin{equation}
\label{eq:standard-prefixing-coding} \sigma_\lambda(x) := \lambda x, \qquad \sigma^n(x)(p,q) := x(p+n, q+n),
\end{equation}
 and compare them to the standard $\Lambda$-semibranching function system of Example \ref{example:SBFS-M}.
We begin by describing examples which arise from  Kakutani's product measure construction \cite{Kaku}.  All of the $\Lambda$-semibranching function systems on $(\Lambda^\infty, \mu)$ that we obtain in this way are equivalent to the standard $\Lambda$-semibranching function system, in the sense that the measure $\mu$ is mutually absolutely continuous with respect to the measure $M$ of Equation \eqref{eq:M}.  

Moreover, as Section 3 of \cite{dutkay-jorgensen-monic} shows how to use Markov measures  to construct many inequivalent representations of $\mathcal O_N$,   we also extend these constructions in this section; this is possible since the $2$-graphs we are considering have infinite path spaces homeomorphic to  the infinite path space associated to $\mathcal{O}_N$, or disjoint unions of them. (For the definition of the Markov measures we are using, see  Definition 3.1 of \cite{dutkay-jorgensen-monic}, and also Definition \ref{def-Markov-measure}; for a generalized definition of Markov measures, see \cite{bezuglyi-jorgensen}.)   For a family of higher-rank graphs whose infinite path space can be constructed from that of $\mathcal O_N$, we  use this technique to obtain   $\Lambda$-semibranching function systems on $(\Lambda^\infty, \mu)$ where $\mu$ and $M$ are mutually singular measures.

First, we record {in Theorem \ref{thm-lambda-sbfs-on-the-inf-path-space-via-a-measure}}
a straightforward consequence of the definition (Definition \ref{def-lambda-SBFS-1}) of a $\Lambda$-semibranching function system.  {Theorem \ref{thm-lambda-sbfs-on-the-inf-path-space-via-a-measure} simplifies the work of checking when a probability measure on $\Lambda^\infty$ gives rise to a $\Lambda$-semibranching function system.} In the proof that follows, it will be useful to 
recall that the topology on the infinite path space $\Lambda^\infty$ of a higher-rank graph $\Lambda$ is generated by the cylinder sets $Z(\lambda)$, for $\lambda \in \Lambda$:
\[ Z(\lambda) = \{ x \in \Lambda^\infty: x(0, d(\lambda)) = \lambda\} = \{  \lambda y: y \in \Lambda^\infty\}.\]
In fact, the proof of Lemma 4.1 from \cite{FGKP} establishes that the topology on $\Lambda^\infty$ (and hence the Borel $\sigma$-algebra $\mathcal{B}_o(\Lambda^\infty)$) is generated by the ``square'' cylinder sets
\[ \{ Z(\lambda): d(\lambda) = (n, \ldots, n) \text{ for some } n \in \N\};\]
given any cylinder set $Z(\nu)$ with $d(\nu) \leq (n, \ldots, n)$, let
\[ I = \{ \lambda_i \in \Lambda: d(\nu \lambda_i)  = (n, \ldots, n) \}.\]
Then $Z(\lambda) = \bigsqcup_{\lambda_i \in I} Z(\nu \lambda_i)$ is a disjoint union of square cylinder sets.
\begin{thm} 
\label{thm-lambda-sbfs-on-the-inf-path-space-via-a-measure}	
Let $\Lambda$ be a finite, strongly connected $k$-graph.
Suppose that the infinite path space $\Lambda^\infty$ of $\Lambda$  is endowed with a probability measure $p$ satisfying the following properties:
\begin{itemize}
\item[(a)] The standard  prefixing and coding maps $\{\sigma_\lambda\}_{\lambda\in \Lambda}$, $\{\sigma^m\}_{m \in \N^k}$ on $\Lambda^\infty$ given in Equation \eqref{eq:standard-prefixing-coding} are measurable maps; 
\item[(b)] For all $v\in \Lambda^0$, we have $p(Z(v))>0$.
\item[(c)] Each of the edge prefixing operators $(\sigma_\lambda)_{\lambda\in \Lambda^{e_i}}$ has positive Radon-Nikodym derivative,
\[ \Phi_{\sigma_\lambda}:= \frac{d(p \circ \sigma_\lambda)}{dp} > 0, \text{ p. a.e. on } Z(s(\lambda)).\]
\end{itemize}
Then the maps $\sigma^n, \sigma_\lambda$ endow $(\Lambda^\infty, p)$ with a  $\Lambda$-semibranching function system. 
\end{thm}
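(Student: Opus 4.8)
The plan is to apply Theorem \ref{thm:SBFS-edge-defn}, which reduces the verification of a $\Lambda$-semibranching function system to checking, for each color $i$, that the edge maps form an ordinary semibranching function system, together with the five conditions (i)--(v) of that theorem. Throughout I take $D_\lambda = Z(s(\lambda))$ and $R_\lambda = \sigma_\lambda(D_\lambda) = Z(\lambda)$, exactly as in Example \ref{example:SBFS-M}, and I use the coding maps $\sigma^{e_i}$.

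First I would check that, for each fixed $1 \leq i \leq k$, the family $\{\sigma_\lambda : d(\lambda) = e_i\}$ is a semibranching function system in the sense of Definition \ref{def-1-brach-system}. Measurability of the maps $\sigma_\lambda$ and $\sigma^{e_i}$ is hypothesis (a). Since every infinite path $x$ satisfies $x(0,e_i) \in \Lambda^{e_i}$, the cylinder sets $\{Z(\lambda) : d(\lambda) = e_i\}$ partition $\Lambda^\infty$; hence $p(\Lambda^\infty \setminus \bigcup_{d(\lambda) = e_i} R_\lambda) = 0$, and $p(R_\lambda \cap R_{\lambda'}) = 0$ for distinct edges $\lambda, \lambda'$ of color $i$ (the intersection is in fact empty). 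The positivity of the Radon-Nikodym derivatives $\Phi_{\sigma_\lambda}$ on $D_\lambda = Z(s(\lambda))$ is precisely hypothesis (c). Finally, $\sigma^{e_i} \circ \sigma_\lambda(x) = \sigma^{e_i}(\lambda x) = x$ for every $x \in Z(s(\lambda))$ by the definition of the shift, so $\sigma^{e_i}$ is a coding map for this family.

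Next I would verify Conditions (i)--(v) of Theorem \ref{thm:SBFS-edge-defn}, all of which follow from the combinatorics of cylinder sets and the factorization property, supplemented by hypothesis (b). Condition (i) holds because $D_\lambda = Z(s(\lambda))$ depends only on $s(\lambda)$, so we may set $D_v := Z(v)$, and $p(D_v) = p(Z(v)) > 0$ is hypothesis (b). Condition (ii) is immediate from $Z(v) \cap Z(w) = \emptyset$ for $v \neq w$, since an infinite path has a unique range. For Condition (iii), if $\lambda\alpha = \nu\beta$ with $\lambda, \beta \in \Lambda^{e_i}$ and $\alpha, \nu \in \Lambda^{e_j}$, then composability forces $r(\alpha) = s(\lambda)$, whence $R_\alpha = Z(\alpha) \subseteq Z(r(\alpha)) = Z(s(\lambda)) = D_\lambda$ (and similarly $R_\beta \subseteq D_\nu$); moreover $\sigma_\lambda \circ \sigma_\alpha(x) = \lambda\alpha x = \nu\beta x = \sigma_\nu \circ \sigma_\beta(x)$ since $\lambda\alpha = \nu\beta$ as morphisms in $\Lambda$. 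Condition (iv) follows from $\sigma^{e_i} \circ \sigma^{e_j} = \sigma^{e_i + e_j} = \sigma^{e_j} \circ \sigma^{e_i}$. Condition (v) holds because $Z(v) = \bigsqcup_{g \in v\Lambda^{e_i}} Z(g)$, again since each $x \in Z(v)$ has a unique initial edge $x(0,e_i) \in v\Lambda^{e_i}$.

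With every hypothesis of Theorem \ref{thm:SBFS-edge-defn} verified, that theorem produces a $\Lambda$-semibranching function system on $(\Lambda^\infty, p)$ with the prescribed prefixing and coding maps, as claimed. I do not expect a genuine obstacle here: the content is almost entirely organizational. The only genuinely analytic input is hypothesis (c), which supplies the positivity of the edge Radon-Nikodym derivatives, while the passage from edges to paths of arbitrary degree (including existence and positivity of the higher derivatives) is handled internally by Theorem \ref{thm:SBFS-edge-defn}. Everything else is a consequence of the partition of $\Lambda^\infty$ into cylinder sets and the $k$-graph factorization property, with hypothesis (b) guaranteeing that the vertex domains carry positive measure.
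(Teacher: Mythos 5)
Your proof is correct, but it is organized differently from the paper's. The paper's proof of Theorem \ref{thm-lambda-sbfs-on-the-inf-path-space-via-a-measure} declares the result ``completely analogous to the proof of Proposition 3.4 of \cite{FGKP}'' and supplies only the one new ingredient: a direct chain-rule computation showing that for an arbitrary path $\eta = \eta_1\cdots\eta_n$ the Radon--Nikodym derivative factors as
\[
\frac{d(p\circ\sigma_\eta)}{dp} = \prod_{j=1}^{n}\Bigl(\frac{d(p\circ\sigma_{\eta_j})}{dp}\circ\sigma_{\eta_{j+1}\cdots\eta_n}\Bigr),
\]
a product of functions each positive on $Z(s(\eta))$ by hypothesis (c), whence $\Phi_{\sigma_\eta}>0$. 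You instead route the entire argument through Theorem \ref{thm:SBFS-edge-defn}: you verify that each color class $\{\sigma_\lambda : d(\lambda)=e_i\}$ is a semibranching function system and that Conditions (i)--(v) hold (all of which you check correctly --- the cylinder-set partitions, the factorization identity $\sigma_\lambda\circ\sigma_\alpha=\sigma_\nu\circ\sigma_\beta$, and hypothesis (b) for the vertex domains), and then let that theorem's inductive machinery produce the existence and positivity of the higher-degree Radon--Nikodym derivatives. The two arguments share the same analytic core --- multiplicativity of Radon--Nikodym derivatives along an edge decomposition of a path --- but yours is more self-contained within the paper (it leans on an internal theorem rather than an external citation to \cite{FGKP}), while the paper's is shorter because it delegates all the structural verifications to the earlier reference and only writes out the one genuinely new estimate. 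One small point worth making explicit if you adopt your route: the prefixing maps produced by Theorem \ref{thm:SBFS-edge-defn} via composition of edge maps coincide with the standard $\sigma_\eta(x)=\eta x$, since $\sigma_{\eta_1}\circ\cdots\circ\sigma_{\eta_n}(x)=\eta x$, so the resulting $\Lambda$-semibranching function system really is the one asserted in the statement.
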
	

\begin{proof}
The proof is straightforward and completely analogous to the proof of Proposition 3.4 from \cite{FGKP}.  The only argument which differs slightly is to see that all Radon-Nikodym derivatives $\Phi_{\sigma_\lambda}$ are positive for any $\lambda\in\Lambda$.  To that end, let $\eta \in \Lambda$ and write $\eta$ as a concatenation of edges, $\eta = \eta_1 \cdots \eta_n$.  Then, since $\sigma_\eta  = \sigma_{\eta_1} \circ \cdots \circ \sigma_{\eta_n}$, and the fact that $p\circ \sigma_{\eta_j}<<p$ for all $1\le j\le n$ implies that $p\circ \sigma_{\eta_j}\circ \dots \circ \sigma_{\eta_n} << p\circ \sigma_{\eta_{j+1}}\circ\dots \circ \sigma_{\eta_n}$ for $1\le j\le n-1$, standard properties of the Radon-Nikodym derivative allow us to rewrite 
\[ \frac{d(p \circ \sigma_\eta)}{dp} = \frac{d(p \circ \sigma_{\eta_1}\circ \cdots \circ \sigma_{\eta_n})}{d(p \circ \sigma_{\eta_2} \circ \cdots \circ \sigma_{\eta_{n}})} \frac{d(p \circ \sigma_{\eta_2}\circ \cdots \circ \sigma_{\eta_{n}})}{d(p \circ \sigma_{\eta_3} \circ \cdots \circ \sigma_{\eta_{n}})} \cdots\frac{d(p \circ \sigma_{\eta_{n-1}}\circ  \sigma_{\eta_n})}{d(p \circ \sigma_{\eta_n} )}  \frac{d(p \circ \sigma_{\eta_n})}{dp} .\]
Now, observe that  
\[\frac{d(p \circ \sigma_{\eta_{n-1}}\circ  \sigma_{\eta_n})}{d(p \circ \sigma_{\eta_n} )} = \frac{d(p \circ \sigma_{\eta_{n-1}})}{dp} \circ \sigma_{\eta_n}.\]

Similarly, by induction we can rewrite 
\[ \frac{d(p \circ \sigma_\eta)}{dp} =\left( \frac{d(p \circ \sigma_{\eta_1})}{dp} \circ \sigma_{\eta_2 \cdots \eta_n}\right)  \left( \frac{d(p \circ \sigma_{\eta_2})}{dp} \circ \sigma_{\eta_3\cdots \eta_n} \right)   \cdots \left( \frac{d(p \circ \sigma_{\eta_{n-1}})}{dp} \circ \sigma_{\eta_n}\right) \left( \frac{d(p \circ \sigma_{\eta_n})}{dp} \right) .\]
Observe that for any edge $\nu$ and any $\lambda\in \Lambda$ with $s(\nu) = r(\lambda)$, $ \frac{d(p \circ \sigma_\nu)}{dp}$ is positive, by hypothesis, on $Z(s(\nu)) \supseteq Z(\lambda)  = R_{\lambda}$.
In our case, since $\eta=\eta_1\dots \eta_n$, we have $Z(s(\eta_1))\supseteq Z(s(\eta_2))\supseteq \dots Z(s(\eta_n))=Z(s(\eta))$. Also the fact that each Radon-Nikodym derivative $\Phi_{\sigma_{\eta_j}}$ is positive $p$-a.e.~on $Z(s(\eta_j))$ implies that $\Phi_{\sigma_{\eta_j}}\circ \sigma_{\eta_{j+1}\dots \eta_n}$ is positive $p$-a.e.~on $Z(s(\eta))$ for $1\le j\le n-1$. 
Thus, $\frac{d(p \circ \sigma_\eta)}{dp} $ is the product of positive functions on $Z(s(\eta))$, and hence is positive on $Z(s(\eta))$. 
\end{proof}

\subsection{Kakutani-type probability  measures on $\Lambda^\infty$ }
\label{sec:examples_ab-cont-_measure}

We now apply Theorem \ref{thm-lambda-sbfs-on-the-inf-path-space-via-a-measure} to the 2-graphs of Example \ref{exonevtwoe} and Example \ref{ex:sbfs-3v8e}.  To be precise, we use a product measure construction inspired by Kakutani 
to build a Borel measure on the infinite path space $\Lambda^\infty$ which satisfies the hypotheses of Theorem \ref{thm-lambda-sbfs-on-the-inf-path-space-via-a-measure}.  It turns out that any such product measure is equivalent to the canonical measure $M$ on $\Lambda^\infty$, and indeed gives rise to a $\Lambda$-semibranching representation of $C^*(\Lambda)$ which is equivalent to the representation on $L^2(\Lambda^\infty, M)$ described in Proposition 3.4 and Theorem 3.5 of \cite{FGKP}. 
Note that a $\Lambda$-semibranching representation of $C^*(\Lambda)$ is a representation of $C^*(\Lambda)$ on $L^2(X,\nu)$ associated to a $\Lambda$-semibranching function system on a measure space $(X,\nu)$.

Recall from Example \ref{exonevtwoe} and Remark \ref{rmk:rainbow} that for the $2$-graph with one vertex $v$, and two blue edges $f_1$ and $f_2$ and one red edge $e$ satisfying the factorization relations
$$ef_1=f_2e\;\;\;\text{and}\;\;\;ef_2=f_1 e,$$
for any $\xi\in \Lambda^{\infty},$ we can write $\xi$ uniquely as 
$$\xi\equiv\;eg_1eg_2eg_3\cdots eg_n\cdots $$
where $g_i\in \{f_1,f_2\}.$ 
We now fix a sequence of positive numbers $\{p_n=\frac{1}{2}+\gamma_n\}_{n=1}^{\infty}$, where $|\gamma_n|<\frac{1}{2}$ such that  $p_n < 1 $ for all $n$, $\lim_{n\to \infty}p_n=\frac{1}{2},$
and $\sum_{n=1}^{\infty}|\gamma_n|<\infty.$  
Set 
$$q_n=1-p_n= \frac{1}{2}-\gamma_n,$$
 and note that $\{q_n\}_{n=1}^{\infty}$ is also a sequence of positive numbers between $0$ and $1$ that tends to $\frac{1}{2}.$

For each $i \in \N$, define
\begin{equation}\label{eq:seq_alpha}
\alpha_i\;=\begin{cases}p_i=\frac{1}{2}+\gamma_i& \text{if} \;g_i=f_1, \\
          q_i=\frac{1}{2}-\gamma_i\; & \text{if}\;g_i=f_2,\;1\leq i\leq n.
          \end{cases}
\end{equation}
Then we define a function $\mu$ on square cylinder sets $Z(e g_1 e g_2 e \cdots g_n)$ inductively  by 
\[
\mu(Z(eg_1eg_2eg_3\cdots eg_n))=\mu(Z(eg_1eg_2eg_3\cdots eg_{n-1}))\, \alpha_n.
\]
Also we define an empty product by 1, so $\mu(Z(v))=1$.
Then one can check that $\mu$ satisfies the following.
\begin{equation}\label{eq:mu_n}
\mu(Z(eg_1eg_2eg_3\cdots eg_n))=\prod_{i=1}^n\alpha_i.
\end{equation}

\begin{prop}\label{prop:example_exonevtwoe}
Let $\Lambda$ be the 2-graph of Example  \ref{exonevtwoe}. Let $(\alpha_n)_n$ be a sequence given by \eqref{eq:seq_alpha}, and $\mu$ be the function  associated to $(\alpha_n)_n$ as in \eqref{eq:mu_n}.  Then 
\begin{itemize}
\item[(a)] The function $\mu$ extends uniquely to a Borel probability measure on $\Lambda^\infty$, and the standard prefixing and coding maps $(\sigma_\lambda, \sigma^n)$ endow  $(\Lambda^\infty, \mu)$ with a $\Lambda$-semibranching function system. 
\item[(b)] Each such measure $\mu$ is equivalent to the Perron-Frobenius measure $M$ of Equation \eqref{eq:M}.  
\item[(c)] The $\Lambda$-semibranching representation of $C^*(\Lambda)$ on $L^2(\Lambda^\infty, \mu)$ is unitarily equivalent to the standard $\Lambda$-semibranching representation.  In particular, the $\Lambda$-semibranching representations on such measure spaces $L^2(\Lambda^\infty, \mu)$  are all unitarily equivalent.
\end{itemize}
\label{prop:prod-meas-onevtwoe}
\end{prop}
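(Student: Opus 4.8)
The plan is to handle the three parts in the order (a)-extension, (b), (a)-semibranching, (c), because the mutual absolute continuity proved in (b) is the engine that drives both the semibranching property in (a) and the unitary equivalence in (c).

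For the extension in (a), I would invoke Lemma \ref{lem:measure}: it suffices to verify that the assignment \eqref{eq:mu_n} is finitely additive on square cylinder sets. The one-step refinement identity is the key point: every level-$n$ square cylinder set $Z(eg_1\cdots eg_n)$ is the disjoint union of the two level-$(n+1)$ square cylinder sets $Z(eg_1\cdots eg_n e f_1)$ and $Z(eg_1\cdots eg_n e f_2)$, and by \eqref{eq:mu_n} the sum of their $\mu$-values is $\big(\prod_{i=1}^n\alpha_i\big)(p_{n+1}+q_{n+1}) = \prod_{i=1}^n\alpha_i$ since $p_{n+1}+q_{n+1}=1$. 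Finite additivity on squares then follows by iterating to a common level, so $\mu$ extends uniquely to a Borel measure, and it is a probability measure because $\mu(Z(v))$ is the empty product $1=\mu(\Lambda^\infty)$.

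For (b) I would compare $\mu$ with $M$ via Lemma \ref{lem-RN-der-comp-limit}. Let $\mathcal F_n$ be the $\sigma$-algebra generated by the level-$n$ square cylinder sets; since $\Lambda$ has a single vertex, its Perron--Frobenius data are $\kappa^\Lambda=1$ and $\rho(\Lambda)=(2,1)$, so \eqref{eq:M} gives $M(Z(eg_1\cdots eg_n))=2^{-n}$. Hence the level-$n$ Radon--Nikodym derivative $R_n=d(\mu|_{\mathcal F_n})/d(M|_{\mathcal F_n})$ is constant on each level-$n$ cylinder, equal to $\prod_{i=1}^n 2\alpha_i=\prod_{i=1}^n(1\pm 2\gamma_i)$. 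The crux is to show $R:=\lim_n R_n$ exists and lies in $(0,\infty)$: since $\gamma_i\to 0$ and $\sum_i|\gamma_i|<\infty$, the series $\sum_i\log(1\pm 2\gamma_i)$ converges absolutely, so the infinite product converges pointwise to a strictly positive finite value. Lemma \ref{lem-RN-der-comp-limit} then yields $\mu\ll M$ with $R=d\mu/dM$, and applying the same lemma with the roles reversed (the limiting derivative is then $R^{-1}>0$) gives $M\ll\mu$, so $\mu\sim M$. This is exactly the Kakutani dichotomy \cite{Kaku} for the product measures that $\mu$ and $M$ become under the identification $\Lambda^\infty\cong\{f_1,f_2\}^\N$, and I expect the convergence and positivity of this infinite product to be the main (though standard) obstacle.

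With (b) established, the semibranching claim in (a) reduces to the three hypotheses of Theorem \ref{thm-lambda-sbfs-on-the-inf-path-space-via-a-measure}. Measurability of the standard maps \eqref{eq:standard-prefixing-coding} is immediate from their continuity (preimages of cylinder sets are cylinder sets), and $\mu(Z(v))=1>0$. For positivity of the edge Radon--Nikodym derivatives I would use the chain rule together with $\mu\sim M$: from $\mu\ll M$ one gets $\mu\circ\sigma_\lambda\ll M\circ\sigma_\lambda\ll M\sim\mu$, so $\Phi^\mu_{\sigma_\lambda}$ exists, and the cocycle identity $\Phi^\mu_{\sigma_\lambda}=(R\circ\sigma_\lambda)\,\Phi^M_{\sigma_\lambda}\,R^{-1}$ shows it is positive, since $R>0$ and $\Phi^M_{\sigma_\lambda}=\rho(\Lambda)^{-d(\lambda)}>0$ by Example \ref{example:SBFS-M}. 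Finally, for (c) I would define $V\colon L^2(\Lambda^\infty,\mu)\to L^2(\Lambda^\infty,M)$ by $V\xi=R^{1/2}\xi$ with $R=d\mu/dM$; this is unitary because $\int|R^{1/2}\xi|^2\,dM=\int|\xi|^2\,d\mu$. A direct computation using the operator formula of Theorem \ref{thm:separable-repn} and the same cocycle identity shows $V S^\mu_\lambda V^{-1}=S^M_\lambda$ on generators, so the representations are unitarily equivalent; transitivity then gives the ``in particular'' statement that all such representations are mutually equivalent.
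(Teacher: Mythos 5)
Your proposal is correct, and while it shares the paper's overall skeleton (Lemma \ref{lem:measure} for the extension, Theorem \ref{thm-lambda-sbfs-on-the-inf-path-space-via-a-measure} for the semibranching structure, and the multiplication unitary $f\mapsto \sqrt{d\mu/dM}\,f$ for part (c)), it differs from the paper's proof at the two key technical steps, in a way worth noting. First, for the equivalence $\mu\sim M$ in (b), the paper identifies $(\Lambda^\infty,\mu)$ with an infinite product space $\prod_i(\{0,1\},\mu_i)$ and quotes Kakutani's criterion (Corollary 1 of Section 10 of \cite{Kaku}), checking that $\sum_i\bigl(1-\tfrac{\sqrt{1+2\gamma_i}}{2}-\tfrac{\sqrt{1-2\gamma_i}}{2}\bigr)$ converges; you instead run the martingale argument of Lemma \ref{lem-RN-der-comp-limit} with $R_n=\prod_{i\le n}(1\pm 2\gamma_i)$ in both directions, which keeps the argument self-contained within the paper's own toolkit at the cost of having to verify convergence and positivity of the infinite product by hand (the paper does essentially this same infinite-product estimate anyway, but for a different purpose). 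Second, and more substantially, the paper establishes positivity of the edge Radon--Nikodym derivatives \emph{directly}, by computing $\lim_N \mu(\sigma_{f_1}(Z(\lambda_N)))/\mu(Z(\lambda_N))$ as a quotient of convergent infinite products, which requires tracking how the factorization rule $ef_i=f_{i+1}e$ permutes the indices $m_i,\ell_i$; you instead prove (b) first and deduce positivity from the cocycle identity $\Phi^\mu_{\sigma_\lambda}=(R\circ\sigma_\lambda)\,\Phi^M_{\sigma_\lambda}\,R^{-1}$ with $R=d\mu/dM>0$ and $\Phi^M_{\sigma_\lambda}=\rho(\Lambda)^{-d(\lambda)}$. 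Your reordering eliminates the most computational part of the paper's proof and makes clear that the positivity is a formal consequence of equivalence with $M$; the paper's direct computation, on the other hand, is the template it reuses in later examples (Propositions \ref{prop-meas-on-ex-3-vert}, \ref{prop:SBFS-on-Lambda2N}, \ref{prop-RN-of-Markov-2-example}) where the measures are \emph{not} equivalent to $M$ and no such shortcut is available.
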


\begin{proof}
To see (a), note that $\mu$ is initially defined on a collection of sets which generates the topology on $\Lambda^\infty$, thus any measure extending $\mu$ is a Borel measure by definition. Moreover, Lemma~\ref{lem:measure} (equivalently, Lemma \ref{lem:kolmogorov})
implies that any function which is finitely additive on (square) cylinder sets gives rise to a measure on $\Lambda^\infty$ as long as $\Lambda$ is row-finite.  Finally, to see that $\mu$ is a probability measure we observe that $\mu(\Lambda^\infty) = \mu(Z(v))$ is the empty product and hence equal to 1 by definition.

Thus, to see that $\mu$ extends to a Borel probability measure on $\Lambda^\infty,$ it only remains to check that $\mu$ is finitely additive on square cylinder sets. If we define $h_i$ to equal $f_1$ when $g_i = f_2$, and vice versa (so that $h_i, g_i \in \{ f_1, f_2\}$ and $h_i \not= g_i$) then we have $Z(e g_1 \cdots e g_n) = Z(e g_1 \cdots eg_ne   g_{n+1}) \sqcup Z(e g_1 \cdots e g_n e h_{n+1})$, a disjoint union of cylinder sets. Therefore, 
\begin{align*} \mu(Z(e g_1 \cdots eg_n e   g_{n+1})) &+ \mu (Z(e g_1 \cdots eg_n e   h_{n+1})) \\
&= \mu(Z(e g_1 \cdots e g_n ))(1/2 + \gamma_{n+1}) + \mu(Z(e  g_1 \cdots e g_n  ))(1/2 - \gamma_{n+1}) \\
&= \mu (Z(e g_1 \cdots e g_n)).
\end{align*}
Arguing inductively, 
we conclude that $\mu$ is finitely additive on square cylinder sets, as claimed. 

We now check that $\mu$ satisfies the hypotheses of Theorem \ref{thm-lambda-sbfs-on-the-inf-path-space-via-a-measure}.  Since $\mu$ is a Borel measure and the maps $(\sigma_\lambda, \sigma^n)$ are continuous, they are measurable; and we observed above that $\mu(Z(v)) = 1$.
It remains to check that each of the edge prefixing operators, $\sigma_{f_1}, \sigma_{f_2}, \sigma_{e}$, has positive Radon-Nikodym derivative. To do so, we will use Lemma 	\ref{lemma-limit-RN}. 

Fix an infinite path $\xi \equiv e g_1 e g_2 e g_3 \cdots$.
Define $\ell_i\in \{0,1\}$ so that $\alpha_i = 1/2 + (-1)^{\ell_i} \gamma_i$, 
and let 
$m_i=1-\ell_i.$ For $N \in \N$, we let $\lambda_N=e g_1 \cdots e g_N$.
Then the factorization rule $e f_i = f_{i+1} e$ implies that 
\[\begin{split}
\sigma_{f_1}(Z(\lambda_{N}))&=\{\zeta =(\zeta_i) \in \Lambda^{\infty}:   \zeta_{2j-1}=e\;\;\text{for}\;\; 1\le j\le N,\;\zeta_2 = f_2, \; \zeta_{2i}=h_i,\;2\;\leq i\leq N\}\\
& = Z(e f_2 e h_1 \cdots e h_N).
\end{split}
\]
Since $g_i\ne h_i\in \{f_1,f_2\}$ as described above,
it follows that 
\[
\mu(\sigma_{f_1}(Z(\lambda_{N}))= (1/2 -\gamma_1)\prod_{i=2}^{N+1}[\frac{1}{2}+(-1)^{m_i}\gamma_i]
\]

Since we also have 
$$\mu(Z(\lambda_{N}))=\prod_{i=1}^N[\frac{1}{2}+(-1)^{\ell_i}\gamma_i],$$
it follows that (multiplying numerator and denominator by $2^N$)
\begin{equation}
\label{eq-Judy-RN-example}
\frac{\mu(\sigma_{f_1}Z(\lambda_{N}))}{\mu(Z(\lambda_{N}))}\;=\;\left( (\frac{1}{2} - \gamma_1)\prod_{i=2}^{N+1}[1+(-1)^{m_i}2\gamma_i] \right) /\left( \prod_{i=1}^N[1+(-1)^{\ell_i}2\gamma_i]\right).
\end{equation}
We then have 
\[ \Phi_{f_1}(\xi):=\frac{d(\mu \circ \sigma_{f_1})}{d\mu}(\xi) = \lim_{N \to \infty} \frac{\mu(\sigma_{f_1}(Z(\lambda_N))}{\mu(Z(\lambda_N))}.\]
To see that the Radon-Nikodym derivative $\Phi_{f_1}$ is positive, note that
standard results on infinite products imply that, since $|\gamma_i|<1/2$ and $\sum_{i\in \N} |\gamma_i |< \infty$ by hypothesis, 
\[ 
\lim_{n\to \infty}\prod_{i=2}^n \left( 1+ (-1)^{m_i} 2 \gamma_i \right) \quad\text{and}\quad \lim_{n\to \infty}\prod_{i=1}^n (1+(-1)^{\ell_i}2\gamma_i)
\]
are both finite, positive and nonzero for any sequences $(m_i)_i , (\ell_i)_i \subseteq \{ 0,1\}^\N$.  
Indeed, if we let $L$ be the sum of the logarithmic series associated to the denominator $P=\prod_{i=1}^\infty [1+(-1)^{\ell_i}2\gamma_i]$, then one can check that $L=\ln P=\sum_{i=1}^\infty \ln ([1+(-1)^{\ell_i}2\gamma_i])$ has the same absolute convergence behavior as the series
\[
\sum_{i=1}^\infty |(-1)^{\ell_i}2\gamma_i|,\;\; i.e.\; \sum_{i=1}^{\infty}|\gamma_i|;
\]
this latter series converges by hypothesis. Thus, the series $\sum_{i=1}^\infty \ln ([1+(-1)^{\ell_i}2\gamma_i])$ converges conditionally to a number $L =\sum_{i=1}^\infty \ln ([1+(-1)^{\ell_i}2\gamma_i])\in \R$. But since $L=\ln P$, it cannot be that $P=0$, because this would mean that $L=\infty$ and not $L\in \R$. Therefore, the Radon-Nikodym derivative 
\[
\Phi_{f_1}(\xi)= \frac{d(\mu \circ \sigma_{f_1})}{d\mu}(\xi) = \lim_{N \to \infty} \frac{\mu(\sigma_{f_1}(Z(\lambda_N))}{\mu(Z(\lambda_N))}
\]
converges and is positive as desired. 

Similar calculations, by  using Lemma 	\ref{lemma-limit-RN},  yield the same conclusion for the Radon-Nikodym derivatives associated to $\sigma_e$ and $\sigma_{f_2}$, showing that all the hypotheses of Theorem~\ref{thm-lambda-sbfs-on-the-inf-path-space-via-a-measure}	 are satisfied in this case.  We conclude that $\mu$ makes $\Lambda^\infty$ into a $\Lambda$-semibranching function system with the standard prefixing and coding maps $(\sigma_\lambda, \sigma^n)$, which proves (a).

To see (b), we now use Kakutani's work on product measures to compare the measures $\mu$ constructed above with the standard (Perron-Frobenius) measure $M$ on $\Lambda^\infty$ (defined in Proposition 8.1 of \cite{aHLRS3}).  Note first that $M$ is a special case of the measure $\mu$ described above, given by taking $\gamma_i = 0$ for all $i$.  

A moment's reflection shows us that  $(\Lambda^{\infty},\mu)$ is measure-theoretically isomorphic to $(\prod_{i=1}^{\infty}[\{0,1\}]_i, \prod_{n=1}^{\infty}\mu_i),$ 
where $\prod_{i=1}^{\infty}[\{0,1\}]_i$ is the set of all sequences consisting of $0$ and $1$ only, and
the measure $\mu_i$ on the $i^{th}$ factor space $\{0,1\}$ is given by 
$$\mu_i(\{0\})=\frac{1}{2}+\gamma_i\quad\text{and}\quad \;\mu_i(\{1\})=\frac{1}{2}-\gamma_i\quad\text{for}\;\; i\in \mathbb N.$$
The isomorphism is given by $\prod_{i\in \N} [\{0,1\}]_i \ni (a_i)_{i\in \N} \mapsto e f_{a_1+1} e f_{a_2+1} e \cdots\in \Lambda^\infty$.
it follows from Corollary 1 of Section 10 of \cite{Kaku} that the measure $\mu$ on $\Lambda^{\infty}$ is equivalent (mutually absolutely continuous)  to the Perron-Frobenius measure $M$ whenever the infinite series
$$\sum_{i=1}^{\infty}\left(\sqrt{\frac{1}{2}}-\sqrt{\frac{1}{2}+\gamma_i}\right)^2+\left (\sqrt{\frac{1}{2}}-\sqrt{\frac{1}{2}-\gamma_i}\right)^2,$$  or equivalently, the infinite series 
$$ \sum_{i=1}^{\infty}\left(1-\frac{\sqrt{1+2\gamma_i}}{2}-\frac{\sqrt{1-2\gamma_i}}{2}\right),
$$
converges.  However, this series converges whenever $\sum_{i \in \N} |\gamma_i| < \infty$.  In other words, all the measures $\mu$ studied in this section are equivalent to $M$.  

To see (c), let $g_\mu \in L^2(\Lambda^\infty, \mu )$ be given by 
\[ g_\mu(x) = \sqrt{\frac{d\mu}{dM}(x)},\]
and define $W_\mu : L^2(\Lambda^\infty, \mu ) \to L^2(\Lambda^\infty, M)$ by $W_\mu (f) = g_\mu  f.$
Then one checks that $W_\mu^*(f) = \frac{f}{g_\mu}$ is given by multiplication by $\sqrt{\frac{dM}{d\mu}(x)}$.

For $\lambda \in \Lambda$, write $S_\lambda^\mu$ for the operator on $L^2(\Lambda^\infty, \mu)$ associated to $\lambda$ via the $\Lambda$-semibranching function system on $(\Lambda^\infty, \mu)$, as in Theorem 3.5 of \cite{FGKP}; that is,  if $d(\lambda) = n$,
\begin{equation}\label{eq:SBFS-rep-formula}
 S_\lambda^\mu(\chi_{Z(\eta)})(x) = \left(\frac{d\mu}{d(\mu \circ \sigma^n)}(x) \right)^{-1/2} \chi_{Z(\lambda \eta)}(x).\end{equation}
Moreover, standard manipulations with Radon-Nikodym derivatives imply that
\begin{align*} W_\mu^* S_\lambda^M W_\mu (\chi_{Z(\eta)} )(x) &= \left( \frac{dM}{d\mu}(x) \right)^{1/2} \chi_{Z(\lambda \eta)}(x) \left( \frac{dM }{d( M\circ \sigma^n)}(x) \right)^{-1/2} \left(\frac{d\mu}{dM}(\sigma^n(x)) \right)^{1/2} \\
&= \chi_{Z(\lambda \eta)}(x)\left( \frac{dM}{d\mu}(x) \right)^{1/2} \left(\frac{d\mu}{dM}(\sigma^n(x)) \right)^{1/2}  \left( \frac{d(M \circ \sigma^n)}{dM}(x) \right)^{1/2} \\
&= \chi_{Z(\lambda \eta)}(x) \left( \frac{ d(\mu \circ \sigma^n)}{d\mu}(x)\right)^{1/2} \\
&= S_\lambda^\mu (\chi_{Z(\eta)})(x).
\end{align*}
Thus, $L^2(\Lambda^\infty, \mu)$ and $L^2(\Lambda^\infty, M)$ are unitarily equivalent, via the unitary $W_\mu$ which intertwines the two $\Lambda$-semibranching representations, $S_\lambda^\mu$ and $S_\lambda^M$.
It follows that
any   $\Lambda$-semibranching function system associated to a measure $\mu$ as described above will give rise to a representation of $C^*(\Lambda)$ which is equivalent to that arising from the $\Lambda$-semibranching function system of Proposition 3.4 of \cite{FGKP}.
\end{proof} 

The equivalence of the $\Lambda$-semibranching representations discussed  above is an instance of a more general phenomenon, as explained below.

We now show how to use a similar product measure construction to obtain a $\Lambda$-semibranching function system for the 2-graph $\Lambda_2$ of Example \ref{ex:sbfs-3v8e}.  Again, any infinite path $\xi \in \Lambda_2^\infty$ can be written uniquely as an alternating sequence of red (dashed) and blue (solid) edges, with the first edge being red.  In fact, such an infinite path is completely determined by the sequence of vertices it passes through: every infinite path $\xi$ with range $v$ is specified uniquely by a string of vertices  
\[ \xi \equiv (v, Q_1, v, Q_3, \ldots ) \text{ where } Q_{2i+1} \in \{ u, w\}.\]
Similarly, if $r(\xi) \in \{ u, w\}$ then $\xi \equiv ( Q_0, v, Q_2, v, \ldots) $ for a unique sequence $(Q_{2i})_i \in \{ u, w\}_{i\in \N}$.

Thus, as in the definition of the product measure $\mu$ above, given any sequence of real numbers $( \delta_n)_{n \in \N}$,  with 
$|\delta_n|<1/2$ for all $n$, and  with $\sum_{n \in \N}|\delta_n| < \infty$, we  define a function $\mu_2$ on square cylinder sets of $\Lambda_2^\infty$ by first setting
\[
\mu_2( Z(u ))=\mu_2( Z(w ))=\mu_2( Z(v))=1.
\]
Given any $N \in \N$ and any $\eta \in \Lambda^{(N,N)}_2$, write $\eta$ as an alternating sequence of red-blue edges and list the vertices through which it passes:
\[ \eta \equiv (v, Q_1, v, \ldots, Q_{2N-1}, v) \quad \text{or} \quad \eta \equiv (Q_0, v, Q_2, \ldots,  v, Q_{2N}).\]
Define $\alpha_n^\eta = \begin{cases}
\delta_n, & Q_n = u \\ - \delta_n, & Q_n = w.
\end{cases}
$, and   set 
\begin{equation} \label{eq:measure-3v8e}
\mu_2( Z(\eta) ): =  \begin{cases} \prod_{0\leq n \leq N} ( 1/2 + \alpha_{2n} ^\eta), & \text{if}\;\; r(\eta) \in \{ u, w\} \\
 \prod_{0\leq n\leq N-1 } ( 1/2 + \alpha_{2n+1} ^\eta), &\text{if}\;\; r(\eta) = v \end{cases}
\end{equation}

\begin{prop}
\label{prop-meas-on-ex-3-vert}
Let $\Lambda_2$ be the $2$-graph given in Example~ \ref{ex:sbfs-3v8e} and $\mu_2$ be the function given by the formula in \eqref{eq:measure-3v8e}. Then we have the followings.
\begin{itemize}
\item[(a)] The function $\mu_2$ defines a Borel measure on $\Lambda_2^\infty$.  
\item[(b)] The standard prefixing and coding maps $(\sigma_\lambda, \sigma^n)$ endow $(\Lambda^\infty_2, \mu_2)$ with a $\Lambda$-semibranching function system.  
\item[(c)] As measure spaces, $(\Lambda^\infty_2, \mu_2)$ is isomorphic to
\[  \left( \prod_{i\in \N} ( \{ 0,1\}, \nu_i) \right) \sqcup \left( \prod_{i\in \N} ( \{ 0,1\}, \nu_i) \right) ,\]
where $\nu_i(j) = 1/2 + (-1)^j \delta_i.$  
\item[(d)] The  measure $\mu_2$ is equivalent to the Perron-Frobenius measure $M$ of \eqref{eq:M}, and the associated $\Lambda$-semibranching representations are also equivalent. 
\end{itemize}
\end{prop}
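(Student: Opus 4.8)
The plan is to follow the template established in the proof of Proposition~\ref{prop:prod-meas-onevtwoe}, exploiting the fact that, just as in Example~\ref{exonevtwoe}, every infinite path in $\Lambda_2$ is determined by the sequence of vertices it visits, and the vertices in $\{u,w\}$ alternate with copies of $v$. First I would set up the two ``coordinate systems'': an infinite path with range $v$ is encoded by the free choices $(Q_1,Q_3,\dots)\in\{u,w\}^{\N}$ (reading off the odd-indexed parameters $\delta_1,\delta_3,\dots$), while one with range in $\{u,w\}$ is encoded by $(Q_0,Q_2,\dots)\in\{u,w\}^{\N}$ (reading off the even-indexed parameters). With $u\mapsto 0$, $w\mapsto 1$, this exhibits $\Lambda_2^\infty$ as the disjoint union $Z(v)\sqcup(Z(u)\cup Z(w))$ of two countable products of two-point spaces, and formula~\eqref{eq:measure-3v8e} is precisely the Bernoulli product measure on each factor. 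This simultaneously addresses parts (a) and (c); I would note that each component is a probability space (since $(\tfrac12+\delta)+(\tfrac12-\delta)=1$), so $\mu_2$ is a genuine Borel measure of total mass $2$, and that the even/odd parameters are distributed one sequence per component.

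For part (a) I would invoke Lemma~\ref{lem:measure}: as $\Lambda_2$ is row-finite it suffices to check finite additivity of $\mu_2$ on square cylinder sets. Extending a square path $\eta$ of degree $(N,N)$ to degree $(N+1,N+1)$ introduces exactly one new free vertex $Q\in\{u,w\}$ (the intervening vertex being forced), and summing the two choices multiplies $\mu_2(Z(\eta))$ by $(\tfrac12+\delta)+(\tfrac12-\delta)=1$; an induction promotes this to arbitrary disjoint unions of square cylinders. Part (b) then follows from Theorem~\ref{thm-lambda-sbfs-on-the-inf-path-space-via-a-measure}: measurability of $\sigma_\lambda,\sigma^n$ is automatic since $\mu_2$ is Borel and these maps are continuous; $\mu_2(Z(x))>0$ for $x\in\{u,v,w\}$ is immediate; and the positivity of the edge Radon--Nikodym derivatives is proved exactly as in~\eqref{eq-Judy-RN-example}. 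Concretely, for an edge $\lambda$ one computes $\Phi_{\sigma_\lambda}$ as the limit over square cylinders $\lambda_N$ of the ratios $\mu_2(\sigma_\lambda Z(\lambda_N))/\mu_2(Z(\lambda_N))$ via Lemma~\ref{lemma-limit-RN}; prepending $\lambda$ merely shifts the indices of the free vertices, so each ratio is a finite product of factors $1\pm 2\delta_i$, and the hypotheses $|\delta_i|<\tfrac12$, $\sum_i|\delta_i|<\infty$ guarantee (by the same logarithmic-series argument as in Proposition~\ref{prop:prod-meas-onevtwoe}) that the limit exists and is strictly positive.

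For part (d), I would first identify $M$ in these coordinates. The graph $\Lambda_2$ admits the automorphism fixing $v$ and interchanging $u\leftrightarrow w$ (hence $a_0\leftrightarrow a_1$, $c_0\leftrightarrow c_1$, $d_0\leftrightarrow d_1$, $b_0\leftrightarrow b_1$), under which $M$ is invariant by uniqueness (Proposition~8.1 of~\cite{aHLRS3}); thus under $M$ each free vertex is a fair coin. Since $\rho_1=\rho_2=\sqrt2$ and $s(\eta)$ is constant on each component (with $\kappa^{\Lambda}_u=\kappa^{\Lambda}_w$ by symmetry), $M$ agrees on each component with the $\delta_i\equiv 0$ Bernoulli measure up to a constant factor, and so is equivalent to it. Comparing $\mu_2$ with this symmetric product measure componentwise, Kakutani's dichotomy (Corollary~1 of \S10 of~\cite{Kaku}) gives equivalence precisely when $\sum_i\big[(\sqrt{\tfrac12}-\sqrt{\tfrac12+\delta_i})^2+(\sqrt{\tfrac12}-\sqrt{\tfrac12-\delta_i})^2\big]<\infty$; each summand is $O(\delta_i^2)$ and $\sum_i|\delta_i|<\infty$, so the series converges and $\mu_2\sim M$. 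The equivalence of the associated $\Lambda$-semibranching representations then follows verbatim from Proposition~\ref{prop:prod-meas-onevtwoe}(c): the map $W_{\mu_2}\colon L^2(\Lambda_2^\infty,\mu_2)\to L^2(\Lambda_2^\infty,M)$ given by multiplication by $\sqrt{d\mu_2/dM}$ is a unitary intertwining $S_\lambda^{\mu_2}$ with the standard operators $S_\lambda^M$.

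The main obstacle I anticipate is part (d): the bookkeeping needed to identify $M$ as an equivalent of the symmetric product measure and to run Kakutani's criterion on each of the two components before recombining. The subtle points are that $\mu_2$ has total mass $2$ rather than $1$ (so ``equivalent to $M$'' must be read as mutual absolute continuity, not equality, which poses no problem since $W_{\mu_2}$ is an isometry regardless of total mass), and that the even- and odd-indexed subsequences of $(\delta_i)$ are split across the two components; keeping this indexing straight is what makes the otherwise routine Kakutani and Radon--Nikodym computations delicate here.
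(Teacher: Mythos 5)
Your proposal is correct and follows essentially the same route as the paper: Lemma~\ref{lem:measure} via finite additivity on square cylinders for (a), Theorem~\ref{thm-lambda-sbfs-on-the-inf-path-space-via-a-measure} together with Lemma~\ref{lemma-limit-RN} and the infinite-product estimate for (b), the two-component product-space identification for (c), and Kakutani's criterion plus the multiplication unitary $W$ for (d). The only (harmless) deviation is in identifying $M$: you use the $u\leftrightarrow w$ graph automorphism and uniqueness of $M$ to see that the free vertices are fair coins, whereas the paper computes the adjacency matrices, spectral radius $\sqrt2$, and Perron--Frobenius eigenvector explicitly — both arguments land on the same description of $M$ as a constant multiple of the symmetric product measure on each component.
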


\begin{proof}
To see (a), we merely need to check that $\mu_2$ is finitely additive on square cylinder sets of $\Lambda^\infty_2$ as in the proof of Proposition \ref{prop:prod-meas-onevtwoe}.
But this follows from the observation that for any $\lambda \in \Lambda_2$ with $d(\lambda) = (N, N), $ 
\begin{align*}
\sum_{\eta \in s(\lambda) \Lambda^{(1,1)}} \mu_2(Z(\lambda \eta)) &= \mu_2(Z(\lambda)) \cdot  \begin{cases} (1/2 - \delta_{2N+1}) + (1/2 + \delta_{2N+1}),  & \text{if}\;\; s(\lambda) = v \\
(1/2 - \delta_{2N+2}) + (1/2 + \delta_{2N +2}), &\text{if}\;\;  s(\lambda) \in \{ u, w\}.
\end{cases} \\
&= \mu_2(Z(\lambda)).
\end{align*}
By induction, it follows that $\mu_2$ is finitely additive on square cylinder sets. Then again Lemma~\ref{lem:measure} implies that $\mu_2$ gives a measure on $\Lambda_2^\infty$.

To see (b), we again use Theorem \ref{thm-lambda-sbfs-on-the-inf-path-space-via-a-measure}.  Thus, it remains to check that the edge prefixing operators all have positive Radon-Nikodym derivatives.

By Lemma 	\ref{lemma-limit-RN}, if $\xi = \bigcap_{n \in \N} Z(\lambda_n)$ with $d(\lambda_n) = (n, n)$, then for an edge $e\in \Lambda_2$, we have
\[ \Phi_e(\xi)=\frac{d(\mu_2 \circ \sigma_e)}{d\mu_2}(\xi) = \lim_{n\to \infty} \frac{\mu_2(\sigma_e(Z(\lambda_n)))}{\mu_2(Z(\lambda_n))}.\]
Similar arguments to those used in the proof of Proposition \ref{prop:prod-meas-onevtwoe} will show that this limit is finite and nonzero for all edges $e$ and all $\xi \in \Lambda^\infty_2$; we detail a few cases here.

First, suppose that $r(\xi) = v$, so that $\lambda_n \equiv (v, Q_1, v, \ldots, Q_{2n-1}, v)$, and that $e\in \{d_0, d_1\}$ is a red edge with range $Q \in \{ u, w\}$.  Since $\xi=\cap_{n\in \N}Z(\lambda_n)$, we have $\xi=(v,Q_1,\dots,Q_{2n-1},v)$, where $(Q_{2i-1})_i\in \{u,w\}$, $1\le i\le n$. Using the factorization rule, rewrite $\xi$ (and each $\lambda_n$) as an alternating blue-red path; with this factorization, $\xi$ passes through the vertices $(v, \tilde Q_1, v, \ldots, \tilde Q_{2n-1}, v, \ldots )$ where $\tilde Q_{2i-1}\in \{u, w\}$ and $Q_{2i-1} \not= \tilde Q_{2i-1}$ for all $i$. Then, prefixing $\lambda_n$ by $e$ results in a red-blue path $e \lambda_n$ with
\[ e \lambda_n \equiv (Q, v, \tilde Q_1, v, \ldots, \tilde Q_{2n-1}, v).\]
Note that for each $n$, $Z(e \lambda_n) = Z (e \lambda_n b_0) \bigsqcup Z( e \lambda_n b_1)$, and 
\[ e \lambda_n b_0 \equiv (Q, v, \tilde Q_1, \ldots, \tilde Q_{2n-1}, v, u); \qquad e \lambda_n b_1 \equiv (Q, v, \tilde Q_1, \ldots, \tilde Q_{2n-1}, v, w).\]
For $1 \leq i \leq n$, write $m_i = 1$ if $\tilde Q_{2i-1} = w$ and $m_i = 0$ if $\tilde Q_{2i-1} = u$. Similarly, write $m_Q = 1$ if $Q = w$ and $m_Q =0$ if $Q=u$.  The definition \eqref{eq:measure-3v8e} of $\mu_2$ then implies
\begin{align*}
\mu_2\circ \sigma_e (Z(\lambda_n)) &= \mu_2 (Z(e \lambda_n b_0)) + \mu_2(Z(e \lambda_n b_1) \\
&= 2^{n+1} (1 + (-1)^{m_Q} 2  \delta_0) \left( \prod_{i=1}^n (1 + (-1)^{m_i} 2 \delta_{2i})\right) \left( 1/2 + \delta_{2n+2} + 1/2 - \delta_{2n+2} \right) \\
&= 2^{n+1} (1 + (-1)^{m_Q} 2  \delta_1) \prod_{i=1}^n (1 + (-1)^{m_i} 2 \delta_{2i}).
\end{align*}

If we write $l_i = m_i +1$, then $Q_{2i-1} = w$ iff $l_i = 1$ and $Q_{2i-1} =u$ iff $l_i = 2$.  Thus, 
\[ \mu_2(Z(\lambda_n)) = 2^n \prod_{i=1}^N (1 + (-1)^{l_i} 2 \delta_{2i-1}).\]
It follows that 
\[\frac{\mu_2(Z(e \lambda_n)) }{ \mu_2(Z(\lambda_n)} = 2 \frac{ (1 + (-1)^{m_Q} 2 \delta_0) \prod_{i=1}^n (1 + (-1)^{m_i} 2 \delta_{2i})}{
 \prod_{i=1}^n (1 + (-1)^{l_i} 2 \delta_{2i-1})}.\]
 
Since we chose the sequence $(\delta_n)_{n\in {\N}}$ such that $|\delta_n| < 1/2$ for all $n$ and $\sum_n |\delta_n |< \infty$, 
\[ \lim_{n\to \infty} \prod_{i=1}^n (1 + (-1)^{m_i} 2 \delta_{2i}) \quad \text{and} \quad \lim_{n \to \infty} \prod_{i=1}^n (1 + (-1)^{l_i} 2 \delta_{2i-1})\]
are both finite, positive, and nonzero.  Using a similar argument to that employed in the proof of Proposition~\ref{prop:example_exonevtwoe}(a), one can show that the Radon-Nikodym derivative 
 \[\Phi_e (\xi) = \lim_{n \to \infty} \frac{\mu_2(Z(e \lambda_n))}{\mu_2(Z(\lambda_n))}\]
  converges and is positive whenever $e = d_0, d_1$ is a red edge with source $v$.
 The other Radon-Nikodym derivatives are similarly computed to be positive on their domains, which proves (b).

To see (c), we use a similar argument to the one used in Proposition \ref{prop:prod-meas-onevtwoe} above to establish that the measures $\mu_2$ on $\Lambda_2^\infty$ can be viewed as product measures on 
\[\left( \prod_{i=1}^\infty (\{ 0,1\}, \nu_i) \right) \sqcup \left( \prod_{i=1}^\infty (\{ 0,1\}, \nu_i) \right).\]
where $\nu_i(j) = 1/2+ (-1)^j \delta_i$  for $j\in \{1,2\}$.  We note that the infinite paths $\xi \in \Lambda_2^\infty$ can be divided into two types; those with range $v$,  and those with a different range vertex.  The fact that an infinite path (written as an alternating sequence of red-blue edges) is completely determined by the sequence of vertices it passes through means that if we identify vertex $u$ with 0 and vertex $w$ with 1, then the infinite paths with range $v$ are in bijection with infinite sequences of 0s and 1s.  However, infinite paths with range $u$ or $w$ are also in bijection with infinite sequences of 0s and 1s.  We therefore need two copies of the infinite product space $ \prod_{i=1}^\infty (\{ 0,1\}, \nu_i) $ to capture the entirety of $\Lambda_2^\infty$. 

We now describe the measure-preserving bijection between $(\Lambda_2^\infty, \mu_2)$ and $\left( \prod_{i=1}^\infty (\{ 0,1\}, \nu_i) \right) \sqcup \left( \prod_{i=1}^\infty (\{ 0,1\}, \nu_i) \right)$.
If we denote by $(a_i)_{i\in \N}$ a sequence in the first product space, and $(b_i)_{i\in \N}$ a sequence in the second, then the isomorphism $\left( \prod_{i=1}^\infty \{ 0,1\} \right) \sqcup \left( \prod_{i=1}^\infty \{ 0,1\} \right) \to \Lambda_2^\infty$ is given by 
\begin{equation}\label{eq:map_infinite_paths}
 (a_i)_i \mapsto ( v, Q_1, v, Q_3, \ldots ); \qquad (b_i)_i \mapsto (Q_0, v, Q_2, v, \ldots),
 \end{equation}
where $Q_{2i+1} = u$ if $a_i = 0$ and $w$ otherwise, whereas $Q_{2i} = u$ if $b_{i} = 0$ and $w$ otherwise.  Our construction of the measure $\mu_2$, and the fact that this isomorphism preserves cylinder sets, implies that this isomorphism is measure-preserving.

To see (d), note that as computed in Example 7.7 of \cite{LLNSW}, the adjacency matrices of $\Lambda_2$ are both given by $\begin{pmatrix}
 0 & 1 & 0 \\ 1 & 0 & 1 \\ 0 & 1 & 0
\end{pmatrix}$.  
Note that here $v$ is the middle vertex of $\Lambda_2$. These matrices both have spectral radius $\sqrt{2}$ and  Perron-Frobenius eigenvector $\frac{1}{2 + \sqrt{2}}
(1, \sqrt{2}, 1)^T$, so the measure $M$ of \cite{aHLRS3} is given on square cylinder sets $Z(\lambda)$ where $d(\lambda) = (n, n)$ by 
\[M(Z(\lambda)) = \begin{cases} \frac{\sqrt{2}}{2^n(2 + \sqrt{2})} , & r(\lambda) = v \\ 
\frac{1}{2^n(2+ \sqrt 2)} , & r(\lambda) \in \{ u, w\}. \end{cases}\]
To compare $M$ with the measure $\mu_2$, we will try to write $M$ in terms of product measures.  If $\lambda \equiv (v, Q_1, v, \cdots, Q_{2n-1}, v)$, then 
\[ M(Z(\lambda)) = \frac{\sqrt{2}}{2 + \sqrt{2}} \prod_{i=1}^n \mu^i\{ Q_{2i-1}\}\]
where $\mu^i(u) = \mu^i(w) = 1/2.$  In other words, on $v\Lambda^\infty$ (the image of the sequences $(a_i)_i$), $M$ is given by $\frac{\sqrt{2}}{2 + \sqrt{2}}$ times a product measure; and on $\Lambda^\infty \backslash v \Lambda^\infty$ (the image of the sequences $(b_i)_i$), $M$ is given by $\frac{1}{2 + \sqrt 2}$ times the same infinite product measure, namely the product measure $\left( \prod_{i=1}^\infty (\{ 0,1\}, \mu^i) \right)$ where $\mu^i(j) = 1/2$.

Just as in Proposition \ref{prop:prod-meas-onevtwoe}, Corollary 1 of Section 10 of \cite{Kaku} then implies that the product measures $M$ and $\mu_2$ are  equivalent measures, thanks to our hypothesis that $\sum_i |\delta_i| < \infty$. At last, the same unitary $W: L^2(\Lambda_2^\infty, \mu_2) \to L^2(\Lambda_2^\infty, M)$ that we used in that Proposition, namely 
\[ W(f)(x) = \sqrt{\frac{d\mu_2}{dM}(x)} f(x),\]
intertwines the $\Lambda$-semibranching representations associated to $\mu_2$ and $M$.
\end{proof}

\label{sec-ex-many-vertex-graph-measure}

In fact, for each $N \in \N$ we can generalize the construction of $\Lambda_{2}$ to construct a $2$-graph $\Lambda_{2N}$ with $2N +1$ vertices labeled $v, u_1, \ldots, u_N, w_1, \ldots w_N$, with red and blue edges connecting $v$ with each of the vertices $u_i, w_i$, in both directions:
 The  skeleton of $\Lambda_{2N}$ is given as below.
\begin{equation}
\begin{tikzpicture}[scale=2.5]
\node[inner sep=0.5pt, circle] (u_1) at (0,0) {$u_1$};
\node[inner sep=0.5pt, circle] (u_a) at (-.1,-.3) {$u_2$};
\node[inner sep=0.5pt, circle] (u_d) at (0,-.5) {$u_3$};
\node[inner sep=0.5pt, circle] (u_b) at (.1,-.7) {$u_4$};
\node[inner sep=0.5pt, circle] (u_c) at (.2,-.95) {$\ddots$};
\node[inner sep=0.5pt, circle] (v) at (1.5,0) {$v$};
\node[inner sep=0.5pt, circle] (w_1) at (3,0) {$w_1$};
\node[inner sep=0.5pt, circle] (w_a) at (3.1,.5) {$w_2$};
\node[inner sep=0.5pt, circle] (w_b) at (2.9,.7) {$w_3$};
\node[inner sep=0.5pt, circle] (w_d) at (2.7,.9) {$w_4$};
\node[inner sep=0.5pt, circle] (w_c) at (2.4,1.25) {$\ddots$};
\node[inner sep=0.5pt, circle] (v) at (1.5,0) {$v$};
\node[inner sep=0.5pt, circle] (w_2) at (1,1) {$w_N$};
\node[inner sep=0.5pt, circle] (u_2) at (1,-1) {$u_N$};
\draw[-latex, thick, blue] (v) to[bend left=22] (u_2); 
\draw[-latex, thick, blue] (u_2) to[bend left=22] (v); 
\draw[-latex, thick, blue] (v) to[bend left=22](w_2); %
\draw[-latex, thick, blue] (w_2) to[bend left=22] (v); %
\draw[-latex, thick, red, dashed] (v) to[bend left=42] (u_2); %
\draw[-latex, thick, red, dashed] (u_2) to[bend left=42] (v); %
\draw[-latex, thick, red, dashed] (v) to[bend left=42] (w_2); %
\draw[-latex, thick, blue] (v) to[bend left=22] (u_1); 
\draw[-latex, thick, red, dashed] (w_2) to[bend left=42](v); %
\draw[-latex, thick, blue] (v) to[bend left=22] (u_1); 
\draw[-latex, thick, blue] (u_1) to[bend left=22] (v); 
\draw[-latex, thick, blue] (v) to[bend left=22] (w_1); 
\draw[-latex, thick, blue] (w_1) to[bend left=22](v); 
\draw[-latex, thick, red, dashed] (v) to[bend left=42] (u_1); 
\draw[-latex, thick, red, dashed] (u_1)to[bend left=42] (v); 
\draw[-latex, thick, red, dashed] (v) to[bend left=42] (w_1); 
\draw[-latex, thick, red, dashed] (w_1) to[bend left=42](v); 
\node at (-1, 1.12) {\color{black} $\Lambda_{2N}$};
\end{tikzpicture}
\label{eq:Lambda2N-skeleton}
\end{equation}
As in the case of $\Lambda_2$, there are multiple choices of factorization rules that will make $\Lambda_{2N}$ into a 2-graph.  Regardless of the factorization rule we choose, though, every (finite or infinite) path will have a unique representative as an alternating string of blue (solid) and red (dashed) edges, with the first edge being red. In fact, such a path is completely determined by the sequence of vertices it passes through: every infinite path $\xi$ with range $v$ is specified uniquely by a string of vertices  
\begin{equation} \xi \equiv (v, Q_1, v, Q_3, \ldots ) \text{ where } Q_{2i+1} = u_j \text{ or } w_j  \text{ for some } 1 \leq j \leq N.\label{eq:inf-path-Lambda2N}
\end{equation}
Similarly, if $r(\xi) \in \{u_j, w_j\,:\,  1\le j\le N \}$, then $\xi \equiv ( Q_0, v, Q_2, v, \ldots) $ for a unique sequence $(Q_{2i})_i \in \{ u_j, w_j: 1 \leq j \leq N\}$ for $i\in \N$. 
  The adjacency matrices of $\Lambda_{2N}$ are given by 
\[ \begin{split} 
A_1 = A_2; \qquad 1 &= A_j (v, u_i) = A_j(v, w_i) = A_j (u_i, v) = A_j (w_i, v) \;\;\text{for}\;\; 1 \leq i \leq N,; \\ 
0 &= A_j (u_i, w_\ell) = A_j (u_i, u_\ell) = A_j(w_i, w_\ell) \;\;\text{for}\;\; 1 \leq i, \ell \leq N\;,
\end{split}\] 
where $j=1, 2$.
The spectral radius of $A_1 = A_2$ is easily computed to be $\sqrt{2N}$, and the Perron-Frobenius eigenvector is $(x_w)_{w \in \Lambda^0_{2N}}$, where
\[ x_w = \begin{cases} 
\frac{1}{1 +\sqrt{2N} }, & w = v \\ \frac{1}{\sqrt{2N}( 1 + \sqrt{2N})}, & w \not= v.
\end{cases}\]
Thus, the measure $M$ of a square cylinder set $Z(\lambda) \subseteq \Lambda_{2N}^\infty$ with $d(\lambda) = (n, n)\in \N^2$ is given by 
\[ M(Z(\lambda))= \begin{cases}\frac{1}{\sqrt{2N}(1+\sqrt{2N}) (2N)^n}, & r(\lambda) \not= v \\
\frac{1}{( 1 + \sqrt{2N}) (2N)^n}, & r(\lambda) = v. \end{cases}\]

\begin{prop}
\label{prop:inf-path-Lambda-2N}
Let $\Lambda_{2N}$ be any 2-graph with skeleton \eqref{eq:Lambda2N-skeleton}. 
 Then the infinite path space $\Lambda_{2N}^\infty$ is isomorphic to a disjoint union of infinite product spaces:
\[ 
\Lambda_{2N}^\infty \cong \prod_{i\in \N} \Z_{2N}  \sqcup \prod_{i\in \N} \Z_{2N}. 
\]
\end{prop}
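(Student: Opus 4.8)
The plan is to encode each infinite path by the sequence of vertices it visits, exploiting the fact that the skeleton \eqref{eq:Lambda2N-skeleton} is bipartite about $v$: every edge joins $v$ to one of the $2N$ vertices in $S := \{u_1,\dots,u_N,w_1,\dots,w_N\}$, and the adjacency matrices show there is exactly one edge of each color between $v$ and each element of $S$ in each direction. I would fix once and for all a bijection $S \cong \Z_{2N}$ (say $u_j \mapsto j-1$, $w_j \mapsto N+j-1$), set $Y := \prod_{i\in\N}\Z_{2N}$, and aim to build a homeomorphism $\Phi : \Lambda_{2N}^\infty \to Y \sqcup Y$.

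First I would invoke Remark \ref{rmk:rainbow}: since $\Lambda_{2N}$ is a $2$-graph, the factorization property guarantees that every infinite path $\xi \in \Lambda_{2N}^\infty$ has a unique representative as an alternating string of edges in the color pattern red, blue, red, blue, $\dots$. Reading this representative outward from the range $r(\xi) = \xi(0)$, the skeleton forces the sequence of vertices visited to alternate between $v$ and elements of $S$, because every edge out of an element of $S$ terminates at $v$ and every edge out of $v$ terminates in $S$. Hence there are exactly two cases, as recorded in \eqref{eq:inf-path-Lambda2N}: if $r(\xi)=v$ the vertex sequence is $(v,Q_1,v,Q_3,\dots)$ with $Q_{2i+1}\in S$, while if $r(\xi)\in S$ it is $(Q_0,v,Q_2,v,\dots)$ with $Q_{2i}\in S$. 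Since there is a unique edge of each prescribed color between $v$ and any given element of $S$ in the required direction, the alternating edge-representative, and therefore $\xi$ itself, is completely determined by this vertex sequence. This lets me define
\[
\Phi(\xi) = \begin{cases} (Q_1,Q_3,Q_5,\dots) \in Y \text{ (first copy)}, & r(\xi)=v, \\ (Q_0,Q_2,Q_4,\dots) \in Y \text{ (second copy)}, & r(\xi)\in S. \end{cases}
\]

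Then I would check that $\Phi$ is a homeomorphism. Injectivity is immediate from the preceding paragraph. For surjectivity, given an arbitrary sequence in $Y$, I would form the corresponding alternating edge-word, assemble the nested square initial segments $\xi(0,(n,n))$ using the factorization rule, and argue by consistency of these segments that they determine a genuine degree-preserving functor $\xi : \Omega_2 \to \Lambda_{2N}$ mapping to the prescribed sequence. Continuity in both directions then follows from the fact that the topology on $\Lambda_{2N}^\infty$ is generated by the cylinder sets $Z(\lambda)$: a square cylinder set $Z(\lambda)$ with $d(\lambda)=(n,n)$ corresponds precisely to fixing the first $n$ of the coordinates $Q$ in the relevant product, which is a basic cylinder in the product topology on $Y \sqcup Y$.

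The main obstacle is the bookkeeping behind the claim that the vertex sequence determines the whole path and that the construction is insensitive to the choice of factorization rule. The subtlety is that an infinite path is a functor on all of $\Omega_2$, not merely a diagonal string of edges, so I must verify that specifying the alternating edge-word consistently pins down $\xi(0,m)$ for every $m\in\N^2$ via the factorization property, and that different admissible factorization rules reshuffle which bicolored squares are identified but not which alternating edge-words (hence which vertex sequences) occur. Once this is settled, the isomorphism is automatically measure-theoretic as well, since $\Phi$ carries square cylinder sets to product cylinders, which is exactly what is needed to transport measures in the subsequent propositions.
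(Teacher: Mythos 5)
Your proof is correct and follows essentially the same route as the paper: the paper likewise splits $\Lambda_{2N}^\infty$ into the paths with range $v$ and those with range in $\{u_j, w_j\}$, identifies each piece with $\prod_{i\in\N}\Z_{2N}$ via the sequence of non-$v$ vertices visited (as in Equation \eqref{eq:map_infinite_paths}), and concludes using the fact that square cylinder sets generate the topology. The details you flag as the main obstacle (uniqueness of the alternating edge-word and independence of the factorization rule) are exactly what the paper disposes of in the discussion preceding the Proposition, around Equation \eqref{eq:inf-path-Lambda2N}.
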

\begin{proof}
According to \eqref{eq:inf-path-Lambda2N}, one can identify  $\Lambda^\infty_{2N}$ with the disjoint union of two infinite path spaces, namely,
\[
\Lambda^\infty_{2N}=\{x\in \Lambda^\infty_{2N}\,:\, r(x)=v\} \sqcup \{x\in\Lambda^\infty_{2N}\,:\, r(x)\ne v\}.
\]
Also observe that $\prod_{i\in \N}\Z_{2N}$ is isomorphic to $\{x\in \Lambda^\infty_{2N}\,:\, r(x)=v\}$ via a map similar to the map given in \eqref{eq:map_infinite_paths}, and similarly we see that $\prod_{i\in \N}\Z_{2N}$ is isomorphic to $\{x\in\Lambda^\infty_{2N}\,:\, r(x)\ne v\}$. 
Therefore the result follows by the fact that the square cylinder sets generate the topology on $\Lambda^\infty_{2N}$ with a similar argument to the one given in the proof of Proposition~\ref{prop-meas-on-ex-3-vert}(c).
\end{proof}

Thus, given $N$ sequences $\{ (\delta_i^j)_{i\in \N}\}_{j=1}^N$ with $\sum_i |\delta^j_i| < \infty$ for all $j$, we can define an associated
 product
measure $\mu_{2N}$ on $\Lambda^\infty_{2N}$.  Given  $\eta \in \Lambda_{2N}$ with $d(\eta) = (n,n)$, we start by identifying $\eta$ with the string of vertices it passes through:
\begin{equation}\label{eq:2N-paths}
r(\eta) = v \Rightarrow \eta \equiv (v, Q_1, \ldots, Q_{2n-1}, v) \qquad r(\eta) \not= v \Rightarrow \eta \equiv (Q_0, v, Q_2, \ldots, v, Q_{2n}),
\end{equation}
where $Q_i \not= v$.  Then, we define
\begin{equation}\label{eq:measure_2N}
\alpha_i = \begin{cases} \delta^j_i, & Q_i = u_j \\ -\delta^j_i, & Q_i = w_j \end{cases}  \qquad \text{and } \qquad \mu_{2N}(Z(\eta)) =\begin{cases} \prod_{i=1}^n \frac{1 + \alpha_{2i-1}}{2N}, & r(\eta) = v \\   \prod_{i=0}^n \frac{1 + \alpha_{2i}}{2N}, & r(\eta) \not= v. 
\end{cases}\end{equation}

\begin{prop}
The formula for $\mu_{2N}$ given in \eqref{eq:measure_2N} defines a measure on $\Lambda^\infty_{2N}$.  Moreover, the standard prefixing and coding maps $(\sigma^n, \sigma_\lambda)$ make $(\Lambda^\infty_{2N}, \mu_{2N})$ into a $\Lambda$-semibranching function system.  The resulting $\Lambda$-semibranching representation of $C^*(\Lambda)$ on $L^2(\Lambda^\infty, \mu_{2N})$ is equivalent to that associated to the Perron-Frobenius measure $M$ on $\Lambda^\infty$ of Equation \eqref{eq:M}.
\label{prop:SBFS-on-Lambda2N}
\end{prop}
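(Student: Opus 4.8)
The plan is to treat the present statement as the $N$-fold generalization of Proposition~\ref{prop-meas-on-ex-3-vert} and to follow that proof's three-part template. For the assertion that $\mu_{2N}$ defines a measure, I would invoke Lemma~\ref{lem:measure}: since $\mu_{2N}$ is prescribed on the square cylinder sets, which generate the Borel $\sigma$-algebra of $\Lambda^\infty_{2N}$, it suffices to verify finite additivity on square cylinder sets. Fixing $\lambda \in \Lambda_{2N}$ with $d(\lambda) = (n,n)$, the extensions $\eta \in s(\lambda)\Lambda^{(1,1)}$ correspond exactly to the $2N$ choices of the next outer vertex $Q$; under \eqref{eq:measure_2N} these split into the $N$ pairs $\{u_j, w_j\}$, each contributing $(1+\delta^j_i)+(1-\delta^j_i)=2$ (at the appropriate level $i$) to $\sum_\eta(1+\alpha)$. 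Hence $\sum_{\eta} \mu_{2N}(Z(\lambda\eta)) = \mu_{2N}(Z(\lambda))\cdot \frac{2N}{2N} = \mu_{2N}(Z(\lambda))$, the $\pm\delta^j_i$ cancelling in pairs exactly as in Proposition~\ref{prop-meas-on-ex-3-vert}(a). Induction then yields finite additivity, and Lemma~\ref{lem:measure} produces the Borel measure.

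For the semibranching assertion I would appeal to Theorem~\ref{thm-lambda-sbfs-on-the-inf-path-space-via-a-measure}. Its conditions (a) and (b) are immediate, since $\sigma_\lambda, \sigma^m$ are continuous hence measurable and $\mu_{2N}(Z(v)) = 1 > 0$ for every vertex $v$ by the empty-product convention; so the substance lies in condition (c), the positivity of the edge Radon-Nikodym derivatives. Following Proposition~\ref{prop-meas-on-ex-3-vert}(b), I would write each $\Phi_e(\xi) = \frac{d(\mu_{2N}\circ\sigma_e)}{d\mu_{2N}}(\xi)$ as the limit, supplied by Lemma~\ref{lemma-limit-RN}, of the ratios $\mu_{2N}(\sigma_e(Z(\lambda_n)))/\mu_{2N}(Z(\lambda_n))$, where $\xi = \bigcap_n Z(\lambda_n)$ with $d(\lambda_n) = (n,n)$. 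As in the $N=1$ case these reduce to quotients of partial products of the form $\prod_i \big(1 + (-1)^{m_i} 2\delta^{j_i}_i\big)$.

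The one point requiring care beyond the $N=1$ argument, and the step I expect to be the main obstacle, is the convergence of these infinite products: along a given infinite path $\xi$ the index $j_i$ of the sequence contributing at level $i$ varies with $\xi$, whereas the summability hypothesis $\sum_i |\delta^j_i| < \infty$ is assumed only for each fixed $j$. The resolution is the crude but decisive bound $\sum_i |\delta^{j_i}_i| \le \sum_{j=1}^N \sum_i |\delta^j_i| < \infty$, valid because there are only finitely many sequences. This guarantees that the partial products converge absolutely to finite positive limits (and bounds the tails needed to apply Lemma~\ref{lemma-limit-RN}), so each $\Phi_e$ is finite and strictly positive and $(\Lambda^\infty_{2N}, \mu_{2N})$ is a $\Lambda$-semibranching function system.

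Finally, for equivalence with the Perron--Frobenius measure $M$ I would use the identification $\Lambda^\infty_{2N} \cong \prod_{i\in\N}\Z_{2N} \sqcup \prod_{i\in\N}\Z_{2N}$ of Proposition~\ref{prop:inf-path-Lambda-2N}. Under it both $M$ and $\mu_{2N}$ become positive scalar multiples of infinite product measures on each copy of $\prod_i \Z_{2N}$: $M$ has uniform factors of mass $1/(2N)$, while $\mu_{2N}$ has factors assigning $\frac{1\pm\delta^j_i}{2N}$ to $u_j, w_j$. Kakutani's dichotomy (Corollary~1, Section~10 of \cite{Kaku}) gives equivalence once $\sum_i \sum_{q} \big(\sqrt{\mu_{2N,i}(q)} - \sqrt{M_i(q)}\big)^2 < \infty$, and the estimate $(\sqrt{1\pm\delta}-1)^2 = O(\delta^2) \le O(|\delta|)$ reduces this to $\sum_{j=1}^N \sum_i |\delta^j_i| < \infty$, which holds by hypothesis. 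With $\mu_{2N} \sim M$ established, the unitary $W : L^2(\Lambda^\infty_{2N}, \mu_{2N}) \to L^2(\Lambda^\infty_{2N}, M)$ given by $W(f) = \sqrt{d\mu_{2N}/dM}\, f$ intertwines the two $\Lambda$-semibranching representations by the identical Radon-Nikodym manipulation performed in Proposition~\ref{prop-meas-on-ex-3-vert}(d), yielding the claimed unitary equivalence.
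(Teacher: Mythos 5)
Your proposal is correct. For the first two assertions it follows the paper's own route: additivity on square cylinder sets via the pairing of $u_j$ with $w_j$ so that the $\pm\delta^j_i$ cancel, then Lemma \ref{lem:measure}, then Theorem \ref{thm-lambda-sbfs-on-the-inf-path-space-via-a-measure} reducing everything to positivity of the edge Radon--Nikodym derivatives, obtained as limits of cylinder-set ratios via Lemma \ref{lemma-limit-RN}. In fact you are more careful than the paper on the one delicate point there: the paper simply asserts that $\prod_i(1+(-1)^{\ell_i}\delta_i^{j_i})$ converges for arbitrary sequences $(j_i)_i$ ``since $\sum_i|\delta_i^j|<\infty$ for each $j$,'' whereas you supply the needed uniform bound $\sum_i|\delta_i^{j_i}|\le\sum_{j=1}^N\sum_i|\delta_i^j|<\infty$, which is exactly what justifies that assertion. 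Where you genuinely diverge is the final claim: the paper does \emph{not} invoke Kakutani here, but instead computes $\frac{d\mu_{2N}}{dM}(\xi)=\lim_n\mu_{2N}(Z(\xi_n))/M(Z(\xi_n))$ directly via Lemma \ref{lemma-limit-RN}, obtaining an explicit finite nonzero limit ($\sqrt{2N}(1+\sqrt{2N})\prod_i(1+\alpha_{2i-1})$ or $\frac{1+\sqrt{2N}}{2N}\prod_i(1+\alpha_{2i})$) and deducing mutual absolute continuity from its positivity in both directions. Your Kakutani route, via the identification of Proposition \ref{prop:inf-path-Lambda-2N} and the Hellinger-sum estimate $(\sqrt{1\pm\delta}-1)^2=O(\delta^2)$, is equally valid (the scalar prefactors $\frac{1}{1+\sqrt{2N}}$, $\frac{1}{\sqrt{2N}(1+\sqrt{2N})}$ are irrelevant to equivalence after normalization) and is the method the paper itself used in Propositions \ref{prop:prod-meas-onevtwoe} and \ref{prop-meas-on-ex-3-vert}; the paper's direct computation has the minor advantage of handing you the density $\frac{d\mu_{2N}}{dM}$ explicitly, which is what the intertwining unitary $W(f)=\sqrt{d\mu_{2N}/dM}\,f$ is built from, whereas your route obtains that density only abstractly from equivalence before making the identical final manipulation.
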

\begin{proof}
To see that $\mu_{2N}$ defines a measure on $\Lambda^\infty_{2N}$, it suffices to show that $\mu_{2N}$ is additive on square cylinder sets.  So fix $\eta\in \Lambda_{2N}$ with $d(\eta)=(n,n)$. If $r(\eta) \not= v$, then we also have $s(\eta) \not= v$ as in \eqref{eq:2N-paths}.  Consequently, there is a unique red edge $e$ with $s(\eta) = r(e)$, and we will have $s(e) = v$.  Since there are $2N$ blue edges with range $v$,
\[ f_i^u \in v \Lambda^{(0,1)}u_i \quad \text{ and } \quad f_i^w \in v\Lambda^{(0,1)}w_i \quad \text{ for } 1 \leq i \leq N,\]
we have 
\[Z(\eta)= \bigsqcup_{i=1}^N \bigsqcup_{\alpha \in \{u, w\}} Z(\eta e f_i^\alpha).\]
Observe that for each $1 \leq i \leq N$, $\mu_{2N}(Z(\eta e f_i^u)) + \mu_{2N}(Z(\eta e f_i^w)) = \frac{2}{2N} \cdot   \prod_{j=0}^n \frac{1 + \alpha_{2j}}{2N}$.  Thus,
\[ \frac{1}{(2N)^n} \prod_{j=0}^n (1 + \alpha_{2j}) = \mu_{2N}(Z(\eta)) = \sum_{i=1}^N \sum_{\alpha \in \{ u, w\}} \mu_{2N} (Z(\eta e f_i^\alpha)),\]
so $\mu_{2N}$ is additive on square cylinder sets with range $u_i$ or $w_i$ for some $1 \leq i \leq N$.

If $r(\eta) = v = s(\eta)$, then a similar argument, using the fact that there are $2N$ red (dashed) edges $e^\alpha_i$ with range $v$, each with a different source vertex, and each with a unique blue edge $f$ such that $r(f) = s(e^\alpha_i)$, will show that $\mu_{2N}$ is also additive on square cylinder sets with range $v$.

To see that the standard prefixing and coding maps make $(\Lambda^\infty_{2N}, \mu_{2N})$ into a $\Lambda$-semibranching function system, we merely need check that the Radon-Nikodym derivatives $\frac{d(\mu_{2N} \circ \sigma_e)}{d\mu_{2N}}$ are positive for every edge $e$.  As in the proofs of Propositions \ref{prop-meas-on-ex-3-vert} and \ref{prop:prod-meas-onevtwoe}, this follows from the fact that for any sequences $(\ell_i)_i \in \{0,1\}^\N, \ (j_i)_i \in (\Z_N)^\N$, the infinite product 
\[ \prod_{i\in \N} (1 + (-1)^{\ell_i} \delta_i^{j_i})\]
is finite, since $\sum_i |\delta_i^j| < \infty$ for $1 \leq j \leq N$.

To be precise: prefixing will always cause a change in the precise list of vertices $Q_i$.  The precise change is specified by the factorization rules.  Prefixing by an edge $e$ with $r(e) \not= v$ will lengthen the list of vertices $Q_i$ by one.  In the end, $\frac{\mu_{2N}(Z(e \eta))}{\mu_{2N}(Z(\eta))}$ will be a scalar (either $1$ or $(2N)^{-1}$, depending on whether $r(e) =v$ or not) times a quotient of sub-products of convergent infinite products of the form $ \prod_{i\in \N}( 1 + (-1)^{\ell_i} \delta_i^{j_i})$.

To see that $\mu_{2N}$ and $M$ are equivalent, we compute yet another Radon-Nikodym derivative  by using Lemma 	\ref{lemma-limit-RN}, namely $\frac{d\mu_{2N}}{dM}$.  Fix $\xi \in \Lambda_{2N}^\infty$ as in \eqref{eq:inf-path-Lambda2N}.  If we write 
\[\xi_n \equiv \begin{cases} (v, Q_1, v, \ldots, Q_{2n-1}, v) &  \text{ if } r(\xi) = v \\ 
(Q_0, v, \ldots, v, Q_{2n}) & \text{ else},\end{cases} \]
then $\xi = \bigcap_{n\in \N} Z(\xi_n)$.  Moreover,
\[ 
\frac{d\mu_{2N}}{dM}(\xi)=\lim_{n\to \infty}\frac{\mu_{2N}(Z(\xi_n))}{M(Z(\xi_n))} = \begin{cases}
\sqrt{2N}(1 + \sqrt{2N}) \prod_{i=1}^n (1 + \alpha_{2i-1}), & r(\xi) = v \\
\frac{1 + \sqrt{2N}}{2N} \prod_{i=0}^n (1 + \alpha_{2i}), & \text{ else.}
\end{cases}\]
Since both $\lim_{n \to \infty} \prod_{i=1}^n (1 + \alpha_{2i-1})$ and $\lim _{n \to \infty} \prod_{i=0}^n (1 + \alpha_{2i})$ are finite  and nonzero, we see that $\mu_{2N}$ is absolutely continuous with respect to $M$. Similarly, one can show that $M$ is absolutely continuous with respect to $\mu_{2N}$, and hence $\mu_{2N}$ and $M$ are equivalent.
For the final claim, one can check that  the same unitary $W$ used in the proof of Propositions  \ref{prop:prod-meas-onevtwoe} and \ref{prop-meas-on-ex-3-vert} establishes that the $\Lambda$-semibranching representations associated to $\mu_{2N}$ and $M$ are equivalent.
\end{proof}

\subsection{Examples of probability measures on $\Lambda^\infty$ that are mutually singular with  the Perron-Frobenius measure}
\label{sec-Markov-measure-Lambda-semibran-0}

{
In Section 3 of \cite{dutkay-jorgensen-monic}, the authors outline a procedure for constructing Markov measures  on the infinite path space $\mathcal{K}_N$ of the Cuntz algebra $\mathcal{O}_{N}$, such that the resulting measures  are mutually singular.  
In this section, we show how to apply the analysis of \cite{dutkay-jorgensen-monic} first to the 2-graph $\Lambda$ of Example \ref{exonevtwoe}, and then to the 2-graphs $\Lambda_{2N}$ of Proposition \ref{prop:SBFS-on-Lambda2N};  we can do so since the infinite path spaces of these $2$-graphs are either homeomorphic to $\mathcal{K}_N$ or to a disjoint union of them.

Recall that the infinite path space $\mathcal{K}_N$ of $\mathcal{O}_N$ is given by
\[
\mathcal{K}_N=\prod_{i=1}^\infty \Z_N=\{(i_1 i_2\dots)\,:\, i_n\in \Z_N,\;\; n=1,2,\dots\}.
\]

\begin{defn}[Definition 3.1 of  \cite{dutkay-jorgensen-monic}]
		\label{def-Markov-measure} 
A {\em Markov measure} on the infinite path space  $\mathcal K_N$ of the Cuntz algebra  ${\mathcal{O}_N}$ is defined by a vector 
	$\lambda = (\lambda_0, \ldots, \lambda_{N-1}) $ and an $N \times N$ matrix $T$ such that $\lambda_i > 0$, $T_{i,j} > 0$ for all $i,j \in \Z_N,$ and if 
	$e = (1,1, \ldots ,1)^t$ then $\lambda T=\lambda $ and $Te=e$.
The Carath\'eodory/Kolmogorov extension theorem then implies that  there exists a unique   Borel measure $\mu$ on $\mathcal K_N$  extending the measure $\mu_{\mathcal{C}}$ defined on cylinder sets by:
\[
\mu_{\mathcal{C}} (Z(I)) : = \lambda_{i_1} T_{i_1,i_2} \cdots T_{i_{n-1},i_n},\text{ if }I = i_1 \ldots i_n.
\]
The extension  $\mu$ is called a \emph{Markov measure}\footnote{For Markov measure in a more general context, see \cite{bezuglyi-jorgensen}.} on $\mathcal K_N$.
  
	\end{defn}

We now describe a specific example of a Markov measure on the infinite path space $\mathcal{K}_2$ of $\mathcal{O}_2$ which we use extensively in what follows.

Fix a number $ x \in (0,1)$, the unit interval, and define $T_x=(T_{i,j}) = \begin{pmatrix}
 x & (1-x) \\ (1-x)  & x 
\end{pmatrix}$.  
Let $\lambda=(1,1)$ be a row vector with 1 in all entries.
Then it is straightforward to check that the pair $(T_x, \lambda)$ satisfy 
\[
\lambda \,T_x=\lambda ,\quad T_x \,e=e.
\]
Then as in Definition~\ref{def-Markov-measure}, the Markov measure $\mu_x$ on $\mathcal{K}_2$ is given  on the cylinder sets by
 \begin{equation}\label{eq:mu_x}
  \mu_x(Z(i_1 i_2 \cdots i_n)) =  T_{i_1, i_2} T_{i_2, i_3} \cdots T_{i_{n-1}, i_n},
  \end{equation}
 where $i_j\in \Z_2=\{0,1\}$.  
 Using the fact that the infinite path space of the 2-graph $\Lambda$ in Example \ref{exonevtwoe} is homeomorphic to that of $\mathcal{O}_2$ (by mapping $e f_j$ to $i_j$ for $j=1,2$) we can convert $\mu_x$ into a measure on $\Lambda^\infty$, which we will continue to denote by $\mu_x$.
} 
 
Under this correspondence, the measure $\mu_{1/2}$ satisfies 
\[ \mu_{1/2}(E) = 2 M(E)\quad \text{for all Borel sets}\;\; E \subseteq \Lambda^\infty. \]
Moreover, if $x \not= x'$, Theorem 3.9 of \cite{dutkay-jorgensen-monic} guarantees that $\mu_x, \mu_{x'}$ are mutually singular.

\begin{prop} 
\label{prop-RN-of-Markov-2-example}
Let $\Lambda$ be the 2-graph given in Example \ref{exonevtwoe}.
Fix a number $x \in (0,1)$, and let $\mu_x$ be Markov measure given in \eqref{eq:mu_x}.  As operators on $L^2(\Lambda^\infty, \mu_{x})$, the prefixing operators  $\sigma_e,\sigma_{f_1},\sigma_{f_2}$ have positive Radon-Nikodym derivatives at any point $z \in \Lambda^\infty$.
 Consequently, the standard prefixing and coding maps make $(\Lambda^\infty, \mu_x)$ into a $\Lambda$-semibranching function system.
\end{prop}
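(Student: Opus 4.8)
The plan is to move the entire computation onto the Cuntz path space $\mathcal{K}_2 = \prod_{i=1}^\infty \Z_2$ through the homeomorphism $\Lambda^\infty \cong \mathcal{K}_2$ fixed above, and then to verify the hypotheses of Theorem~\ref{thm-lambda-sbfs-on-the-inf-path-space-via-a-measure}. Recall this homeomorphism sends a standard-form path $\xi \equiv e g_1 e g_2 \cdots$ (with $g_i \in \{f_1, f_2\}$) to the sequence $(i_1 i_2 \cdots) \in \mathcal{K}_2$ recording its blue edges, say $i_n = 0$ when $g_n = f_1$ and $i_n = 1$ when $g_n = f_2$. Hypothesis (a) of that theorem (measurability) is immediate because the standard maps $\sigma_\lambda, \sigma^n$ are continuous and $\mu_x$ is Borel, exactly as in Proposition~\ref{prop:prod-meas-onevtwoe}; and (b) holds because $\Lambda$ has a single vertex, so $Z(v) = \Lambda^\infty$ has the finite positive $\mu_x$-mass $2$ (replacing $\mu_x$ by $\tfrac12\mu_x$ if a genuine probability measure is wanted leaves all Radon--Nikodym derivatives below unchanged). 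Thus the whole content of the proposition is hypothesis (c): positivity of $\Phi_{\sigma_e}, \Phi_{\sigma_{f_1}}, \Phi_{\sigma_{f_2}}$.

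First I would determine how the three prefixing maps act in the coordinates of $\mathcal{K}_2$. Using the factorization rules $f_1 e = e f_2$ and $f_2 e = e f_1$ (together with their reverses $e f_1 = f_2 e$, $e f_2 = f_1 e$) to rewrite $e\xi$ and $f_j\xi$ back into standard form, one pushes the surplus edge out to infinity and finds that the existing blue symbols all get flipped, while prefixing by $f_j$ additionally prepends one new symbol. Writing $\overline{i} = 1-i$ for the flip on $\Z_2$, this gives
\[ \sigma_e(i_1 i_2 \cdots) = (\overline{i_1}\,\overline{i_2}\cdots), \qquad \sigma_{f_1}(i_1 i_2 \cdots) = (1\,\overline{i_1}\,\overline{i_2}\cdots), \qquad \sigma_{f_2}(i_1 i_2 \cdots) = (0\,\overline{i_1}\,\overline{i_2}\cdots). \]
I expect this ``prepend-and-flip'' identification to be the main obstacle, since it is where the $2$-graph combinatorics enters and where bookkeeping errors are easiest; everything afterwards is a short computation. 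In particular, these formulas show that each $\sigma_\lambda$ carries a square cylinder $Z(i_1\cdots i_n)$ to another (square) cylinder.

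With the formulas in hand I would compute each Radon--Nikodym derivative as a limit of cylinder-measure ratios along the square cylinders $\lambda_n = Z(i_1 \cdots i_n)$ shrinking to $\xi$, via Lemma~\ref{lemma-limit-RN}, just as in Propositions~\ref{prop:prod-meas-onevtwoe} and \ref{prop-meas-on-ex-3-vert}. The decisive point is the flip-symmetry of the transition matrix: since $T_x$ has equal diagonal entries and equal off-diagonal entries, $T_{\overline{i}\,\overline{j}} = T_{ij}$ for all $i,j \in \Z_2$. Consequently the bulk product $\prod_{k=1}^{n-1} T_{\overline{i_k}\,\overline{i_{k+1}}}$ appearing in $\mu_x(\sigma_\lambda(Z(\lambda_n)))$ cancels exactly against $\prod_{k=1}^{n-1} T_{i_k i_{k+1}} = \mu_x(Z(\lambda_n))$, leaving $\Phi_{\sigma_e} \equiv 1$ and
\[ \Phi_{\sigma_{f_1}}(\xi) = T_{1,\overline{i_1}}, \qquad \Phi_{\sigma_{f_2}}(\xi) = T_{0,\overline{i_1}}, \]
each of which equals $x$ or $1-x$ and hence is strictly positive for every $\xi$ since $x \in (0,1)$. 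This cancellation is the heart of the matter: it is the flip-symmetry of $T_x$ (not shared by a general Markov matrix) that makes the limit exist and be positive, whereas positivity of the single surviving factor is then automatic. Having verified (a)--(c), Theorem~\ref{thm-lambda-sbfs-on-the-inf-path-space-via-a-measure} yields the asserted $\Lambda$-semibranching function system.
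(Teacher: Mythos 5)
Your proposal is correct and follows essentially the same route as the paper: both verify the hypotheses of Theorem \ref{thm-lambda-sbfs-on-the-inf-path-space-via-a-measure} by computing the Radon--Nikodym derivatives as limits of cylinder-measure ratios via Lemma \ref{lemma-limit-RN}, using the factorization rules to see that prefixing flips the blue-edge sequence (and prepends one symbol for $f_j$), and then exploiting the symmetry $T_{i,j}=T_{\overline{i},\overline{j}}$ of $T_x$ to cancel the bulk product, leaving $\Phi_{\sigma_e}\equiv 1$ and $\Phi_{\sigma_{f_j}}(\xi)=T_{\overline{j},\overline{i_1}}>0$. The only cosmetic difference is that you phrase the computation in the coordinates of $\mathcal{K}_2$ with explicit ``prepend-and-flip'' formulas, while the paper works directly with the paths $e f_{i_1} e f_{i_2}\cdots$; the content is the same.
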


\begin{proof} 
Since $\mu_x$ is a Borel measure and the standard coding and prefixing maps $\sigma^n, \sigma_\lambda$ are local homeomorphisms, they are $\mu_x$-measurable for any $x \in (0,1)$.  Moreover, we have 
\[ \mu_x(Z(v)) = \sum_{i, j \in \{ 1,2\}} \mu_x(Z(e f_i e f_j)) = 2.\]
Thus, once we show that the  prefixing operators $\sigma_e,\sigma_{f_1},\sigma_{f_2}$ have positive Radon-Nikodym derivatives (for which we use Lemma 	\ref{lemma-limit-RN}), Theorem \ref{thm-lambda-sbfs-on-the-inf-path-space-via-a-measure} tells us that the standard prefixing and coding maps constitute a $\Lambda$-semibranching function system on $(\Lambda^\infty, \mu_x)$.

Thus, fix $z = e f_{i_1} ef_{i_2}ef_{i_3} \ldots \in \Lambda^\infty$,
and a sequence $(z_n)_n$ of finite paths 
\[
z_n:=e f_{i_1}ef_{i_2}ef_{i_3}\ldots f_{i_n}
\]
such that $z = \bigcap_{n\in \N} Z(z_n)$.  By Lemma \ref{lemma-limit-RN}, for any finite path $g \in \Lambda$, we have 

\[
\frac{d(\mu_x \circ \sigma_g)}{d\mu_x}(z) = \lim_{n\to \infty} \frac{ \mu_{x}(   Z( gz_n  ) ) } {\mu_{x} (Z(z_n ))}.
\]
If we take $g=e$, the factorization rules $e f_i = f_{i-1} e$, for $i= 1,2$, imply that
\[
\lim_n \frac{ \mu_{x}(   Z( gz_n)   ) } {\mu_{x} (Z(z_n) )}=\lim_n \frac{ \mu_{x}(  Z(e f_{i_1-1}e f_{i_2-1}\ldots f_{i_n-1} e) ) } {\mu_{x}( Z(  e f_{i_1}e f_{i_2} \ldots f_{i_n})  ) }=\lim_n \frac{T_{i_1-1,i_2-1}\cdots T_{i_{n-1}-1,i_n-1}} {T_{i_1,i_2} \cdots T_{i_{n-1},i_n}} ,
\]
since $Z(e f_{i_1-1}e f_{i_2-1}\ldots f_{i_n-1} e ) = Z(e f_{i_1-1}e f_{i_2-1}\ldots f_{i_n-1} e f_1  ) \sqcup Z(e f_{i_1-1}e f_{i_2-1}\ldots f_{i_n-1} e f_2)$ has 
\[\mu_x (Z(e f_{i_1-1}e f_{i_2-1}\ldots f_{i_n-1} e) ) = \mu_x Z(e f_{i_1-1}e f_{i_2-1}\ldots f_{i_n-1} ) ).\]

Now, observe that for any $i,j \in \Z/2\Z$ we have $T_{i,j} = T_{i-1, j-1}$.  It follows that 
\[ \frac{d(\mu_x \circ \sigma_e)}{d\mu_x}(z) = \lim_{n \to \infty} \frac{T_{i_1-1,i_2-1}\cdots T_{i_{n-1}-1,i_n-1}} {T_{i_1,i_2}\cdots T_{i_{n-1},i_n}} = 1.\]

Similarly, for $j= 1, 2,$  by Lemma 	\ref{lemma-limit-RN},  the Radon-Nikodym derivative
\[
\frac{d(\mu_x \circ \sigma_{f_j})}{d\mu_x}(z) = \lim_{n\to \infty} \frac{ \mu_{x}(   Z( f_j z_n  ) ) } {\mu_{x} (Z(z_n ))} = \lim_{n\to \infty} \frac{T_{j+1, i_1+1} T_{i_1 +1, i_2 + 1} \cdots T_{i_{n-1} + 1, i_n +1}}{T_{i_1, i_2} \cdots T_{i_{n-1}, i_n}} = T_{j+1, i_1 +1}
\]
is positive (indeed, constant on each cylinder set $Z(e f_i)$ for $i = 1, 2$).
\end{proof}

Recall from Proposition \ref{prop:inf-path-Lambda-2N} that, if $\Lambda_{2N}$ denotes a 2-graph with $2N+1$ vertices and skeleton \eqref{eq:Lambda2N-skeleton}, then $\Lambda_{2N}^\infty$ consists of two disjoint copies of the infinite path space $\prod_{i\in \N} \Z_{2N}$ of $\mathcal O_{2N}$. Thus, we can also use the Markov measures of \cite{dutkay-jorgensen-monic}  to construct new measures on $\Lambda_{2N}^\infty$.

The distinction between the vertices $u_i$ and $w_i$ which we relied on to prove Proposition \ref{prop:SBFS-on-Lambda2N} will encumber our notation unnecessarily in the sequel.  Thus, we fix a relabeling the vertices $u_i, w_i$ of $\Lambda_{2N}$ by $\{ Q_i\}_{i=1}^{2N}$.  With this notation, the isomorphism between $\prod_{i\in \N} \Z_{2N} \sqcup \prod_{i \in \N} \Z_{2N}$ and $\Lambda^\infty_{2N}$  is given by mapping a sequence $( a_i)_{i \in \N}$ in the first copy of $\prod_{i\in \N} \Z_{2N}$ to $\xi \equiv (v, Q_{a_1}, v, Q_{a_2}, \ldots)$. Similarly, a sequence $(b_i)_{i\in \N}$ in the second copy of $ \prod_{i\in \N} \Z_{2N}$ maps to the infinite path $\xi \equiv (Q_{b_1}, v, Q_{b_2}, v, \ldots)$.

Observe that a choice of factorization  on $\Lambda_{2N}$ is equivalent to choosing a permutation $\phi$ of $\{1, \ldots, 2N\}$ such that the red-blue path $(v, Q_i, v)$ equals the blue-red path $(v, Q_{\phi(i)}, v)$.
Having specified such a permutation $\phi$, suppose $\phi$ consists of $d$ cycles; write $c_j$ for the smallest entry in the $j$th cycle.

 Fix $d$ vectors $\{x^j\in \R^{2N}: 0<x_i^j<1\;\;\text{for}\;\; 1\le i\le 2N\}_{j=1}^d$ such that $\sum_{i=1}^{2N} x^j_i = 1$ for each $j$, and define $T_x$ to be the  $2 N \times 2N$ matrix with entries from $(0,1)$ such that 
 \[ T_x(i, j) = x^m_{\phi^{n-1}(j)} \ \text{ if } i = \phi^{n-1}(c_m).\]
By construction, we have $T_x(i,j) = T_x(\phi(i), \phi(j))$ for all $1 \leq i, j \leq 2N$.  Moreover,  the fact that all rows of $T$ sum to 1 implies that $(T, ( 1, 1, \ldots, 1)^T)$ satisfies the conditions given in Definition 3.1 of \cite{dutkay-jorgensen-monic}. Therefore, we have Markov measure $\mu_x$ associated to $T$.

\begin{prop}
Let $\Lambda_{2N}$ be a 2-graph with skeleton  \eqref{eq:Lambda2N-skeleton} and factorization rule determined by the permutation $\phi \in S_{2N}$.  For each matrix $T_x$ as above, write $\mu_x$ for the associated measure  on $\Lambda_{2N}^\infty \cong \prod_{i\in \N} \Z_{2N} \sqcup \prod_{i\in \N} \Z_{2N}$, given on a cylinder set in either copy of $\prod_{i\in \N} \Z_{2N} $ by 
\[ \mu_x(Z(a_1 \cdots a_n)) = \prod_{i=1}^{n-1} T_x(a_i, a_{i+1}). \]
Then the standard prefixing and coding maps make $(\Lambda_{2N}^\infty, \mu_x)$ into a $\Lambda$-semibranching function system.  If the vectors $x^m$ are not all constant, $\mu_x$ is mutually singular with respect to the measure $M$ of Equation \eqref{eq:M}.
\label{prop:markov-2N}
\end{prop}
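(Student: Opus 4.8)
The plan is to treat the two assertions separately, handling the semibranching claim via Theorem~\ref{thm-lambda-sbfs-on-the-inf-path-space-via-a-measure} and the singularity claim via the Markov-measure machinery of \cite{dutkay-jorgensen-monic}. For the first claim I would verify the three hypotheses of Theorem~\ref{thm-lambda-sbfs-on-the-inf-path-space-via-a-measure}. Hypothesis (a) is immediate: $\mu_x$ is a Borel measure by the Carath\'eodory/Kolmogorov extension theorem (as in Definition~\ref{def-Markov-measure}), and the standard prefixing and coding maps $\sigma_\lambda,\sigma^n$ are local homeomorphisms, hence measurable. For hypothesis (b) I would decompose each $Z(v)$ into square cylinder sets and use the strict positivity of the entries of $T_x$ to conclude $\mu_x(Z(v))>0$ for every $v\in\Lambda_{2N}^0$.

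The crux of the first claim is hypothesis (c), the positivity of the Radon--Nikodym derivatives $\Phi_{\sigma_g}=d(\mu_x\circ\sigma_g)/d\mu_x$ for each edge $g$. I would compute these exactly as in the proof of Proposition~\ref{prop-RN-of-Markov-2-example}: fixing $\xi=\bigcap_n Z(\xi_n)$ with $d(\xi_n)=(n,n)$ and applying Lemma~\ref{lemma-limit-RN}, so that $\Phi_{\sigma_g}(\xi)=\lim_n \mu_x(Z(g\xi_n))/\mu_x(Z(\xi_n))$. Prefixing by $g$ alters the sequence of vertices through which the path passes according to the factorization rule encoded by the permutation $\phi$: when $r(g)=v$ this permutes the index labels by a power of $\phi$, and when $r(g)\ne v$ it prepends a single transition. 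The defining relation $T_x(i,j)=T_x(\phi(i),\phi(j))$ guarantees that the resulting ratio of products of $T_x$-entries telescopes to a finite limit on each cylinder set, and the strict positivity $T_x(i,j)>0$ ensures this limit is positive. Hence every edge prefixing operator has positive Radon--Nikodym derivative and Theorem~\ref{thm-lambda-sbfs-on-the-inf-path-space-via-a-measure} applies.

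For mutual singularity I would exploit the identification of Proposition~\ref{prop:inf-path-Lambda-2N} of $\Lambda_{2N}^\infty$ with two disjoint copies of $\prod_{i\in\N}\Z_{2N}$, on each of which both $M$ and $\mu_x$ restrict to Markov measures. From the formula for $M$ recorded just before Proposition~\ref{prop:inf-path-Lambda-2N}, the restriction of $M$ to either copy is a positive scalar multiple of the Markov measure with constant transition matrix (all entries $1/(2N)$), whereas $\mu_x$ is the Markov measure with transition matrix $T_x$. The hypothesis that the vectors $x^m$ are not all constant means at least one row of $T_x$ differs from $(1/(2N),\ldots,1/(2N))$, so $T_x$ is not the uniform matrix. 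Theorem~3.9 of \cite{dutkay-jorgensen-monic} then guarantees that these two Markov measures are mutually singular on each copy; since multiplication by a positive scalar does not affect singularity and the two copies are disjoint, I can glue the witnessing sets to conclude that $\mu_x$ and $M$ are mutually singular on all of $\Lambda_{2N}^\infty$.

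The principal obstacle is the bookkeeping: correctly tracking how prefixing by a red or blue edge transforms the vertex string $(v,Q_1,v,\ldots)$ (equivalently the $\Z_{2N}$-valued sequence) through the factorization permutation $\phi$, and verifying that the $\phi$-invariance of $T_x$ makes each prefixing derivative both well-defined and nonvanishing. This is the higher-rank analogue of the index shift $ef_i=f_{i-1}e$ used in Proposition~\ref{prop-RN-of-Markov-2-example}, complicated here by the larger vertex set and the freedom in the choice of factorization permutation $\phi$.
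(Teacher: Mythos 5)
Your proposal is correct and follows essentially the same route as the paper: the Radon--Nikodym derivatives of the edge prefixing maps are computed as limits of ratios of cylinder-set measures via Lemma~\ref{lemma-limit-RN}, with the $\phi$-invariance $T_x(i,j)=T_x(\phi(i),\phi(j))$ and strict positivity of the entries yielding positive (in fact locally constant) derivatives, and mutual singularity follows from observing that $M$ is a rescaling of the $\mu_x$ with all $x^m$ constant and invoking Theorem~3.9 of \cite{dutkay-jorgensen-monic}. The only difference is cosmetic: you spell out hypotheses (a) and (b) of Theorem~\ref{thm-lambda-sbfs-on-the-inf-path-space-via-a-measure} and the gluing over the two copies of $\prod_{i\in\N}\Z_{2N}$, which the paper leaves implicit.
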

\begin{proof}
As above, we merely need to check the Radon-Nikodym derivatives by using Lemma 	\ref{lemma-limit-RN}.  Fix a red edge $e$ with range $Q_i$, and fix a point $\xi \equiv (v, Q_{b_1}, v, Q_{b_2}, \ldots)$ (with a  red edge listed first).  Then, 
\begin{align*}
\frac{d(\mu_x \circ \sigma_e)}{d\mu_x}(\xi) &= \lim_{n \to \infty} \frac{\mu_x \circ \sigma_e(Z(v, Q_{b_1}, \ldots, Q_{b_n}, v))}{\mu_x(Z(v, Q_{b_1}, \ldots, Q_{b_n}, v))}\\
&= \lim_{n\to \infty} \frac{\mu_x(Z(Q_i, v, Q_{\phi(b_1)}, v, \ldots, Q_{\phi(b_n)}, v))}{\mu_x(Z(v, Q_{b_1}, \ldots, Q_{b_n}, v))}\\
&= \lim_{n \to \infty}\frac{ T_x(i, \phi(b_1)) \prod_{i=1}^{n-1} T_x(\phi(b_i), \phi(b_{i+1}))}{\prod_{i=1}^{n-1} T_x(b_i, b_{i+1})} \\
&= T_x(i, \phi(b_1)).
\end{align*}
Similarly, if we choose a blue edge $f$  with range $Q_i$, the fact that there is a unique red-blue path with range $Q_i$ means that rewriting the path $f \xi$ as an alternating sequence of red-blue edges doesn't change the vertices $Q_j$ that the path passes through.  In other words, we have 
\begin{align*}
\frac{d(\mu_x \circ \sigma_f)}{d\mu_x}(\xi) &= \lim_{n \to \infty} \frac{\mu_x \circ \sigma_f(Z(v, Q_{b_1}, \ldots, Q_{b_n}, v))}{\mu_x(Z(v, Q_{b_1}, \ldots, Q_{b_n}, v))}\\
&= \lim_{n\to \infty} \frac{\mu_x(Z(Q_{i}, v, Q_{b_1}, v, \ldots, Q_{b_n}, v))}{\mu_x(Z(v, Q_{b_1}, \ldots, Q_{b_n}, v))}\\
&= \lim_{n \to \infty}\frac{ T_x(i, b_1 ) \prod_{i=1}^{n-1} T_x(b_i, b_{i+1})}{\prod_{i=1}^{n-1} T_x(b_i, b_{i+1})} \\
&= T_x(i, b_1).
\end{align*}
On the other hand, if $\zeta \equiv (Q_{a_1}, v, Q_{a_2}, v, \ldots)$ is an infinite path and $g$ is a blue edge with source $Q_{a_1}$ and range $v$, prefixing $\zeta$ by $g$ and rewriting the result as a sequence of red-blue edges gives $g \zeta \equiv (v, Q_{\phi(a_1)}, v, Q_{\phi(a_2)}, \ldots)$.  It follows that 
\begin{align*}
\frac{d(\mu_x \circ \sigma_g)}{d\mu_x}(\zeta) &= \lim_{n \to \infty} \frac{\mu_x \circ \sigma_g(Z(Q_{a_1}, \ldots, Q_{a_n}))}{\mu_x(Z(Q_{a_1}, \ldots, Q_{a_n})))}\\
&= \lim_{n\to \infty} \frac{\mu_x(Z( v, Q_{\phi(a_1)}, v, \ldots, Q_{\phi(a_n)}))}{\mu_x(Z( Q_{a_1}, \ldots, Q_{a_n}))}\\
&= \lim_{n \to \infty}\frac{ \prod_{i=1}^{n-1} T_x(\phi(a_i), \phi(a_{i+1}))}{\prod_{i=1}^{n-1} T_x(a_i, a_{i+1})} \\
&= 1.
\end{align*}
Similarly, for any red edge $h$ with source $Q_{a_1}$, the fact that rewriting $\zeta$ as a blue-red path doesn't change the sequence of vertices it passes through means that $\frac{d(\mu_x \circ \sigma_h)}{d\mu_x}(\zeta) = 1$.

Since all the Radon-Nikodym derivatives are positive, we obtain a $\Lambda$-semibranching function system as claimed.

For the final assertion, one simply observes that since the formula for $M(Z(\lambda))$ only depends on the degree (length) of $\lambda$, it is a rescaling of the measure $\mu_x$ corresponding to the choice $x^m = (\frac{1}{2N}, \ldots, \frac{1}{2N})$ for all $m$.  Thus, if any of the vectors $x^m$ are not constant, Theorem 3.9 of \cite{dutkay-jorgensen-monic} implies that $\mu_x$ is mutually singular with respect to $M$.
\end{proof}

\begin{rmk}
The measure $\mu_x$ used in Proposition \ref{prop:markov-2N} above could equally well be defined for any  2-graph $\Lambda_{2N+1}$ with one central vertex and $2N+1$ peripheral vertices, each connected to the center vertex as in \eqref{eq:Lambda2N-skeleton}.  This is because any such 2-graph (equivalently, any factorization rule for this skeleton) is determined by a permutation of the outer $2N+1$ vertices.  The  conclusions of Proposition \ref{prop:markov-2N} above regarding when $\mu_x$ and $M$ are mutually singular also hold in this context.
\end{rmk}

\section{$\Lambda$-projective systems and projection valued measures }
\label{sec-further-foundations}

In Sections \ref{sec:monic-results} and \ref{sec:atomic_repn} below, we turn our attention to two new classes of representations of $k$-graph algebras: monic and purely atomic representations.  For our analysis of these representations, we need certain straightforward but technical results about the projection-valued measure on $C(\Lambda^\infty)$ induced by a $*$-representation of $C^*(\Lambda)$; these form the content of Section \ref{sec:Proj-valued-measures} below.  We also rely on a slight strengthening of the notion of a $\Lambda$-semibranching function system.  These so-called $\Lambda$-projective systems, and their properties, are the content of Section \ref{sec:lambda-proj-systems}.

\subsection{$\Lambda$-projective systems}
\label{sec:lambda-proj-systems}

\begin{defn}\label{def:lambda-proj-system}
Let $\Lambda$ be a finite $k$-graph with no sources. 
A \emph{$\Lambda$-projective system} on a measure space $(X,\mu)$ is a $\Lambda$-semibranching function system on $(X,\mu)$, with prefixing maps $\{\tau_\lambda:D_\lambda\to R_\lambda\}_{\lambda\in\Lambda}$ and coding map $\{\tau^n:n\in\N^k\}$
together with a family of functions $\{ f_\lambda\}_{\lambda\in \Lambda} \subseteq  L^2(X,\mu)$ satisfying the following.
\begin{itemize}
\item[(a)] For any   $\lambda \in \Lambda$,   we have  $ 0\not=\frac{d(\mu \circ (\tau_\lambda)^{-1})}{d\mu} = |f_\lambda |^2$ {on $R_\lambda$}; 
\item[(b)] For any $\lambda, \nu \in \Lambda$, we have 
$f_\lambda \cdot (f_\nu \circ \tau^{d(\lambda)}) = f_{\lambda \nu}.$
\end{itemize}
\end{defn}
{
 Before discussing some of the consequences and implications of this definition, we present an alternative formulation of the important quantity $\left.\frac{d(\mu \circ (\tau_\lambda)^{-1}}{d\mu}\right|_{R_\lambda}$.} 
 \begin{lemma}
  For any $\Lambda$-semibranching function system, \begin{equation}
 \left.\frac{d(\mu \circ (\tau_\lambda)^{-1})}{d\mu}\right|_{R_\lambda} = \frac1{(\Phi_\lambda \circ \tau^{n})|_{R_\lambda}}. 
\label{eq:phi-lambda-eqn}
\end{equation}
  \end{lemma}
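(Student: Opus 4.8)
The plan is to relate the two Radon--Nikodym derivatives appearing in Equation~\eqref{eq:phi-lambda-eqn} by using the defining property of the coding map together with the chain rule for Radon--Nikodym derivatives. Write $n = d(\lambda)$, and recall from Definition~\ref{def-1-brach-system}(b) that $\Phi_\lambda = \Phi_{\tau_\lambda} = \frac{d(\mu \circ \tau_\lambda)}{d\mu}$ on $D_\lambda$, while by definition $\tau^n$ is a coding map for the semibranching function system $\{\tau_\lambda : d(\lambda) = n\}$, so that $\tau^n \circ \tau_\lambda = \mathrm{id}$ on $D_\lambda$. The key observation is that on $R_\lambda = \tau_\lambda(D_\lambda)$, the map $\tau^n$ acts as a genuine inverse to $\tau_\lambda$: since the ranges $\{R_\lambda : d(\lambda) = n\}$ are mutually disjoint (up to measure zero) and cover $X$, the restriction $\tau^n|_{R_\lambda}$ coincides with $(\tau_\lambda)^{-1}$, and conversely $\tau_\lambda \circ \tau^n = \mathrm{id}$ on $R_\lambda$.

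First I would establish the measure-theoretic identity that on $R_\lambda$ one has $\tau^n|_{R_\lambda} = (\tau_\lambda)^{-1}$, so that the expression $\mu \circ (\tau_\lambda)^{-1}$ restricted to $R_\lambda$ makes sense and equals the pushforward $\mu \circ \tau^n$ restricted to $R_\lambda$. Then I would apply the standard Radon--Nikodym chain/inverse rule: for a bimeasurable bijection (here $\tau_\lambda : D_\lambda \to R_\lambda$ with inverse $\tau^n|_{R_\lambda}$), the derivative of the pushforward under the inverse map is the reciprocal of the derivative of the pushforward under the map, composed with the inverse. Concretely, since $\frac{d(\mu \circ \tau_\lambda)}{d\mu} = \Phi_\lambda$ on $D_\lambda$, the inverse relationship gives
\[
\frac{d(\mu \circ (\tau_\lambda)^{-1})}{d\mu} = \frac{1}{\Phi_\lambda \circ (\tau_\lambda)^{-1}} \quad \text{on } R_\lambda,
\]
and substituting $(\tau_\lambda)^{-1} = \tau^n$ on $R_\lambda$ yields exactly $\frac{1}{(\Phi_\lambda \circ \tau^n)|_{R_\lambda}}$, which is the claimed formula.

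To make the inverse rule rigorous I would verify it directly by integrating against an arbitrary measurable test set $E \subseteq R_\lambda$: compute $\mu(\tau_\lambda^{-1}(E)) = \int_{\tau_\lambda^{-1}(E)} d\mu$ and change variables via $\tau_\lambda$, using the definition of $\Phi_\lambda$ as the Radon--Nikodym derivative of $\mu \circ \tau_\lambda$, to express everything back as an integral over $E$ with integrand $(\Phi_\lambda \circ \tau^n)^{-1}$. The uniqueness of Radon--Nikodym derivatives then forces the identity. This is essentially the same integration-by-substitution technique used in the proof of Theorem~\ref{thm:SBFS-edge-defn}, where the relations $\tau^{e_j} \circ \tau_{\lambda_2} = \mathrm{id}$ and $\tau_{\lambda_2} \circ \tau^{e_j} = \mathrm{id}$ were exploited to transfer integrals between the domain and range.

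The main obstacle I anticipate is bookkeeping the null sets carefully: all the semibranching identities (disjointness of ranges, $\tau^n \circ \tau_\lambda = \mathrm{id}$, the covering property) hold only $\mu$-almost everywhere, so I must ensure that the claimed equality $\tau^n|_{R_\lambda} = (\tau_\lambda)^{-1}$ and the resulting derivative formula hold off a common $\mu$-null set, and that the reciprocal $(\Phi_\lambda \circ \tau^n)^{-1}$ is well-defined $\mu$-a.e.\ on $R_\lambda$ (which is guaranteed by the positivity $\Phi_\lambda > 0$ $\mu$-a.e.\ from Definition~\ref{def-1-brach-system}(b)). Once the a.e.\ identification of $\tau^n$ with $(\tau_\lambda)^{-1}$ on $R_\lambda$ is in hand, the remainder is a routine application of the change-of-variables formula and the uniqueness of Radon--Nikodym derivatives.
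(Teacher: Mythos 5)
Your proposal is correct and follows essentially the same route as the paper: both identify $\tau^n|_{R_\lambda}$ with $(\tau_\lambda)^{-1}$ and then invoke the reciprocal relation for Radon--Nikodym derivatives of a bimeasurable bijection, the paper justifying it via the identity $\frac{d(\mu \circ \tau_\lambda^{-1})}{d\mu} \cdot \frac{d\mu}{d(\mu \circ \tau_\lambda^{-1})} = 1_{R_\lambda}$ together with composition by $\tau_\lambda \circ \tau^n = \mathrm{id}|_{R_\lambda}$, rather than via your test-set change of variables. The two justifications are interchangeable, and your attention to the a.e.\ caveats and to positivity of $\Phi_\lambda$ matches what the paper relies on.
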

  \begin{proof}
 Suppose $d(\lambda) = n \in \N^k$. Since $\tau_\lambda^{-1} = \tau^{n}|_{R_\lambda}$ for $n=d(\lambda)$, Condition (b) in the definition of a semibranching function system (Definition \ref{def-1-brach-system}) implies that $\mu << \mu \circ \tau_\lambda^{-1} $ a.e. on $R_\lambda$.  Moreover,
 \[\frac{d(\mu \circ \tau_\lambda^{-1})}{d\mu} \cdot \frac{d\mu}{d(\mu \circ \tau_\lambda^{-1})} = 1_{R_\lambda}.\]  
 Since $\tau_\lambda\circ \tau^n|_{R_\lambda}=id|_{R_\lambda}$, we have
 \begin{align*}
 \frac{d\mu}{d(\mu \circ \tau_\lambda^{-1})}|_{R_\lambda} &= \frac{d( \mu \circ \tau_\lambda \circ \tau^n)}{d(\mu \circ \tau_\lambda^{-1} \circ \tau_\lambda \circ \tau^n)} |_{R_\lambda} = \left. \frac{d(\mu \circ \tau_\lambda)}{d\mu} \circ \tau^n\right|_{R_\lambda}\\
 &= \Phi_\lambda \circ \tau^{n}|_{R_\lambda},
\end{align*}
and  since $D_{s(\lambda)} = D_\lambda$ is the domain of $\Phi_\lambda$, it follows that 
\begin{equation*}
\frac1{(\Phi_\lambda \circ \tau^{n})|_{R_\lambda}} = \left.\frac{d(\mu \circ (\tau_\lambda)^{-1})}{d\mu}\right|_{R_\lambda}. 
\end{equation*}
  \end{proof}

  \begin{rmk}
 \label{rmk:lambda-proj-comments-1}
 \begin{enumerate}
 
 \item For any $\Lambda$-semibranching function system on $(X, \mu)$, we will have 
 \[\mu \circ (\tau_\lambda)^{-1} << \mu \text{ on }R_\lambda.\]
   To see this, suppose $E \subseteq R_\lambda$ has measure zero.  Setting  $F := \{ x \in D_{s(\lambda)}: \tau_\lambda(x) \in E\}$, we have $E = \tau_\lambda(F)$ and $F = (\tau_\lambda)^{-1}(E)$.  Since the Radon-Nikodym derivative 
 \[ \frac{d(\mu \circ \tau_\lambda)}{d\mu} = \Phi_\lambda\]
 is strictly positive a.e.~on $D_{s(\lambda)}$, if $\mu(E) = \mu \circ \tau_\lambda(F) = 0$ then $\mu(F)$ must also be zero.  Hence $\mu(E) = 0 \Rightarrow \mu \circ (\tau_\lambda)^{-1}(E) = 0$.
 
\item  In fact, $ \frac{d(\mu \circ (\tau_\lambda)^{-1})}{d\mu}$ is always nonzero a.e.~on $R_\lambda$.  To see this, let 
 \[E = \left\{ x \in R_\lambda: \frac{d(\mu \circ (\tau_\lambda)^{-1})}{d\mu}(x) = 0 \right\}\]
  and note that 
 \[ 0=\int_E  \frac{d(\mu \circ (\tau_\lambda)^{-1})}{d\mu} \, d\mu = \mu \circ (\tau_\lambda)^{-1}(E) 
 .\]
 Write $F = (\tau_\lambda)^{-1}(E)$ and observe that 
 \[\mu(F) = 0 \Rightarrow \mu(\tau_\lambda(F)) = \mu(E) = 0 ,\] since $\mu \circ \tau_\lambda << \mu$ in any $\Lambda$-semibranching function system.

 \item A $\Lambda$-projective system on $(X,\mu)$ consists of a $\Lambda$-semibranching function system plus some extra information (encoded in the functions $f_\lambda$).  We have a certain amount of choice for the functions $f_\lambda$; we can take positive or negative (or imaginary!) roots of $\frac{d(\mu \circ (\tau_\lambda)^{-1})}{d\mu} $ for $f_\lambda$, as long as they satisfy the multiplicativity Condition (b) above.
 \item 
 For any $\Lambda$-semibranching function system on $(X, \mu)$
 , there is a natural choice of an associated $\Lambda$-projective system; namely, for $\lambda \in \Lambda^{n}$ we define 
 \[f_\lambda(x) := \Phi_{\lambda}(\tau^{n}(x))^{-1/2}  \chi_{R_\lambda}(x).\]
Since $(\Phi_\lambda \circ \tau^n)^{-1} = \frac{d(\mu \circ \tau_\lambda^{-1})}{d\mu}$, it follows that $f_\lambda$ defined above satisfies Condition (a) of Definition \ref{def:lambda-proj-system}. 

Moreover, since the operators $S_\lambda \in B(L^2(X, \mu))$ of Theorem 3.5 of \cite{FGKP} are given by 
\[S_\lambda(f) = f_\lambda \cdot (f \circ \tau^n),\]
and the aforementioned Theorem 3.5 establishes that $\{S_\lambda\}_{\lambda\in \Lambda}$ is a Cuntz--Krieger family, Proposition \ref{prop:lambda-proj-repn} below shows that $\{f_\lambda\}_{\lambda\in \Lambda}$ is indeed a $\Lambda$-projective system. 

\item Our definition of a $\Lambda$-projective system is based on the definition of a  monic system in \cite{dutkay-jorgensen-monic} (for $\mathcal{O}_N$) and \cite{bezuglyi-jorgensen} (for $\mathcal{O}_A$).  We have decided to change the name because even for $\mathcal{O}_A$, not every monic system gives rise to a monic representation of $\mathcal{O}_A$. 
The word ``projective'' refers to the cocycle-like Condition (c) of Definition \ref{def:lambda-proj-system}.

\item 
 Finally, we observe that Condition (a) of Definition \ref{def:lambda-proj-system}
 forces $f_\lambda (x) = 0 $ a.e.~outside of $R_\lambda$, since $\frac{d(\mu \circ (\tau_\lambda)^{-1})}{d\mu}$ is supported only on $R_\lambda$. 
 \end{enumerate}
\end{rmk}

 Condition (b) of Definition~\ref{def:lambda-proj-system} is needed to associate a representation of $C^*(\Lambda)$ to a $\Lambda$-projective system, as the following Proposition shows. 
 
  \begin{prop}
  \label{prop:lambda-proj-repn}
  Let $\Lambda$ be a finite, source-free $k$-graph. Suppose that a measure space $(X,\mu)$ admits a $\Lambda$-semibranching function system with prefixing maps $\{\tau_\lambda:\lambda\in\Lambda\}$ and coding maps $\{\tau^n:n\in\N^k\}$.  Suppose that $\{f_\lambda\}_{\lambda\in \Lambda}$ is a collection of functions satisfying Condition (a) 
  of Definition \ref{def:lambda-proj-system}. Then $(X,\mu)$ admits a $\Lambda$-projective system with $\{\tau_\lambda\}$, $\{\tau^n\}$ and $\{f_\lambda\}_\lambda$ if and only if the operators $T_\lambda \in B(L^2(X, \mu))$ given by 
  \begin{equation}\label{eq:T-lambda}
  T_\lambda(f) = f_\lambda \cdot (f \circ \tau^{d(\lambda)})
  \end{equation}
 form a Cuntz--Krieger $\Lambda$-family (and hence give a representation of $C^*(\Lambda)$).
  \end{prop}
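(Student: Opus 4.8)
The plan is to exploit the fact that Condition (a) of Definition \ref{def:lambda-proj-system} is \emph{already} assumed, so that the existence of a $\Lambda$-projective system with the given data $(\{\tau_\lambda\},\{\tau^n\},\{f_\lambda\})$ is equivalent to Condition (b) alone. Thus the proposition reduces to the single assertion that Condition (b) holds if and only if $\{T_\lambda\}_{\lambda\in\Lambda}$ satisfies the Cuntz--Krieger relations (CK1)--(CK4); the parenthetical claim then follows from the universal property of $C^*(\Lambda)$. My strategy is to compute each operator appearing in (CK1)--(CK4) explicitly and read off which relation corresponds to which piece of Condition (b).

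First I would identify the adjoint. Using Condition (a) in the form $\frac{d(\mu\circ\tau_\lambda^{-1})}{d\mu}=|f_\lambda|^2$ on $R_\lambda$, together with the pushforward change of variables $\int_{D_{s(\lambda)}}(h\circ\tau_\lambda)\,d\mu = \int_{R_\lambda} h\,|f_\lambda|^2\,d\mu$ and the coding identity $\tau^{d(\lambda)}\circ\tau_\lambda=\id$, a short calculation should give
\[ T_\lambda^* g(y) = \frac{g(\tau_\lambda(y))}{f_\lambda(\tau_\lambda(y))}\,\chi_{D_{s(\lambda)}}(y),\]
where $f_\lambda(\tau_\lambda(y))\neq 0$ for $y\in D_{s(\lambda)}$ because $\tau_\lambda(y)\in R_\lambda$. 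From this I expect four computations to fall out, none of which sees the \emph{phase} of $f_\lambda$ (they depend only on $|f_\lambda|^2$, i.e. on Condition (a)): $T_\lambda^*T_\lambda$ is multiplication by $\chi_{D_{s(\lambda)}}$; $T_\lambda T_\lambda^*$ is multiplication by $\chi_{R_\lambda}$; and, using the partition $\bigcup_{\lambda\in v\Lambda^n}R_\lambda = D_v$ (mod null) and the disjointness of the $R_\lambda$, the sum $\sum_{\lambda\in v\Lambda^n} T_\lambda T_\lambda^*$ is multiplication by $\chi_{D_v}$. Finally $T_v$ is multiplication by $f_v$, while Condition (d) of the semibranching function system ($\tau^{d(\eta)}\circ\tau^{d(\lambda)}=\tau^{d(\lambda\eta)}$) gives $T_\lambda T_\eta f = f_\lambda\cdot(f_\eta\circ\tau^{d(\lambda)})\cdot(f\circ\tau^{d(\lambda\eta)})$ against $T_{\lambda\eta}f = f_{\lambda\eta}\cdot(f\circ\tau^{d(\lambda\eta)})$.

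With these formulas the equivalences become transparent. Since $T_v$ is multiplication by $f_v$ (supported on $D_v$, and the $D_v$ are pairwise disjoint mod null), mutual orthogonality in (CK1) is automatic, while $T_v$ being a self-adjoint idempotent is equivalent to $f_v=\chi_{D_v}$; comparing the computations above, the relations (CK1), (CK3), and (CK4) are \emph{each} equivalent to the single statement that $f_v=\chi_{D_v}$ for all $v\in\Lambda^0$. Comparing $T_\lambda T_\eta$ with $T_{\lambda\eta}$ shows that (CK2) is equivalent to the identity $f_\lambda\cdot(f_\eta\circ\tau^{d(\lambda)})=f_{\lambda\eta}$ for all composable pairs, which is precisely Condition (b). To close the loop I would note that Condition (b) with $\lambda=\eta=v$ yields $f_v^2=f_v$, hence $f_v=\chi_{D_v}$; thus Condition (b) alone forces all four relations, and conversely (CK1) supplies $f_v=\chi_{D_v}$ while (CK2) is literally Condition (b).

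The hard part will be the passage from operator identities to almost-everywhere identities of functions. Both $T_\lambda T_\eta$ and $T_{\lambda\eta}$ act as $f\mapsto A\cdot(f\circ\tau^{d(\lambda\eta)})$, and their coefficient functions $A$ are each supported on $R_{\lambda\eta}$ (using $\tau_\lambda(R_\eta)=R_{\lambda\eta}$, which follows from Condition (c), and the support conclusion of Remark \ref{rmk:lambda-proj-comments-1}); to deduce $A_1=A_2$ a.e.\ one tests against $f=\chi_E$ for subsets $E\subseteq D_{s(\lambda\eta)}$ of finite positive measure, on which $f\circ\tau^{d(\lambda\eta)}=1$ throughout $R_{\lambda\eta}$. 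Careful bookkeeping of supports and null sets — the disjointness of $\{D_v\}$ and of $\{R_\lambda\}_{d(\lambda)=n}$, and the relations $\mu\circ\tau_\lambda\ll\mu$, $\mu\circ\tau_\lambda^{-1}\ll\mu$ — is what legitimizes the change of variables and is where Condition (a) is genuinely used; everything else reduces to direct substitution.
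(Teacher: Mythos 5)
Your proposal is correct and follows essentially the same route as the paper's proof: both compute the adjoint formula $T_\lambda^* g = \chi_{D_{s(\lambda)}}\cdot (g\circ\tau_\lambda)/(f_\lambda\circ\tau_\lambda)$, verify (CK1)--(CK4) from Condition (b) after deducing $f_v=\chi_{D_v}$ from $f_v=f_v^2$, and in the other direction extract Condition (b) by comparing $T_\lambda T_\nu$ with $T_{\lambda\nu}$ via (CK2). Your observation that (CK1), (CK3), and (CK4) each reduce, given Condition (a), to the single identity $f_v=\chi_{D_v}$ is slightly tidier bookkeeping, and your care in upgrading the operator identity of (CK2) to an a.e.\ identity of the coefficient functions is a point the paper passes over silently, but the underlying computations are the same.
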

  \begin{proof}
  Suppose that the operators $\{T_\lambda\}_{\lambda\in \Lambda}$ described in \eqref{eq:T-lambda} form a Cuntz--Krieger $\Lambda$-family.  Then, for any $\lambda,\nu \in \Lambda$ with $s(\lambda)=r(\nu)$ and any $f \in L^2(X,\mu)$ we have 
  \begin{align*}
  T_\lambda T_\nu(f) &= T_\lambda(f_\nu \cdot (f \circ \tau^{d(\nu)})) = f_\lambda \cdot ( f_\nu \circ \tau^{d(\lambda)}) \cdot (f \circ \tau^{d(\lambda) } \circ \tau^{d(\nu)});\\
  T_{\lambda\nu}(f) &= f_{\lambda\nu} \cdot (f \circ \tau^{d(\lambda \nu)}),
  \end{align*}
  
  Since $\{\tau^n, \tau_{\lambda}\}$ is a $\Lambda$-semibranching function system, we know that 
  \[\tau^{d(\lambda \nu)} = \tau^{d(\lambda) + d(\nu)} = \tau^{d(\lambda)} \circ \tau^{d(\nu)};\]
  thus, the fact that $\{T_\lambda\}_{\lambda\in \Lambda}$ is a Cuntz--Krieger $\Lambda$-family, in particular condition (CK2), implies that 
  \[f_\lambda \cdot (f_\nu \circ \tau^{d(\lambda)}) = f_{\lambda \nu}  \ \forall \lambda, \nu \in \Lambda,\]
so Condition (b) holds for $\{f_\lambda\}_{\lambda\in \Lambda}$  as desired.
  
  On the other hand, suppose that $(X,\mu)$ admits a $\Lambda$-projective system with prefixing maps $\{\tau_\lambda\}_{\lambda \in \Lambda}$, coding maps $\{\tau^n\}_{n\in \N^k}$, and functions $\{f_\lambda\}_{\lambda \in \Lambda}$.  We will show that the operators $\{T_\lambda\}$ of Equation \eqref{eq:T-lambda} do indeed form a Cuntz--Krieger family, by showing they satisfy (CK1)-(CK4).
  
First, we observe that for $v \in \Lambda^0$, $T_v(f) = f_v \cdot (f\circ \tau^0)$ is supported on $D_v = R_v$ by Condition (a) of Definition~\ref{def:lambda-proj-system}. Moreover, since  $v= v^2$ for any $v \in \Lambda^0,$ and $\tau_v = id_{D_v} = \tau^0$,  Condition (b) of Definition~\ref{def:lambda-proj-system} implies that 
\[f_v = f_v \cdot (f_v \circ \tau^0) = f_v^2 \Rightarrow f_v = \chi_{D_v}.\]
 Consequently, $T_v(f) = \chi_{D_v} \cdot f$.  Since the sets $\{D_v = R_v\}_{v\in\Lambda^0}$ are disjoint (up to a set of measure zero), it follows that $\{T_v: v\in \Lambda^0\}$ is a set of mutually orthogonal projections; in other words, (CK1) holds. 

For (CK2), fix $\lambda,\nu\in \Lambda$ with $s(\lambda)=r(\nu)$.
 We compute  that 
 \[ f_\nu \circ \tau^{d(\lambda)} (x) = \begin{cases}
 0, & \tau^{d(\lambda)}(x) \not\in R_\nu \\ 
 f_\nu(\tau_\nu(y)), & x = \tau_\eta \circ \tau_\nu(y) \text{ and }d(\eta) = d(\lambda).
 \end{cases}\]
 Thus, using the same notation,
 \begin{align*}
  T_\lambda T_\nu(f)(x) &= T_\lambda( f_\nu \cdot (f \circ \tau^{d(\nu)})) (x)= f_\lambda(x) \cdot (f_\nu \circ \tau^{d(\lambda)})(x) \cdot (f \circ \tau^{d(\lambda) + d(\nu)})(x) \\
  &= f_\lambda(\tau_\eta \circ \tau_\nu(y)) \cdot f_\nu(\tau_\nu(y)) \cdot f(y) \\
  &= \begin{cases}
  0, & \eta \not= \lambda \\
  f_\lambda(\tau_\lambda \circ \tau_\nu(y)) \cdot f_\nu(\tau_\nu(y)) \cdot f(y), & x = \tau_\lambda \tau_\nu(y).
  \end{cases}
 \end{align*}
 On the other hand, since 
 $y = \tau^{d(\lambda) +d(\nu)}(x)$, we can  write $x = \tau_\rho(y) $ for a unique $\rho$ such that $d(\rho) =d(\lambda) + d(\nu) = d(\lambda\nu)$.  Then 
 \begin{align*}
 T_{\lambda\nu}(f)(x)&=  f_{\lambda\nu}(\tau_\rho(y)) \cdot f(y) \\
  &= \begin{cases} 0, & \rho \not= \lambda \nu\\
  f_{\lambda\nu}(x) \cdot f(y), &  \rho = \lambda\nu
  \end{cases}\\
  &= f_\lambda(x) \cdot f_\nu(\tau_\nu(y) ) \cdot f(y)
  \end{align*}
by Condition (b) of Definition \ref{def:lambda-proj-system}.  In other words, $T_\lambda T_\nu = T_{\lambda\nu}$ as claimed.
 
  To check (CK3),
 we first compute that, if $d(\lambda) = n$,
 \begin{align*}
 \langle T_\lambda^* f, g \rangle &= \langle f, T_\lambda g \rangle = \int_X f(x) \overline{f_\lambda(x) g(\tau^n(x))}\, d\mu\\
 &=  \int_{R_\lambda} f(x) \overline{f_\lambda(x) g(\tau^{n}(x))}\,  d\mu = \int_{D_\lambda} f(\tau_\lambda(x)) \overline{f_\lambda(\tau_\lambda(x)) g(x)}  d(\mu \circ \tau_\lambda) \\ 
 &= \int_{D_\lambda} f(\tau_\lambda(x)) \overline{f_\lambda(\tau_\lambda(x)) g(x)} \Phi_\lambda(x) \, d\mu. 
 \end{align*}
 Thus, $T_\lambda^*f = f \circ \tau_\lambda \cdot \overline{f_\lambda \circ \tau_\lambda} \cdot \Phi_\lambda$.
 Alternatively, since $(\Phi_\lambda \circ \tau^{n})^{-1} = |f_\lambda|^2 = f_\lambda \overline{f_\lambda}$, we see that 
 \[
 \overline{f_\lambda(\tau_\lambda(x))} \Phi_\lambda(x) = \big(f_\lambda(\tau_\lambda(x))\big)^{-1}.
 \]  
 Consequently,
 \begin{equation}\label{eq:T-adj}
 T_\lambda^* f= \frac{\chi_{D_\lambda} \cdot (f \circ \tau_\lambda )}{f_\lambda \circ \tau_\lambda}.
 \end{equation}
 Now, we can check (CK3):
 \begin{align*}
 T_\lambda^* T_\lambda(f) &= T_\lambda^* ( f_\lambda \cdot (f \circ \tau^{d(\lambda)})) = \frac{\chi_{D_\lambda} \cdot ( (f_\lambda \cdot (f \circ \tau^{d(\lambda)})) \circ \tau_\lambda)}{f_\lambda \circ \tau_\lambda} \\
 &=\chi_{D_\lambda} f = T_v (f),
 \end{align*}
 since $\tau^{d(\lambda)} \circ \tau_\lambda = id_{D_\lambda}$ in any $\Lambda$-semibranching function system.  Thus, (CK3) holds.  
Now, a straightforward computation shows us that $T_\lambda T_\lambda^* T_\lambda = T_\lambda$ since $\left. (\chi_{D_\lambda} \circ \tau^{d(\lambda)} )\right|_{R_\lambda}= \chi_{R_\lambda}$, and hence $f_\lambda \cdot (\chi_{D_\lambda} \circ \tau^{d(\lambda)}) = f_\lambda$.  Therefore, the operators $\{T_\lambda: \lambda \in \Lambda\}$ are partial isometries as claimed.

For (CK4), we fix $n\in \Lambda^k$ and $v\in \Lambda^0$ and compute 
\begin{equation}
\begin{split}
\sum_{\lambda \in v\Lambda^{n}} T_\lambda T_\lambda^*(f) &= \sum_{\lambda \in v\Lambda^{n}} T_\lambda \left( \frac{\chi_{D_\lambda} \cdot (f \circ \tau_\lambda)}{f_\lambda \circ \tau_\lambda} \right) \\
&= \sum_{\lambda \in v\Lambda^{n}} f_\lambda \cdot \frac{(\chi_{D_\lambda} \circ \tau^{n}) \cdot (f \circ \tau_\lambda \circ \tau^{n})}{f_\lambda \circ \tau_\lambda \circ \tau^{n}} \\
&= \sum_{\lambda \in v\Lambda^{n}} \chi_{R_\lambda} \cdot f,
\end{split}
\label{eq:lambda-proj-range-sets}
\end{equation}
since $\tau_\lambda \circ \tau^{n} = id_{R_\lambda}$.  Condition (a) of Definition \ref{def-1-brach-system} implies that  $\sum_{\lambda \in v\Lambda^{n} }\chi_{R_\lambda} = \chi_{D_v}$. Thus,
\[\sum_{\lambda \in v\Lambda^{e_i}} T_\lambda T_\lambda^* = T_v \]
as desired.
 \end{proof}
 
 \begin{rmk}\label{rmk:standard-sys}
Consider, for a finite strongly connected $k$-graph $\Lambda$, the standard $\Lambda$-semibranching function system on $(\Lambda^\infty, M)$ given in \cite[Proposition~3.4]{FGKP} (also see Example~\ref{example:SBFS-M} above).  The prefixing maps $\{\sigma_\lambda:Z(s(\lambda))\to Z(\lambda)\}_{\lambda \in \Lambda}$ are given by $\sigma_\lambda(x)=\lambda x$. 
One easily checks that the associated Radon-Nikodym derivative satisfies 
\[\Phi_{\sigma_\lambda}=\frac{d(M\circ\sigma_\lambda)}{dM} = \rho(\Lambda)^{-d(\lambda)} >0,\] $M$-almost everywhere on $D_\lambda:=Z(s(\lambda))$. 
The corresponding operators $S_\lambda$ that generate a representation of $C^*(\Lambda)$ (as in Equation \eqref{eq:standard_S_f} above and \cite[Theorem~3.5]{FGKP}) are of the form in \eqref{eq:T-lambda}, with 
\[ f_\lambda=(\Phi_\lambda\circ \sigma^n)^{-1/2} = \rho(\Lambda)^{d(\lambda)/2} \chi_{Z(\lambda)}.\]
Hence Proposition~\ref{prop:lambda-proj-repn} implies that the standard $\Lambda$-semibranching function system on $(\Lambda^\infty, M)$ becomes a $\Lambda$-projective system  with $\{f_\lambda\}_{\lambda \in \Lambda}$ as above.
\end{rmk}

Now suppose that a measure space $(X,\mu)$ admits a $\Lambda$-projective system and there is a measure $\mu'$ on $X$ such that $\mu$ and $\mu'$ are equivalent (i.e. $\mu' << \mu$, and  $\mu << \mu'$).
Then $\frac{d\mu'}{d\mu}>0,\;\mu$-a.e. and $\frac{d\mu}{d\mu'}>0,\;\mu'$-a.e. and 
\[
 \frac{d\mu'}{d\mu}\cdot \frac{d\mu}{d\mu'} \equiv 1,\;\mu \; \text{a.e., which is the same as}\;\;\mu'\;\text{a.e.}.
 \] 
Let $g_1(x)\;=\;\frac{d\mu'}{d\mu}(x)$ and $g_2(x)=\frac{d\mu}{d\mu'}(x)$ so that $g_1(x)\cdot g_2(x)\equiv 1$ a.e. as described above.
It is then easy to check that there is a unitary isomorphism of Hilbert spaces $U:L^2(X,\mu)\to L^2(X,\mu')$ defined by 
\begin{equation}
U(f)(x)\;=\;\sqrt{g_2(x)}\cdot f(x),\; f\in\; L^2(X,\mu),
\label{eq:U}
\end{equation}
and it is evident that $U^{-1}:L^2(X,\mu')\to L^2(X,\mu)$  is given by 
\[
U^{-1}(h)(x)\;=\;\sqrt{g_1(x)}\cdot h(x),\; h\in\; L^2(X,\mu').
\]
We show as follows that one can construct a corresponding $\Lambda$-projective system on $(X,\mu'),$ and that  the two representations of $C^*(\Lambda)$ associated to the  $\Lambda$-projective systems  on $(X,\mu)$ and $(X,\mu')$ are unitarily equivalent.

\begin{prop}
 \label{prop:lambda-proj-repn-un-equiv} Let $\Lambda$ be a finite $k$-graph with no sources. Suppose we are given a $\Lambda$-projective system  $\{\tau_{\lambda}:\lambda\in \Lambda\}$, $\{\tau^n: n \in \N^k\}$ and $\{ f_\lambda : \lambda \in \Lambda\}$ on a measure space $(X,\mu)$. Let $\mu'$ be a measure equivalent to $\mu$ as described above, with $g_1(x)\;=\;\frac{d\mu'}{d\mu}(x)$ and $g_2(x)=\frac{d\mu}{d\mu'}(x)$.  If we define  $\{ \tilde{f}_\lambda \}_{\lambda \in \Lambda}$ by 
$$\tilde{f}_\lambda(x)\;=\;\frac{\sqrt{g_1\circ \tau^{d(\lambda)}(x)}}{\sqrt{g_1(x)}}\cdot f_{\lambda}(x),\;\lambda\in \Lambda,$$
then  $\{\tau_{\lambda}:\lambda\in \Lambda\}$, $\{\tau^n: n\in \N^k\}$ and $\{ \tilde{f}_\lambda \}_{\lambda \in \Lambda}$ give a  $\Lambda$-projective system on $(X,\mu')$. 
Moreover, the associated representations $\{T_{\lambda}:\lambda\in \Lambda\}$ and $\{\tilde{T}_{\lambda}:\lambda\in \Lambda\}$ of $C^*(\Lambda)$ on $L^2(X,\mu)$ and $L^2(X,\mu')$  given by Equation  \eqref{eq:T-lambda} of Proposition \ref{prop:lambda-proj-repn} are unitarily equivalent via the unitary $U$ of Equation \eqref{eq:U}. 
\end{prop}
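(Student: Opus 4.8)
The plan is to verify the two assertions in turn: first that $\{\tau_\lambda\}, \{\tau^n\}, \{\tilde f_\lambda\}$ constitute a $\Lambda$-projective system on $(X,\mu')$, and second that $U$ intertwines the associated representations. For the first assertion, I would invoke Proposition \ref{prop:lambda-proj-repn}: since the $\Lambda$-semibranching function system $\{\tau_\lambda\}, \{\tau^n\}$ is unchanged (only the measure and the functions $f_\lambda$ are being modified), it suffices to check that the operators $\tilde T_\lambda$ defined by $\tilde T_\lambda(h) = \tilde f_\lambda \cdot (h \circ \tau^{d(\lambda)})$ on $L^2(X,\mu')$ form a Cuntz--Krieger $\Lambda$-family. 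I expect the cleanest route to be to establish the intertwining relation $U T_\lambda U^{-1} = \tilde T_\lambda$ directly, because then the Cuntz--Krieger relations for $\{\tilde T_\lambda\}$ follow immediately from those for $\{T_\lambda\}$ (which hold by Proposition \ref{prop:lambda-proj-repn}, given that $\{f_\lambda\}$ is a $\Lambda$-projective system), and hence $\{\tilde f_\lambda\}$ is a $\Lambda$-projective system again by Proposition \ref{prop:lambda-proj-repn}. This neatly collapses both claims into a single computation.

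Thus the core of the proof is the identity $U T_\lambda U^{-1} = \tilde T_\lambda$. First I would recall from Equation \eqref{eq:U} that $U(f) = \sqrt{g_2}\, f$ and $U^{-1}(h) = \sqrt{g_1}\, h$, where $g_1 g_2 \equiv 1$ a.e. Then for $h \in L^2(X,\mu')$ and $n = d(\lambda)$ I would compute
\[
U T_\lambda U^{-1}(h)(x) = \sqrt{g_2(x)}\, f_\lambda(x) \cdot \big( (\sqrt{g_1}\, h) \circ \tau^{n}\big)(x) = \sqrt{g_2(x)}\, f_\lambda(x)\, \sqrt{g_1(\tau^{n}(x))}\, h(\tau^{n}(x)).
\]
Using $\sqrt{g_2(x)} = 1/\sqrt{g_1(x)}$, this equals $\dfrac{\sqrt{g_1(\tau^{n}(x))}}{\sqrt{g_1(x)}}\, f_\lambda(x)\, h(\tau^{n}(x)) = \tilde f_\lambda(x)\,(h \circ \tau^{n})(x) = \tilde T_\lambda(h)(x)$, exactly matching the definition of $\tilde f_\lambda$. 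So the intertwining holds by construction.

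Two supporting points must be dispatched along the way. I need $U$ to genuinely be a unitary from $L^2(X,\mu)$ onto $L^2(X,\mu')$; this is the standard change-of-measure isometry, since $\int |U(f)|^2 \, d\mu' = \int g_2 |f|^2 \, d\mu' = \int |f|^2\, d\mu$ using $g_2 = d\mu/d\mu'$, with $U^{-1}$ given as stated. I should also confirm directly that $\tilde f_\lambda$ satisfies Condition (a) of Definition \ref{def:lambda-proj-system} relative to $\mu'$, i.e. that $|\tilde f_\lambda|^2 = \frac{d(\mu' \circ \tau_\lambda^{-1})}{d\mu'}$ is nonzero on $R_\lambda$; this follows from $|\tilde f_\lambda|^2 = \frac{g_1 \circ \tau^n}{g_1}|f_\lambda|^2$ together with a chain-rule manipulation of Radon--Nikodym derivatives relating $\frac{d(\mu'\circ\tau_\lambda^{-1})}{d\mu'}$ to $\frac{d(\mu\circ\tau_\lambda^{-1})}{d\mu}$ via the factors $g_1, g_2$, and positivity of $g_1$ a.e. The main obstacle is bookkeeping: keeping the arguments $x$ versus $\tau^{n}(x)$ straight and correctly applying the chain rule for Radon--Nikodym derivatives under the measure change, since $\tau^n$ and $\tau_\lambda$ are only partial maps and the derivatives are supported on specific sets $D_v, R_\lambda$. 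Once the intertwining identity is in hand, however, everything else is formal, so I would foreground that computation and relegate the measure-theoretic verifications to brief remarks.
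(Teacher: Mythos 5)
Your proposal is correct, and its central computation --- the identity $U T_\lambda U^{-1} = \tilde T_\lambda$ --- is exactly the one the paper performs. The logical organization differs slightly: the paper verifies Conditions (a) and (b) of Definition \ref{def:lambda-proj-system} for $\{\tilde f_\lambda\}$ by direct computation (Condition (a) via the same Radon--Nikodym chain rule you sketch, Condition (b) via the telescoping product $\tilde f_\lambda\cdot(\tilde f_\nu\circ\tau^{d(\lambda)}) = \frac{\sqrt{g_1\circ\tau^{d(\lambda\nu)}}}{\sqrt{g_1}}f_{\lambda\nu} = \tilde f_{\lambda\nu}$) and only then checks the intertwining, whereas you deduce Condition (b) for free from the intertwining together with Proposition \ref{prop:lambda-proj-repn}, since unitary conjugation preserves the Cuntz--Krieger relations. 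Your route buys a slightly shorter argument at the cost of leaning on Proposition \ref{prop:lambda-proj-repn} in both directions; the paper's direct verification of the cocycle identity is self-contained and about equally short. You are right that Condition (a) cannot be absorbed into this scheme, since Proposition \ref{prop:lambda-proj-repn} takes Condition (a) as a hypothesis, and your chain-rule sketch for it is the correct one. The only point worth making explicit that you pass over quickly is that $\{\tau_\lambda\},\{\tau^n\}$ remain a $\Lambda$-semibranching function system \emph{with respect to $\mu'$}: the positivity of $\Phi'_{\tau_\lambda} = \frac{d(\mu'\circ\tau_\lambda)}{d\mu'} = (g_1\circ\tau_\lambda)\,\Phi_{\tau_\lambda}\,g_2$ is measure-dependent and follows from the equivalence of $\mu$ and $\mu'$, not merely from the maps being "unchanged."
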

\begin{proof}
We first check that the functions $\{ \tilde{f}_\lambda \}_{\lambda \in \Lambda}$ satisfy Conditions (a) and (b) of Definition \ref{def:lambda-proj-system}. 
For  (a), recall that 
$\frac{d(\mu \circ (\tau_\lambda)^{-1})}{d\mu} = |f_\lambda |^2.$
To prove that 
$\frac{d(\mu' \circ (\tau_\lambda)^{-1})}{d\mu'} = |\tilde{f}_\lambda |^2,$
we begin with the observation that 
\[ \frac{d(\mu' \circ( \tau_\lambda)^{-1})}{d(\mu \circ (\tau_\lambda)^{-1})} = \frac{d\mu'}{d\mu} \circ (\tau_\lambda)^{-1}.\]
Thus, since $(\tau_\lambda)^{-1} = \tau^{d(\lambda)}$  on $R_\lambda$, and $f_\lambda$ vanishes off $R_\lambda$, 
\[\frac{d(\mu' \circ (\tau_\lambda)^{-1})}{d\mu'}(x) = \frac{d(\mu \circ( \tau_\lambda)^{-1})}{d\mu}(x) \frac{d\mu'}{d\mu}(\tau^{d(\lambda)}(x)) \cdot g_1(x)^{-1} =\frac{d(\mu \circ( \tau_\lambda)^{-1})}{d\mu}(x)  \frac{g_1 \circ \tau^{d(\lambda)}(x)}{g_1(x)}.\]
 But now, checking formulas, this is precisely $|\tilde f_\lambda|^2$, which proves (a).
 

For Condition (b) we note that for $\lambda,\;\nu\in \Lambda$ satisfying $s(\lambda)=r(\nu)$ we have:
$$\tilde{f}_\lambda \cdot (\tilde{f}_\nu \circ \tau^{d(\lambda)})(x)\;=\;\frac{\sqrt{g_1\circ \tau^{d(\lambda)}(x)}}{\sqrt{g_1(x)}} f_{\lambda}(x)\cdot \frac{\sqrt{g_1\circ \tau^{d(\nu)}\circ\tau^{d(\lambda)}(x))}}{\sqrt{g_1(\tau^{d(\lambda)}(x))}}f_{\nu}\circ \tau^{d(\lambda)}(x)$$
$$=\;\frac{\sqrt{g_1\circ \tau^{d(\lambda\nu)}(x)}}{\sqrt{g_1(x)}} f_{\lambda}(x)f_{\nu}\circ \tau^{d(\lambda)}(x)$$
$$=\;\frac{\sqrt{g_1\circ \tau^{d(\lambda\nu)}(x)}}{\sqrt{g_1(x)}}f_{\lambda\nu}(x)\;=\;\tilde{f}_{\lambda\nu}.$$
Thus, the functions $\tilde f_\lambda$ indeed give a $\Lambda$-projective system on $(X, \mu)$.

Finally, we check that our Hilbert space isomorphisms  $U^{-1}:L^2(X,\mu')\to L^2(X,\mu)$ and $U:L^2(X,\mu)\to L^2(X,\mu')$ intertwine the representations $\{T_\lambda\}, \{\tilde T_\lambda\}$ associated to the $\Lambda$-projective systems $\{ f_\lambda\}, \{ \tilde f_\lambda\}$: 
for any  $h\in L^2(X,\mu')$ and $\lambda\in\Lambda$, 
$$U\circ T_{\lambda}\circ U^{-1}(h)(x)\;=\;[\sqrt{g_1(x)}]^{-1}\cdot T_{\lambda}\circ U^{-1}(h)(x)\;=\;[\sqrt{g_1(x)}]^{-1}\cdot T_{\lambda}(\sqrt{g_1}\cdot h)(x)$$
$$\;=\;[\sqrt{g_1(x)}]^{-1}\cdot f_{\lambda}(x)\cdot (\sqrt{g_1}\cdot h)(\tau^{d(\lambda)}(x))\;=\;\frac{\sqrt{g_1\circ \tau^{d(\lambda)}(x)}}{\sqrt{g_1(x)}}\cdot f_{\lambda}(x)\cdot h(\tau^{d(\lambda)}(x))$$
$$\;=\;\tilde{f}_{\lambda}\cdot h(\tau^{d(\lambda)}(x))\;=\;\tilde{T}_{\lambda}(h)(x).$$
Therefore, the two representations of $C^*(\Lambda),\;\{T_{\lambda}:\lambda\in \Lambda\}$ and $\{\tilde{T}_{\lambda}:\lambda\in \Lambda\},$ are unitarily equivalent, as desired.

\end{proof}

  \begin{rmk}
  \label{rmk:lambda-proj-comments}
 
   A straightforward computation reveals that if $f_\lambda$ is the positive square root of $\left(\Phi_\lambda \circ \tau^{d(\lambda)}\right)^{-1}$, then $
 f_\lambda' $ is given by the same formula (using instead the Radon-Nikodym derivative with respect to the measure $\mu'$.)
 
\end{rmk}

 The following Proposition generalizes the familiar Kirchhoff-Ohm rule for adding resistors in parallel on electrical networks, realized on graphs of vertices and edges.  In the graph-theoretic setting, one assigns resistors to each edge of the graph, and describes the current as a function on the space of edges or finite paths.  The voltage is then given by a function on the vertices.  Using this formulation, \cite{dutkay-jorgensen-spectral-cuntz} highlights the relationship between representations of the Cuntz algebras $\mathcal O_N$ and spectral graph analysis. 
 
 Other extensions of the Kirchhoff-Ohm rule to a functional-analytic setting can be found in \cite{jorgensen-pearse, jorgensen-tian} and the papers referenced therein; these treatments rely on analysis of operators in Hilbert space, and on both commutative and non-commutative methods.
 
For a wider study of analysis on infinite graphs, in particular their  relationship with electrical networks,  see also \cite{Peres}.  

\begin{prop}
\label{lem-proposition-2.8} (C.f. Proposition 2.8 of \cite{dutkay-jorgensen-monic})
Let  $\Lambda$ be a finite $k$-graph with no sources and let $\{\tau_{\lambda}:\lambda\in \Lambda\}$, $\{\tau^n: n \in \N^k\}$ and $\{ f_\lambda \}_{\lambda \in \Lambda}$ be a  
$\Lambda$-projective system on $(X,\mu)$. Then for any $n \in \N^k$, the equality
\[
 \quad \sum_{d(\lambda)=n} \frac{1}{|f_\lambda\circ \tau_\lambda|^2} = \frac{d \mu \circ (\tau^{{n}})^{-1} }{d\mu}.
\]
holds $\mu$-a.e.
\end{prop}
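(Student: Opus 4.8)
The plan is to reduce both sides to the same explicit function, namely $\sum_{d(\lambda)=n} \chi_{D_{s(\lambda)}} \Phi_\lambda$, where $\Phi_\lambda = \frac{d(\mu \circ \tau_\lambda)}{d\mu}$ is the Radon--Nikodym derivative attached to the underlying semibranching function system.

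First I would establish the pointwise identity $\frac{1}{|f_\lambda \circ \tau_\lambda|^2} = \Phi_\lambda$ on $D_\lambda = D_{s(\lambda)}$. Writing $n = d(\lambda)$, Equation \eqref{eq:phi-lambda-eqn} together with Condition (a) of Definition \ref{def:lambda-proj-system} gives $|f_\lambda|^2 = (\Phi_\lambda \circ \tau^n)^{-1}$ on $R_\lambda$. Precomposing with $\tau_\lambda : D_\lambda \to R_\lambda$ and using that $\tau^n$ is a coding map for $\{\tau_\mu : d(\mu)=n\}$, so that $\tau^n \circ \tau_\lambda = \id_{D_\lambda}$, yields $|f_\lambda \circ \tau_\lambda|^2 = (\Phi_\lambda)^{-1}$ on $D_\lambda$, hence $\frac{1}{|f_\lambda \circ \tau_\lambda|^2} = \Phi_\lambda$ there. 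Since $f_\lambda$ is supported on $R_\lambda$ (Remark \ref{rmk:lambda-proj-comments-1}(6)), the function $f_\lambda \circ \tau_\lambda$ is supported on $D_\lambda$, so I interpret $\frac{1}{|f_\lambda \circ \tau_\lambda|^2}$ as the function $\chi_{D_{s(\lambda)}} \Phi_\lambda$ on all of $X$.

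Next I would compute the right-hand side directly. For any measurable $E \subseteq X$, Remark \ref{rmk:abs-cts-inverse} gives $(\tau^n)^{-1}(E) = \bigcup_{d(\lambda)=n} \tau_\lambda(E \cap D_\lambda)$; because $\{\tau_\lambda : d(\lambda)=n\}$ is a semibranching function system, the range sets $R_\lambda \supseteq \tau_\lambda(E \cap D_\lambda)$ are mutually disjoint up to $\mu$-null sets, so this is an essentially disjoint union. Applying the defining property $\mu(\tau_\lambda(F)) = \int_F \Phi_\lambda \, d\mu$ of $\Phi_\lambda = \frac{d(\mu \circ \tau_\lambda)}{d\mu}$ to each piece, I obtain
\[
\mu \circ (\tau^n)^{-1}(E) = \sum_{d(\lambda)=n} \int_E \chi_{D_{s(\lambda)}} \Phi_\lambda \, d\mu.
\]
As this holds for every measurable $E$, uniqueness of the Radon--Nikodym derivative gives $\frac{d(\mu \circ (\tau^n)^{-1})}{d\mu} = \sum_{d(\lambda)=n} \chi_{D_{s(\lambda)}} \Phi_\lambda$ $\mu$-a.e. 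Combining this with the first step completes the proof.

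The bookkeeping of domains and ranges is the only delicate point: I must keep straight that $D_\lambda = D_{s(\lambda)}$ is the domain (where $\Phi_\lambda$ lives) while $R_\lambda$ is the range (where $f_\lambda$ is supported and where \eqref{eq:phi-lambda-eqn} is stated), and that distinct $\lambda$ of the same degree can share domains but must have disjoint ranges. It is precisely the disjointness of the \emph{ranges}, not of the domains, that renders the union above essentially disjoint and thereby validates the term-by-term integration.
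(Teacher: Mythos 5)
Your proof is correct and follows essentially the same route as the paper's: both arguments hinge on the identity $|f_\lambda|^2=(\Phi_\lambda\circ\tau^n)^{-1}$ on $R_\lambda$ together with the essential disjointness of the range sets $\{R_\lambda\}_{d(\lambda)=n}$, and differ only in bookkeeping. The paper integrates the whole sum against a test function $\phi$ and pushes each integral from $D_{s(\lambda)}$ to $R_\lambda$ via the substitution $x\mapsto\tau_\lambda(x)$ (using $\frac{1}{|f_\lambda|^2}\,d(\mu\circ\tau_\lambda^{-1})=d\mu$ on $R_\lambda$), whereas you identify each summand with $\chi_{D_{s(\lambda)}}\Phi_\lambda$ and pull the set $(\tau^n)^{-1}(E)$ back to the pieces $\tau_\lambda(E\cap D_\lambda)$ -- two faces of the same change-of-variables computation.
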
 
 
\begin{proof} Recall from Remark \ref{rmk:abs-cts-inverse} that for any $\Lambda$-semibranching function system, in particular any $\Lambda$-projective system, we have $\mu \circ (\tau^n)^{-1} << \mu$.  
 Let $\phi$ be a continuous function on $X$, and fix $n\in \N^k$.  Since $ \tau_\lambda \circ \tau^n = id|_{R_\lambda}$ whenever $d(\lambda) = n$, the substitutions $x \mapsto \tau_\lambda(x)$   give
 \[
\int_X \sum_{\lambda: d(\lambda) = n} \phi \cdot \frac{1}{|f_\lambda \circ \tau_\lambda|^2} \, d\mu = \sum_{\lambda:\,d(\lambda)=n}  \int_{D_{s(\lambda)}} \phi \frac{1}{|f_\lambda\circ \tau_\lambda|^2} d \mu = \sum_{\lambda:\,d(\lambda)=n} \int_{R_\lambda}(\phi\circ \tau^n) \frac{1}{|f_\lambda|^2} d (\mu\circ(\tau_\lambda)^{-1}).\]
 Since $\{f_\lambda\}_{\lambda \in \Lambda}$ is a $\Lambda$-projective system, we have $\frac{d(\mu \circ \tau_\lambda^{-1})}{d\mu} = |f_\lambda|^2 = (\Phi_\lambda \circ \tau^n)^{-1}$ is nonzero on $R_\lambda$; thus, $\frac{1}{|f_\lambda|^2} d(\mu \circ (\tau_\lambda)^{-1}) = d\mu$ on $R_\lambda$.  In other words, 
 \begin{align*}
 \sum_{\lambda:\,d(\lambda)=n}\int_{D_{s(\lambda)}} \phi(x) \frac{1}{|f_\lambda\circ \tau_\lambda(x)|^2} d \mu(x) &= \sum_{\lambda: d(\lambda) = n}  \int_{R_\lambda} \phi\circ \tau^n(x)  d \mu (x) = \int_X \phi(x)\   d (\mu\circ(\tau^n)^{-1})(x)\\
 & = \int_X \phi(x)\,  \frac{d (\mu\circ(\tau^n)^{-1})}{d\mu}(x) \, d\mu. \qedhere
 \end{align*}
\end{proof} 

Proposition \ref{prop-2.11} below is the analog of Proposition 2.11 of \cite{dutkay-jorgensen-monic} for $\Lambda$-projective systems. 
\begin{prop}
\label{prop-2.11} Let $\Lambda$ be a finite $k$-graph with no sources. Suppose we are given two $\Lambda$-projective systems on $X$, with the same prefixing and coding maps $\{\tau_{\lambda}:\lambda\in \Lambda\}$, $\{\tau^n: n \in \N^k\}$, but with different measures $\mu, \mu'$ and $\Lambda$-projective functions $\{ f_\lambda \}_{\lambda \in \Lambda}$ for $(X, \mu)$ and $\{ f_\lambda' \}_{\lambda \in \Lambda}$ for $(X,\mu')$.

Let $d\mu' = h^2 d\mu + d\nu 
$
be the Lebesgue-Radon-Nikodym decomposition, with $h \geq 0$ and $\nu$
singular with respect to $\mu$. Then there is a partition of $X$ into Borel sets $
X = A \cup B$
such that:
\begin{enumerate}
\item[(a)]  The function $h$ is supported on $A$, $\nu$ is supported on $B$, and $\mu(B) = 0$, $\nu(A) = 0$.
\item[(b)] The sets $A$, $B$ are invariant under $\tau^n$ for all $n \in \N^k$, i.e., 
\[
(\tau^n)^{-1}(A) = A,\quad\text{and}\quad (\tau^n)^{-1}(B) = B.
\]
\item[(c)]  We have $\nu \circ  \tau_\lambda^{-1} << \nu$ and $k_\lambda := \sqrt{\frac{d(\nu \circ \tau_\lambda^{-1} )}{d\nu}}$ is supported on $B$.
\item[(d)] $| f_\lambda' | \cdot h =| f_\lambda|\cdot (h \circ \tau^{d(\lambda)} )\, {\mu\text{-a.e.~on $A$ and } |f_\lambda'| = k_\lambda\,  \nu\text{-a.e.~on } B.}$ 
\end{enumerate}
\end{prop}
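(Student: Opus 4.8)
The plan is to mimic the structure of the Lebesgue--Radon--Nikodym decomposition argument from Proposition 2.11 of \cite{dutkay-jorgensen-monic}, adapting it to the $k$-graph setting where the single shift is replaced by the commuting family $\{\tau^n\}_{n\in\N^k}$. First I would invoke the Lebesgue decomposition $d\mu' = h^2\, d\mu + d\nu$ directly: this gives a Borel set $A$ supporting $h^2\,d\mu$ and a disjoint complement $B := X\setminus A$ supporting $\nu$, with $\mu(B) = 0$ and $\nu(A) = 0$, establishing (a). The content of (a) is essentially the statement of the Lebesgue decomposition theorem together with the standard fact that mutually singular measures are carried by complementary Borel sets, so this step is routine.

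The crux is part (b), the $\tau^n$-invariance of $A$ and $B$. Here I would argue that the decomposition $d\mu' = h^2\,d\mu + d\nu$ is, up to $\mu$-null and $\nu$-null sets, \emph{unique}, and that applying $(\tau^n)^{-1}$ to the whole picture yields another such decomposition which must therefore agree with the original. Concretely, I would use Remark \ref{rmk:abs-cts-inverse}, which guarantees $\mu\circ(\tau^n)^{-1} \ll \mu$ in any $\Lambda$-semibranching function system, to push the absolutely-continuous part of $\mu'\circ(\tau^n)^{-1}$ back into the $\mu$-absolutely-continuous class, while the singular part $\nu\circ(\tau^n)^{-1}$ remains singular with respect to $\mu$ (one checks $\mu(E)=0 \Rightarrow \mu((\tau^n)^{-1}E)$ has $\mu$-null image under the relevant maps, using that $(\tau^n)^{-1}(E) = \bigcup_{d(\lambda)=n}\tau_\lambda(E)$). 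Uniqueness of the decomposition then forces $(\tau^n)^{-1}(A)$ and $(\tau^n)^{-1}(B)$ to coincide with $A$ and $B$ respectively up to null sets. \textbf{This invariance step is the main obstacle}, because unlike the $\mathcal O_N$ case there are $k$ commuting coding maps to control simultaneously, and one must verify the invariance for each generator $\tau^{e_i}$ and then use Condition (d) of Definition \ref{def-lambda-SBFS-1} ($\tau^m\circ\tau^n = \tau^{m+n}$) to conclude it for all $n\in\N^k$.

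With invariance in hand, part (c) follows quickly: restricting $\nu$ to the invariant set $B$ and using that $\tau_\lambda^{-1} = \tau^{d(\lambda)}|_{R_\lambda}$ together with the same Remark \ref{rmk:abs-cts-inverse} computation gives $\nu\circ\tau_\lambda^{-1} \ll \nu$, so the Radon--Nikodym derivative $k_\lambda = \sqrt{d(\nu\circ\tau_\lambda^{-1})/d\nu}$ exists and is supported on $B$ by construction. Finally, for part (d) I would compute $\frac{d(\mu'\circ\tau_\lambda^{-1})}{d\mu'}$ in two ways. On the one hand it equals $|f_\lambda'|^2$ by Condition (a) of Definition \ref{def:lambda-proj-system} applied to the primed system. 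On the other hand, writing $\mu' = h^2\mu + \nu$ and using the chain rule for Radon--Nikodym derivatives under $\tau_\lambda^{-1}$ (analogous to the manipulation in Proposition \ref{prop:lambda-proj-repn-un-equiv}, where the factor $g_1\circ\tau^{d(\lambda)}/g_1$ appeared), the absolutely-continuous part contributes $|f_\lambda|^2 (h\circ\tau^{d(\lambda)})^2 / h^2$ on $A$ and the singular part contributes $k_\lambda^2$ on $B$. Matching these expressions on the disjoint pieces $A$ and $B$, and taking square roots, yields exactly the two identities $|f_\lambda'|\cdot h = |f_\lambda|\cdot(h\circ\tau^{d(\lambda)})$ $\mu$-a.e.\ on $A$ and $|f_\lambda'| = k_\lambda$ $\nu$-a.e.\ on $B$. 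The only care needed is to keep track of where each derivative is supported, which the invariance of $A,B$ under $\tau^{d(\lambda)}$ makes automatic.
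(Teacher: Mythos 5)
Your parts (a), (c), and (d) line up with the paper's proof. Part (d) in particular is the paper's argument in a different notation: the paper evaluates $\int_A |f_\lambda'|^2 f\, h^2\, d\mu$ by passing the test function through $\frac{d(\mu'\circ\tau_\lambda^{-1})}{d\mu'}=|f_\lambda'|^2$ and back through $\frac{d(\mu\circ\tau_\lambda^{-1})}{d\mu}=|f_\lambda|^2$, which is exactly your ``compute the derivative two ways'' step in integrated form. For (c), note that the absolute continuity you need is for the \emph{primed} system ($\mu'\circ\tau_\lambda^{-1} << \mu'$, combined with $\mu'|_B=\nu$ and the invariance of $B$), not Remark \ref{rmk:abs-cts-inverse} applied to $\mu$; this is a small imprecision, not a gap.

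The genuine gap is in (b), which you rightly flag as the crux but resolve with an argument that does not work. Applying $(\tau^n)^{-1}$ to $d\mu'=h^2\,d\mu+d\nu$ yields a Lebesgue decomposition of the \emph{different} measure $\mu'\circ(\tau^n)^{-1}$, whose singular part $\nu\circ(\tau^n)^{-1}$ is carried by (roughly) the image $\tau^n(B)$, not by $(\tau^n)^{-1}(B)$; so uniqueness of the decomposition of $\mu'$ itself gives no information about whether $(\tau^n)^{-1}(B)$ equals $B$. Moreover uniqueness could at best give equality modulo null sets, whereas exact invariance is what is used downstream (e.g.\ in Theorem \ref{thm-disjoint-monic-repres}, where invariance of $A$ is what makes $L^2(A,\mu')$ an invariant subspace). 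The paper's route is constructive: let $\tilde B$ be the support of $\nu$, check that $(\tau^n)^{-1}(\tilde B)$ and $(\tau_\lambda)^{-1}(\tilde B)$ are still $\mu$-null (using $\mu\circ\tau_\lambda << \mu$ and $\mu\circ\tau_\lambda^{-1}<<\mu$ from Remark \ref{rmk:abs-cts-inverse}), and then take $B$ to be the \emph{orbit} of $\tilde B$ under all the prefixing and coding maps. The orbit is exactly invariant by construction and is a countable union of $\mu$-null sets, hence $\mu$-null; $A:=X\setminus B$ then carries $h^2\,d\mu$ and is likewise invariant. If you insist on starting from the Lebesgue carrier sets, you must prove the two concrete estimates $\mu\bigl((\tau^n)^{-1}(B)\bigr)=0$ and $\nu\bigl((\tau^n)^{-1}(A)\bigr)=0$ directly, writing $(\tau^n)^{-1}(E)=\bigcup_{d(\lambda)=n}\tau_\lambda(E\cap D_\lambda)$ and using $\mu\circ\tau_\lambda<<\mu$ for the first and $\mu'\circ\tau_\lambda<<\mu'$ together with $\mu(\tau_\lambda^{-1}(B))=0$ for the second --- and even then you should replace $B$ by its orbit to get exact rather than a.e.\ invariance.
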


\begin{proof} Recall that the support of a measure is the smallest closed set whose complement has measure zero.  Let $\tilde{B}$ be the support of $\nu$, and observe that $\mu (\tilde{B}) = 0$. We observe that the definitions of $\Lambda$-semibranching function systems and $\Lambda$-projective systems, together with the fact that $(\tau^n)^{-1}(\tilde B) = \bigcup_{\lambda \in \Lambda^n} \tau_\lambda(\tilde B)$, imply that
\[
(\tau^n)^{-1}(\tilde{B})\text{ and } (\tau_\lambda)^{-1}(\tilde{B}) 
\] 
have $\mu$-measure zero. Therefore
we can take the orbit $B$ of $\tilde{B}$ under the functions $\{\tau^n: n \in \N^k\}$ and $\{\tau_\lambda: \lambda \in \Lambda\},$ and $B$ will then have $\mu$-measure zero. Let
$A := X \backslash B$. Then $A$ contains the support of $\mu$, and we can choose $h$ to be supported on $A$.  Moreover, 
$\nu(A) = 0$. By construction, $A$ and $B$ are invariant under $\tau^n$.
To prove $(c)$, let $E$ be a Borel set with $\nu (E) = 0$. Then $\nu (E\cap B) = 0$, so the fact that $\mu$ vanishes on $B$ implies that $\mu' (E\cap B) = 0$. 
We consequently have $\mu'(\tau_\lambda^{-1} (E \cap B)) = 0,$ which means that  $\mu'(\tau_\lambda^{-1} (E) \cap  B) = 0$, so $\nu(\tau_\lambda^{-1} (E)) = 0$.
Since $B$ is invariant under $\tau_\lambda^{-1}$ and $\nu$ and $\nu \circ \tau_\lambda^{-1}$  are supported on $B$, it follows that 
$k_\lambda$ is supported on $B$. 
To see $(d)$, let $f$ be a bounded Borel function supported on $A$. Then we have
\[
\begin{split}
\int_A |f'_\lambda|^2\, f\, h^2\, d\mu &=\int_A |f'_\lambda|^2\, f\,  d\mu'= \int_A f \, \frac{d(\mu' \circ \tau_\lambda^{-1})}{d(\mu')}d\mu'  =\int_A (f\circ \tau_\lambda)\,  d\mu' \\
&= \int_A (f\circ \tau_\lambda)\, h^2\, d\mu = \int_X (f\circ \tau_\lambda)\, (h^2 \circ \tau^{d(\lambda)}\circ \tau_\lambda)\, d\mu= \int_X f\, (h^2 \circ \tau^{d(\lambda)})\, d(\mu\circ \tau_\lambda^{-1})\\
&= \int_X f\, (h^2 \circ \tau^{d(\lambda)})\, | f_\lambda|^2 d\mu,\\
\end{split}
\]
which implies the first relation. The second relation follows from the fact that
$ \mu'|_B = \nu$.
\end{proof}

\begin{defn}
\label{def:disjoint}
Two representations $\pi, \pi'$ of a $C^*$-algebra $A$ are  \emph{disjoint} if  no nonzero
subrepresentation of $\pi$ is unitarily equivalent to a subrepresentation of $\pi'$.
\end{defn}

 \begin{thm} (C.f. Theorem 2.12 of \cite{dutkay-jorgensen-monic})
 \label{thm-disjoint-monic-repres}
Let $\Lambda$ be a finite $k$-graph with no sources. Suppose we are given two $\Lambda$-projective systems on the infinite path space $\Lambda^\infty$ with the standard prefixing and coding maps $\{\sigma_{\lambda}:\lambda\in \Lambda\}$, $\{\sigma^n: n \in \N^k\}$, but associated to different measures $\mu, \mu'$ and different $\Lambda$-projective families of non-negative functions $\{ f_\lambda \}_{\lambda \in \Lambda}$ on $(\Lambda^\infty,\mu)$, and $\{ f_\lambda' \}_{\lambda \in \Lambda}$ on $(\Lambda^\infty,\mu')$. Then the two associated representations $\{T_\lambda: \lambda \in \Lambda\}$ and $\{T_\lambda': \lambda \in \Lambda\}$  of $C^*(\Lambda)$ given by Equation \eqref{eq:T-lambda} of Proposition  \ref{prop:lambda-proj-repn}  are disjoint if and only if the measures $\mu$ and $\mu'$ are mutually singular.
 \end{thm}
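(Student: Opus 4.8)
The plan is to prove both directions of the equivalence, leveraging the structural decomposition established in Proposition \ref{prop-2.11}.

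\textbf{The ``if'' direction (mutual singularity implies disjointness).} Suppose $\mu \perp \mu'$. I would argue by contradiction: assume there exist nonzero subrepresentations of $\{T_\lambda\}$ and $\{T_\lambda'\}$ that are unitarily equivalent, and derive a contradiction with $\mu \perp \mu'$. The key tool is the projection-valued measure associated to each representation (the spectral resolution of $C(\Lambda^\infty)$ mentioned in Section \ref{sec-further-foundations}). Because the prefixing and coding maps are the \emph{standard} ones $\sigma_\lambda, \sigma^n$, the restriction of each $T$-representation to the commutative subalgebra $C(\Lambda^\infty)$ acts by multiplication operators on $L^2(\Lambda^\infty, \mu)$ and $L^2(\Lambda^\infty, \mu')$ respectively. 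A unitary intertwiner between subrepresentations must intertwine these multiplication representations of $C(\Lambda^\infty)$, and hence intertwine the associated projection-valued measures. I would then invoke the standard measure-theoretic fact that two multiplication representations of $C(\Lambda^\infty)$ on $L^2(\Lambda^\infty, \mu)$ and $L^2(\Lambda^\infty, \mu')$ have no equivalent nonzero subrepresentations when $\mu$ and $\mu'$ are mutually singular (since equivalence of spectral measures forces the scalar spectral types to have a common absolutely continuous component). This yields the contradiction.

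\textbf{The ``only if'' direction (disjointness implies mutual singularity).} For the contrapositive, I would suppose $\mu$ and $\mu'$ are \emph{not} mutually singular and construct equivalent nonzero subrepresentations. Here Proposition \ref{prop-2.11} is the engine: writing the Lebesgue-Radon-Nikodym decomposition $d\mu' = h^2\, d\mu + d\nu$ with $\nu \perp \mu$, the hypothesis that $\mu \not\perp \mu'$ guarantees that $h$ is not $\mu$-a.e.\ zero, so the Borel set $A$ on which $h$ is supported has positive $\mu$-measure. By part (b) of Proposition \ref{prop-2.11}, $A$ is invariant under all $\sigma^n$, and by construction $\tau_\lambda(A) \subseteq A$; this invariance means that the subspaces $L^2(A, \mu) \subseteq L^2(\Lambda^\infty, \mu)$ and $L^2(A, \mu') \subseteq L^2(\Lambda^\infty, \mu')$ are reducing for the respective representations, giving nonzero subrepresentations $\{T_\lambda|_A\}$ and $\{T_\lambda'|_A\}$. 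The multiplication operator by $h$ (which is positive and finite $\mu$-a.e.\ on $A$) should furnish a unitary-type intertwiner after the usual normalization: I would define $W(f) = h \cdot f$ from $L^2(A, \mu)$ into $L^2(A, \mu')$, checking it is a unitary onto its range (isometry because $\int_A |f|^2 h^2\, d\mu = \int_A |f|^2\, d\mu'$ on $A$, where $\nu(A)=0$). Part (d) of Proposition \ref{prop-2.11}, namely $|f_\lambda'|\cdot h = |f_\lambda|\cdot (h \circ \sigma^{d(\lambda)})$ on $A$, is exactly the identity needed to verify $W T_\lambda = T_\lambda' W$ on $L^2(A,\mu)$, since both representations involve composition with $\sigma^{d(\lambda)}$ and multiplication by the respective $f_\lambda$'s. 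This is where the non-negativity hypothesis on the $f_\lambda$ is used, so that the phases match and the intertwining holds without an unwanted unimodular factor.

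\textbf{Main obstacle.} The delicate part will be the intertwining computation in the ``only if'' direction: verifying $W T_\lambda = T_\lambda' W$ requires carefully tracking how multiplication by $h$ interacts with the coding map $\sigma^{d(\lambda)}$, and confirming that the Radon-Nikodym factors built into $T_\lambda$ and $T_\lambda'$ combine correctly with the relation in Proposition \ref{prop-2.11}(d). The fact that $W$ involves $h$ while the representation operators involve $h \circ \sigma^{d(\lambda)}$ means the relation from part (d) must be applied precisely at the right argument. A secondary subtlety is ensuring $W$ extends to a genuine unitary between the subrepresentations (surjectivity onto $L^2(A, \mu')$), which follows because $h > 0$ $\mu$-a.e.\ on $A$ and $\mu' = h^2 \mu$ on $A$; I would make this explicit rather than assert it, since degeneracies of $h$ on null sets must be excluded. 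For the ``if'' direction, the main point requiring care is reducing equivalence of the full representations to equivalence of the restricted commutative (multiplication) representations, i.e.\ confirming that any intertwiner for the $C^*(\Lambda)$-representations necessarily respects the $C(\Lambda^\infty)$-module structure encoded by the projection-valued measure.
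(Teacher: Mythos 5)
Your overall strategy matches the paper's on both sides of the equivalence: the ``if'' direction by restricting to the commutative subalgebra $C(\Lambda^\infty)$, noting that $T_\lambda T_\lambda^* = M_{\chi_{Z(\lambda)}}$ so that any intertwiner of subrepresentations intertwines two multiplication representations, and then invoking the standard fact (Arveson, Theorem 2.2.2) that mutually singular measures admit no equivalent nonzero multiplication subrepresentations; and the ``only if'' direction via the Lebesgue decomposition of Proposition \ref{prop-2.11} and multiplication by $h$ as the intertwiner, with part (d) and non-negativity supplying exactly the identity $f'_\lambda\, h = f_\lambda\,(h\circ\sigma^{d(\lambda)})$ that makes the intertwining work.

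However, there is a concrete error in your construction of $W$: you orient it the wrong way. With $d\mu' = h^2\,d\mu + d\nu$ and $\nu(A)=0$, the identity you write, $\int_A |f|^2 h^2\,d\mu = \int_A |f|^2\,d\mu'$, says that $f\mapsto hf$ is an isometry \emph{from} $L^2(A,\mu')$ \emph{into} $L^2(A,\mu)$, not from $L^2(A,\mu)$ into $L^2(A,\mu')$ as you claim; in your stated direction $\|hf\|^2_{L^2(\mu')} = \int_A |f|^2 h^4\,d\mu$, which is not $\|f\|^2_{L^2(\mu)}$. The orientation also matters for the intertwining: Proposition \ref{prop-2.11}(d) gives $f'_\lambda h = f_\lambda (h\circ\sigma^{d(\lambda)})$, which verifies $T_\lambda W = W T'_\lambda$ for $W\colon L^2(\Lambda^\infty,\mu')\to L^2(\Lambda^\infty,\mu)$, $W(f)=hf$ (as in the paper), whereas your orientation would require the reversed relation $f_\lambda h = f'_\lambda(h\circ\sigma^{d(\lambda)})$, which is not what (d) provides. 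A second, smaller point: Proposition \ref{prop-2.11} only guarantees that $h$ is \emph{supported} on $A$, so $h>0$ holds $\mu'$-a.e.\ on $A$ but not necessarily $\mu$-a.e.\ on $A$; with the correct orientation the range of $W$ is $L^2(\{h>0\},\mu)$, a possibly proper invariant subspace, and one must check (via (d) and the a.e.\ nonvanishing of $f_\lambda$ on $Z(\lambda)$) that $\{h>0\}$ is invariant. The paper sidesteps all of this by not requiring a unitary at all: it produces a nonzero bounded intertwiner $W$ and concludes from disjointness (which forces the intertwiner space to vanish, by polar decomposition) that $W=0$ and hence $h=0$. Your argument is repairable, but as written the key step would fail.
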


 \begin{proof}

If the representations are not disjoint, there exist subspaces $\H_\mu \subseteq L^2(\Lambda^\infty, \mu)$ and $\H_{\mu'} \subseteq L^2(\Lambda^\infty, \mu')$, preserved by their respective representations, and a unitary $W: \H_\mu \to \H_{\mu'}$ such that 
\[W T_\lambda |_{\H_\mu} = T_\lambda' |_{\H_{\mu'}} W, \qquad W T_\lambda^* |_{\H_\mu} = (T_\lambda')^* |_{\H_{\mu'}} W.\]
The fact that each operator $T_\lambda^*$ also preserves $\H_\mu$ implies that 
\begin{align*}
W T_\lambda T_\lambda^* |_{\H_\mu} &= W T_\lambda |_{\H_\mu} T_\lambda^* |_{\H_\mu} = T_\lambda' |_{\H_{\mu'}} W T_\lambda^* |_{\H_\mu} \\
&= T_\lambda' (T_\lambda')^*|_{\H_{\mu'}} W.
\end{align*}
Moreover, Equation \eqref{eq:lambda-proj-range-sets} tells us that 
\[ T_\lambda T_\lambda^* = M_{\chi_{Z(\lambda)}} = T_\lambda' (T_\lambda')^*.\]
In other words,  the representations of $C(\Lambda^\infty)$ given by $\chi_{Z(\lambda)} \mapsto T_\lambda T_\lambda^*$ and $\chi_{Z(\lambda)} \mapsto T_\lambda' (T_\lambda')^*$ (on $L^2(\Lambda^\infty, \mu)$ and $L^2(\Lambda^\infty, \mu')$ respectively) are multiplication representations.
Since $W$ implements a unitary equivalence between their subrepresentations on $\H_\mu$ and $H_{\mu'}$ respectively, Theorem 2.2.2 of \cite{arveson} tells us that  the measures $\mu, \mu'$ cannot be mutually singular.  
 
 For the converse, assume that the representations are disjoint and that the measures
 are not mutually singular. Then, use Proposition \ref{prop-2.11}  and decompose $ d\mu' = h^2 d\mu +d\nu$,
 with the subsets $A$, $B$ as in Proposition \ref{prop-2.11}.
 Define the operator $W$ on $L^2(\Lambda^\infty, \mu')$ by $W(f) = f\cdot h$ if $f \in  L^2(A, \mu')$, and $W(f) = 0$ on the
 orthogonal complement of $L^2(A, \mu') \subseteq L^2(\Lambda^\infty, \mu')$. Since $A$ is invariant under $\tau^n$ for all $ n$, $L^2(A, \mu')$ is an invariant subspace
 for the representation. To  check that $W$ is intertwining, we use part (d) of Proposition \ref{prop-2.11} and the non-negativity condition on $\{f_\lambda\}$ and $\{f'_\lambda\}$ to obtain the almost-everywhere equalities
 \[
 T_\lambda W (f )= f_\lambda(h \circ \tau^{d(\lambda)})(f \circ \tau^{d(\lambda)}) = f_\lambda' \, h\, (f \circ \tau^{d(\lambda)}) = W T_\lambda'(f).
 \]
 Since $W$ intertwines the representations $\{ T_\lambda\}_{\lambda \in \Lambda}, \{ T_{\lambda}'\}_{\lambda \in \Lambda}$ of $C^*(\Lambda)$, we must have $W =0$; hence $h=0$, so $\mu, \mu'$ are mutually singular.
 \end{proof}
 
 \begin{rmk}
 As a Corollary of  Theorem  \ref{thm-disjoint-monic-repres}, 
 we see  that the examples of measures introduced in 
 Section \ref{sec-Markov-measure-Lambda-semibran-0}
 generate representations of $C^*(\Lambda)$ disjoint from the  representation of  \cite[Theorem 3.5]{FGKP}; in fact, these  Markov measures are 
 mutually singular with the Perron-Frobenius measure \cite{dutkay-jorgensen-monic}, \cite{Kaku}.
 \end{rmk}

%

  \begin{thm} (C.f. Theorem 2.13 of \cite{dutkay-jorgensen-monic})\label{thm:ergodic}
    Let $\Lambda$ be a finite $k$-graph with no sources.  
  Suppose that the infinite path space $\Lambda^\infty$ admits a $\Lambda$-projective system on $(\Lambda^\infty,\mu)$ for some measure $\mu$ with standard prefixing maps $\{\sigma_\lambda\, :\, \lambda\in\Lambda\}$, coding maps $\{\sigma^n\, :\, n\in\N^k\}$ and functions $\{f_\lambda\,:\,\lambda\in\Lambda\}$ satisfying Conditions (a)--(c) of Definition~\ref{def:lambda-proj-system}.  Let $\{T_\lambda:\lambda\in\Lambda\}$ be the operators given by Equation \eqref{eq:T-lambda} of Proposition \ref{prop:lambda-proj-repn}.
 Then:
 \begin{itemize}
 \item[(a)] The commutant of the operators $\{T_\lambda: \lambda \in \Lambda\}$ consists of multiplication
      operators by functions $h$ with $h \circ \sigma^n = h$, $\mu$-a.e for all $n\in \N^k$.  
 \item[(b)] The representation given by $\{T_\lambda:\lambda\in\Lambda\}$ is
      irreducible if and only if the coding maps $\sigma^n$ are jointly ergodic with respect to the measure $\mu$, i.e., the only Borel
      sets $A\subset \Lambda^\infty$  with $ (\sigma^{n})^{-1}(A)= A$ for all $n$ are sets of measure zero, or of full measure.       \end{itemize}
       
  \end{thm}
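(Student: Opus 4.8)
The plan is to prove the two statements in order, starting with (a) since the irreducibility criterion in (b) will follow quickly from it. For (a), I would characterize the commutant of $\{T_\lambda : \lambda \in \Lambda\}$ in $B(L^2(\Lambda^\infty, \mu))$. The key observation is that the projections $T_\lambda T_\lambda^* = M_{\chi_{Z(\lambda)}}$ (from Equation \eqref{eq:lambda-proj-range-sets}) are multiplication operators by indicator functions of cylinder sets, and these cylinder sets generate the Borel $\sigma$-algebra. Therefore any operator $W$ commuting with all $T_\lambda$ commutes in particular with all $M_{\chi_{Z(\lambda)}}$, hence with all multiplication operators $M_\phi$ for $\phi \in C(\Lambda^\infty)$; a standard argument (the commutant of a maximal abelian multiplication algebra is itself) shows $W = M_h$ for some $h \in L^\infty(\Lambda^\infty, \mu)$. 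The remaining content is to determine which multiplication operators $M_h$ commute with every $T_\lambda$.

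To pin down the condition on $h$, I would compute directly using the formula $T_\lambda(f) = f_\lambda \cdot (f \circ \sigma^{d(\lambda)})$. On one hand, $T_\lambda M_h (f) = f_\lambda \cdot ((h f) \circ \sigma^{d(\lambda)}) = f_\lambda \cdot (h \circ \sigma^{d(\lambda)}) \cdot (f \circ \sigma^{d(\lambda)})$; on the other hand, $M_h T_\lambda(f) = h \cdot f_\lambda \cdot (f \circ \sigma^{d(\lambda)})$. Equating these for all $f$ forces $f_\lambda \cdot (h \circ \sigma^{d(\lambda)}) = h \cdot f_\lambda$, i.e., $h \circ \sigma^{d(\lambda)} = h$ on the support $R_\lambda$ of $f_\lambda$ (using that $f_\lambda \neq 0$ a.e.\ on $R_\lambda$, by Condition (a) of Definition~\ref{def:lambda-proj-system}). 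Since for each fixed $n \in \N^k$ the sets $\{R_\lambda : d(\lambda) = n\}$ cover $\Lambda^\infty$ up to measure zero (Condition (a) of Definition~\ref{def-1-brach-system}), and $\sigma^{d(\lambda)} = \sigma^n$ for all such $\lambda$, I conclude that $M_h$ commutes with all $T_\lambda$ of degree $n$ if and only if $h \circ \sigma^n = h$ $\mu$-a.e. Ranging over all $n$ gives the claimed characterization; conversely, any such $h$ clearly yields a commuting $M_h$.

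For part (b), I would invoke the standard dictionary between irreducibility and the triviality of the commutant: the representation is irreducible if and only if its commutant consists only of scalar multiples of the identity. By part (a), the commutant is $\{M_h : h \circ \sigma^n = h \ \forall n\}$, so irreducibility is equivalent to the statement that every $h \in L^\infty(\Lambda^\infty,\mu)$ satisfying $h \circ \sigma^n = h$ for all $n$ is constant $\mu$-a.e. This is precisely the definition of the coding maps $\{\sigma^n\}$ being jointly ergodic: I would translate between the function-theoretic form (invariant functions are constant) and the set-theoretic form (invariant sets are null or conull) in the usual way, passing from an invariant set $A$ to its indicator $\chi_A$ and from an invariant bounded function to its level sets.

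The main obstacle, and the step deserving the most care, is the first reduction in part (a): justifying that the commutant of the full family $\{T_\lambda\}$ is contained in the algebra of multiplication operators. The subtlety is that one must verify the multiplication algebra $\{M_\phi : \phi \in L^\infty(\Lambda^\infty,\mu)\}$ is maximal abelian in $B(L^2(\Lambda^\infty,\mu))$ (so that its commutant is itself), and then argue that commuting with the projections $M_{\chi_{Z(\lambda)}}$ for cylinder sets $Z(\lambda)$ suffices to commute with the whole algebra. This requires that $\mu$ be a reasonable (e.g.\ $\sigma$-finite Borel) measure on the compact metrizable space $\Lambda^\infty$, which holds in our setting, and that the cylinder sets separate points and generate the measure algebra; both facts are available from the topology on $\Lambda^\infty$ discussed earlier in the excerpt. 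Once this reduction is in hand, the rest of the argument is the routine computation sketched above.
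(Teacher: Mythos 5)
Your proposal is correct and follows essentially the same route as the paper's proof: reduce the commutant to multiplication operators via the fact that $T_\lambda T_\lambda^* = M_{\chi_{Z(\lambda)}}$ generates a subalgebra whose weak closure is the maximal abelian algebra $L^\infty(\Lambda^\infty,\mu)$, then extract the invariance condition $h\circ\sigma^n = h$ from the commutation relation on the range sets, and finally translate between invariant functions and invariant sets for part (b). The only cosmetic difference is that the paper tests the commutation relation on $f=1$ while you cancel against arbitrary $f$; the substance is identical.
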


 \begin{proof}
 For (a), let $T$ be an operator in the commutant of the $\Lambda$-projective representation. Then $T$ commutes with the representation $\pi$
 of the abelian subalgebra $C(\Lambda^\infty)$ of $C^*(\Lambda)$, where 
 \[ \pi(\chi_{Z(\lambda)}) = T_\lambda T_\lambda^* = M_{\chi_{Z(\lambda)}}.\]
 {Since $C(\Lambda^\infty) \subseteq L^\infty(\Lambda^\infty, \mu)$, the maximal abelian subalgebra of $B(L^2(\Lambda^\infty, \mu))$, } 
 this implies that $T$ must be a multiplication operator.
 Write $T = M_h$. Since $T$ commutes with $T_\lambda$ we obtain
 \[ 
 h\,f_\lambda\,(f \circ \sigma^n) = f_\lambda (h \circ \sigma^n)(f \circ \sigma^n)\;\;\text{ whenever } d(\lambda)=n,
 \ f \in  L^2(X, \mu).
 \]
 Take $f=1$ and use the definition of $\Lambda$-projective system to conclude that $h \circ \sigma^n = h$ on $Z(\lambda)$.  Since $\Lambda^\infty = \bigsqcup_{\lambda \in \Lambda^n} Z(\lambda)$ for any $n \in \N^k$, it follows that $h \circ \sigma^n = h$ on $\Lambda^\infty.$
 Conversely, it is easy to see that any such multiplication operator $T = M_h$ commutes with each operator $T_\lambda$.
 
 For (b), 
 suppose that the measure is  ergodic with respect to all the coding maps $\sigma^n$. Suppose that $T = M_h$ is in the commutant of the representation $\{T_\lambda\}_\lambda$.  For any constant $\alpha$, and any $n \in \N^k$, we have 
 \[ A_\alpha := \{ x: h(x) = \alpha\} = \{ x: h(\sigma^n(x)) = \alpha \}  = \{ x: \sigma^n(x) \in A_\alpha \} = (\sigma^n)^{-1}(A_\alpha)\]
 (up to sets of measure zero). 
 Since $\mu$ is jointly ergodic for the functions $\sigma^n$, we have $\mu(A_\alpha) \in \{ 0, 1\}$; in other words, $h$ is constant, and $\{T_\lambda\}_\lambda$ determines an irreducible representation.

 On the other hand, if the representation generated by $\{T_\lambda\}_\lambda$ is irreducible, suppose there exists a set $A \subseteq \Lambda^\infty$  such that $(\sigma^{n})^{-1}(A) = A$ for all $n$.  Then 
 \[ \chi_A \circ \sigma^n  = \chi_{(\sigma^{n})^{-1}(A)} = \chi_A \;\;\text{for all $n$}.\]
 
 It now follows from the definition of the operators $T_\lambda$ that multiplication by $\chi_A$ commutes with $T_\lambda$ and $T_\lambda^*,$ for all $\lambda \in \Lambda$.  In other words, 
 \[ \chi_A \in \C \chi_{\Lambda^\infty}\;\;\; \text{if and only if}\;\;\; \mu(A) \in \{0, 1\}.\]
 
 Thus, the operators $\sigma^n$ are jointly ergodic with respect to the measure $\mu$.
 \end{proof}

  \subsection{Projection valued measures}
\label{sec:Proj-valued-measures}

Inspired by Dutkay, Haussermann, and Jorgensen     \cite{dutkay-jorgensen-monic, dutkay-jorgensen-atomic}, here we study   the projection valued measure associated to a  representation of $C^*(\Lambda)$. This material will be employed in Section \ref{sec:monic-results} to analyze when a $\Lambda$-semibranching function system gives rise to a monic representation of $C^*(\Lambda)$ (see Definition \ref{def:lambda-monic-repres}), {as well as underpinning our work on purely atomic representations in Section \ref{sec:atomic_repn}}.

\begin{defn} 
	\label{def-proj-val-measu}
Let $\Lambda$ be a row-finite $k$-graph with no sources.
Given a representation $\{ t_\lambda\}_{\lambda\in\Lambda }$ of a $k$-graph $C^*$-algebra $C^*(\Lambda)$ on a Hilbert space $\mathcal{H}$, we define a projection valued function $P$ on $\Lambda^\infty$ by 
 \[P(Z(\lambda)) = t_\lambda t_\lambda^* \quad\text{for all $\lambda \in\Lambda$}.
 \]
 \end{defn}
 
 For the discussion of this function, in particular for the proof that $P$ is indeed a projection valued measure, we recall the notion of a ``minimal common extension'' in a $k$-graph.  Recall from Equation \eqref{eq:lambda_min} (also see \cite[Definition~2.2]{RSY2}) that given $\lambda, \eta \in \Lambda$ with $r(\lambda) = r(\eta)$, their set of minimal common extensions is 
\[ \Lambda^{\operatorname{min}}(\lambda, \eta) = \{ (\rho, \xi) \in \Lambda \times \Lambda: \lambda \rho = \eta \xi \text{ and } d(\lambda \rho) = d(\lambda) \vee d(\eta)\}.\]
In a directed graph, $\Lambda^{\operatorname{min}}(\lambda, \eta)$ will have at most one element; this need not be true in a $k$-graph if $k > 1$.

As mentioned in Equation \eqref{eq:CK4-2},  for any $n \in \N^k$ the generators $\{t_\lambda\}_{\lambda \in \Lambda}$ of  $C^*(\Lambda)$ satisfy 
 \[ t_\lambda^* t_\eta = \sum_{(\xi, \rho) \in \Lambda^{\operatorname{min}}(\lambda, \eta)} t_\xi t_\rho^*, \quad \text{ and }\quad  t_v = \sum_{\lambda \in v\Lambda^n} t_\lambda t_\lambda^*.\]

 \begin{thm} 
 \label{conj-palle-proj-valued-measure-gen-case}
 Let $\Lambda$ be a row-finite $k$-graph with no sources.
 Given a representation $\{ t_\lambda\}_{\lambda\in\Lambda}$ of a $k$-graph $C^*$-algebra $C^*(\Lambda)$ on a Hilbert space $\mathcal{H}$, then the assignment 
 \[P(Z(\lambda)) = t_\lambda t_\lambda^*  \quad\text{for}\quad  \lambda \in \Lambda
 \]
 extends to  a projection valued measure on the Borel $\sigma$-algebra  $\mathcal{B}_o(\Lambda^\infty)$ of the infinite path space $\Lambda^\infty$. 
 \end{thm}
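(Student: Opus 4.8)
The plan is to verify the hypotheses of a standard extension theorem for projection-valued measures: once I show that the assignment $P(Z(\lambda)) = t_\lambda t_\lambda^*$ is additive and well-defined on the ring generated by cylinder sets, the Carath\'eodory-type extension for spectral measures (or equivalently, the fact that a $*$-homomorphism from $C(\Lambda^\infty)$ induces a projection-valued measure on $\mathcal{B}_o(\Lambda^\infty)$) will furnish the desired extension. Concretely, I would first observe that each $P(Z(\lambda)) = t_\lambda t_\lambda^*$ is genuinely a projection: it is self-adjoint, and it is idempotent because $t_\lambda^* t_\lambda = t_{s(\lambda)}$ by (CK3), so $(t_\lambda t_\lambda^*)^2 = t_\lambda t_{s(\lambda)} t_\lambda^* = t_\lambda t_\lambda^*$ using (CK1) and (CK2).

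The core computational step is to establish the two compatibility relations that make $P$ a finitely additive projection-valued set function on the algebra of finite disjoint unions of cylinder sets. First I would compute the product $P(Z(\lambda)) P(Z(\eta)) = t_\lambda t_\lambda^* t_\eta t_\eta^*$ and use the formula \eqref{eq:CK4-2}, namely $t_\lambda^* t_\eta = \sum_{(\alpha,\beta) \in \Lambda^{\operatorname{min}}(\lambda,\eta)} t_\alpha t_\beta^*$, to rewrite this as $\sum_{(\alpha,\beta)} t_{\lambda\alpha} t_{\eta\beta}^*$. Since $\lambda\alpha = \eta\beta$ for each minimal common extension $\zeta = \lambda\alpha$, this sum telescopes to $\sum_{\zeta \in \operatorname{MCE}(\lambda,\eta)} t_\zeta t_\zeta^* = \sum_{\zeta} P(Z(\zeta))$, which matches the set-theoretic identity $Z(\lambda) \cap Z(\eta) = \bigsqcup_{\zeta \in \operatorname{MCE}(\lambda,\eta)} Z(\zeta)$ established in the proof of Lemma \ref{lem:measure}. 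In particular, when $Z(\lambda) \cap Z(\eta) = \emptyset$ the corresponding projections are orthogonal, so $P$ respects disjointness. For finite additivity on square cylinder subdivisions I would invoke (CK4): for $v \in \Lambda^0$ and $n \in \N^k$, $\sum_{\lambda \in v\Lambda^n} t_\lambda t_\lambda^* = t_v$, which is exactly the statement that $P$ is additive under the refinement $Z(v) = \bigsqcup_{\lambda \in v\Lambda^n} Z(\lambda)$; the general relation $t_\lambda t_\lambda^* = \sum_{\nu \in s(\lambda)\Lambda^m} t_{\lambda\nu} t_{\lambda\nu}^*$ then handles arbitrary refinements of $Z(\lambda)$ into square cylinder sets.

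Having checked well-definedness and finite additivity, I would then pass to the extension. The cleanest route is to note that $P$ determines a unital $*$-homomorphism $\pi_0 : C(\Lambda^\infty) \to B(\mathcal{H})$ on the dense $*$-subalgebra spanned by the characteristic functions $\{\chi_{Z(\lambda)}\}$ of cylinder sets (which is closed under products and is self-adjoint by the computation above, and is unital since $\sum_{v \in \Lambda^0} t_v = 1$ acts as the identity on the appropriate summand); the relations just verified are precisely the statement that $\chi_{Z(\lambda)} \mapsto t_\lambda t_\lambda^*$ is multiplicative and additive. Because $C(\Lambda^\infty) = \overline{\operatorname{span}}\{\chi_{Z(\lambda)}\}$ by the Stone--Weierstrass argument recalled before the theorem (the square cylinder sets generate the topology), $\pi_0$ extends to a representation $\pi$ of $C(\Lambda^\infty)$, and the standard correspondence between representations of $C(\Lambda^\infty)$ and regular projection-valued measures on $\mathcal{B}_o(\Lambda^\infty)$ yields $P$. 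I expect the main obstacle to be bookkeeping in the multiplicativity computation: one must handle the case $k > 1$ where $\Lambda^{\operatorname{min}}(\lambda,\eta)$ can contain several (or zero) elements, and carefully match the telescoping of the Cuntz--Krieger sum to the disjoint decomposition of $Z(\lambda) \cap Z(\eta)$ into minimal common extensions, rather than any genuine analytic difficulty in the extension step.
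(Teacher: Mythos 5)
Your proposal is correct, and its algebraic core coincides with the paper's: both proofs verify that $P$ is a well-defined, finitely additive projection-valued set function on cylinder sets by refining to square cylinder sets of a common degree and exploiting (CK4) together with the expansion $t_\lambda^* t_\eta = \sum_{(\alpha,\beta)\in\Lambda^{\operatorname{min}}(\lambda,\eta)} t_\alpha t_\beta^*$ (the paper does not isolate your multiplicativity identity $P(Z(\lambda))P(Z(\eta)) = \sum_{\zeta\in\MCE(\lambda,\eta)}P(Z(\zeta))$ as a separate step, but it is implicit in its manipulation of Equation \eqref{eq:P-additivity}). Where you genuinely diverge is the passage from finite additivity on the ring of cylinder sets to a projection-valued measure on all of $\mathcal{B}_o(\Lambda^\infty)$: the paper deduces countable additivity from Lemma \ref{lem:measure} (finite additivity on square cylinder sets plus compactness of cylinder sets) and then appeals to the Carath\'eodory/Kolmogorov extension theorem, whereas you promote $\chi_{Z(\lambda)}\mapsto t_\lambda t_\lambda^*$ to a bounded $*$-homomorphism on $\operatorname{span}\{\chi_{Z(\lambda)}\}$, extend by density to a representation of $C(\Lambda^\infty)$, and invoke the standard correspondence between representations of a commutative $C^*$-algebra and regular projection-valued measures. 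Your route buys a cleaner extension step: it sidesteps the question of whether the scalar Carath\'eodory machinery applies verbatim to operator-valued set functions (the paper's Lemma \ref{lem:measure} is stated for positive real-valued functions, so strictly one should compose $P$ with vector states $\langle\xi, P(\cdot)\xi\rangle$ and polarize). The one point to tighten is unitality: for a row-finite graph with infinitely many vertices, $\sum_{v\in\Lambda^0}t_v$ converges only strongly, $C^*(\Lambda)$ is nonunital, and $\Lambda^\infty$ is merely locally compact, so you should phrase the extension in terms of $C_0(\Lambda^\infty)$ and a nondegenerate representation rather than $C(\Lambda^\infty)$ with a unit; this is a routine adjustment and does not affect the argument.
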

 
 \begin{proof}
 We first claim that $P$ is finitely additive.  To see this, fix $\lambda\in\Lambda$ and observe that for any $n \in \N^k$ we have 
 \begin{equation} P(Z(\lambda)) = t_\lambda t_\lambda^* = t_\lambda t_\lambda^*\sum_{\zeta \in r(\lambda) \Lambda^n}  t_\zeta t_\zeta^* = \sum_{\zeta \in r(\lambda) \Lambda^n} \sum_{(\rho,\xi) \in \Lambda^{\operatorname{min}}(\lambda, \zeta)} t_{\lambda \rho} t_{\lambda \rho}^*= \sum_{\zeta \in r(\lambda) \Lambda^n} \sum_{(\rho,\xi) \in \Lambda^{\operatorname{min}}(\lambda, \zeta)} P(Z(\lambda \rho)).
\label{eq:P-additivity} 
 \end{equation}
 Now, suppose that $Z(\lambda) = \bigsqcup_{i=1}^p Z(\eta_i)$. Observe that this implies that $\Lambda^{\operatorname{min}}(\lambda, \eta_i)$ is nonempty for each $i$.  Writing $m = d(\lambda) \vee \bigvee_i d(\eta_i)$, we have 
 \[ Z(\lambda) = \bigsqcup_{\eta \in s(\lambda) \Lambda^{m-d(\lambda)}} Z(\lambda \eta) = \bigsqcup_{i=1}^p \bigsqcup_{\alpha_{ij} \in s(\eta_i)\Lambda^{m-d(\eta_i)}} Z(\eta_i \alpha_{ij}).\]
 Since both sides are disjoint unions of cylinder sets of degree $m$, the list of cylinder sets under consideration must be precisely the same on both sides.  That is, each set $Z(\lambda \eta)$ must equal $Z(\eta_i \alpha_{ij})$ for precisely one $i$ and one $\alpha_{ij}$.
 
 Now, observe that if $\zeta \in r(\lambda)\Lambda^m$, the fact that $d(\lambda) \leq m$ implies that $\Lambda^{\operatorname{min}}(\lambda, \zeta) = \{ (\eta, s(\zeta)): \lambda \eta = \zeta\}$ contains precisely one element (unless $\zeta$ is not an extension of $\lambda$, in which case $\Lambda^{\operatorname{min}}(\lambda, \zeta) = \emptyset$).
 Thus, 
 \[P(Z(\lambda)) = \sum_{\zeta \in r(\lambda) \Lambda^m} \sum_{(\rho, \xi) \in \Lambda^{\operatorname{min}}(\lambda, \zeta)} P(Z(\lambda \rho)) = \sum_{\eta \in s(\lambda)\Lambda^{m-d(\lambda)}} P(Z(\lambda \eta)).\]
 Applying the same logic to $P(Z(\eta_i))$, we see that 
 \begin{align*}
 P(Z(\lambda)) &= \sum_{\zeta \in r(\lambda) \Lambda^m} \sum_{(\rho,\xi) \in \Lambda^{\operatorname{min}}(\lambda, \zeta)} P(Z(\lambda \rho)) \\ 
 &= \sum_{\eta \in s(\lambda) \Lambda^{m-d(\lambda)}} P(Z(\lambda \eta)) \\ 
 &= \sum_{i=1}^p \sum_{\alpha_{ij} \in s(\eta_i) \Lambda^{m-d(\eta_i)}} P(Z(\eta_i \alpha_{ij})) \\
 &= \sum_{i=1}^p P(Z(\eta_i)),
 \end{align*}
so $P$ is finitely additive. 
  Countable additivity then follows by Lemma~\ref{lem:measure}; then we apply the { Carath\'eodory/Kolmogorov consistency/extension theorem \cite{kolmogorov}, \cite{Tum},} and we're done.
 \end{proof}

We now establish some properties of the projection valued measure $P$ on $\Lambda^\infty$.
 The equations below are the analogues for $k$-graphs of Equations (2.7) and (2.8) and (2.13) of \cite{dutkay-jorgensen-atomic}.

 \begin{prop} 
 \label{prop-atomic-basic-equns} Let $\Lambda$ be a row-finite, source-free $k$-graph, and fix a representation $\{t_\lambda:\lambda \in \Lambda\}$ of $C^*(\Lambda)$.
 For any  $\eta \in \Lambda$, we have
 \begin{itemize}\label{prop:pvm-properties}
 \item[(a)] For $\lambda, \eta \in\Lambda$ with $s(\lambda)=r(\eta)$, we have $t_\lambda P(Z(\eta)) t_\lambda^*=P(\sigma_\lambda(Z(\eta)))$, where $\sigma_\lambda$ is the {prefixing} 
  map on $\Lambda^\infty$ given in Equation \eqref{eq:shift-map}.
 
 \item[(b)] For any fixed $n \in \N^k$, we have
  \[
  \sum_{\lambda \in  f(\eta) \Lambda^n} t_\lambda P(\sigma_\lambda^{-1}(Z(\eta))) t_\lambda^* = P(Z(\eta)) ;
  \]
  \item[(c)]  For any $\lambda,\eta \in \Lambda$ with $r(\lambda)=r(\eta)$, we have $t_\lambda P(\sigma_\lambda^{-1}(Z(\eta))) = P(Z(\eta)) t_\lambda$;
  \item[(d)] When $\lambda\in\Lambda^n$, we have $t_\lambda P( Z(\eta)) =  P((\sigma^n)^{-1}(Z(\eta))) t_\lambda$.
 \end{itemize}
 \end{prop}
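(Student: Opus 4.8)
The entire proposition will follow from one ``transfer'' identity combined with the fact (Theorem~\ref{conj-palle-proj-valued-measure-gen-case}) that $P$ is a projection valued measure, so that $P(A)P(B)=P(A\cap B)$ and $P$ is countably additive. The plan is first to upgrade part (a) from cylinder sets to arbitrary Borel subsets of $Z(s(\lambda))$, and then to read off (b)--(d). For the cylinder case, when $s(\lambda)=r(\eta)$ the relation (CK2) gives
\[
t_\lambda P(Z(\eta)) t_\lambda^* = t_\lambda t_\eta t_\eta^* t_\lambda^* = t_{\lambda\eta}t_{\lambda\eta}^* = P(Z(\lambda\eta)) = P(\sigma_\lambda(Z(\eta))),
\]
which is exactly (a). I would then observe that both set functions $E\mapsto t_\lambda P(E)t_\lambda^*$ and $E\mapsto P(\sigma_\lambda E)$, defined on Borel subsets $E\subseteq Z(s(\lambda))$, are projection valued measures: the map $\sigma_\lambda$ is a homeomorphism of $Z(s(\lambda))$ onto $Z(\lambda)$ with inverse $\sigma^{d(\lambda)}|_{Z(\lambda)}$, so $E\mapsto P(\sigma_\lambda E)$ is visibly a PVM; and for $E\subseteq Z(s(\lambda))$ one has $P(E)\leq P(Z(s(\lambda)))=t_\lambda^*t_\lambda$ by (CK3), whence $t_\lambda P(E)t_\lambda^*$ is checked to be a projection, with the assignment orthogonal on disjoint sets and countably additive. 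Since the two PVMs agree on cylinder sets by the display above, uniqueness of the extension (Lemma~\ref{lem:measure}, equivalently the Carath\'eodory--Kolmogorov theorem) forces $t_\lambda P(E)t_\lambda^* = P(\sigma_\lambda E)$ for every Borel $E\subseteq Z(s(\lambda))$. Multiplying on the right by $t_\lambda$ and using $t_\lambda^*t_\lambda = P(Z(s(\lambda)))$ together with $P(E)P(Z(s(\lambda)))=P(E)$ yields the right-handed form $t_\lambda P(E)=P(\sigma_\lambda E)\,t_\lambda$, which drives (b) and (c).

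For (b) I would take $E=\sigma_\lambda^{-1}(Z(\eta))\subseteq Z(s(\lambda))$, so that $\sigma_\lambda E = Z(\eta)\cap Z(\lambda)$ and the transfer identity gives $t_\lambda P(\sigma_\lambda^{-1}(Z(\eta)))t_\lambda^* = P(Z(\eta)\cap Z(\lambda))$. Summing over $\lambda\in r(\eta)\Lambda^n$ (reading $f(\eta)$ as $r(\eta)$), and using that the cylinders $\{Z(\lambda)\}_{\lambda\in r(\eta)\Lambda^n}$ partition $Z(r(\eta))\supseteq Z(\eta)$, finite additivity of $P$ collapses the sum to $P(Z(\eta))$. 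For (c), the right-handed transfer identity gives $t_\lambda P(\sigma_\lambda^{-1}(Z(\eta))) = P(Z(\eta)\cap Z(\lambda))\,t_\lambda$, and then $P(Z(\eta)\cap Z(\lambda))t_\lambda = P(Z(\eta))P(Z(\lambda))t_\lambda = P(Z(\eta))t_\lambda$, where the last step uses $P(Z(\lambda))t_\lambda = t_\lambda t_\lambda^* t_\lambda = t_\lambda$.

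Part (d) I would prove by a direct computation with the Cuntz--Krieger relations rather than through the transfer identity. Expanding $(\sigma^n)^{-1}(Z(\eta)) = \bigsqcup_{\mu\in\Lambda^n,\ s(\mu)=r(\eta)} Z(\mu\eta)$ as in Remark~\ref{rmk:abs-cts-inverse} gives $P((\sigma^n)^{-1}(Z(\eta))) = \sum_\mu t_{\mu\eta}t_{\mu\eta}^*$, so that $P((\sigma^n)^{-1}(Z(\eta)))\,t_\lambda = \sum_\mu t_{\mu\eta}t_\eta^*(t_\mu^* t_\lambda)$. The orthogonality relation $t_\mu^* t_\lambda = \delta_{\mu,\lambda}\,t_{s(\lambda)}$ for $\mu,\lambda\in\Lambda^n$, an instance of \eqref{eq:CK4-2} since $\Lambda^{\operatorname{min}}(\mu,\lambda)$ consists of the single pair $(s(\mu),s(\mu))$ when $\mu=\lambda$ and is empty otherwise, collapses the sum: if $s(\lambda)=r(\eta)$ the surviving term is $t_{\lambda\eta}t_\eta^* = t_\lambda P(Z(\eta))$, while if $s(\lambda)\neq r(\eta)$ no term survives and both sides vanish, since $t_\lambda t_\eta = t_\lambda t_{s(\lambda)}t_{r(\eta)}t_\eta = 0$ by (CK1).

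I expect the main obstacle to be the first step: verifying rigorously that $E\mapsto t_\lambda P(E)t_\lambda^*$ is genuinely a projection valued measure (projection-valued, orthogonal on disjoint sets, and countably additive in the strong operator topology), so that uniqueness of PVM extensions may legitimately promote (a) from cylinder sets to all Borel subsets of $Z(s(\lambda))$. Once this is in place, Steps for (b), (c), (d) are bookkeeping with the Cuntz--Krieger relations and the measure-theoretic identities for $\sigma_\lambda$ and $\sigma^n$; the only subtlety there is the degenerate case $s(\lambda)\neq r(\eta)$ in (d), which must be treated separately.
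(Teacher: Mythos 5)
Your proof is correct, and parts (a) and (d) coincide with the paper's own argument essentially verbatim (including the separate treatment of the degenerate case $s(\lambda)\neq r(\eta)$ in (d), and the correct reading of the typo $f(\eta)$ as $r(\eta)$ in (b)). Where you diverge is in (b) and (c): the paper never upgrades (a) to arbitrary Borel sets. Instead it uses the finite decomposition $\sigma_\lambda^{-1}(Z(\eta))=\bigsqcup_{(\rho,\xi)\in\Lambda^{\operatorname{min}}(\lambda,\eta)}Z(\rho)$, applies the cylinder-set case of (a) termwise, and then collapses the double sum using Equation \eqref{eq:P-additivity} (for (b)) or the identity $t_\eta^*t_\lambda=\sum_{(\rho,\xi)}t_\xi t_\rho^*$ (for (c)). Your route through the general transfer identity $t_\lambda P(E)t_\lambda^*=P(\sigma_\lambda E)$ for Borel $E\subseteq Z(s(\lambda))$ is valid — the map $E\mapsto t_\lambda P(E)t_\lambda^*$ is indeed a PVM since $P(E)\le P(Z(s(\lambda)))=t_\lambda^*t_\lambda$, and uniqueness on the $\pi$-system of cylinder sets (via polarization of the scalar measures $E\mapsto\langle x,Q(E)x\rangle$) gives the extension — but it is heavier than necessary: the step you identify as the main obstacle can be bypassed entirely, because $\sigma_\lambda^{-1}(Z(\eta))$ is already a \emph{finite} disjoint union of cylinder sets (by row-finiteness), so the cylinder-set version of (a) plus finite additivity of $P$ delivers $t_\lambda P(\sigma_\lambda^{-1}(Z(\eta)))t_\lambda^*=P(Z(\eta)\cap Z(\lambda))$ directly. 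What your approach buys in exchange is a genuinely stronger statement (the transfer identity for all Borel sets), which is occasionally useful later but is not needed for this proposition.
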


 \begin{proof}
 For (a), first observe that both sides of the equation are zero unless $s(\lambda) = r(\eta)$, because $t_\lambda t_\eta = \delta_{s(\lambda), r(\eta)} t_{\lambda \eta}$. If $s(\lambda) = r(\eta)$, we have
 \[
 LHS= t_\lambda P(\eta) t_\lambda^* =t_\lambda t_\eta  t_\eta^*  t_\lambda^*.
 \]
 But since $\sigma_\lambda(Z(\eta)) = Z(\lambda\eta)$ when $s(\lambda) = r(\eta)$, we also have
 \[
 RHS= t_{\lambda\eta} t_{\lambda\eta}^*= t_{\lambda}t_{\eta} t_{\eta}^*t_{\lambda}^*.
 \]
 For (b), observe that $\sigma_\lambda^{-1}(Z(\eta)) = \bigsqcup_{(\rho, \xi) \in \Lambda^{\operatorname{min}}(\lambda, \eta)} Z(\rho)$.   Thus, Equation \eqref{eq:P-additivity} implies that 
 \begin{align*}
 \sum_{\lambda \in r(\eta)\Lambda^n} t_\lambda P(\sigma_\lambda^{-1}(Z(\eta)) t_\lambda^* & = \sum_{\lambda \in r(\eta)\Lambda^n} \sum_{(\rho,\xi) \in \Lambda^{\operatorname{min}}(\lambda, \eta)} t_\lambda t_\rho t_\rho^* t_\lambda^* =  \sum_{\lambda \in r(\eta)\Lambda^n} \sum_{(\rho,\xi) \in \Lambda^{\operatorname{min}}(\lambda, \eta)} P(Z(\lambda \rho))\\
 & = \sum_{\lambda \in r(\eta)\Lambda^n} \sum_{(\rho,\xi) \in \Lambda^{min}(\lambda, \eta)} P(Z(\eta \xi)) = P(Z(\eta)).
 \end{align*}
 For (c), write $\sigma_\lambda^{-1}(Z(\eta)) = \bigsqcup_{(\rho, \xi) \in \Lambda^{\operatorname{min}}(\lambda, \eta)} Z(\rho)$.  Since $\Lambda$ is row-finite, this union is finite.   Then from (a) and the fact that $t_\lambda^* t_\lambda = t_{s(\lambda)}$, we   have
 \[
 t_\lambda P(\sigma_\lambda^{-1}(Z(\eta)) ) = \sum_{(\rho,\xi) \in \Lambda^{min}(\lambda, \eta)} P(\sigma_\lambda(Z(\rho))) t_\lambda = \sum_{(\rho, \xi)\in \Lambda^{\operatorname{min}}(\lambda, \eta)} t_\lambda t_\rho t_\rho^* .
 \]
 On the other hand, we also have
 \[P(Z(\eta)) t_\lambda = t_\eta t_\eta^* t_\lambda = t_\eta \sum_{(\rho,\xi) \in \Lambda^{\operatorname{min}}(\lambda, \eta)} t_\xi t_\rho^* = \sum_{(\rho, \xi)\in \Lambda^{\operatorname{min}}(\lambda, \eta)} t_\lambda t_\rho t_\rho^* .\]

For (d), notice that 
 for the formula on the LHS not to be zero we need $s(\lambda) = r(\eta)$; in this case the LHS is $t_\lambda t_\eta t_\eta^*$.  
 
 For the right-hand side, 
 notice that $(\sigma^n)^{-1}(Z(\eta)) = \bigsqcup_{\zeta \in \Lambda^n r(\eta)} Z(\zeta \eta)$.  Thus, we have 
 \[ P((\sigma^n)^{-1}(Z(\eta))) t_\lambda = \left( \sum_{\zeta \in \Lambda^n r(\eta)} t_\zeta t_\eta t_\eta^* t_\zeta^* \right) t_\lambda.\]
 Since $d(\zeta) =n = d(\lambda) $, we have  $\Lambda^{\operatorname{min}}(\zeta, \lambda) = \emptyset$ unless $\zeta = \lambda$ -- that is,
 \[ P((\sigma^n)^{-1}(Z(\eta))) t_\lambda = t_\lambda t_\eta t_\eta^*.\]

 \end{proof}

\section{Monic representations of finite $k$-graph algebras}
\label{sec:monic-results} 

The main result of this section, Theorem \ref{thm-characterization-monic-repres}, establishes that every monic  representation 
of a finite, strongly connected $k$-graph algebra $C^*(\Lambda)$ is unitarily equivalent to  a $\Lambda$-projective representation of $C^*(\Lambda)$ on $L^2(\Lambda^\infty, \mu_\pi)$, where the measure $\mu_\pi$ arises from the representation.  (See Definition \ref{def:lambda-monic-repres} and  Equation \eqref{eq:monic_measure} below for details.)  After proving Theorem \ref{thm-characterization-monic-repres}, we return in Section \ref{sec:monic-examples} to the examples introduced in Section \ref{sec:examples_a}  above, and 
use Theorem \ref{thm-characterization-monic-repres} to identify which of the examples are monic.

 
 In the final section of their paper \cite{bezuglyi-jorgensen}, Bezuglyi and Jorgensen studied the relationship between semibranching function systems and monic representations of Cuntz--Krieger algebras (1-graph $C^*$-algebras). A \emph{monic} representation of a Cuntz--Krieger algebra is a faithful representation for which a canonical abelian subalgebra admits a monic vector; see Definition \ref{def:lambda-monic-repres} below.
Theorem 5.6 of \cite{bezuglyi-jorgensen} establishes that within a specific class of semibranching function systems, which the authors term \emph{monic systems}, those for which the underlying space is the infinite path space $\Lambda^\infty$ are precisely the systems which give rise to monic representations of the Cuntz--Krieger algebra.  The $\Lambda$-projective systems studied in Section \ref{sec:lambda-proj-systems} constitute our extension to $k$-graphs of the monic systems for Cuntz--Krieger algebras.
Thus, even in the case of 1-graph algebras (Cuntz--Krieger algebras), our Theorem \ref{thm-characterization-monic-repres} is substantially stronger than Theorem 5.6 of \cite{bezuglyi-jorgensen}.

\begin{defn}\label{def:lambda-monic-repres}
 Let $\Lambda$ be a finite $k$-graph with no sources.
  A representation $\{ t_\lambda\,:\, \lambda\in\Lambda\}$ of a $k$-graph  on a Hilbert space $\H$ is called \emph{monic} if $t_\lambda \not= 0$ for all $\lambda \in \Lambda$, and there exists a vector $\xi\in \mathcal{H}$ such that
 \[
 \overline{\text{span}}_{\lambda \in \Lambda} \{ t_\lambda t_\lambda^* \xi \} = \mathcal{H}.
 \]
 \end{defn}

Recall that for a representation $\{t_\lambda\}_{\lambda\in \Lambda}$ on $\mathcal{H}$ of a  row-finite, source-free $k$-graph $C^*$-algebra $C^*(\Lambda)$,  we have an associated projection valued measure $P$ on the Borel $\sigma$-algebra $\mathcal{B}_o(\Lambda^\infty)$ as in Theorem~\ref{conj-palle-proj-valued-measure-gen-case}. Then we obtain a representation $\pi:C(\Lambda^\infty)\to B( \mathcal{H})$ given by
\[
\pi(f)=\int_{\Lambda^\infty}f(x)dP(x),
\]
which gives, for $\lambda\in\Lambda$,
\begin{equation}\label{eq:repn_pi}
\pi(\chi_{Z(\lambda)})=\int_{\Lambda^\infty}\chi_{Z(\lambda)}\,(x)\,dP(x)=P(Z(\lambda))=t_\lambda t^*_\lambda.
\end{equation}
Since 
we can view $C(\Lambda^\infty)$ as a subalgebra of $C^*(\Lambda)$ via the embedding $\chi_{Z(\lambda)} \mapsto t_\lambda t_\lambda^*$, the representation $\pi$ is often understood as  the restriction of the representation $\{ t_\lambda \}_{\lambda \in \Lambda}$ to the ``diagonal subalgebra'' $\overline{\text{span}}\{ t_\lambda t_\lambda^*\}_{\lambda \in \Lambda}.$

If the representation $\{t_\lambda\}_\lambda$ is monic, then Definition \ref{def:lambda-monic-repres} implies that
there is a cyclic vector $\xi\in \mathcal{H}$ for $\pi$.  This induces  a Borel measure $\mu_\pi$ on $\Lambda^\infty$ given by
\begin{equation}\label{eq:monic_measure}
\mu_\pi(Z(\lambda))=\langle \xi, P(Z(\lambda))\xi\rangle=\langle \xi, t_\lambda t^*_\lambda\xi \rangle.
\end{equation}

\begin{thm}
\label{thm-characterization-monic-repres} 
Let $\Lambda$ be a finite  $k$-graph with no sources.  
If $\{t_\lambda\}_{\lambda \in \Lambda}$ is a monic representation of $C^*(\Lambda)$ on a Hilbert space $\H$,
then $\{t_\lambda\}_{\lambda\in \Lambda}$ is unitarily equivalent to a representation $\{S_\lambda\}_{\lambda\in \Lambda}$ associated to a $\Lambda$-projective system on $(\Lambda^\infty, \mu_\pi)$.

Conversely, if we have a representation of $C^*(\Lambda)$ on $L^2(\Lambda^\infty, \mu)$ which arises from a $\Lambda$-projective system, then the representation is monic. 
\end{thm}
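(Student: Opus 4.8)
The theorem has two directions, and the plan is to handle them separately, using the projection-valued measure machinery of Section \ref{sec:Proj-valued-measures} and the equivalence between $\Lambda$-projective systems and Cuntz--Krieger families (Proposition \ref{prop:lambda-proj-repn}) as the two main tools.

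\textbf{The forward direction (monic implies $\Lambda$-projective).} Let $\{t_\lambda\}_{\lambda \in \Lambda}$ be a monic representation on $\H$, with monic vector $\xi$, and let $P$ be the associated projection-valued measure from Theorem \ref{conj-palle-proj-valued-measure-gen-case}. First I would form the measure $\mu_\pi$ of Equation \eqref{eq:monic_measure} and set up the canonical unitary $U \colon \H \to L^2(\Lambda^\infty, \mu_\pi)$ that comes from the cyclic vector $\xi$ for the representation $\pi$ of $C(\Lambda^\infty)$: concretely, $U$ should send $\pi(f)\xi \mapsto f$ for $f \in C(\Lambda^\infty)$, and one checks this is isometric using $\langle \pi(f)\xi, \pi(g)\xi\rangle = \int \bar f g \, d\mu_\pi$, then extends to a unitary by the cyclicity hypothesis (i.e. monicity). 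Under $U$, the operators $P(Z(\lambda)) = t_\lambda t_\lambda^*$ become multiplication by $\chi_{Z(\lambda)}$ on $L^2(\Lambda^\infty, \mu_\pi)$. The key computation is then to transport the operators $t_\lambda$ themselves through $U$ and recognize the result as the operator $S_\lambda(f) = f_\lambda \cdot (f \circ \sigma^{d(\lambda)})$ of a $\Lambda$-projective system. To identify the functions $f_\lambda$, I would use parts (a)--(d) of Proposition \ref{prop:pvm-properties}: in particular part (d), $t_\lambda P(Z(\eta)) = P((\sigma^n)^{-1}(Z(\eta)))t_\lambda$, tells us that conjugation by $U t_\lambda U^*$ implements the coding map $\sigma^{d(\lambda)}$ at the level of the multiplication representation, which forces $U t_\lambda U^*$ to have the form ``multiplication by some $f_\lambda$ composed with the shift $\sigma^{d(\lambda)}$.'' One then sets $f_\lambda := (U t_\lambda U^*)(\chi_{Z(s(\lambda))})$ or an equivalent expression, verifies that $|f_\lambda|^2 = d(\mu_\pi \circ \sigma_\lambda^{-1})/d\mu_\pi$ on $Z(\lambda)$ (Condition (a) of Definition \ref{def:lambda-proj-system}) by computing $\langle t_\lambda \eta, t_\lambda \eta\rangle$ against $\langle \eta, t_\lambda^* t_\lambda \eta\rangle$ for suitable $\eta$, and checks the multiplicativity Condition (b) $f_\lambda \cdot (f_\nu \circ \sigma^{d(\lambda)}) = f_{\lambda\nu}$ directly from (CK2), $t_\lambda t_\nu = t_{\lambda\nu}$. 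That $(\Lambda^\infty, \mu_\pi)$ carries the standard $\Lambda$-semibranching function system underlying the projective system follows because $\Lambda$ is finite and strongly connected, so Theorem \ref{thm-lambda-sbfs-on-the-inf-path-space-via-a-measure} applies once we know $\mu_\pi(Z(v)) > 0$ for all $v$ (which holds since $t_v \neq 0$ and $\xi$ is cyclic) and that the edge Radon--Nikodym derivatives are positive (which is exactly Condition (a) just verified).

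\textbf{The converse direction.} This is the easier half and is essentially immediate from the results already established. Given a representation arising from a $\Lambda$-projective system on $L^2(\Lambda^\infty, \mu)$ via the operators $T_\lambda$ of Equation \eqref{eq:T-lambda}, Proposition \ref{prop:lambda-proj-repn} guarantees these form a Cuntz--Krieger $\Lambda$-family, so $T_\lambda \neq 0$ for all $\lambda$ (indeed $T_v$ is multiplication by $\chi_{D_v}$, nonzero since $\mu(D_v) > 0$). To exhibit a monic vector, I would take $\xi = \one_{\Lambda^\infty} \in L^2(\Lambda^\infty, \mu)$ (the constant function $1$, which lies in $L^2$ since $\mu$ is a probability measure). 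Then $T_\lambda T_\lambda^* \xi = M_{\chi_{Z(\lambda)}} \xi = \chi_{Z(\lambda)}$ by the computation in Equation \eqref{eq:lambda-proj-range-sets}, so $\overline{\operatorname{span}}\{T_\lambda T_\lambda^* \xi : \lambda \in \Lambda\} = \overline{\operatorname{span}}\{\chi_{Z(\lambda)} : \lambda \in \Lambda\}$, which is all of $L^2(\Lambda^\infty, \mu)$ because the cylinder sets generate the Borel $\sigma$-algebra and their characteristic functions are dense. Hence the representation is monic.

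\textbf{Main obstacle.} The delicate point is the forward direction: showing rigorously that the transported operator $U t_\lambda U^*$ really does take the form $f \mapsto f_\lambda \cdot (f \circ \sigma^{d(\lambda)})$, rather than some more general operator. The intertwining relations in Proposition \ref{prop:pvm-properties} pin down how $t_\lambda$ interacts with the multiplication representation $\pi$ of $C(\Lambda^\infty)$, but turning this into the precise pointwise formula requires care — one must argue that commutation with all multiplication operators $M_{\chi_{Z(\eta)}}$ (via parts (c) and (d)) together with the partial-isometry relations (CK3) forces the claimed composition-operator structure, and then check the domain/range supports match those of the underlying semibranching function system. Verifying that the candidate $f_\lambda$ is genuinely square-integrable and satisfies Condition (a) with the correct Radon--Nikodym derivative, rather than merely formally, is where the real work lies.
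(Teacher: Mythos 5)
Your proposal is correct and follows essentially the same route as the paper's proof: the unitary $W(f)=\pi(f)\xi$ built from the monic vector, the intertwining relations of Proposition \ref{prop:pvm-properties} to force the composition-operator form $S_\lambda(f)=f_\lambda\cdot(f\circ\sigma^{d(\lambda)})$ with $f_\lambda=W^*t_\lambda\xi$, and the constant function $1$ as cyclic vector for the converse. The step you flag as the main obstacle is in fact immediate once one writes $W(f)=\pi(f)\xi$ and applies $t_\lambda\pi(f)=\pi(f\circ\sigma^{d(\lambda)})t_\lambda$ a single time, so no general structure theorem for operators commuting with the multiplication representation is needed.
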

 
 By Remark \ref{rmk:lambda-proj-comments-1}, this implies that a $\Lambda$-semibranching function system on $(\Lambda^\infty, \mu)$, for any Borel measure $\mu$, gives rise to a monic representation of $C^*(\Lambda)$.

\begin{proof}
 Suppose that  the representation $\{t_\lambda\}_{\lambda \in \Lambda}$ of $C^*(\Lambda)$ is monic, and let $\xi\in \H$ be a cyclic vector for $C(\Lambda^\infty)$.
 Note that the map $W : C(\Lambda^\infty) \to  \mathcal{H}$ given by 
\[
  W( f) = \pi(f) \xi
\]
is 
 linear. Moreover, if we think of $C(\Lambda^\infty)$ as a dense subspace of $L^2(\Lambda^\infty, \mu_\pi)$, the operator $W$ is isometric:
\[ 
 \| f\|^2_{L^2} = \int_{\Lambda^\infty} |f|^2 \, d\mu_\pi  = \langle \xi, \pi(|f|^2) \xi \rangle = \| \pi(f) \xi \|^2 = \|W(f)\|^2.
\]
Therefore $W$ extends to an isometry  from $
L^2(\Lambda^\infty, \mu_\pi) $ to $ \mathcal{H}.$
Since $W$ is also onto (because the representation is monic), $W$ is a surjective isometry; that is, $W$ is a unitary.

Moreover, for any $f \in C(\Lambda^\infty)$ and any $\varphi \in L^2(\Lambda^\infty, \mu_\pi)$, we have 
\[ \pi(f) W(\varphi) = \pi(f) \pi(\varphi) \xi = \pi(f \cdot \varphi) \xi = W(f \cdot \varphi).\]
Thus, unitarity of  $W$ implies that $W^* \pi(f) W$ acts on $L^2(\Lambda^\infty, \mu_\pi)$ by multiplication by $f$:
\begin{equation}
W^* \pi(f) W = M_f \text{ and } W M_f W^* = \pi(f).\label{eq:w-intertwines-pi}
\end{equation}

 Now define the operator
\[
S_\lambda = W^* t_\lambda W \quad\quad\text{for $\lambda \in \Lambda$.} 
\]
First note that because $W$ is a unitary operator, and $\{t_\lambda\}_{\lambda \in \Lambda}$ is a representation of $C^*(\Lambda)$, the operators $\{S_\lambda\}_{\lambda \in \Lambda}$ also  give  a representation of $C^*(\Lambda)$. Moreover, since $W$ is a unitary, 
\begin{equation}\begin{split} S_\lambda S_\lambda^* (f) &= W^* t_\lambda t_\lambda^*  W(f) = W^* \pi(\chi_{Z(\lambda)}) \pi(f) \xi  \\ 
&= W^* \pi( \chi_{Z(\lambda)} \cdot f) \xi = W^* W (\chi_{Z(\lambda)} \cdot f) \\
&= \chi_{Z(\lambda)} \cdot f.
\end{split}
\label{eq:range-sets}
\end{equation}

Let $1$ denote the characteristic function of $\Lambda^\infty$, and define a function $f_\lambda \in L^2(\Lambda^\infty, \mu_\pi)$ by 
\[
f_\lambda = S_\lambda 1 = W^* t_\lambda \xi.
\]
 We will now show that the functions $f_\lambda$, combined with the usual coding and prefixing maps on $\Lambda^\infty$, form a $\Lambda$-projective system on $(\Lambda^\infty, \mathcal{B}_o(\Lambda^\infty), \mu_\pi)$. 
  To that end, we will invoke Proposition \ref{prop:pvm-properties}; since 
\[ P(Z(\nu)) = \pi(\chi_{Z(\nu)})\]
for any $\nu \in \Lambda$, and characteristic functions of cylinder sets densely span $L^2(\Lambda^\infty, \mu_\pi)$, the equalities established in Proposition \ref{prop:pvm-properties} still hold if we replace $P(Z(\nu))$ by $\pi(f)$ for any $f \in L^2(\Lambda^\infty, \mu_\pi)$.  In particular, noting that 
\[ \chi_{(\sigma^n)^{-1}(Z(\nu))} = \chi_{Z(\nu)} \circ \sigma^n \quad \text{ and } \quad \chi_{\sigma_\lambda^{-1}(Z(\nu))} = \chi_{Z(\nu)} \circ \sigma_\lambda \]
Part (d) of Proposition \ref{prop:pvm-properties} implies that if $d(\lambda) = n$, 
\begin{equation}
t_\lambda \pi(f) = \pi(f \circ \sigma^n) t_\lambda\label{eq:pvm-useful}
\end{equation} 
and Part (c) implies that 
\begin{equation}
t_\lambda^* \pi(f) = \pi(f \circ \sigma_\lambda) t_\lambda^*.\label{eq:part-c-useful}
\end{equation}

Let $f \in L^2(\Lambda^\infty, \mu_\pi)$ and let $n = d(\lambda)$.  By using Part (d) of Proposition \ref{prop:pvm-properties}, Equation \eqref{eq:w-intertwines-pi}, and the fact that $W$ is a unitary, we obtain 
\begin{align*}
S_\lambda (f) &= W^* t_\lambda W ( f )= W^* t_\lambda \pi(f)\xi \\
&= W^* \pi(f\circ \sigma^n) t_\lambda \xi =  W^* \pi(f\circ \sigma^n)W W^* t_\lambda \xi\\
&=(f\circ \sigma^n) \cdot  f_\lambda.
\end{align*}

In order to show that $\{S_\lambda\}_{\lambda \in \Lambda}$ is a $\Lambda$-projective representation, then, Proposition \ref{prop:lambda-proj-repn} tells us that it remains to check that the standard prefixing and coding maps make $(\Lambda^\infty, \mu_\pi)$ into a $\Lambda$-semibranching function system, and that Condition (a) of Definition \ref{def:lambda-proj-system} holds for the functions $f_\lambda$.

To establish Condition (a), we work indirectly.
Since $W$ is a unitary, we   have (for  any $f \in L^2(\Lambda^\infty, \mu_\pi)$ and any $\lambda \in \Lambda^n$)
\begin{align*}
\int_{\Lambda^\infty} |f_\lambda|^2 \cdot  f \, d\mu_\pi &= \langle S_\lambda(1), S_\lambda(1) \cdot  f \rangle_{ L^2} = \langle W^* t_\lambda (\xi), M_f W^* t_\lambda (\xi)\rangle_{ L^2} \\
&= \langle t_\lambda \xi,  W M_f W^*(t_\lambda \xi )  \rangle_{ \mathcal{H}} = \langle \xi, t_\lambda^* \pi(f) t_\lambda \xi \rangle_{\mathcal{H}}\\
&= \langle \xi, \pi(f \circ \sigma_\lambda) \xi \rangle_{\mathcal{H}} = \int_{\Lambda^\infty} f \circ \sigma_\lambda \, d\mu_\pi\\
&= \int_{\Lambda^\infty} f \, d(\mu_\pi \circ \sigma_\lambda^{-1})
\end{align*}
by using Equations \eqref{eq:w-intertwines-pi} through \eqref{eq:part-c-useful}.  

If $E\subseteq \Lambda^\infty$ is any set for which $\mu_\pi(E) = 0$, then taking $f = \chi_E$ above  shows that $\mu_\pi \circ \sigma_\lambda^{-1}(E) = 0$ also -- in other words, 
\begin{equation}
\mu_\pi \circ \sigma_\lambda^{-1} << \mu_\pi.\label{eq:mu-pi-abs-cts}
\end{equation}
The uniqueness of Radon-Nikodym derivatives then implies that
\[|f_\lambda|^2 =  \frac{d(\mu_\pi \circ \sigma_\lambda^{-1})}{d(\mu_\pi )}= ((\Phi_\pi)_\lambda \circ \sigma^n)^{-1} \]
by Equation \eqref{eq:phi-lambda-eqn}.
In other words, Condition (a) of Definition \ref{def:lambda-proj-system} holds.

Similarly, for any set $F \subseteq Z(s(\lambda))$ such that  $\mu_\pi(F) = 0$, taking $f = \chi_{\sigma_\lambda(F)}$ reveals that 
\[ 0 = \mu_\pi (F) = \mu_\pi \circ \sigma_\lambda^{-1}(\sigma_\lambda(F)) = \int_{\sigma_\lambda(F)} |f_\lambda|^2 \, d\mu_\pi.\]
Since $|f_\lambda|^2 >0$ a.e.~on $Z(\lambda) \supseteq \sigma_\lambda(F)$, we must have 
\[ \mu_\pi \circ \sigma_\lambda(F) = 0\]
and hence $\mu_\pi \circ \sigma_\lambda << \mu_\pi$.

Furthermore, the Radon-Nikodym derivative $\frac{d(\mu_\pi \circ \sigma_\lambda)}{d(\mu_\pi )}$ is nonzero $\mu_\pi$-a.e.~on $Z(s(\lambda))$. 
To see this, we  set  
\[ E = \left\{ x \in Z(s(\lambda)): \frac{d(\mu_\pi \circ \sigma_\lambda)}{d(\mu_\pi )} = 0 \right\}\]
and observe that 
\[ \mu_\pi (\sigma_\lambda(E)) = \int_E \frac{d(\mu_\pi \circ \sigma_\lambda)}{d(\mu_\pi )} \, d\mu_\pi = 0.\]
Equation \eqref{eq:mu-pi-abs-cts} 
therefore  implies that
\[ \mu_\pi (E) = (\mu_\pi \circ \sigma_\lambda^{-1})(\sigma_\lambda(E)) = 0.\]

Theorem \ref{thm-lambda-sbfs-on-the-inf-path-space-via-a-measure} now implies that the standard prefixing and coding maps make $(\Lambda^\infty, \mu)$ into a $\Lambda$-semibranching function system.  Consequently, the functions $f_\lambda$ make $\{S_\lambda\}_{\lambda \in \Lambda}$ into a $\Lambda$-projective representation, which is unitarily equivalent to our initial monic representation by construction. 

For the converse, suppose that $\{t_\lambda\}_{\lambda\in\Lambda}$ is a representation of $C^*(\Lambda)$ on $L^2(\Lambda^\infty, \mu)$ for some Borel measure $\mu$ which arises from a $\Lambda$-projective system $\{f_\lambda\}_{\lambda\in\Lambda}$.   Then (as in \cite{dutkay-jorgensen-monic} Theorem 2.7, or Proposition \ref{prop:lambda-proj-repn} of this paper) the fact that $t_\lambda (f) = f_\lambda \cdot (f\circ \sigma^n)$ for any $f\in L^2(\Lambda^\infty, \mu)$ implies that 
\[ t_\lambda^* f = (\overline{f_\lambda} \circ \sigma_\lambda) \cdot (f \circ \sigma_\lambda) \cdot \Phi_\lambda.\] 
Consequently, 
letting 1 denote the constant function on $\Lambda^\infty$, we have 
\begin{align*}
t_\lambda t_\lambda^* 1 & = t_\lambda ((\overline{f_\lambda} \circ \sigma_\lambda) \cdot (1 \circ \sigma_\lambda) \cdot \Phi_\lambda ) \\
&= f_\lambda \cdot (\overline{f_\lambda} \circ \sigma_\lambda \circ \sigma^n) \cdot (1 \circ \sigma_\lambda \circ \sigma^n) \cdot (\Phi_\lambda \circ \sigma^n),
\end{align*}
which is zero off $Z(\lambda)$.  Moreover, on $Z(\lambda)$, we have $\sigma_\lambda \circ \sigma^n = id$, and Equation \eqref{eq:phi-lambda-eqn} tells us that 
\[(\Phi_\lambda \circ \sigma^n)^{-1}  =\frac{d(\mu \circ \sigma_\lambda^{-1})}{d\mu} = |f_\lambda|^2. \]
Consequently, 
\[t_\lambda t_\lambda^* 1 =  f_\lambda \cdot \overline{f_\lambda} \cdot (\Phi_\lambda \circ \sigma^n) =  \chi_{Z(\lambda)},\]
and so (since the characteristic functions of cylinder sets span a dense subspace of $L^2(\Lambda^\infty, \mu)$) it follows that $1 = \chi_{\Lambda^\infty}$ is a cyclic vector for $C(\Lambda^\infty) \subseteq C^*(\Lambda)$.  Thus, $\{t_\lambda\}_{\lambda \in \Lambda}$ is monic.
\end{proof}


  
\begin{thm} 
\label{thm-theorem-2.9-of-monic}
  Let $\Lambda$ be finite, source-free $k$-graph, and let $\{S_\lambda\}_{\lambda \in \Lambda}$, $\{T_\lambda\}_{\lambda \in \Lambda}$ be two monic representations   of $C^*(\Lambda)$. Let $\mu_S, \mu_T$ be the measures on $\Lambda^\infty$ associated to these representations as in \eqref{eq:monic_measure}.
  The representations $\{S_\lambda\}_{\lambda \in \Lambda}$, $\{T_\lambda\}_{\lambda \in \Lambda}$ are  equivalent if and only if the measures  $\mu_S$ and  $\mu_T$ are equivalent 
  and there exists a function $h$ on $\Lambda^\infty$ such that
  \begin{equation}\label{eq-1-thm-monic}
  \frac{d\mu_S}{d\mu_T} =|h|^2 \quad \text{and}
  \end{equation}
 \begin{equation}\label{eq-2-thm-monic}
  f^S_{\lambda} = \frac{ h \circ \sigma^n}{h }f^T_{\lambda} \quad\text{for all  $\lambda\in\Lambda$ with $d(\lambda)=n$.}
  \end{equation}
 \end{thm}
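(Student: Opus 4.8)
The plan is to prove the two directions separately, using Theorem~\ref{thm-characterization-monic-repres} to realize both representations concretely as $\Lambda$-projective representations on $L^2(\Lambda^\infty, \mu_S)$ and $L^2(\Lambda^\infty, \mu_T)$ respectively, with associated $\Lambda$-projective families $\{f^S_\lambda\}$ and $\{f^T_\lambda\}$. The key technical engine will be Proposition~\ref{prop:lambda-proj-repn-un-equiv}, which already manufactures a unitary equivalence between $\Lambda$-projective representations on equivalent measures, together with the multiplication-representation uniqueness (via Arveson's Theorem~2.2.2, as invoked in the proof of Theorem~\ref{thm-disjoint-monic-repres}).

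\textbf{The (easy) sufficiency direction.} First I would assume $\mu_S \sim \mu_T$ with $\frac{d\mu_S}{d\mu_T} = |h|^2$ and the cocycle relation \eqref{eq-2-thm-monic}. Setting $g_1 = \frac{d\mu_S}{d\mu_T}$ and $g_2 = \frac{d\mu_T}{d\mu_S}$, Proposition~\ref{prop:lambda-proj-repn-un-equiv} produces a unitary $U: L^2(\Lambda^\infty, \mu_T) \to L^2(\Lambda^\infty, \mu_S)$ that intertwines $\{T_\lambda\}$ with the representation built from the transported family $\tilde f_\lambda = \frac{\sqrt{g_1 \circ \sigma^{d(\lambda)}}}{\sqrt{g_1}} f^T_\lambda$. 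Taking $|h|^2 = g_1$, the hypothesis \eqref{eq-2-thm-monic} says precisely $f^S_\lambda = \frac{h\circ\sigma^n}{h} f^T_\lambda$; since $|h\circ\sigma^n/h| = \sqrt{g_1\circ\sigma^n/g_1}$, the functions $\tilde f_\lambda$ and $f^S_\lambda$ agree up to a phase. I would then check that the phase ambiguity does not obstruct unitary equivalence: the unitary $U$ can be composed with multiplication by the unimodular function $h/|h|$ (which commutes with the diagonal $C(\Lambda^\infty)$ and transforms $\tilde f_\lambda$ into $f^S_\lambda$ via the cocycle identity), yielding a unitary intertwining $\{S_\lambda\}$ and $\{T_\lambda\}$ exactly.

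\textbf{The (harder) necessity direction.} Suppose the two monic representations are unitarily equivalent via $V: L^2(\Lambda^\infty,\mu_T) \to L^2(\Lambda^\infty,\mu_S)$ with $V T_\lambda = S_\lambda V$ and $V T_\lambda^* = S_\lambda^* V$. As in the proof of Theorem~\ref{thm-disjoint-monic-repres}, the relation $S_\lambda S_\lambda^* = M_{\chi_{Z(\lambda)}} = T_\lambda T_\lambda^*$ (Equation~\eqref{eq:lambda-proj-range-sets}) shows $V$ intertwines the two \emph{multiplication} representations of $C(\Lambda^\infty)$. Arveson's Theorem~2.2.2 then forces $\mu_S \sim \mu_T$, and identifies $V$ as $V(f) = h\cdot f$ for a function $h$ with $|h|^2 = \frac{d\mu_S}{d\mu_T}$, establishing \eqref{eq-1-thm-monic}. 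To extract \eqref{eq-2-thm-monic}, I would apply $V$ to the cyclic vector: write $f^T_\lambda = T_\lambda 1$ and $f^S_\lambda = S_\lambda 1$, and compute $V(f^T_\lambda) = V T_\lambda 1 = S_\lambda V(1) = S_\lambda(h)$. Using the explicit form $S_\lambda(g) = f^S_\lambda\cdot(g\circ\sigma^n)$ from Theorem~\ref{thm-characterization-monic-repres}, this reads $h \cdot f^T_\lambda = f^S_\lambda \cdot (h\circ\sigma^n)$, which rearranges to exactly \eqref{eq-2-thm-monic}.

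\textbf{The main obstacle.} I expect the delicate point to be the phase/unimodular factor bookkeeping in the sufficiency direction: the $\Lambda$-projective functions $f_\lambda$ are only determined up to a choice of square root (Remark~\ref{rmk:lambda-proj-comments-1}(3)), so matching $f^S_\lambda$ exactly (not merely $|f^S_\lambda|$) requires carefully composing the measure-transport unitary of Proposition~\ref{prop:lambda-proj-repn-un-equiv} with multiplication by a unimodular function, and verifying this composite still intertwines the full $\Lambda$-families (both $S_\lambda$ and $S_\lambda^*$) rather than just the diagonal. A secondary subtlety is ensuring the cocycle relation \eqref{eq-2-thm-monic} is consistent, i.e.\ that $h\circ\sigma^n/h$ is well-defined and the identity propagates correctly under composition $t_{\lambda\nu}$; this should follow from Condition~(b) of Definition~\ref{def:lambda-proj-system} applied to both families, but I would verify it explicitly to confirm $h$ is a genuine coboundary-type intertwiner compatible with the $k$-graph structure.
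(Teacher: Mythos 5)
Your proposal is correct and follows essentially the same route as the paper: the intertwiner restricted to the diagonal $C(\Lambda^\infty)$ is forced (by maximal abelianness of $L^\infty$, i.e.\ the Arveson argument you cite) to be multiplication by a nowhere-vanishing function $h$, unitarity yields \eqref{eq-1-thm-monic}, and evaluating the intertwining relation on the cyclic vector $1$ yields \eqref{eq-2-thm-monic}; conversely, $M_h$ itself is the intertwiner. Two small remarks: with your chosen direction $V\colon L^2(\Lambda^\infty,\mu_T)\to L^2(\Lambda^\infty,\mu_S)$, unitarity forces $|h|^2 = d\mu_T/d\mu_S$ rather than $d\mu_S/d\mu_T$, so the relation you derive, $f^S_\lambda = \frac{h}{h\circ\sigma^n}f^T_\lambda$, matches the statement only after replacing $h$ by $1/h$ (or reversing the direction of the unitary, as the paper does). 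Also, the ``phase obstacle'' you anticipate in the sufficiency direction is illusory: since \eqref{eq-2-thm-monic} is an exact identity of the functions (not merely of their moduli), a one-line computation shows that $Wf = hf$ already satisfies $WS_\lambda = T_\lambda W$, so the detour through Proposition \ref{prop:lambda-proj-repn-un-equiv} followed by a unimodular correction, while workable, is unnecessary.
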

    
  \begin{proof} 
  Suppose $\{S_\lambda\}_{\lambda \in \Lambda}$, $\{T_\lambda\}_{\lambda \in \Lambda}$ are equivalent representations of $C^*(\Lambda)$.  {From Theorem 
  \ref{thm-disjoint-monic-repres}, it follows that the associated measures $\mu_S, \mu_T$ are equivalent.}
  Let
  \[
  W:L^2(\Lambda^\infty, \mu_S) \to L^2(\Lambda^\infty,\mu_T)
  \]
  be the intertwining 
  unitary for them. Then the two representations are also equivalent when restricted to  the diagonal  subalgebra $C^*(\{ t_\lambda t_\lambda^*: \lambda \in \Lambda \})$.
By linearity, we can extend the formula from Equation \eqref{eq:range-sets} to all of $C(\Lambda^\infty)$.  It follows that $\pi_S, \pi_T$ are both given on $C(\Lambda^\infty)$ by multiplication: 
  \[\pi_S(\phi) = M_\phi \text{ and } \pi_T(\phi) = M_\phi \ \forall \ \phi \in C(\Lambda^\infty).\]
Since $W$ intertwines a dense subalgebra -- namely $\pi_S(C(\Lambda^\infty))$ -- of the maximal abelian subalgebra $L^\infty(\Lambda^\infty, \mu_S) \subseteq B(L^2(\Lambda^\infty, \mu_S))$ which consists of multiplication operators,  with the dense subalgebra $\pi_T(C(\Lambda^\infty)) \subseteq L^\infty(\Lambda^\infty, \mu_T)$, the unitary $W$ must be given by multiplication by some nowhere-vanishing function $h$ on $\Lambda^\infty$: 
   \[W(\phi) = h \phi.\]
   Moreover, since $W$ is a unitary, 
  \[
 \int_{\Lambda^\infty} |W(f)|^2 d\mu_T =  \int_{\Lambda^\infty} |f|^2|h|^2 d\mu_T = \int_{\Lambda^\infty}|f|^2 d\mu_S \quad\text{for all  $f \in L^2(\Lambda^\infty,\mu_S)$},
  \]
  which implies \eqref{eq-1-thm-monic}.  
   
  From the intertwining property   $
  T_\lambda \, W = W \, S_{\lambda} $ 
  we obtain, for any $f \in L^2(\Lambda^\infty, \mu_S)$ and  any $\lambda$ with $d(\lambda)=n$, that 
  \[
   T_\lambda \, W (f) = W \, S_{\lambda}(f) ,\ \text{that is, }
  f^T_\lambda(h \circ \sigma^n) (f \circ \sigma^n) = hf^S_\lambda   (f \circ \sigma^n). 
  \]
  Take $f=1$ and we obtain that 
  \[
  f^T_\lambda \frac{h \circ \sigma^n}{h} = f^S_\lambda\label{eq:10-1}
  \]
  as claimed in \eqref{eq-2-thm-monic}.

  For the converse, suppose that the measures $\mu_S, \mu_T$ are equivalent and there is a function $h$ on $\Lambda^\infty$ satisfying \eqref{eq-1-thm-monic} and \eqref{eq-2-thm-monic}.  Then  define $W: L^2(\Lambda^\infty, \mu_S) \to L^2(\Lambda^\infty, \mu_T)$ by 
  \[
  W f=hf ; \] 
 it is then straightforward to check that $W S_\lambda = T_\lambda W$ and that $W$ is a unitary.\end{proof} 
 

\subsection{$\Lambda$-semibranching function systems and monic representations}
\label{sec:monic-examples}

In this section, we examine the examples of $\Lambda$-semibranching function systems given in Section  \ref{sec:examples_gen_measure},
 and identify which of them give rise to monic representations of $C^*(\Lambda)$ -- or, equivalently, which of these $\Lambda$-semibranching function systems are unitarily equivalent to ones on the infinite path space. Besides, since monic representations are multiplicity free,  it is easy to construct examples of non-monic representations by using direct sums of monic representations, see  \cite{arveson} page 54.

The next theorem, which we use heavily in our analysis of these examples, shows that a $\Lambda$-semibranching representation on $(X, \mu)$ is monic if and only if its associated range sets generate the $\sigma$-algebra of $X$. To  state our result more precisely, we will  denote by
 \[
 ({X,} \mathcal F, \mu)
 \]
the measure space associated to $L^2(X, \mu)$; in particular $\mathcal F$ is the standard $\sigma $-algebra associated to $L^2(X, \mu)$.

\begin{thm}
\label{prop:range-sets-generate-sigma-alg-in-monic}
Let $\Lambda$ be a finite, source-free $k$-graph and 	let  $ \{t_\lambda\}_{\lambda \in \Lambda}$ be a $\Lambda$-semibranching  representation  of $C^*(\Lambda)$ on $L^2(X, \mathcal{F}, \mu)$ 
with $\mu(X)< \infty$. Let   $\mathcal{R} $ be the collection of sets which are modifications of range sets $ R_\lambda$ by sets of measure zero; that is, each element $X \in  \mathcal R$ has the form 
	\[ X = R_\lambda \cup S \quad \text{ or } \quad X = R_\lambda \backslash S\]
	for some set $S$ of measure zero. 
	Let $\sigma(\mathcal{R})$ be the $\sigma $-algebra generated by $\mathcal{R}.$
The representation $ \{t_\lambda\}_{\lambda \in \Lambda}$ 
 is monic, with cyclic vector $\chi_X \in L^2(X, \mathcal F, \mu)$, if and only if $\sigma(\mathcal R) = \mathcal F$. 
		\label{prop-conv-to-8-7}
In particular, for a monic representation $\{t_\lambda\}_{\lambda \in \Lambda}$,  the set
	\[
	{\mathcal{S} }:=  \Big\{ \sum_{i=1}^{n} a_{i}t_{\lambda_i} t_{\lambda_i}^* \chi_X \ | \ n \in \N , \lambda_i \in \Lambda, a_i \in \C\Big\} = \Big\{ \sum_{i=1}^{n} a_{i} \chi_{R_{\lambda_i}}  \ | \ \ n \in \N , \lambda_i \in \Lambda, a_i \in \C\Big\}
	\]
	is dense in  $L^2(X, \mathcal F, \mu)$.

\end{thm}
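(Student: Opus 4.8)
\emph{The plan is to} first reduce the whole statement to a purely measure-theoretic claim about the range sets, and then to settle the two implications by a Dynkin $\pi$--$\lambda$ argument and a comparison of $L^2$-spaces. \emph{Reduction.} As established in Equation \eqref{eq:lambda-proj-range-sets} (see also Remark \ref{rmk:lambda-proj-comments-1} and Proposition \ref{prop:lambda-proj-repn}), for a $\Lambda$-semibranching representation each operator $t_\lambda t_\lambda^*$ acts on $L^2(X,\mathcal F,\mu)$ as multiplication by $\chi_{R_\lambda}$. Since $\mu(X)<\infty$ we have $\chi_X\in L^2(X,\mathcal F,\mu)$, and because $R_\lambda\subseteq X$ we get $t_\lambda t_\lambda^*\chi_X=\chi_{R_\lambda}$. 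This immediately identifies the two descriptions of $\mathcal S$ in the statement, and shows that the representation is monic with cyclic vector $\chi_X$ precisely when $V:=\overline{\operatorname{span}}\{\chi_{R_\lambda}:\lambda\in\Lambda\}=L^2(X,\mathcal F,\mu)$. Thus the whole theorem, including the final ``in particular'' clause, follows once we prove that $V=L^2(X,\mathcal F,\mu)$ if and only if $\sigma(\mathcal R)=\mathcal F$.

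\emph{A $\pi$-system of range sets.} The key structural input is that finite disjoint unions of range sets form a $\pi$-system (in fact an algebra) generating $\sigma(\mathcal R)$. First, applying the Cuntz--Krieger relation \eqref{eq:CK4-2} to $t_\lambda t_\lambda^* t_\eta t_\eta^*$ yields $\chi_{R_\lambda}\chi_{R_\eta}=\sum_{\zeta\in\operatorname{MCE}(\lambda,\eta)}\chi_{R_\zeta}$, i.e.\ $R_\lambda\cap R_\eta=\bigsqcup_{\zeta}R_\zeta$ up to a null set. Second, Equation \eqref{eq-partition} of Remark \ref{rmk:abs-cts-inverse} writes each $D_{r(\lambda)}$ as a finite disjoint union of range sets of the same degree as $\lambda$ (finiteness and source-freeness of $\Lambda$ guarantee these index sets are finite and contain $\lambda$), so that $X\setminus R_\lambda$ is again such a finite disjoint union. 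Letting $\mathcal A$ denote the collection of finite disjoint unions of range sets, modulo null sets, these two facts show that $\mathcal A$ is closed under intersection and complement; since $\mathcal A$ and $\mathcal R$ generate the same (complete) $\sigma$-algebra, $\sigma(\mathcal A)=\sigma(\mathcal R)$.

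\emph{The two implications.} Assume first $\sigma(\mathcal R)=\mathcal F$, and set $\mathcal D=\{E\in\mathcal F:\chi_E\in V\}$. By linearity of $V$ every element of $\mathcal A$ lies in $\mathcal D$. I would then verify that $\mathcal D$ is a $\lambda$-system: it contains $X$; it is closed under relative complements and finite disjoint unions because $V$ is a linear space; and it is closed under increasing unions because $\mu(X)<\infty$ gives $\|\chi_{A_n}-\chi_A\|_2^2=\mu(A\setminus A_n)\to0$ while $V$ is closed. Dynkin's theorem then forces $\mathcal D\supseteq\sigma(\mathcal A)=\mathcal F$, so $\chi_E\in V$ for every $E\in\mathcal F$; density of simple functions in $L^2(X,\mathcal F,\mu)$ (again using $\mu(X)<\infty$) gives $V=L^2(X,\mathcal F,\mu)$. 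For the converse, each $\chi_{R_\lambda}$ is $\sigma(\mathcal R)$-measurable, so $V\subseteq L^2(X,\sigma(\mathcal R),\mu)$, which is a closed subspace; hence $V=L^2(X,\mathcal F,\mu)$ forces $L^2(X,\mathcal F,\mu)=L^2(X,\sigma(\mathcal R),\mu)$. Approximating $\chi_E$ for $E\in\mathcal F$ by $\sigma(\mathcal R)$-measurable functions and passing to an a.e.-convergent subsequence shows that $E$ agrees with a $\sigma(\mathcal R)$-set up to a null set, whence $\mathcal F=\sigma(\mathcal R)$ by completeness.

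\emph{The main obstacle.} I expect the genuine work to lie in the structural step: confirming that range sets intersect to finite disjoint unions of range sets and that $\mathcal A$ is a bona fide $\pi$-system modulo null sets, so that Dynkin's lemma applies cleanly and so that the passage from $\sigma(\mathcal A)$ to $\sigma(\mathcal R)$ (and the null-set handling in the converse) is justified by completeness of $\mathcal F$. The two convergence and measurability arguments, which both rely on the finiteness hypothesis $\mu(X)<\infty$, are routine once this $\pi$-system is established.
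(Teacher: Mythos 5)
Your proof is correct, and while it rests on the same structural fact as the paper's argument---that the range sets $R_\lambda$ generate, up to null sets, an algebra of subsets of $X$---it settles the density step by a different standard tool. For the implication $\sigma(\mathcal R)=\mathcal F\Rightarrow$ monic, the paper proceeds in four steps: it forms the algebra $\tilde{\mathcal R}$ of finite unions of elements of $\mathcal R$, invokes the Carath\'eodory extension theorem to approximate any $B\in\sigma(\tilde{\mathcal R})$ by an element of $\tilde{\mathcal R}$ in symmetric-difference measure, and then transfers this to an $L^2$-approximation of simple functions via a general metric-space density lemma. You instead run a Dynkin $\pi$--$\lambda$ argument on $\mathcal D=\{E:\chi_E\in V\}$, which collapses the paper's Steps 2--4 into one application of a single theorem; the price is that you must verify $\mathcal D$ is a $\lambda$-system, which you do correctly using linearity, closedness of $V$, and $\mu(X)<\infty$. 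Your derivation of $R_\lambda\cap R_\eta=\bigsqcup_{\zeta\in\operatorname{MCE}(\lambda,\eta)}R_\zeta$ from \eqref{eq:CK4-2} is also a nice explicit justification of closure under intersection, which the paper leaves implicit (it only records closure under complements via \eqref{eq-partition}). For the converse implication, the paper uses convergence in measure and completeness of the space of a.e.-finite measurable functions under the convergence-in-measure metric to show the $L^2$-limit of $\sigma(\mathcal R)$-measurable functions is again $\sigma(\mathcal R)$-measurable; your route via the closedness of $L^2(X,\sigma(\mathcal R),\mu)$ inside $L^2(X,\mathcal F,\mu)$ and an a.e.-convergent subsequence is an equally valid and arguably more direct way to reach the same conclusion. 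One small point worth making explicit in your write-up: the final appeal to ``completeness'' in the converse really rests on the observation that $\sigma(\mathcal R)$ already contains every $\mathcal F$-measurable null set (since $(R_\lambda\cup S)\setminus(R_\lambda\setminus S)=S$ for any null $S$), which is exactly why the theorem defines $\mathcal R$ to include the null-set modifications of the $R_\lambda$ rather than the $R_\lambda$ alone.
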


\begin{proof}

Suppose first that the representation $\{t_\lambda\}_{\lambda\in \Lambda}$ is monic and that $\chi_X$ is a cyclic vector for the representation.  As computed in the proof of Theorem 3.4 of \cite{FGKP}, we have 
\[ t_\lambda t_\lambda^* (\chi_X) = \chi_{R_\lambda}.\]
Therefore, our hypothesis that $\chi_X$ is a cyclic vector implies that for any $f \in L^2(X, \mathcal F, \mu)$, there is a sequence $(f_j)_j$, with $f_j \in \text{span} \{ \chi_{R_\lambda}: \lambda \in \Lambda\}$, such that 
\[\lim_{j\to \infty}  \int_X | f_j - f|^2 \, d\mu =0.\]
In particular, $(f_j) \to f$ in measure.  

For any $\sigma$-algebra $\mathcal T$, standard measure-theoretic results \cite[Proposition 6]{Moore} imply that since $\mu(X) < \infty$, convergence in measure among $\mathcal T$-a.e.~finite measurable functions on $(X, \mathcal T, \mu)$ is metrized by the distance 
\[ d_{\mathcal T} (f, g) :=  \int_{\Omega} \frac{ | f-g| }{  1+ | f-g| } d\mu.\] 
Moreover, $d_{\mathcal T}$ makes the space of $\mathcal S$-a.e.~finite measurable functions into a complete metric space (this can be seen, for example, by combining
 Proposition 1 and Corollary 7 of \cite{Moore}). 

The fact that $(f_j)_j \to f$ in measure in $(X, \mathcal F, \mu)$, and that  $f_j \in L^2(X, \sigma(\mathcal R), \mu)$ for all $j$, implies that $(f_j)_j$ is a Cauchy sequence with respect to both $d_{\mathcal F}$ and $d_{\sigma(\mathcal R)}$.  Consequently, the limit $f$ of $(f_j)_j$ must also be a $\sigma(\mathcal R)$-a.e.~finite measurable function.  In other words, every $f \in L^2(X, \mathcal F, \mu)$ is in fact in $L^2(X, \sigma(\mathcal R), \mu)$.  Since $\mathcal R \subseteq \mathcal F$ by construction we must have $\sigma(\mathcal R) = \mathcal F$, as desired.

We will subdivide the proof of the converse in several steps.   Thus, assume $\sigma(\mathcal R) = \mathcal F$.
\begin{enumerate}
\item[(Step 1)] 
Observe that the set
\[
\tilde{\mathcal{R}} := \{ \text{finite unions of elements in $\mathcal{R}$} \}
\] 
is a subalgebra of $ \mathcal{P}(X)$ -- that is, closed under finite unions and complements.  Closure under finite unions follows from the definition, while the second claim follows from  Equation \eqref{eq-partition}. In addition, 
$\sigma(\mathcal{R})= \sigma(\mathcal{\tilde{R}}) = \mathcal F,$ and 
\[
{\tilde{\mathcal{S}}}:=  \Big\{ \sum_i^{n} a_{i} \chi_{{B_i}}  \ | \ \ n \in \N , B_i \in \sigma(\mathcal{\tilde{R}}), a_i \in \C\Big\}
\]
is dense in  $L^2(X, \mathcal F, \mu)$. 
	\item[(Step 2)] Now we apply the Carath\'eodory/Kolmogorov extension theorem to conclude that the measure $\mu|_{\mathcal{\tilde{R}}}$ restricted to $\tilde{\mathcal{R}}$ induces a unique (extended) measure on $\mathcal{F} = \sigma(\mathcal R)$, which we still call $\mu$. (This is indeed the original measure on $L^2(X,\mathcal{F}, \mu)$ by the uniqueness of the extension.)
	It is a corollary of the Carath\'eodory/Kolmogorov extension theorem\footnote{See \cite{BDurrett} Page 452, Appendix: Measure theory, Exercise 3.1.} that  for any $\tilde{B} \in \sigma(\mathcal{ \tilde{\mathcal{R}}} ) = \mathcal F$ and for any $\epsilon >0,$ there exists $ A_\epsilon^{\tilde{B}}\in \mathcal{ \tilde{\mathcal{R}}} $ with
	\[
	\mu\Big( \tilde{B} \Delta  A_\epsilon^{\tilde{B}} \Big) < \epsilon,
	\]
	where $\Delta$ denotes symmetric difference.
	\item[(Step 3)] Recall  the following fundamental result about metric spaces: if $(Q,d_Q)$ is a metric space, and if $\tilde{\Sigma} \subseteq Q$  is a dense subset of $(Q,d_Q)$, then any other subset ${\Sigma} \subseteq Q$ having the property
	\[
	\forall \epsilon >0, \ \forall \tilde{x}\in \tilde{\Sigma} ,\ \exists \  x_\epsilon \in {\Sigma} \text{ with } d_Q(\tilde{x}, x_\epsilon ) <\epsilon
	\]
	is also dense in $(Q,d_Q)$.
	\item[(Step 4)] 
	To show that the vector 
	 $\chi_X$ is monic, equivalently that the set $\mathcal{S}$ is dense in $(L^2(X, \mathcal{F}, \mu), d_{L^2(X, \mathcal{F}, \mu) })$,\footnote{Here $d_{L^2(X, \mathcal{F}, \mu)}$ denotes the standard metric coming from the $L^2$-norm on $L^2(X, \mathcal{F}, \mu)$.}
	we will apply Step 4 to the metric space
	\[
	(Q,d_Q) = (L^2(X, \mathcal{F}, \mu), d_{L^2(X, \mathcal{F}, \mu) }),\text{ with } \tilde{\Sigma} = \tilde{\mathcal{S}},\ {\Sigma} = {\mathcal{S}}.
	\]
	Indeed we know from Step 2 that $\tilde{\Sigma} = \tilde{\mathcal{S}}$ is dense in $(L^2(X, \mathcal{F}, \mu), d_{L^2(X, \mathcal{F}, \mu) }).$
	To end the proof, it will then be enough to show that
	\[
	\forall \epsilon >0, \ \forall \tilde{s}\in \tilde{\mathcal{S}}, \ \exists \  s_\epsilon \in {\mathcal{S}} \text{ with } d_{L^2(X, \mathcal{F}, \mu) }(\tilde{s}, s_\epsilon ) <\epsilon.
	\]
	To do so, we will use the approximation results in Step 3. Indeed, without loss of generality we can assume 
	\[
\tilde{s}=  \sum_i^{n} a_{i} \chi_{{B_i}} , \text{ for some }  n \in \N , B_i \in \sigma(\mathcal{\tilde{R}}), a_i \not=0 .
	\]
	Fix $\epsilon >0$, and  define
	$
	A:=\sum_i^{n} |a_{i} |\in \C.
	$
	By Step 3, given  $i$, there exists  $A_i^\epsilon \in \tilde{\mathcal{R}} $ such that
	\[
	\mu(B_i \Delta A_i^\epsilon   )<\frac{\epsilon^2}{A^2},\text{ or, equivalently,  }\Big(\mu(B_i \Delta A_i^\epsilon   )\Big)^{1/2}<\frac{\epsilon}{A}.
	\]
	So if we now define $
	s_\epsilon := \sum_i^{n} a_{i} \chi_{A_i^\epsilon} , $
	we see, by using the triangle inequality, that
	\[
	 d_{L^2(X, \mathcal{F}, \mu) }(\tilde{s}, 	s_\epsilon    ) <\epsilon,
	\]
	as desired.
\end{enumerate}
\end{proof}

\begin{example} 
\label{ex-exonevthreeed-monic}
The semibranching function system given in Example  \ref{exonevthreeed}  is not monic.  To see this, we argue by contradiction.  First, observe that the only finite paths with range $v_2$ are of the form $f_3 f_3 \cdots f_3$; and since $\tau_{f_3}(x)=x$ on $D_3=(1/2, 1]$, we have
\[R_{f_3} = R_{f_3 f_3 \cdots f_3} = (1/2, 1].\]
Every other finite path $\lambda$, having range $v_1$, will satisfy $R_\lambda \subseteq D_{v_1} = [0, 1/2]$.

Consequently, $\mathcal R = \{R_\lambda\}_{\lambda\in \Lambda}$ does not generate the usual Lebesgue $\sigma$-algebra on $[0,1]$, even after modification by sets of measure zero, since the restriction of $\mathcal R$  to $(1/2, 1]$ contains no nontrivial measurable sets.  Theorem \ref{prop:range-sets-generate-sigma-alg-in-monic} therefore
implies that the representation of $C^*(\Lambda)$ associated to this semibranching function system is not monic, and {hence is not equivalent to any representation on $L^2(\Lambda^\infty,\mu)$ arising from a $\Lambda$-projective system.}

\end{example}  

\begin{example}
The $\Lambda$-semibranching function system of the $2$-graph $\Lambda$ in Example \ref{exonevtwoe}, however, does give rise to a monic representation of $C^*(\Lambda)$.  To see this, let $\lambda \in\Lambda^{(n,n)}$ and use the factorization rules to write $\lambda = \lambda_1 \lambda_2 \cdots \lambda_n$, where $\lambda_i = f_{j_i} e$.  Then, one can compute that $R_\lambda$ is an open interval of length $2^{-n}$, whose left endpoint is 
\[\sum_{i \leq n : j_i = 2} 2^{i-n-1}.\]
In other words, every interval of the form $\left( \frac{k}{2^j}, \frac{k+1}{2^j} \right)$ is $R_\lambda$ for some $\lambda \in \Lambda$.  Since these intervals generate the standard topology on $(0,1)$ up to  measure-zero sets (cf.~\cite{hutchinson} or \cite[Section 4.1]{palle-analysis-text}), {it follows from Theorem \ref{prop-conv-to-8-7}   that the representation of $C^*(\Lambda)$ associated to this $\Lambda$-semibranching function system is monic,} and hence equivalent to a representation on $\Lambda^\infty$.

\end{example}

\begin{example}
From the $\Lambda$-semibranching function system of Example \ref{ex:sbfs-3v8e}, we again obtain a monic representation of $C^*(\Lambda)$.  To see this, we first notice that whenever $d(\lambda) = (1,1)$, $R_\lambda$ is an interval of the form $(k/6, (k+1)/6)$;  there are six such paths $\lambda$, so each such interval is realized as $R_\lambda$ for some $\lambda$.\footnote{ This last assertion follows from the fact that 
\[\mu \left( X \backslash \bigcup_{\lambda \in \Lambda^{(1,1)}} R_\lambda \right) = 0 \]
 for any $\Lambda$-semibranching function system.}   Indeed, for any such $\lambda$, one can calculate that $\tau_\lambda$ is a linear function on a connected domain with slope $-1/2$.  
 
Furthermore, for any vertex $v\in \Lambda^0$, we have 
\[|\{\eta: d(\eta) = (1,1), \ s(\eta) = v\}| = 2.\]
It follows that there are 12 paths $\lambda$ with degree $(2,2)$, and for each such $\lambda$ we know that $\tau_\lambda$ will be a linear function on a connected domain with slope $1/4$.

  Proceeding inductively, we see that for any $n\in \N$, we have $| \Lambda^{(n,n)} | = 2^n \cdot 3$ and that for each $\lambda \in \Lambda^{(n,n)}$, $\tau_\lambda$ is a linear function on a connected domain with slope $(-2)^{-n}$.  It follows that 
  \[ \{ R_\lambda: \lambda \in \Lambda^{(n,n)}\} = \{ \left( \frac{k}{2^n \cdot 3}, \frac{k+1}{2^n \cdot 3} \right) : 0 \leq k \leq 2^n \cdot 3-1\}\]
  for any $n \in \N$.
 Again (cf.~\cite[Section 4.1]{palle-analysis-text} or \cite{hutchinson})
 these sets 
 $\{R_\lambda: \lambda \in \Lambda^{(n,n)} \text{ for some } n \in \N\}$
 generate the standard Borel $\sigma$-algebra on $(0,1)$; and thus, together with sets of measure zero, they generate the entire Lebesgue $\sigma$-algebra on $(0,1)$. {Consequently, Theorem \ref{prop-conv-to-8-7} implies that
 the constant function $1= \chi_{[0,1]}$ is a cyclic vector for the representation associated to this $\Lambda$-semibranching function system, as claimed.} 
\end{example}

\begin{prop}
Let $E$ be a finite directed graph with no sources and let $\Lambda$ be its double graph.
If an $E$-semibranching function system gives rise to a monic representation of $C^*(E)$, then the associated $\Lambda$-semibranching function system also give rise to a  monic representation of $C^*(\Lambda)$.
\end{prop}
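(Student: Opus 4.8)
The plan is to reduce everything to Theorem~\ref{prop-conv-to-8-7}, which characterizes monicity of a semibranching representation (on a finite measure space, with cyclic vector $\chi_X$) in terms of whether its range sets generate the full $\sigma$-algebra $\mathcal F$ up to null sets. Since the hypothesis provides a monic $E$-semibranching representation with cyclic vector $\chi_X$, we automatically have $\mu(X)=\|\chi_X\|_{L^2}^2<\infty$, and $E$ is a finite source-free $1$-graph, so Theorem~\ref{prop-conv-to-8-7} applies: the $E$-range sets $\{R_\eta^E\}_{\eta\in E}$ generate $\mathcal F$ modulo sets of measure zero. The associated $\Lambda$-semibranching function system is the one built in Proposition~\ref{prop:double-graph-SBFS} on the very same space $(X,\mathcal F,\mu)$; thus it will be enough to prove that its range sets $\{R_\lambda\}_{\lambda\in\Lambda}$ generate the same $\sigma$-algebra, after which the converse direction of Theorem~\ref{prop-conv-to-8-7} delivers monicity of the $\Lambda$-representation.

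The key step, and the main point of the argument, is to identify the two families of range sets. I would introduce the forgetful map $\lambda\mapsto\bar\lambda\in E$ which sends a $\Lambda$-path, written as a word of edges $\lambda=\lambda_1\cdots\lambda_\ell$, to the directed path $\bar\lambda:=\phi(\lambda_1)\cdots\phi(\lambda_\ell)$ obtained by erasing the colors (where $\phi$ denotes $\phi_1$ or $\phi_2$ as appropriate). Because the factorization rule of a double graph (Definition~\ref{def-double-graph}) only interchanges the colors of a composable pair and never alters the underlying directed path, $\bar\lambda$ is independent of the chosen edge decomposition of $\lambda$; this is the same observation that guarantees $\tau_\lambda$ is well defined in Proposition~\ref{prop:double-graph-SBFS}. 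Since $\tau_{f^i}=\tau_{\phi_i(f^i)}$ on edges, composing yields $\tau_\lambda=\tau^E_{\bar\lambda}$, and as $D_\lambda=D_{s(\lambda)}=D^E_{s(\bar\lambda)}$ is exactly the domain of $\tau^E_{\bar\lambda}$, we obtain $R_\lambda=R^E_{\bar\lambda}$.

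Finally I would observe that $\lambda\mapsto\bar\lambda$ is onto: any directed path $\eta=\eta_1\cdots\eta_\ell$ in $E$ lifts to the composable $\Lambda$-path $\eta_1^1\cdots\eta_\ell^1$ (all edges in the first color), whose image is $\eta$. Combined with $R_\lambda=R^E_{\bar\lambda}$, this shows $\{R_\lambda\}_{\lambda\in\Lambda}=\{R_\eta^E\}_{\eta\in E}$ as collections of subsets of $X$, so the two families generate the same $\sigma$-algebra, namely $\mathcal F$ (up to null sets). Theorem~\ref{prop-conv-to-8-7}, applied now to the $\Lambda$-semibranching function system, then shows $\chi_X$ is cyclic for the diagonal subalgebra and hence that the $\Lambda$-representation is monic, as desired. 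I expect no serious obstacle beyond the careful bookkeeping in the middle paragraph; once the equality of range sets is in hand, monicity transfers through Theorem~\ref{prop-conv-to-8-7} essentially for free.
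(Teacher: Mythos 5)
Your argument is correct and turns on the same key fact as the paper's proof --- namely that, by the construction in Proposition \ref{prop:double-graph-SBFS}, the range sets of the double-graph system are exactly the range sets of the original $E$-system --- but you package it differently. The paper argues directly with the span: every finite path of $E$ is also a finite path of $\Lambda$ (read all edges in a single color), and since $t_\lambda t_\lambda^*\xi=\chi_{R_\lambda}\xi$ with $R_\lambda$ unchanged, $\overline{\text{span}}\{t_\lambda t_\lambda^*\xi:\lambda\in\Lambda\}$ contains $\overline{\text{span}}\{t_\eta t_\eta^*\xi:\eta\in E\}=\H$ for whatever monic vector $\xi$ the hypothesis supplies; only the containment of range-set families is needed, not the equality you establish via surjectivity of the color-forgetting map. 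Your detour through Theorem \ref{prop-conv-to-8-7} quietly strengthens the hypothesis: the forward direction of that theorem assumes $\chi_X$ is the cyclic vector, whereas ``monic'' only gives you \emph{some} cyclic vector for the diagonal. This is repairable (any cyclic vector for the multiplication algebra must be nonzero a.e., else its span lies in $L^2$ of its support, and one can then run the approximation argument of Theorem \ref{prop-conv-to-8-7} to get $\sigma(\mathcal R)=\mathcal F$), but the paper's one-line span argument sidesteps the issue entirely; you should either insert that repair or switch to the direct span argument.
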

\begin{proof}
Since every edge in $E$ is repeated twice in $\Lambda$, once per color, it follows that all the finite composable paths in $E$ are also finite composable paths in $\Lambda$.  Moreover, if $\lambda$ is a finite path in $E$, then $R_\lambda$ is the same in the $E$-semibranching function system as in the $\Lambda$-semibranching function system by Proposition~\ref{prop:double-graph-SBFS}.  Thus, if 
\[\H = \overline{\text{span}} \{ t_\lambda t_\lambda^* \xi: \lambda \text{ a finite path in } E\},\] 
where $\xi\in \mathcal{H}$ is a monic vector of $C^*(E)$,
then we also have $\H = \overline{\text{span}} \{ t_\lambda t_\lambda^* \xi: \lambda \in \Lambda\}$, since $t_\lambda t_\lambda^* \xi = \chi_{R_\lambda} \xi.$
\end{proof}

\begin{prop}
Suppose that $\Lambda_1, \Lambda_2$ are finite, source-free $k_1$- and $k_2$-graphs respectively and that (for $i=1,2$) we have $\Lambda_i$-semibranching function systems on $(X_i, \mu_i)$.  If both of these $\Lambda_i$-semibranching function systems give rise to monic representations of $C^*(\Lambda_i)$, then the associated $\Lambda_1 \times \Lambda_2$-semibranching function system on $(X_1 \times X_2, \mu_1 \times \mu_2)$ is also a monic representation.
\end{prop}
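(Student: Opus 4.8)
The plan is to realize the product representation on the tensor-product Hilbert space $L^2(X_1,\mu_1)\otimes L^2(X_2,\mu_2)\cong L^2(X_1\times X_2,\mu_1\times\mu_2)$ and to exhibit an explicit cyclic vector for the diagonal subalgebra as a simple tensor of cyclic vectors for the two factors. The key structural input is that in any $\Lambda$-semibranching representation the diagonal operators act by multiplication: for each path $\rho$ one has $t_\rho t_\rho^* = M_{\chi_{R_\rho}}$ (this is the computation in the proof of Theorem 3.4 of \cite{FGKP}, recorded in Equation \eqref{eq:lambda-proj-range-sets}). Thus monicity of a $\Lambda$-semibranching representation with vector $\xi$ is precisely the statement that $\overline{\operatorname{span}}\{\chi_{R_\lambda}\cdot\xi:\lambda\in\Lambda\}=L^2(X,\mu)$.

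First I would record the shape of the range sets of the product system. By Proposition \ref{prop:product-graph-SBFS} and the definition of the product prefixing maps, composing edge prefixing maps shows that for a path $(\lambda,\nu)\in\Lambda_1\times\Lambda_2$ the prefixing map is $\tau_{(\lambda,\nu)}(x,y)=(\tau^1_\lambda(x),\tau^2_\nu(y))$ on $D_{s(\lambda)}\times D_{s(\nu)}$, so that
\[R_{(\lambda,\nu)}=R_\lambda\times R_\nu.\]
Consequently $\chi_{R_{(\lambda,\nu)}}=\chi_{R_\lambda}\otimes\chi_{R_\nu}$ as an element of $L^2(X_1,\mu_1)\otimes L^2(X_2,\mu_2)$, and the corresponding diagonal operator factors as
\[t_{(\lambda,\nu)}t_{(\lambda,\nu)}^*=M_{\chi_{R_\lambda}}\otimes M_{\chi_{R_\nu}}.\]

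Next I would choose monic vectors $\xi_1\in L^2(X_1,\mu_1)$ and $\xi_2\in L^2(X_2,\mu_2)$ for the two factor representations, which exist by hypothesis, and claim that $\xi_1\otimes\xi_2$ is a cyclic vector for the diagonal subalgebra of the product representation. Applying the factored diagonal operator gives
\[t_{(\lambda,\nu)}t_{(\lambda,\nu)}^*(\xi_1\otimes\xi_2)=(t_\lambda t_\lambda^*\xi_1)\otimes(t_\nu t_\nu^*\xi_2),\]
so the closed span of $\{t_{(\lambda,\nu)}t_{(\lambda,\nu)}^*(\xi_1\otimes\xi_2)\}$ equals the closed span of $\{(t_\lambda t_\lambda^*\xi_1)\otimes(t_\nu t_\nu^*\xi_2):\lambda\in\Lambda_1,\nu\in\Lambda_2\}$. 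Since $\overline{\operatorname{span}}\{t_\lambda t_\lambda^*\xi_1\}=L^2(X_1,\mu_1)$ and $\overline{\operatorname{span}}\{t_\nu t_\nu^*\xi_2\}=L^2(X_2,\mu_2)$ by monicity of the factors, the elementary fact that the tensors of two total subsets are total in a Hilbert-space tensor product finishes the argument: $\xi_1\otimes\xi_2$ is cyclic, and hence the product representation is monic by Definition \ref{def:lambda-monic-repres}.

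The only genuinely non-formal point, and hence the step I would treat most carefully, is the identification $R_{(\lambda,\nu)}=R_\lambda\times R_\nu$ for arbitrary paths (rather than just edges), together with the bookkeeping that $t_{(\lambda,\nu)}t_{(\lambda,\nu)}^*$ is multiplication by $\chi_{R_\lambda}\otimes\chi_{R_\nu}$; everything else reduces to the density statement for tensor products and the definition of monicity. As an alternative to the direct cyclic-vector computation, one could instead invoke Theorem \ref{prop-conv-to-8-7}, reducing the claim to showing that $\{R_\lambda\times R_\nu\}$ generates the product $\sigma$-algebra on $X_1\times X_2$ up to null sets; this follows from a standard generated-$\sigma$-algebra argument using that each $X_i$ is a finite union of range sets, but the tensor-product route avoids the measure-theoretic subtleties of completions and so is the approach I would favor.
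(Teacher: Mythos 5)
Your proof is correct, but it takes a different route from the paper's. Both arguments rest on the same geometric fact, namely that the range sets of the product system factor as $R_{(\lambda,\nu)}=R_\lambda\times R_\nu$ (the paper establishes this by writing $\eta=(\lambda\times r(\nu))(s(\lambda)\times\nu)$ and composing the prefixing maps, exactly as you do). After that the two proofs diverge: the paper invokes Theorem \ref{prop-conv-to-8-7} and argues that $\{R_\lambda\times R_\nu\}$ generates the product $\sigma$-algebra of $X_1\times X_2$ because each factor family generates the $\sigma$-algebra of $X_i$, whereas you work on $L^2(X_1,\mu_1)\otimes L^2(X_2,\mu_2)$, factor the diagonal projections as $M_{\chi_{R_\lambda}}\otimes M_{\chi_{R_\nu}}$, and exhibit $\xi_1\otimes\xi_2$ as a cyclic vector via the totality of elementary tensors of total sets. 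Your route is more elementary and, as you note, sidesteps a point the paper leaves implicit: that products of two families generating $\sigma$-algebras modulo null sets generate the product $\sigma$-algebra modulo null sets (and the attendant completion issues), which is true here but is asserted without argument in the paper. What the paper's route buys is that it produces the canonical cyclic vector $\chi_{X_1\times X_2}$ and stays within the $\sigma$-algebra framework of Theorem \ref{prop-conv-to-8-7} used for the other examples in that section; your route only certifies cyclicity of $\xi_1\otimes\xi_2$, which is all that Definition \ref{def:lambda-monic-repres} requires. One small point common to both proofs: monicity also demands $t_{(\lambda,\nu)}\neq 0$ for all paths, but this is automatic since $\mu_1\times\mu_2(R_\lambda\times R_\nu)=\mu_1(R_\lambda)\mu_2(R_\nu)>0$.
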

\begin{proof}
If $\{R_\lambda: \lambda \in \Lambda_i\}$ generates the $\mu_i-\sigma$-algebra for $i = 1,2$, then $\{R_\lambda \times R_\nu: \lambda \in \Lambda_1, \nu \in \Lambda_2\}$ generates the $\mu_1 \times \mu_2 - \sigma$-algebra of $X_1 \times X_2$.  Moreover, given any path $\eta \in \Lambda_1 \times \Lambda_2$, the definition of the product $k$-graph ensures that we can write $\eta =( \lambda \times r(\nu)) (s(\lambda) \times \nu)$ for some finite paths $\lambda\in \Lambda_1, \nu \in \Lambda_2$.  Then, since 
\[\tau_\eta = \tau_{(\lambda \times r(\nu))} \circ \tau_{(s(\nu) \times \nu)}\]
has range $R_\lambda \times R_\nu$, we see that $\{R_\eta: \eta \in \Lambda_1 \times \Lambda_2\}$ generates the $\mu_1 \times \mu_2 - \sigma$-algebra on $X_1 \times X_2$ whenever $\{R_\lambda: \lambda \in \Lambda_i\}$ generates the $\mu_i- \sigma$-algebra for $i=1,2$.
\end{proof}

\begin{example}
The two previous Propositions tell us that, in order to check that the $\Lambda$-semibranching function systems of  Sections \ref{sec:double} and \ref{sec:product} give rise to monic representations of $C^*(\Lambda)$, it suffices to show that, for the 1-graph $E$ in Example \ref{ex-3.3-Kawamura_modified}, $\{R_\lambda: \lambda \text{ a finite path in }E\}$ generates the standard Lebesgue $\sigma$-algebra on $(0,1)$, up to sets of measure zero.

To that end, note that if $\lambda$ is a finite path in $E$ with both source and range $v$, then $\lambda$ is of the form $\lambda _1 \lambda_2 \cdots \lambda_n$, where each $\lambda_i$ is either equal to $e$ or to $g f$.  Furthermore, we note that both $\tau_e$ and $\tau_g \circ \tau_f$ are linear functions on a connected domain (namely $D_v = (0,a)$) with slope $1/2$.  Consequently, $R_\lambda$ is an interval of length $a \cdot 2^{-n}$; since we can choose either $e$ or $gf$ for each $\lambda_i$, there are $2^n$ such sets $R_\lambda$, and so 
\[\{R_\lambda: \lambda \in v E v\} = \{ \left( \frac{j \cdot a}{ 2^k}, \frac{(j+1) \cdot a}{2^k} \right): k \in \N, \ 0 \leq j \leq 2^k - 1\}\]
generates the Lebesgue $\sigma$-algebra for $(0,a)$, after modification by sets of measure zero.

Moreover, if $\lambda \in w E v$ is a finite path in $E$ with range $w$ and source $v$, it must be of the form $\lambda = f \lambda_1 \cdots \lambda_n$, with $\lambda_i  \in \{e, gf\}$ as above.  For each $n$ there are $2^n$ such paths.
Consequently, since $\tau_f$ has slope $(1-a)/a$, we see that 
\[\{R_\lambda: \lambda \in w E v \} = \{ \left( \frac{j (1-a)}{2^k}, \frac{(j+1)(1-a)}{2^k} \right): k \in \N, 0 \leq j \leq 2^k -1\}.\]
Again, these sets generate the Lebesgue $\sigma$-algebra for the interval $(a, 1)$, which has length $1-a$.

In other words, the $E$-semibranching function system given in Example \ref{ex-3.3-Kawamura_modified} gives rise to a monic representation of $C^*(E)$, and hence to monic representations of $C^*(\Lambda)$ and of $C^*(E \times E)$, where $\Lambda$ is the double graph of $E$. 
\end{example}

The following Corollary shows that the measures constructed in Section \ref{sec:examples_gen_measure} give rise to monic representations of $C^*(\Lambda)$. 

\begin{cor} \label{cor-repres-via-a-measure-on-the-inf-path-space}	
Suppose that $\Lambda$ is a $k$-graph and $p$ is a Borel measure on its infinite path space $\Lambda^\infty$ satisfying the hypotheses of Theorem \ref{thm-lambda-sbfs-on-the-inf-path-space-via-a-measure}.
Then the representation of  $C^*(\Lambda)$ on $L^2(\Lambda^\infty, p)$ arising from  the $\Lambda$-semibranching function system of Theorem  \ref{thm-lambda-sbfs-on-the-inf-path-space-via-a-measure} is monic.
\end{cor}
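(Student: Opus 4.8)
The plan is to deduce Corollary \ref{cor-repres-via-a-measure-on-the-inf-path-space} almost immediately from the machinery already assembled, rather than to argue from scratch. The key observation is that this Corollary concerns a $\Lambda$-semibranching function system living on the \emph{infinite path space} $\Lambda^\infty$ itself, equipped with the standard prefixing and coding maps $(\sigma_\lambda, \sigma^n)$. This is precisely the situation in which monicity is automatic, and the work has effectively been done in Theorem \ref{thm-characterization-monic-repres} and Remark \ref{rmk:lambda-proj-comments-1}.

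First I would invoke Theorem \ref{thm-lambda-sbfs-on-the-inf-path-space-via-a-measure}: since $p$ satisfies its hypotheses, the standard maps $\{\sigma_\lambda\}$, $\{\sigma^n\}$ endow $(\Lambda^\infty, p)$ with a genuine $\Lambda$-semibranching function system. Next, by Remark \ref{rmk:lambda-proj-comments-1}(4), \emph{every} $\Lambda$-semibranching function system carries a canonical associated $\Lambda$-projective system: one simply sets $f_\lambda := (\Phi_\lambda \circ \sigma^{d(\lambda)})^{-1/2} \chi_{R_\lambda}$, and Proposition \ref{prop:lambda-proj-repn} confirms these functions satisfy Conditions (a) and (b) of Definition \ref{def:lambda-proj-system} because the associated operators $S_\lambda$ form a Cuntz--Krieger family by Theorem 3.5 of \cite{FGKP}. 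Thus the representation on $L^2(\Lambda^\infty, p)$ arising from this $\Lambda$-semibranching function system is in fact a $\Lambda$-projective representation.

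Finally I would apply the converse direction of Theorem \ref{thm-characterization-monic-repres}, which states that any representation of $C^*(\Lambda)$ on $L^2(\Lambda^\infty, \mu)$ arising from a $\Lambda$-projective system is monic. Taking $\mu = p$, this immediately gives that the representation in question is monic, completing the proof. One could equivalently cite the remark immediately following the statement of Theorem \ref{thm-characterization-monic-repres}, which records exactly this consequence: a $\Lambda$-semibranching function system on $(\Lambda^\infty, \mu)$ for any Borel measure $\mu$ gives rise to a monic representation.

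I do not anticipate a genuine obstacle here, since the Corollary is a direct specialization of results already proved; the only care required is to confirm that the hypotheses of Theorem \ref{thm-lambda-sbfs-on-the-inf-path-space-via-a-measure} feed correctly into the construction of the canonical $\Lambda$-projective system (in particular that the positivity of the edge Radon--Nikodym derivatives, guaranteed by hypothesis (c) of that Theorem, propagates to all $f_\lambda$ being well-defined and nonzero on $R_\lambda$, which is exactly the content established in the proof of Theorem \ref{thm-lambda-sbfs-on-the-inf-path-space-via-a-measure}). The mild subtlety worth stating explicitly is that the underlying space is $\Lambda^\infty$ with the standard maps, which is what makes Theorem \ref{thm-characterization-monic-repres} directly applicable; the Corollary is therefore essentially a bookkeeping consequence tying together Theorem \ref{thm-lambda-sbfs-on-the-inf-path-space-via-a-measure}, Remark \ref{rmk:lambda-proj-comments-1}, Proposition \ref{prop:lambda-proj-repn}, and Theorem \ref{thm-characterization-monic-repres}.
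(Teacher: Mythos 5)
Your proposal is correct, and it reaches the conclusion by a legitimate but slightly more indirect citation chain than the paper uses. You pass from the $\Lambda$-semibranching function system of Theorem \ref{thm-lambda-sbfs-on-the-inf-path-space-via-a-measure} to its canonical $\Lambda$-projective system (Remark \ref{rmk:lambda-proj-comments-1} and Proposition \ref{prop:lambda-proj-repn}) and then invoke the converse direction of Theorem \ref{thm-characterization-monic-repres}. The paper's own proof of the Corollary skips the projective-system layer entirely: it simply recalls that the associated operators satisfy $S_\lambda S_\lambda^* f = \chi_{Z(\lambda)}\cdot f$, and then observes that since the cylinder sets generate the Borel $\sigma$-algebra of $\Lambda^\infty$, the constant function $\chi_{\Lambda^\infty}$ is already a cyclic vector for the diagonal. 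The two arguments are ultimately the same computation in different packaging --- the converse half of Theorem \ref{thm-characterization-monic-repres} is itself proved by exactly that cyclicity calculation --- so your route buys economy of citation (everything follows from the general theorem) at the cost of invoking heavier machinery, while the paper's direct verification is shorter and self-contained. Your one point of care, that the representation produced by Theorem \ref{thm:SBFS-repn} agrees with the one produced by the canonical $\Lambda$-projective system, is correctly handled: Remark \ref{rmk:lambda-proj-comments-1}(4) identifies the two formulas, so there is no gap.
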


\begin{proof}
Recall from Theorem 3.4 of \cite{FGKP} that the representation $\{ S_\lambda\}_{\lambda \in \Lambda}$ associated to such $\Lambda$-semibranching function systems satisfies
\[
S_\lambda S_\lambda^*  f = \chi_{Z(\lambda)} \cdot f \quad  \text{for $f\in L^2(\Lambda^\infty, p).$} 
\]
 Since the cylinder sets $\{ Z(\lambda)\}_{\lambda \in \Lambda}$ generate the topology on $\Lambda^\infty$, and $p$ is a Borel measure by hypothesis, the characteristic function of $\Lambda^\infty$ is a monic vector.
\end{proof}

We conclude this section with a remark on the existence of monic representations of $\mathcal O_N$ which do not contain any monic sub-representations.

    As we mentioned in the Introduction, the lack of Borel cross-sections for the irreducible representations of $\mathcal O_N$ implies that  there are not many theorems which apply to all representations of  the Cuntz algebras, and even fewer which apply  to all representations of higher-rank graph  $C^*$-algebras. Nonetheless, a structure theory of a different kind can be found in Theorems 1.2 and 1.4 of the memoir \cite{Bra-Jor-Ostro} by Bratteli, Jorgensen, and Ostrovskyi.  These Theorems describe direct integral decompositions for any non-degenerate representation of  $\mathcal O_N$. Combining these results with the Remarks 1.3 and 1.5 in \cite{Bra-Jor-Ostro}, the reader will be able to identify many representations of $\mathcal O_N$ which do not contain any monic sub-representations.

\section{A universal representation for nonnegative monic  $\Lambda$-projective  systems}
\label{sec:univ_repn}

Throughout this section, as in Section \ref{sec:monic-results}, we assume that $\Lambda$ is a finite $k$-graph with no sources.  The focus of this section is the construction of a \lq universal representation' of $C^*(\Lambda)$, generalizing the work in Section 4 of \cite{dutkay-jorgensen-monic} for Cuntz algebras, such that  all of the non-negative monic representations of $C^*(\Lambda)$ are a sub-representation of the universal representation.  
The Hilbert space $\mathcal{H}(\Lambda^\infty)$ on which our universal representation is defined is
the \lq universal space' for representations of $C^*(\Lambda)$, see \cite{nelson}, and also  \cite{dutkay-jorgensen-monic,bezuglyi-jorgensen-infinite,alpay-jorgensen-Lew,palle-iterated}.
For the case of $\mathcal{O}_N,$ this space was also shown to be the  \lq universal representation space'
for monic representations in \cite{dutkay-jorgensen-monic}.  We recall the construction of $\mathcal{H}(\Lambda^\infty)$ below.

\begin{defn} 
\label{def-universal-Hilbert-space}
Let $\Lambda$ be a finite 
 $k$-graph with no sources, and let $\Lambda^\infty$ be the infinite path space of $\Lambda,$ endowed with the topology generated by the cylinder sets and the Borel $\sigma$-algebra associated to it. Consider the collection of pairs $(f, \mu)$, where $\mu$ is a Borel measure on $\Lambda^\infty$, and  $f\in L^2(\Lambda^\infty,\mu)$. 
 
 We say that two pairs $(f,\mu)$ and $(g,\nu)$ are equivalent, denoted by $(f,\mu)\sim (g,\nu)$, if there exists a finite Borel measure $m$ on $\Lambda^\infty$ such that
\[
\mu << m,\ \nu << m,\ \hbox{ and }\;\; f \sqrt{\frac{d\mu}{dm}} = g \sqrt{\frac{d\nu}{dm}} \;\; \text{ in } \, L^2(\Lambda^\infty, m).
\]

Let $\mathcal{H}(\Lambda^\infty)$ be the set of equivalence classes of pairs $(f,\mu)$,~i.e.
\[
\mathcal{H}(\Lambda^\infty)=\{(f,\mu)\mid \mu\;\;\text{is a finite Borel measure on $\Lambda^\infty$},\; f\in L^2(\Lambda^\infty,\mu)\}\big/\sim
.\]
Denote equivalence classes with respect to $\sim$ by 
\[
[(f,\mu )] := f\,\sqrt{d\mu}.
\]

\end{defn}

Proposition~8.3 of \cite{bezuglyi-jorgensen-infinite} establishes that $\mathcal{H}(\Lambda^\infty)$ is a Hilbert space, with the vector space structure given by scalar multiplication and 
\[  f\,\sqrt{d\mu} +  g\,\sqrt{d\nu} := \Big( f \sqrt{\frac{d\mu}{d(\mu + \nu)}}+g \sqrt{\frac{d\nu}{d( \mu + \nu)}}\Big) \sqrt{{d(\mu + \nu)}},
\]
and the inner product given by 
\begin{equation}
\label{eq:Hilbert-sp}
\langle f\,\sqrt{d\mu} ,  g\,\sqrt{d\nu}\rangle := \int_{\Lambda^\infty} \overline{f} g\, 
\Big( \sqrt{\frac{d\mu}{d(\mu + \nu)}}\ \sqrt{\frac{d\nu}{d(\mu + \nu) }}\Big) {{d(\mu + \nu)}}.
\end{equation}
Moreover,  $\mathcal{H}(\Lambda^\infty)$ admits the following universal property: every {non-negative} monic representation of $C(\Lambda^\infty)$ -- where the topological space $\Lambda^\infty$ is endowed with the topology generated by the cylinder sets -- is unitarily equivalent to a representation of $C(\Lambda^\infty)$ on a subspace of $\mathcal{H}(\Lambda^\infty)$. 
Because of this, we will call $\mathcal{H}(\Lambda^\infty)$ the \emph{universal Hilbert space}  for $\Lambda^\infty$.


Recall from \cite{dutkay-jorgensen-monic}, \cite{alpay-jorgensen-Lew}, \cite{palle-iterated} the following fundamental property of $\mathcal{H}(\Lambda^\infty)$:

\begin{prop}(\cite[Theorem~3.1]{palle-iterated}, \cite{dutkay-jorgensen-monic}, \cite{alpay-jorgensen-Lew})
\label{prop-dutkay-jorgensen-lemma-4.6}
Let $\Lambda$ be a finite $k$-graph with no sources.
For every finite Borel measure $\mu$ on $\Lambda^\infty$, define the operator $W_\mu$
from $L^2(\Lambda^\infty, \mu)$ to $\mathcal{H}(\Lambda^\infty)$ by
\[
W_\mu(f) = f \sqrt{d\mu}. 
\]
Then $W_\mu$ is an isometry of $L^2(\Lambda^\infty, \mu)$ onto a subspace of $\mathcal{H}(\Lambda^\infty)$, which we will denote by $\mathcal{L}^2(\mu)$. 
\end{prop}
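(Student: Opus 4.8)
The plan is to verify directly that the map $W_\mu(f) = f\sqrt{d\mu}$ is a well-defined linear isometry onto the subspace $\mathcal{L}^2(\mu) := W_\mu(L^2(\Lambda^\infty, \mu))$ of $\mathcal{H}(\Lambda^\infty)$. First I would check well-definedness: the assignment $f \mapsto [(f,\mu)]$ sends an element of $L^2(\Lambda^\infty,\mu)$ to an equivalence class, and since $\mu$ is a fixed finite Borel measure, every $f \in L^2(\Lambda^\infty,\mu)$ produces a legitimate pair $(f,\mu)$ whose equivalence class lives in $\mathcal{H}(\Lambda^\infty)$ by Definition \ref{def-universal-Hilbert-space}. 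If $f = g$ in $L^2(\Lambda^\infty,\mu)$ (i.e.\ $f = g$ $\mu$-a.e.), then taking $m = \mu$ in the equivalence relation shows $[(f,\mu)] = [(g,\mu)]$, so $W_\mu$ is well-defined.

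Next I would verify linearity. For scalars this is immediate. For additivity, I would apply the vector-space structure on $\mathcal{H}(\Lambda^\infty)$ recalled just before the Proposition: for two functions $f, g \in L^2(\Lambda^\infty,\mu)$ we are adding $f\sqrt{d\mu} + g\sqrt{d\mu}$, which is the case $\mu = \nu$ of the addition formula. Since $\frac{d\mu}{d(\mu+\mu)} = \frac{1}{2}$ identically, the formula collapses to $(f+g)\sqrt{d\mu}$ after absorbing the factor of $\sqrt{2}$ correctly; I would spell out this short computation to confirm $W_\mu(f) + W_\mu(g) = W_\mu(f+g)$.

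The isometry property is the crux. I would compute $\langle W_\mu(f), W_\mu(g)\rangle_{\mathcal{H}(\Lambda^\infty)}$ using the inner product formula \eqref{eq:Hilbert-sp} with $\mu = \nu$. In that case both Radon-Nikodym derivatives $\frac{d\mu}{d(\mu+\mu)}$ equal $\frac{1}{2}$, so $\sqrt{\frac{d\mu}{d(\mu+\mu)}}\sqrt{\frac{d\mu}{d(\mu+\mu)}} = \frac{1}{2}$, and the total measure $d(\mu+\mu) = 2\,d\mu$; thus the two factors of $\frac{1}{2}$ and $2$ cancel, yielding
\[
\langle W_\mu(f), W_\mu(g)\rangle_{\mathcal{H}(\Lambda^\infty)} = \int_{\Lambda^\infty} \overline{f}\,g \, d\mu = \langle f, g\rangle_{L^2(\Lambda^\infty,\mu)}.
\]
This shows $W_\mu$ preserves inner products, hence is isometric and in particular injective. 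Finally, the image $\mathcal{L}^2(\mu) = W_\mu(L^2(\Lambda^\infty,\mu))$ is by definition a subspace of $\mathcal{H}(\Lambda^\infty)$, and it is closed because $W_\mu$ is an isometry from the complete space $L^2(\Lambda^\infty,\mu)$; so $W_\mu$ maps $L^2(\Lambda^\infty,\mu)$ onto $\mathcal{L}^2(\mu)$, completing the proof.

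The main obstacle is bookkeeping with the normalization factors in the inner product and addition formulas: one must be careful that the $\sqrt{d(\mu+\nu)}$ symbols denote formal square-root-of-measure objects rather than genuine functions, so the cancellation of the $\frac{1}{2}$ factors against $d(\mu+\mu) = 2\,d\mu$ has to be justified at the level of the equivalence classes in Definition \ref{def-universal-Hilbert-space}. Once one fixes the convention that $[(f,\mu)] = f\sqrt{d\mu}$ and interprets \eqref{eq:Hilbert-sp} consistently, the computation is routine, but the formal manipulation of the $\sqrt{d\mu}$ notation is where care is required.
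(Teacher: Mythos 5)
Your proof is correct; the paper itself states this Proposition as a cited result (from \cite{palle-iterated}, \cite{dutkay-jorgensen-monic}, \cite{alpay-jorgensen-Lew}) without reproducing a proof, and your direct verification is exactly the standard argument from those references. The key computations — that with $\nu=\mu$ the Radon--Nikodym factors $\sqrt{\tfrac{d\mu}{d(2\mu)}}\sqrt{\tfrac{d\mu}{d(2\mu)}}=\tfrac12$ cancel against $d(2\mu)=2\,d\mu$ in the inner product \eqref{eq:Hilbert-sp}, and that $[(\tfrac{f+g}{\sqrt2},2\mu)]=[(f+g,\mu)]$ in the addition formula — are handled correctly, and your observation that the image is closed because $W_\mu$ is an isometry on a complete space is the right way to justify calling $\mathcal{L}^2(\mu)$ a subspace.
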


We are now ready to present the universal representation $\pi_{univ}$  of $C^*(\Lambda)$ on $\mathcal{H}(\Lambda^\infty)$.

\begin{prop}
\label{prop-univ-repres}
\label{univ-repres-definition-result}
Let $\Lambda$ be a finite  $k$-graph with no sources. Fix $(f,\mu)\in \mathcal{H}(\Lambda^\infty)$.  For each $\lambda \in \Lambda^n$, define  
\[
 S_\lambda^{univ} (f\,\sqrt{d\mu}) : = (f \circ \sigma^n) \,\sqrt{d(\mu \circ \sigma_\lambda^{-1})},
\]
where $\sigma_\lambda$ and $\sigma^n$ are the standard prefixing  and coding maps for $\Lambda^\infty$.
Then:
\begin{itemize}
\item[(a)] The adjoint of $S_\lambda^{univ}$ is given by
\[
 (S_\lambda^{univ})^* (f\,\sqrt{d\mu}) : = (f \circ \sigma_\lambda) \,\sqrt{d(\mu \circ \sigma_\lambda)}.
\]
\item[(b)] The operators $\{S_\lambda^{univ}: \lambda \in \Lambda\}$ generate a representation $\pi_{univ}$ of $C^*(\Lambda)$ on $\H(\Lambda^\infty)$, which we call the \lq universal  representation'. 
\item[(c)] The projection valued measure $P$ on $\Lambda^\infty$ given in Definition \ref{def-proj-val-measu} associated to  the universal representation $\pi_{univ}$ is
given by:
\begin{equation}
\label{proj-valued-measure-universal-representation}
P(A)(f\,\sqrt{d\mu}) = (\chi_A \cdot f)\,\sqrt{d\mu} ,
\end{equation}
where $A$ is a Borel set of the Borel $\sigma$-algebra generated by the cylinder sets.
\end{itemize}
\end{prop}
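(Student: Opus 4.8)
The plan is to establish parts (a), (b), and (c) in sequence, with part (b) resting on the machinery of $\Lambda$-projective systems already developed, and part (c) being essentially a direct computation once (b) is in hand. The key conceptual point is that the universal representation on $\H(\Lambda^\infty)$ should restrict, on each subspace $\mathcal{L}^2(\mu) = W_\mu(L^2(\Lambda^\infty,\mu))$, to the $\Lambda$-projective representation associated to the standard $\Lambda$-semibranching function system on $(\Lambda^\infty,\mu)$ (whenever $\mu$ makes $(\Lambda^\infty,\mu)$ into such a system). This is the thread I would pull on throughout.

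For part (a), I would verify the adjoint formula directly from the inner product \eqref{eq:Hilbert-sp}. Fix representatives $(f,\mu)$ and $(g,\nu)$, and compute $\langle S_\lambda^{univ}(f\sqrt{d\mu}),\, g\sqrt{d\nu}\rangle$ by unwinding the definition $S_\lambda^{univ}(f\sqrt{d\mu}) = (f\circ\sigma^n)\sqrt{d(\mu\circ\sigma_\lambda^{-1})}$ and comparing with $\langle f\sqrt{d\mu},\, (g\circ\sigma_\lambda)\sqrt{d(\nu\circ\sigma_\lambda)}\rangle$. The substitution rule $\int \phi\, d(\mu\circ\sigma_\lambda^{-1}) = \int (\phi\circ\sigma_\lambda)\, d\mu$ on $R_\lambda = Z(\lambda)$, together with $\sigma^n\circ\sigma_\lambda = \mathrm{id}$ on $Z(s(\lambda))$, should make the two expressions agree after choosing a common dominating measure $m$. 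The main subtlety here is bookkeeping the Radon–Nikodym factors $\sqrt{d\mu/dm}$ correctly and confirming that the well-definedness on equivalence classes is respected; I would check that $S_\lambda^{univ}$ is independent of the choice of representative by using the defining relation of $\sim$ and the chain rule for Radon–Nikodym derivatives.

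For part (b), rather than verifying (CK1)--(CK4) by hand on $\H(\Lambda^\infty)$, I would leverage Proposition \ref{prop-dutkay-jorgensen-lemma-4.6} and Proposition \ref{prop:lambda-proj-repn}. The idea is to show that each $S_\lambda^{univ}$ preserves the decomposition of $\H(\Lambda^\infty)$ into the subspaces $\mathcal{L}^2(\mu)$ in a compatible way, and that on each such subspace the operators agree (via the unitary $W_\mu$) with the $\Lambda$-projective representation $\{T_\lambda\}$ of Equation \eqref{eq:T-lambda}. Concretely, for the standard system with $f_\lambda = \sqrt{d(\mu\circ\sigma_\lambda^{-1})/d\mu}$ as in Remark \ref{rmk:lambda-proj-comments-1}(4), one checks that $W_\mu T_\lambda = S_\lambda^{univ} W_\mu$, so that $S_\lambda^{univ}|_{\mathcal{L}^2(\mu)}$ is unitarily equivalent to a Cuntz--Krieger family. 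Since the Cuntz--Krieger relations are algebraic identities among the $S_\lambda^{univ}$, and every vector of $\H(\Lambda^\infty)$ lies in some $\mathcal{L}^2(\mu)$, verifying the relations on each $\mathcal{L}^2(\mu)$ suffices. I anticipate that the genuine obstacle lies here: one must be careful that the relation (CK4), $\sum_{\lambda\in v\Lambda^n} S_\lambda^{univ}(S_\lambda^{univ})^* = S_v^{univ}$, holds on all of $\H(\Lambda^\infty)$ and not merely on a fixed $\mathcal{L}^2(\mu)$, because applying different generators to a single vector $f\sqrt{d\mu}$ produces vectors living in $L^2$ spaces for the pushforward measures $\mu\circ\sigma_\lambda^{-1}$. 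I would resolve this by computing $S_\lambda^{univ}(S_\lambda^{univ})^*(f\sqrt{d\mu})$ explicitly and showing it equals $(\chi_{Z(\lambda)}f)\sqrt{d\mu}$, which makes the sum telescope via the partition $D_v = \bigsqcup_{\lambda\in v\Lambda^n} R_\lambda$ of Equation \eqref{eq-partition}.

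Part (c) then follows immediately: from the definition $P(Z(\lambda)) = S_\lambda^{univ}(S_\lambda^{univ})^*$ and the computation just described, $P(Z(\lambda))(f\sqrt{d\mu}) = (\chi_{Z(\lambda)}\cdot f)\sqrt{d\mu}$, which is the asserted formula on cylinder sets. By Theorem \ref{conj-palle-proj-valued-measure-gen-case}, $P$ extends to a projection-valued measure on $\mathcal{B}_o(\Lambda^\infty)$, and since both $A\mapsto P(A)$ and $A\mapsto M_{\chi_A}$ (multiplication by $\chi_A$) are projection-valued measures agreeing on the generating cylinder sets, a standard uniqueness argument (again appealing to Lemma \ref{lem:measure} and the Carath\'eodory/Kolmogorov extension theorem) gives $P(A)(f\sqrt{d\mu}) = (\chi_A\cdot f)\sqrt{d\mu}$ for all Borel $A$. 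The only care needed is to confirm that multiplication by $\chi_A$ is well-defined on equivalence classes in $\H(\Lambda^\infty)$, which is immediate from the defining relation $\sim$ since $\chi_A$ is bounded and commutes with the Radon–Nikodym rescalings.
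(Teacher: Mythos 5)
Your treatment of part (a) (direct computation of the adjoint via a common dominating measure, plus the check of well-definedness on equivalence classes) and of part (c) (read off $P(Z(\lambda))$ from $S_\lambda^{univ}(S_\lambda^{univ})^*$ and extend by uniqueness of projection-valued measures agreeing on cylinder sets) coincide with what the paper does. The problem is your primary strategy for part (b).

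The reduction of (b) to $\Lambda$-projective systems via the isometries $W_\mu$ has a genuine gap, for two reasons. First, $\H(\Lambda^\infty)$ is built from \emph{all} finite Borel measures on $\Lambda^\infty$, and an arbitrary such $\mu$ need not make the standard prefixing and coding maps into a $\Lambda$-semibranching function system (e.g.\ $\mu$ may assign measure zero to some $Z(v)$, or the Radon--Nikodym derivatives $d(\mu\circ\sigma_\lambda)/d\mu$ may fail to exist or to be positive); in that case the operators $T_\lambda$ of Equation \eqref{eq:T-lambda} are simply not available on $L^2(\Lambda^\infty,\mu)$, so there is nothing to intertwine with. Your parenthetical ``whenever $\mu$ makes $(\Lambda^\infty,\mu)$ into such a system'' concedes this but leaves the remaining vectors of $\H(\Lambda^\infty)$ uncovered. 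Second, even for good $\mu$, the relation $W_\mu T_\lambda = S_\lambda^{univ} W_\mu$ presupposes that $S_\lambda^{univ}$ maps $\mathcal{L}^2(\mu)$ into itself, whereas by definition it lands in $\mathcal{L}^2(\mu\circ\sigma_\lambda^{-1})$, which sits inside $\mathcal{L}^2(\mu)$ only when $\mu\circ\sigma_\lambda^{-1}\ll\mu$. You correctly sense this obstruction for (CK4) and fall back on computing $S_\lambda^{univ}(S_\lambda^{univ})^*(f\sqrt{d\mu}) = (\chi_{Z(\lambda)}f)\sqrt{d\mu}$ directly --- which is exactly the paper's method and does work for every $\mu$ --- but the same issue afflicts (CK1), (CK2), and (CK3), so the honest fix is to abandon the $W_\mu$ reduction entirely and verify all four Cuntz--Krieger relations by direct substitution into the defining formulas (e.g.\ $S_\lambda^{univ} S_\nu^{univ}(f\sqrt{d\mu}) = (f\circ\sigma^{d(\lambda\nu)})\sqrt{d(\mu\circ\sigma_{\lambda\nu}^{-1})}$ and $(S_\lambda^{univ})^* S_\lambda^{univ}(f\sqrt{d\mu}) = (\chi_{Z(s(\lambda))}f)\sqrt{d\mu}$, using $\sigma^{d(\lambda)}\circ\sigma_\lambda = \mathrm{id}_{Z(s(\lambda))}$), which is the route the paper takes.
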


\begin{proof}
The proof is similar to that of  Proposition 4.2 of   \cite{dutkay-jorgensen-monic}, although the details are more involved because of the more complicated $k$-graph structure. 
To simplify the notation, in this proof we will also drop the superscript $univ$ form $S^{univ}_\lambda$, since we will consider no other  representations here, and so there is no danger of confusion.
To explicitly check that   we obtain a representation of $C^*(\Lambda)$,
first we observe that the operators $S_\lambda$ are well defined; in other words, if $f \sqrt{d\mu} = g \sqrt{d\nu}$, we must have $(f \circ \sigma^n) \sqrt{d(\mu \circ \sigma_\lambda^{-1})} = (g \circ \sigma^n)\sqrt{d(\nu \circ \sigma_\lambda^{-1})}$ for all $\lambda \in \Lambda$.  Recall that $f \sqrt{d\mu} = g \sqrt{d\nu} \in \H(\Lambda^\infty)$ iff
\[f \sqrt{\frac{d\mu}{dm}} = g \sqrt{\frac{d\nu}{dm}} \in L^2(\Lambda^\infty, m)\]
for some Borel measure $m$ on $\Lambda^\infty$.

Observe that $\mu \circ \sigma_\lambda^{-1}$ is zero off $Z(\lambda)$, and $\mu \circ \sigma_\lambda^{-1} =  \left(\mu|_{Z(s(\lambda))} \circ \sigma_\lambda^{-1}\right)$ on $Z(\lambda)$; similarly for $\nu$. Therefore, since $ f \sqrt{\frac{d\mu}{dm}} = g \sqrt{\frac{d\nu}{dm}}$ on all of $\Lambda^\infty$, these functions agree in particular on $Z(s(\lambda))$.  The fact that $\sigma_\lambda^{-1} = \sigma^n$ on $Z(\lambda)$, where $n=d(\lambda)$, now implies that
\begin{align*}
(f \circ \sigma^n) \sqrt{d(\mu \circ \sigma_\lambda^{-1})} &= \left( (f \circ \sigma^n) \sqrt{d(\mu \circ \sigma_\lambda^{-1})}\right)|_{Z(\lambda)} = \left(f \sqrt{d\mu}\right) |_{Z(s(\lambda))} \circ \sigma_\lambda^{-1} \\
&= \left(g \sqrt{d\nu}\right) |_{Z(s(\lambda))} \circ \sigma_\lambda^{-1}= \left( (g\circ \sigma^n) \sqrt{d(\nu \circ \sigma_\lambda^{-1})}\right)|_{Z(\lambda)} \\
&= (g\circ \sigma^n) \sqrt{d(\nu \circ \sigma_\lambda^{-1})}.
\end{align*}
It follows that $S_\lambda$ is well defined.

To check the formula for $S_\lambda^*$ given in the statement of the proposition, we compute:
\begin{align*}
\langle S_\lambda^* (f \sqrt{d\mu}),  g \sqrt{d\nu}\rangle &= \langle f \sqrt{d\mu}, S_\lambda g \sqrt{d\nu} \rangle = \langle f \sqrt{d\mu}, (g \circ \sigma^n) \sqrt{d(\nu \circ \sigma_\lambda^{-1})} \rangle \\
&= \int_{\Lambda^\infty} \overline{f(x)} ( g \circ \sigma^n)(x) \sqrt{\frac{d\mu}{d(\mu + (\nu \circ \sigma_\lambda^{-1}))}} \sqrt{\frac{d(\nu \circ \sigma_\lambda^{-1})}{d(\mu + (\nu \circ \sigma_\lambda^{-1}))}}\, d(\mu + (\nu \circ \sigma_\lambda^{-1})).
\end{align*}
This integral vanishes off $Z(\lambda)$, since $\sigma_\lambda^{-1}$ (and consequently $d(\nu \circ \sigma_\lambda^{-1})$) do.  We thus use the fact that $(\nu \circ \sigma_\lambda^{-1})|_{Z(\lambda)} = \nu|_{Z(s(\lambda))} \circ \sigma_\lambda^{-1}$ to rewrite 
\begin{align*}
\langle S_\lambda^* (f \sqrt{d\mu}), g \sqrt{d\nu}\rangle &= \int_{Z(s(\lambda))} \overline{f \circ \sigma_\lambda(x)} g(x) \sqrt{\frac{d(\mu \circ \sigma_\lambda)}{d((\mu \circ \sigma_\lambda) + \nu)}} \sqrt{\frac{d\nu}{d((\mu \circ \sigma_\lambda)+ \nu)}} d((\mu \circ \sigma_\lambda) + \nu).
\end{align*}
Hence $S_\lambda^*(f\sqrt{d\mu}) = (f \circ \sigma_\lambda) \sqrt{d(\mu \circ \sigma_\lambda)}$, which proves (a).

To see (b), we first note that if $v \in \Lambda^0$ then $S_v$ is given by multiplication by $\chi_{Z(v)}$, as is $S_v^*$; in other words, $\{S_v: v \in \Lambda^0\}$ are a collection of mutually orthogonal projections, so (CK1) holds.

For (CK2), choose $\lambda \in \Lambda^\ell, \nu \in s(\lambda)\Lambda^n$ and compute:
\begin{align*}
S_\lambda S_\nu (f \sqrt{d\mu})&= S_\lambda ( f \circ \sigma^n \sqrt{d(\mu \circ \sigma_\nu^{-1})}) = f \circ \sigma^n \circ \sigma^\ell \sqrt{d(\mu \circ \sigma_\nu^{-1} \circ \sigma_\lambda^{-1})} \\
&= f \circ \sigma^{n+\ell} \sqrt{d(\mu \circ (\sigma_\lambda\sigma_\nu)^{-1})} = f \circ \sigma^{d(\lambda\nu)} \sqrt{d(\mu \circ \sigma_{\lambda \nu}^{-1})} \\
&= S_{\lambda \nu} (f \sqrt{d\mu}).
\end{align*}

For (CK3), fix $\lambda \in \Lambda^\ell$ and compute:
\begin{align*}
S_\lambda^* S_\lambda(f\sqrt{d\mu}) &= S_\lambda^*( f\circ \sigma^\ell \sqrt{d(\mu \circ \sigma_\lambda^{-1})}) = f \circ \sigma^\ell \circ \sigma_\lambda \sqrt{d(\mu \circ \sigma_\lambda^{-1} \circ \sigma_\lambda)} \\
&= (\chi_{Z(s(\lambda))} \cdot f) \sqrt{d\mu} \\
&= S_{s(\lambda)} (f \sqrt{d\mu}),
\end{align*}
since $\sigma^\ell \circ \sigma_\lambda = id_{Z(s(\lambda))}$. In particular, it now follows that $S_\lambda S_\lambda^* S_\lambda = S_\lambda$.

For (CK4), we calculate that for any fixed $n \in \N^k$,
\begin{align*}
\sum_{\nu \in v\Lambda^n} S_\nu S_\nu^*(f \sqrt{d\mu}) &= \sum_{\nu\in v\Lambda^n} S_\nu( f \circ \sigma_\nu) \sqrt{d(\mu \circ \sigma_\nu)}\\
&= \sum_{\nu \in v\Lambda^n} f \circ \sigma_\nu \circ \sigma^n \sqrt{d(\mu \circ \sigma_\nu \circ \sigma_\nu^{-1})} \\
&= \sum_{\nu \in v\Lambda^n} \chi_{Z(\nu)} \cdot f \sqrt{d\mu},
\end{align*}
since $\mu \circ \sigma_\nu \circ \sigma_\nu^{-1}$ vanishes off $Z(\nu)$, and $\sigma_\nu \circ \sigma_\nu^{-1} = id$ on $Z(\nu)$. Consequently, since $Z(v) = \sqcup_{\nu \in v\Lambda^n} Z(\nu)$, we have 
\[\sum_{\nu \in v\Lambda^n} S_\nu S_\nu^* (f\sqrt{d\mu}) = \chi_{Z(v)} f \sqrt{d\mu} = S_v (f \sqrt{d\mu}).\]
Thus, the operators $\{S_\lambda\}$ give a representation of $C^*(\Lambda)$.

To see (c), note that Equation \eqref{proj-valued-measure-universal-representation} follows from the observation that $S_\nu S_\nu^*$ acts by multiplication by $\chi_{Z(\nu)}$; the fact that disjoint unions of cylinder sets $Z(\nu)$ generate the $\sigma$-algebra up to sets of measure zero therefore enables us  to compute $P(A)$ by linearity.
\end{proof}

The following two Propositions, which detail additional technical properties of the projection valued measure associated to $\pi_{univ}$,  will be used later  in the proof of the main results of this Section.

\begin{prop} 
\label{prop-commutant-dutkay-jorgensen-lemma-4.4}
Let $\Lambda$ be a finite $k$-graph with no sources and let $\mathcal{H}(\Lambda^\infty)$ be the Hilbert space described in Definition~\ref{def-universal-Hilbert-space}, and let $\pi_{univ}=  \{S_\lambda^{univ}: \lambda \in \Lambda\}$ be the universal representation of $C^*(\Lambda)$ on $\mathcal{H}(\Lambda^\infty)$ given in Proposition~\ref{prop-univ-repres}.

\begin{itemize}
\item[(a)] For $y\in \mathcal{H}(\Lambda^\infty)$, define a function $\nu_y$ on $\Lambda^\infty$ by
\[
\nu_{y} (Z(\lambda) ) : =  \langle y, (S^{univ}_\lambda (S^{univ}_\lambda)^*) y \rangle,
\]
where $\langle \cdot,\cdot\rangle$ is the inner product given on $\mathcal{H}(\Lambda^\infty)$ in Equation~\eqref{eq:Hilbert-sp}. Then $\nu_y$ gives a measure on $\Lambda^\infty$.

\item[(b)]  Let $T$ be a bounded operator on $\mathcal{H}(\Lambda^\infty)$.  If  $T$ commutes with $\pi_{univ}|_{C(\Lambda^\infty)}$,  then for any  $x\in \mathcal{H}(\Lambda^\infty)$ we have
\[
\nu_{T(x)} << \nu_{x}.
\]
\end{itemize}
\end{prop}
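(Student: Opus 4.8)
The plan is to prove both statements (a) and (b) by exploiting the structure of $\mathcal{H}(\Lambda^\infty)$ and the formula \eqref{proj-valued-measure-universal-representation} for the projection valued measure $P$ associated to $\pi_{univ}$.

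For part (a), I would first rewrite $\nu_y(Z(\lambda))$ in terms of the projection valued measure. Since $S_\lambda^{univ}(S_\lambda^{univ})^* = P(Z(\lambda))$ by Definition \ref{def-proj-val-measu}, and since by \eqref{proj-valued-measure-universal-representation} we have $P(Z(\lambda))(f\sqrt{d\mu}) = (\chi_{Z(\lambda)} \cdot f)\sqrt{d\mu}$, I can compute directly that for $y = f\sqrt{d\mu}$,
\[
\nu_y(Z(\lambda)) = \langle f\sqrt{d\mu}, (\chi_{Z(\lambda)} f)\sqrt{d\mu}\rangle = \int_{\Lambda^\infty} |f|^2 \chi_{Z(\lambda)}\, d\mu = \int_{Z(\lambda)} |f|^2\, d\mu.
\]
This presentation makes it transparent that $\nu_y$ is simply the measure $|f|^2\, d\mu$ restricted to cylinder sets. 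Since $|f|^2 \in L^1(\Lambda^\infty, \mu)$ (because $f \in L^2(\Lambda^\infty, \mu)$ and $\mu$ is finite), this is a genuine finite Borel measure. To promote the definition on cylinder sets to a bona fide Borel measure, I would invoke Theorem \ref{conj-palle-proj-valued-measure-gen-case} (that $P$ extends to a projection valued measure on $\mathcal{B}_o(\Lambda^\infty)$), so that $A \mapsto \langle y, P(A) y\rangle$ is automatically countably additive; alternatively, one observes directly that $A \mapsto \int_A |f|^2\, d\mu$ is a measure. I must also check well-definedness with respect to the equivalence relation $\sim$ on $\mathcal{H}(\Lambda^\infty)$, but this follows because the inner product \eqref{eq:Hilbert-sp} is already well-defined on equivalence classes.

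For part (b), the key observation is that part (a) identifies $\nu_x$ as the measure $\langle x, P(\cdot)x\rangle$. The hypothesis that $T$ commutes with $\pi_{univ}|_{C(\Lambda^\infty)}$ means precisely that $T$ commutes with $P(A)$ for every Borel set $A$ (since $P(A) = \pi_{univ}(\chi_A)$ and characteristic functions of cylinder sets densely span, by the same reasoning as in \eqref{eq:repn_pi}). Given a Borel set $A$ with $\nu_x(A) = 0$, I want to show $\nu_{T(x)}(A) = 0$. I would compute
\[
\nu_{T(x)}(A) = \langle T(x), P(A) T(x)\rangle = \langle T(x), T P(A) x\rangle = \langle T^* T(x), P(A) x\rangle,
\]
using $T P(A) = P(A) T$. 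Writing $P(A)$ as the self-adjoint projection $P(A) = P(A)^2 = P(A)^*$, and using that $P(A)$ commutes with both $T$ and $T^*$, I can estimate $\nu_{T(x)}(A) = \|P(A) T(x)\|^2 = \|T P(A) x\|^2 \leq \|T\|^2 \|P(A) x\|^2 = \|T\|^2 \nu_x(A) = 0$. This chain of inequalities gives the absolute continuity $\nu_{T(x)} \ll \nu_x$ directly.

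The main obstacle I anticipate is being careful about the commutation relation: I must justify rigorously that commuting with $\pi_{univ}|_{C(\Lambda^\infty)}$ (continuous functions) upgrades to commuting with $P(A)$ for arbitrary Borel $A$. The cleanest route is to note that $P(A) = \int \chi_A\, dP$ lies in the von Neumann algebra generated by $\pi_{univ}(C(\Lambda^\infty))$, since the projection valued measure is the spectral resolution of the representation of the abelian algebra $C(\Lambda^\infty)$; an operator commuting with a $*$-algebra commutes with its weak closure, hence with all spectral projections. Once that point is secured, the norm estimate $\|P(A) T x\| \leq \|T\|\,\|P(A) x\|$ does the rest, and the argument is short.
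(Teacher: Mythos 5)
Your proof is correct and follows essentially the same route as the paper's: part (a) reduces to the computation $\nu_{f\sqrt{d\mu}}(Z(\lambda))=\int_{Z(\lambda)}|f|^2\,d\mu$ (which the paper records separately as Proposition \ref{prop-dutkay-jorgensen-lemma-4.5}), and part (b) rests on the commutation $TP(Z(\lambda))=P(Z(\lambda))T$ combined with a Cauchy--Schwarz/norm estimate. Your version of the estimate, $\nu_{T(x)}(A)=\|TP(A)x\|^2\le\|T\|^2\nu_x(A)$, is in fact slightly cleaner than the paper's, since it gives the measure domination $\nu_{T(x)}\le\|T\|^2\nu_x$ directly rather than only vanishing on $\nu_x$-null cylinder sets.
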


\begin{proof} As in Equation \eqref{eq:monic_measure}, it is straightforward to see (a). For (b), fix $x\in \mathcal{H}(\Lambda^\infty)$. Then since $T$ commutes with $S^{univ}_\lambda$, we have
 \[
\nu_{T(x)}(Z(\lambda) ) =  \langle (S^{univ}_\lambda (S^{univ}_\lambda)^*)T(x), T(x) \rangle =  \langle (S^{univ}_\lambda (S^{univ}_\lambda)^*)x, T^* T (x) \rangle.
 \]
Since each $S^{univ}_\lambda$ is a partial isometry, the Cauchy-Schwarz inequality then gives
  \[
 \nu_{T(x)}(Z(\lambda) )^2 \leq  \Vert(S^{univ}_\lambda (S^{univ}_\lambda)^*)x\Vert^2 \ \Vert T^* T x \Vert^2 = \nu_{x}(Z(\lambda) )^2  \ \Vert T^* T x \Vert^2 ,
  \]
which gives that $\nu_T(x) << \nu_x$.
\end{proof}

\begin{prop} 
\label{prop-dutkay-jorgensen-lemma-4.5}  Let $\Lambda$, $\mathcal{H}(\Lambda^\infty)$ and $\pi_{univ}$ be as in Proposition~\ref{prop-commutant-dutkay-jorgensen-lemma-4.4}. Then for every   vector
$f \sqrt{d \mu}\in \mathcal{H}(\Lambda^\infty)$, we have
\[
 \nu_{f \sqrt{d \mu}} = |f|^2  \mu.
\]
\end{prop}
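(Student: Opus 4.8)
The plan is to compute both sides of the claimed equality $\nu_{f\sqrt{d\mu}} = |f|^2 \mu$ directly from the definitions on cylinder sets $Z(\lambda)$, and then invoke the uniqueness of measures agreeing on a generating $\pi$-system (equivalently Lemma \ref{lem:measure}, which guarantees that a finitely additive set function on square cylinder sets extends uniquely to a Borel measure). Since the cylinder sets generate the Borel $\sigma$-algebra on $\Lambda^\infty$, it suffices to verify the identity on each $Z(\lambda)$.

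First I would unwind the definition of $\nu_{f\sqrt{d\mu}}$ from Proposition \ref{prop-commutant-dutkay-jorgensen-lemma-4.4}(a):
\[
\nu_{f\sqrt{d\mu}}(Z(\lambda)) = \langle f\sqrt{d\mu}, S_\lambda^{univ}(S_\lambda^{univ})^* (f\sqrt{d\mu})\rangle.
\]
Using the formula from Proposition \ref{univ-repres-definition-result}(c), the projection $P(Z(\lambda)) = S_\lambda^{univ}(S_\lambda^{univ})^*$ acts by multiplication by $\chi_{Z(\lambda)}$, so
\[
S_\lambda^{univ}(S_\lambda^{univ})^*(f\sqrt{d\mu}) = (\chi_{Z(\lambda)} \cdot f)\sqrt{d\mu}.
\]
Then I would substitute this into the inner product formula \eqref{eq:Hilbert-sp}. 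Since both vectors share the same underlying measure $\mu$, I can take the dominating measure to be $m = \mu$ itself, in which case $\sqrt{d\mu/dm} = 1$, and the inner product collapses to the ordinary $L^2(\Lambda^\infty,\mu)$ pairing:
\[
\langle f\sqrt{d\mu}, (\chi_{Z(\lambda)} f)\sqrt{d\mu}\rangle = \int_{\Lambda^\infty} \overline{f}\, \chi_{Z(\lambda)}\, f \, d\mu = \int_{Z(\lambda)} |f|^2\, d\mu = (|f|^2\mu)(Z(\lambda)).
\]
This establishes the equality on every cylinder set.

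To conclude, I would note that $|f|^2 \mu$ is a finite Borel measure on $\Lambda^\infty$ (finite because $f \in L^2(\Lambda^\infty,\mu)$ forces $\int |f|^2\, d\mu < \infty$), and that $\nu_{f\sqrt{d\mu}}$ is a measure by part (a) of Proposition \ref{prop-commutant-dutkay-jorgensen-lemma-4.4}. Two finite Borel measures that agree on the cylinder sets—which form a $\pi$-system generating the Borel $\sigma$-algebra, by the discussion preceding Theorem \ref{thm-lambda-sbfs-on-the-inf-path-space-via-a-measure}—must coincide. Hence $\nu_{f\sqrt{d\mu}} = |f|^2\mu$ as Borel measures. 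I do not anticipate a genuine obstacle here; the only point requiring care is the consistent choice of dominating measure $m$ in \eqref{eq:Hilbert-sp} so that the Radon-Nikodym factors reduce to $1$, together with the well-definedness of the expression (which is already guaranteed since $\nu_y$ depends only on the equivalence class of $y$ in $\mathcal{H}(\Lambda^\infty)$, as it is defined via the inner product). The result is essentially a formal computation once the multiplication-operator description of $P$ from Proposition \ref{univ-repres-definition-result}(c) is in hand.
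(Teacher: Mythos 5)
Your proof is correct and follows essentially the same route as the paper's: both invoke the multiplication-operator description of $P(Z(\lambda))$ from Equation \eqref{proj-valued-measure-universal-representation}, evaluate the inner product \eqref{eq:Hilbert-sp} (with dominating measure $\mu$) to obtain $\int_{Z(\lambda)}|f|^2\,d\mu$, and conclude by uniqueness of Borel measures that agree on all cylinder sets. Your added remarks on the choice of dominating measure and the finiteness of $|f|^2\mu$ are minor refinements of the identical argument.
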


\begin{proof} 
By definition, for any cylinder set $Z(\eta)$ we have
\[
\nu_{f \sqrt{d \mu}} (Z(\eta) ) : =  \langle (S^{univ}_\eta (S^{univ}_\eta)^*)f \sqrt{d \mu}, f \sqrt{d \mu} \rangle.
\]
By using Equation (\ref{proj-valued-measure-universal-representation}) this becomes
\[
\int_{Z(\eta)} \nu_{f \sqrt{d\mu}} \, d\mu= 
\nu_{f \sqrt{d \mu}} (Z(\eta) )  =  \langle( {\chi_{Z(\eta)}} f)\,\sqrt{d\mu} , f \sqrt{d \mu} \rangle =\int \chi_{Z(\eta)} \cdot  |f|^2 d\mu \;  = \int_{Z(\eta)} |f|^2 \, d\mu.
\]
This
 gives the desired result 
 (see also Carath\'eodory/Kolmogorov's consistency/extension theorems \cite{kolmogorov}, \cite{Tum}): since the cylinder sets generate the Borel $\sigma$-algebra   $\mathcal{B}_\Lambda$ on $\Lambda^\infty$, two measures which agree on all cylinder sets must be the same measure.
\end{proof}

We now present  an important result which will allow us to derive, in Theorem  \ref{prop-universal-name}, the desired universal property of the representation.

\begin{thm} 
\label{them-universal-representation-commutant} 
Let $\Lambda$ be a finite $k$-graph with no sources. Let $\mathcal{H}(\Lambda^\infty)$ be the universal Hilbert space  for $\Lambda^\infty$ and $\pi_{univ}$ be the universal representation of $C^*(\Lambda)$ on $\mathcal{H}(\Lambda^\infty)$. Then:
\begin{enumerate}
\item [(a)] An operator $T\in \mathcal{B}(\mathcal{H}(\Lambda^\infty))$ commutes with  $\pi_{univ}|_{C(\Lambda^\infty)}$ 
if and only if for each finite Borel  measure $ \mu $ on $\Lambda^\infty$ which arises from a monic representation of $C^*(\Lambda)$ as in Equation \eqref{eq:monic_measure}, 
there exists a function $F_\mu$ in ${L}^\infty(\Lambda^\infty, \mu)$ such that: 
\begin{enumerate}
\item[(i)]  $sup \{ \Vert F_\mu\Vert : \mu \text{ arises from a monic representation }\} < \infty$.
\item[(ii)]  If $\mu << \lambda$ then $F_\mu = F_\lambda$, $\mu$-a.e.
\item[(iii)]  $T(f\sqrt{d\mu}) = F_\mu f\sqrt{d\mu}$ for all $f\sqrt{d\mu} \in \mathcal{H}(\Lambda^\infty)$ 
\end{enumerate}
\item[(b)]  Let $\mathcal H$ denote the subspace of $\mathcal H(\Lambda^\infty)$ spanned by vectors of the form $f \sqrt{d\mu}$ 
where $\mu$ arises from a monic representation.  An operator $T\in \mathcal{B}(\mathcal{H}(\Lambda^\infty))$  commutes with the  restriction $\pi_{univ}|_\mathcal H$ of the universal representation  $\pi_{univ}$   if and only if  for every finite Borel measure $\mu$ on $\Lambda^\infty$ arising from a monic representation of $C^*(\Lambda)$,  
and for each $\lambda \in \Lambda$, we have
\[
F_\mu =F_{\mu\circ \sigma_\lambda^{-1}} \circ \sigma_\lambda,\  \mu-a.e.
\]
\end{enumerate}
\end{thm}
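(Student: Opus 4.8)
The strategy is to leverage the already-established characterization in part (a): an operator $T \in \mathcal{B}(\mathcal{H}(\Lambda^\infty))$ commuting with $\pi_{univ}|_{C(\Lambda^\infty)}$ acts on each fiber $\mathcal{L}^2(\mu) = W_\mu(L^2(\Lambda^\infty, \mu))$ as multiplication by a function $F_\mu \in L^\infty(\Lambda^\infty,\mu)$ satisfying the compatibility conditions (i)--(iii). Since commuting with the full representation $\pi_{univ}|_{\mathcal{H}}$ implies commuting with its restriction to $C(\Lambda^\infty)$, part (a) already gives us the family $\{F_\mu\}$. The remaining task is therefore to translate the additional requirement that $T$ commute with each $S_\lambda^{univ}$ (not merely with the diagonal subalgebra) into the functional equation $F_\mu = F_{\mu \circ \sigma_\lambda^{-1}} \circ \sigma_\lambda$, $\mu$-a.e.

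The key computation is direct. Fix $\lambda \in \Lambda$ with $d(\lambda) = n$, fix a measure $\mu$ arising from a monic representation, and fix $f \sqrt{d\mu} \in \mathcal{H}$. First I would compute $T S_\lambda^{univ} (f\sqrt{d\mu})$. Using the formula $S_\lambda^{univ}(f \sqrt{d\mu}) = (f \circ \sigma^n)\sqrt{d(\mu \circ \sigma_\lambda^{-1})}$ from Proposition~\ref{prop-univ-repres}, and then applying $T$ via condition (iii) of part (a) to the vector $(f\circ\sigma^n)\sqrt{d(\mu\circ\sigma_\lambda^{-1})}$, I obtain
\[
T S_\lambda^{univ}(f\sqrt{d\mu}) = F_{\mu \circ \sigma_\lambda^{-1}} \cdot (f \circ \sigma^n) \sqrt{d(\mu \circ \sigma_\lambda^{-1})}.
\]
On the other hand, computing $S_\lambda^{univ} T (f \sqrt{d\mu}) = S_\lambda^{univ}(F_\mu f \sqrt{d\mu})$, the same formula for $S_\lambda^{univ}$ gives
\[
S_\lambda^{univ} T(f\sqrt{d\mu}) = \big( (F_\mu f) \circ \sigma^n \big) \sqrt{d(\mu \circ \sigma_\lambda^{-1})} = (F_\mu \circ \sigma^n)(f \circ \sigma^n)\sqrt{d(\mu \circ \sigma_\lambda^{-1})}.
\]
Setting these equal (the commutation hypothesis) and cancelling the common factor $(f \circ \sigma^n)\sqrt{d(\mu \circ \sigma_\lambda^{-1})}$, which is supported on $Z(\lambda)$, yields $F_{\mu \circ \sigma_\lambda^{-1}} = F_\mu \circ \sigma^n$ on $Z(\lambda)$. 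Since $\sigma^n = \sigma_\lambda^{-1}$ on $Z(\lambda)$, precomposing with $\sigma_\lambda$ (which maps $Z(s(\lambda))$ onto $Z(\lambda)$) and using $\sigma^n \circ \sigma_\lambda = \mathrm{id}$ on $Z(s(\lambda))$, I recover $F_\mu = F_{\mu \circ \sigma_\lambda^{-1}} \circ \sigma_\lambda$, $\mu$-a.e., as claimed. The converse direction reverses this calculation: given the functional equation, the same two displays show $T$ commutes with every $S_\lambda^{univ}$ on the dense subspace $\mathcal{H}$, hence with $\pi_{univ}|_{\mathcal{H}}$.

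The main obstacle I anticipate is the careful bookkeeping of where functions are supported and where the measures $\mu \circ \sigma_\lambda^{-1}$ vanish. The measure $\mu \circ \sigma_\lambda^{-1}$ is concentrated on $Z(\lambda)$, so all equalities of the form $F_{\mu\circ\sigma_\lambda^{-1}} = F_\mu \circ \sigma^n$ must be interpreted as holding $(\mu \circ \sigma_\lambda^{-1})$-a.e., i.e., a.e.~on $Z(\lambda)$; transporting this back to a statement $\mu$-a.e.~on $Z(s(\lambda))$ requires the absolute continuity $\mu \circ \sigma_\lambda^{-1} \ll \mu$ (Remark~\ref{rmk:lambda-proj-comments-1}) together with the fact that $\sigma_\lambda$ is a measurable bijection onto $Z(\lambda)$ with measurable inverse $\sigma^n|_{Z(\lambda)}$. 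One must also invoke condition (ii) of part (a) to ensure that the functions $F_\mu$ and $F_{\mu\circ\sigma_\lambda^{-1}}$ are well-defined and mutually compatible across the relevant measures; in particular, since it may not hold that $\mu \circ \sigma_\lambda^{-1} \ll \mu$ globally in the sense needed, the identification of the relevant $F$'s on the overlap of supports must be handled using the consistency condition (ii). Once these support and absolute-continuity issues are tracked correctly, the algebraic core of the argument is the two-line cancellation above.
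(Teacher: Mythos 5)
Your computation for part (b) is essentially the paper's: both arguments equate $T S_\lambda^{univ}(f\sqrt{d\mu}) = F_{\mu\circ\sigma_\lambda^{-1}}\,(f\circ\sigma^n)\sqrt{d(\mu\circ\sigma_\lambda^{-1})}$ with $S_\lambda^{univ}T(f\sqrt{d\mu}) = (F_\mu\circ\sigma^n)(f\circ\sigma^n)\sqrt{d(\mu\circ\sigma_\lambda^{-1})}$, cancel the common factor, and compose with $\sigma_\lambda$. However, there are two genuine gaps. First, you treat part (a) as ``already established,'' but part (a) is half of the theorem and carries most of its content: one must show that an operator $T$ commuting with $\pi_{univ}|_{C(\Lambda^\infty)}$ preserves each fiber $\mathcal{L}^2(\mu)$ (the paper does this by proving $\nu_{T(x)} \ll \nu_x$ for the scalar measures $\nu_x(Z(\lambda)) = \langle x, S_\lambda^{univ}(S_\lambda^{univ})^* x\rangle$ via Propositions \ref{prop-commutant-dutkay-jorgensen-lemma-4.4} and \ref{prop-dutkay-jorgensen-lemma-4.5}, followed by a Radon--Nikodym argument), then that $T|_{\mathcal{L}^2(\mu)}$ commutes with all multiplication operators and hence --- because $L^\infty(\Lambda^\infty,\mu)$ is maximal abelian in $\mathcal{B}(L^2(\Lambda^\infty,\mu))$ --- is itself multiplication by some $F_\mu$, and finally verify (i)--(iii) together with the converse, which requires checking that $T(f\sqrt{d\mu}) := F_\mu f \sqrt{d\mu}$ is well defined on equivalence classes $f\sqrt{d\mu} = g\sqrt{d\nu}$ using condition (ii). None of this appears in your proposal.

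Second, even granting part (a), your converse direction for (b) only verifies $TS_\lambda^{univ} = S_\lambda^{univ}T$; commuting with the representation $\pi_{univ}|_{\mathcal H}$ of the $C^*$-algebra also requires $T(S_\lambda^{univ})^* = (S_\lambda^{univ})^* T$, and commutation with a non-self-adjoint family does not in general pass to adjoints. The paper closes this by observing that $T|_{\mathcal H}$ is normal (being a multiplication operator on each $\mathcal{L}^2(\mu)$) and invoking the Fuglede--Putnam theorem, so commutation with the $S_\lambda^{univ}$ alone suffices. Without that step, or a direct check on the adjoints, the equivalence in (b) is not established. Your attention to the support and absolute-continuity bookkeeping in transporting $F_{\mu\circ\sigma_\lambda^{-1}} = F_\mu\circ\sigma^n$ from $Z(\lambda)$ back to a $\mu$-a.e.\ statement on $Z(s(\lambda))$ is correct and matches the paper.
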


\begin{proof} 
{Recall from Proposition}~\ref{prop-dutkay-jorgensen-lemma-4.6} {the isometry $W_\mu$ of $L^2(\Lambda^\infty, \mu)$ onto $\mathcal{L}^2(\mu)$.  Throughout the proof, we will assume that the finite Borel measure $\mu$ arises from a monic representation.}
We first claim that if   $T$ commutes with  $\pi_{univ}|_{C(\Lambda^\infty)}$, then $T$ maps $\mathcal{L}^2(\mu)$
into itself. 
To prove this, let  $x = f\sqrt{d\mu}$ be in $\mathcal{L}^2(\mu)$,  {where $(f,\mu)\in \mathcal{H}(\Lambda^\infty)$, meaning that $\mu$ is a finite Borel measure on $\Lambda^\infty$ arising from a monic representation, and $f\in L^2(\Lambda^\infty, \mu)$.} Also let
$T(x) = g\sqrt{d \zeta}$ {for $(g,\zeta)\in \mathcal{H}(\Lambda^\infty)$.}
 Then Proposition \ref{prop-commutant-dutkay-jorgensen-lemma-4.4} implies that
\[
\nu_{T(x)} << \nu_{x}.
\]
{where $\nu_x$ is a measure on $\Lambda^\infty$.}
But by Proposition  \ref{prop-dutkay-jorgensen-lemma-4.5}, we have
\[ 
\nu_{x} = |f|^2\mu, \;\;\text{ and }\;\; \nu_{T(x)} = |g|^2 \zeta.
\]
Therefore $|g|^2 \zeta << \mu $,  so by the Radon--Nikodym theorem there exists $h \geq 0$ in  $L^1(\Lambda^\infty, \mu)$
such that $|g|^2\, d\zeta = h\, d\mu$. Then
\[
|g| \sqrt{d\zeta} =\sqrt{ h} \sqrt{d\mu},\;\; \text{ and } \;\;  |g|\, g \sqrt{d\zeta} =g \, \sqrt{ h}\, \sqrt{d\mu}.
\]
If $g =0$ on some Borel set $A$, then $\sqrt{h} \sqrt{d\mu}(A) = 0$ also. Therefore, 
\[
g\sqrt{d\zeta} =\begin{cases}
\frac{ g \sqrt{h} }{|g|} \sqrt{d\mu} \; \in \mathcal{L}^2(\mu), & g \not= 0 \\ 
 0, & g=0
 \end{cases}
\]
which shows that $T$ maps $\mathcal{L}^2(\mu)$
into itself. 

{
We now make some computations regarding the relationship between an arbitrary monic representation $\pi$ and the universal representation $\pi_{univ}$.  Observe that for any $\lambda \in \Lambda$ and any $f \in L^2(\Lambda^\infty, \mu_\pi)$, we have 
\[ \pi_{univ}(\chi_{Z(\lambda)})(f \sqrt{d\mu_\pi}) = S^{univ}_\lambda (S_\lambda^{univ})^* (f \sqrt{d\mu_\pi})= \left( \chi_{Z(\lambda)} \cdot f \right) \sqrt{d\mu_\pi}.\]
Extending by linearity, we see that $\pi_{univ}(\psi) (f \sqrt{d\mu_\pi}) = \left( \psi \cdot f\right) \sqrt{d\mu_\pi}$ for any $f \in L^2(\Lambda^\infty, \mu_\pi)$.

On the other hand, since $\pi$ is a monic representation, $\pi(\chi_{Z(\lambda)}) f = \chi_{Z(\lambda)} \cdot f \in L^2(\Lambda^\infty, \mu_\pi)$ by Equation \eqref{eq:range-sets}.  Therefore, 
\[ \pi_{univ}(\psi)(f \sqrt{d\mu_\pi}) = \left[ \pi(\psi)(f) \right] \sqrt{d\mu_\pi}.\]

By hypothesis, $T$ commutes with $\pi_{univ}|_{C(\Lambda^\infty)}$. 
Since $T$ preserves $\mathcal L^2(\mu)$ for each measure $\mu$ arising from a monic representation, there must exist  $g \in \mathcal L^2(\mu_\pi)$ such that  $T(f \sqrt{d\mu_\pi}) = g \sqrt{d\mu_\pi}$.  Consequently, 
\begin{align*}
T[ \pi(\psi) f] \sqrt{d\mu_\pi} & = T \pi_{univ}(\psi) ( f \sqrt{d\mu_\pi}) = \pi_{univ}(\psi) T(f \sqrt{d\mu_\pi})\\
 &= \pi_{univ}(\psi)( g \sqrt{d\mu_\pi})
 = [\pi(\psi)(g)] \sqrt{d\mu_\pi}  = \pi(\psi) T \left( f \sqrt{d\mu_\pi}\right),
\end{align*}
so (identifying $\mathcal L^2(\mu_\pi)\subseteq \H(\Lambda^\infty)$ with $L^2(\Lambda^\infty, \mu_\pi)$) we see that $T$ commutes with $\pi(\psi)$ for all $\psi \in C(\Lambda^\infty)$.} 
%
%

 Therefore, we can pull-back  $T$ to an operator $\widetilde{T}$ on $L^2(\Lambda^\infty, \mu)$  that commutes with all of the multiplication operators $\{ M_f: f \in C(\Lambda^\infty)\}$.  The fact (cf.~\cite{vJones}) that the  maximal abelian subalgebra of $\mathcal{B}(L^2(\Lambda^\infty, \mu)) $, for any finite Borel measure $\mu$, is the sub-algebra $L^\infty(\Lambda^\infty, \mu)$ consisting of multiplication operators now implies that 
$\widetilde{T}$ must be a multiplication operator too.

So there exists a function $F_\mu$ in $L^\infty(\Lambda^\infty, \mu)$ such that 
\begin{equation}\label{eq:T}
T(f\sqrt{d\mu}) =F_\mu\, f\, \sqrt{d\mu}
\end{equation}
for all $f\in L^2(\Lambda^\infty,\mu)$, establishing $\it{(iii)}$.
It remains to check the properties of the functions $F_\mu$. One immediately observes that 
\[
\Vert F_\mu \Vert_{L^\infty(\mu)} \leq  \Vert  T \Vert 
\]
and this implies $\it{(i)}$. To check $\it{(ii)}$, suppose $\mu << \lambda$.  Then, for all $f \in L^2(\Lambda^\infty, \mu)$, 
we have $f\sqrt{d\mu} = f \sqrt{d \mu/ d\lambda} \sqrt{d\lambda}$, and hence 
\[
\begin{split}
&T(f\sqrt{d\mu}) = T(f \sqrt{d \mu/ d\lambda} \sqrt{d\lambda}) \\
\Longrightarrow \quad & F_\mu f\sqrt{d\mu} = F_\lambda f \sqrt{d \mu/ d\lambda} \sqrt{d\lambda}\end{split} \]
Writing $d\mu = \frac{d\mu}{d\lambda} d\lambda$ reveals that, as elements of $\mathcal L^2(\lambda)$,
\[ F_\mu f \sqrt{d \mu/ d\lambda} = F_\lambda f \sqrt{d \mu/ d\lambda}
\]
for any $f \in L^2(\Lambda^\infty, \mu)$, which implies $F_\mu {d \mu/ d\lambda} = F_\lambda  {d \mu/ d\lambda} \;\; (\lambda-a.e.).$  Thus, for any Borel set $A$,
\[ \int_A(  F_\mu - F_\lambda ) \, d\mu = \int_A (F_\mu - F_\lambda) \frac{d\mu}{d\lambda} d\lambda = 0,\]
so $F_\mu = F_\lambda$, $\mu$-a.e. 
This proves $\it{(ii)}$.

For the converse, assume that  $T$ is given by a function $F_\mu\in L^\infty(\Lambda^\infty,\mu)$ satisfying  $\it{(i),(ii),(iii)}$, { i.e.~$T(f\sqrt{d\mu})=F_\mu f \sqrt{d\mu}$ for all $f \sqrt{d\mu} \in\mathcal{H}(\Lambda^\infty)$ such that $\mu$ arises from a monic representation.}
First, we check that $T$ is well defined. 
Take 
$f\sqrt{d\mu} = g\sqrt{d\nu}$  and let
$\lambda$ be a measure such that $\mu, \nu << \lambda$. Then: 
\[
f\sqrt{d\mu / d\lambda }= g \sqrt{d\nu / d\lambda} ,\ \ (\lambda-a.e.).
\]
Now, from  {\it{(ii)}} we know that $F_\mu = F_\lambda\, (\mu-a.e.)$ and {$F_\nu=F_\lambda\, (\nu-a.e)$.} Thus     
\[
F_\mu \sqrt{d\mu / d\lambda } = F_\lambda \sqrt{d\mu / d\lambda } ,\ \ (\lambda-a.e.), \text {and } 
F_\nu \sqrt{d\nu / d\lambda } = F_\lambda \sqrt{d\nu / d\lambda } ,\ \ (\lambda-a.e.).
\]
Therefore 
$
 f F_\mu \sqrt{d\mu / d\lambda }  = g F_\nu \sqrt{d\nu / d\lambda } ,\  \ (\lambda-a.e.), 
$
 and hence
  \[
  f F_\mu \sqrt{d\mu}  = g F_\nu \sqrt{d\nu  } ,
  \]
  so $T$ is well defined. 
 
 Now {\it{(i)}} implies that T is bounded with 
 \[\Vert T \Vert \leq sup_\mu\{  \Vert F_\mu \Vert_{L^\infty(\mu)} \}.\]
Since $T$ acts as a multiplication operator on each $\mathcal L^2(\mu)$, Part (c) of Proposition \ref{univ-repres-definition-result} implies that  $T$ commutes with $P(A)$ for all Borel subsets $A$ and therefore $T$ commutes with the {restricted} universal representation, {$\pi_{univ}|_{C(\Lambda^\infty)}$, which proves (a).}
 
 For (b), note that if an operator $T \in \mathcal B(\H(\Lambda^\infty))$  commutes with the universal representation $\pi_{univ}$ of $C^*(\Lambda)$ on $\mathcal H$, then in particular $T$ commutes with $\pi_{univ}|_{C(\Lambda^\infty)}$ on $\mathcal H$, and hence $T(f \sqrt{d\mu}) = F_\mu f \sqrt{d\mu}$ is a multiplication operator on each $\mathcal L^2(\mu)$ when the  measure $\mu$ arises from a monic representation.  In particular, $T$ is normal (when restricted to $\mathcal H$).  Therefore, by the Fuglede-Putnam theorem,  $T|_{\mathcal H}$  commutes with $\pi_{univ}$ iff $T S_\lambda^{univ}|_{\mathcal H} = S_\lambda^{univ} T|_{\mathcal H}$ for all $\lambda\in \Lambda$. 
 Using the formulas for $S_\lambda^{univ}$ from Theorem \ref{them-universal-representation-commutant}, we see that $T|_{\mathcal H}$ commutes with $\pi_{univ}|_{\mathcal H}$ if and only if, 
for each $f \sqrt{d\mu}\in \mathcal{H}$ and $\lambda\in\Lambda^n$,
\[ 
 F_{\mu \circ \sigma_\lambda^{-1}}(f \circ \sigma^n)\sqrt{d\mu \circ \sigma_\lambda^{-1}} =  (F_\mu \circ \sigma^n)(f \circ \sigma^n)\sqrt{d\mu \circ \sigma_\lambda^{-1}} ,
\]
or equivalently,
\[ 
 F_{\mu \circ \sigma_\lambda^{-1}}(f \circ \sigma^n)=  (F_\mu \circ \sigma^n)(f \circ \sigma^n)\quad \text{for all } \;\; \lambda\in\Lambda^n,\; (\mu \circ \sigma_\lambda^{-1})-a.e.
\]
for all measures $\mu$ arising from monic representations of $C^*(\Lambda)$.  That is, for such $\mu$,
\[ 
 F_{\mu \circ \sigma_\lambda^{-1}}=  (F_\mu \circ \sigma^n) \ \text{ for } \lambda\in\Lambda^n, \; (\mu \circ \sigma_\lambda^{-1}) - a.e .
\]
  Composing with $\sigma_\lambda$ gives the desired result of (b).
\end{proof}
 
%
%

 The following proposition, consequence of Theorem \ref{them-universal-representation-commutant}, will justify the name `universal representation' for non-negative monic representations.

 \begin{thm}
 \label{prop-universal-name} 
 Let $\Lambda$ be a finite $k$-graph with no sources.
 Let $\pi = \{t_\lambda\}_{\lambda \in \Lambda}$ be a nonnegative  monic representation of $C^*(\Lambda)$ on $L^2(\Lambda^\infty, \mu_\pi)$. 
 Let $W$ be the isometry from $L^2(\Lambda^\infty, \mu_\pi)$ onto $\mathcal{L}^2(\mu_\pi )$ given in  Proposition \ref{prop-dutkay-jorgensen-lemma-4.6}:
 \[
  W f = f\sqrt{d\mu_\pi} .
 \]
  Then $W$ intertwines the monic representation $\{t_\lambda\}_{\lambda\in\Lambda}$  with  the sub-representation $\{S_\lambda^{univ}|_{\mathcal L^2(\mu_\pi)}\}_{\lambda\in\Lambda}$ of the universal representation $\{S_\lambda^{univ}\}_\lambda$.
 \end{thm}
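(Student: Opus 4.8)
The plan is to show directly that the isometry $W$ intertwines the two representations, i.e.\ that $W t_\lambda = S_\lambda^{univ} W$ as maps $L^2(\Lambda^\infty, \mu_\pi) \to \mathcal{H}(\Lambda^\infty)$ for every $\lambda \in \Lambda$. Since $W$ is a unitary from $L^2(\Lambda^\infty, \mu_\pi)$ onto $\mathcal{L}^2(\mu_\pi)$ by Proposition~\ref{prop-dutkay-jorgensen-lemma-4.6}, establishing this intertwining relation immediately gives that $\{t_\lambda\}$ is unitarily equivalent (via $W$) to the restriction of the universal representation to the invariant subspace $\mathcal{L}^2(\mu_\pi)$. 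So the whole argument reduces to a single computation carried out on both sides.

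First I would invoke Theorem~\ref{thm-characterization-monic-repres}: because $\pi = \{t_\lambda\}$ is monic, it is unitarily equivalent to a $\Lambda$-projective representation $\{S_\lambda\}$ on $(\Lambda^\infty, \mu_\pi)$, with $S_\lambda(f) = f_\lambda \cdot (f \circ \sigma^n)$ for $d(\lambda) = n$, where the functions $\{f_\lambda\}$ form the associated $\Lambda$-projective system. The non-negativity hypothesis on $\pi$ guarantees we may take $f_\lambda \geq 0$, and Condition (a) of Definition~\ref{def:lambda-proj-system} together with Equation~\eqref{eq:phi-lambda-eqn} gives the crucial identity $|f_\lambda|^2 = \frac{d(\mu_\pi \circ \sigma_\lambda^{-1})}{d\mu_\pi}$ on $R_\lambda = Z(\lambda)$. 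Since $f_\lambda \geq 0$ and vanishes off $Z(\lambda)$, this means precisely $f_\lambda = \sqrt{\frac{d(\mu_\pi \circ \sigma_\lambda^{-1})}{d\mu_\pi}}$.

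Then I would compute both sides of the intertwining equation on an arbitrary $f \in L^2(\Lambda^\infty, \mu_\pi)$, with $d(\lambda) = n$. On one side,
\[
W(t_\lambda f) = W\big(f_\lambda \cdot (f \circ \sigma^n)\big) = f_\lambda \cdot (f \circ \sigma^n)\, \sqrt{d\mu_\pi}.
\]
On the other side, using the formula for $S_\lambda^{univ}$ from Proposition~\ref{univ-repres-definition-result},
\[
S_\lambda^{univ}(W f) = S_\lambda^{univ}\big(f \sqrt{d\mu_\pi}\big) = (f \circ \sigma^n)\,\sqrt{d(\mu_\pi \circ \sigma_\lambda^{-1})}.
\]
To see these agree as elements of $\mathcal{H}(\Lambda^\infty)$, I would rewrite the second expression using the identification $\sqrt{d(\mu_\pi \circ \sigma_\lambda^{-1})} = \sqrt{\frac{d(\mu_\pi \circ \sigma_\lambda^{-1})}{d\mu_\pi}}\,\sqrt{d\mu_\pi} = f_\lambda \sqrt{d\mu_\pi}$, which holds because $\mu_\pi \circ \sigma_\lambda^{-1} \ll \mu_\pi$ (Equation~\eqref{eq:mu-pi-abs-cts}). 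Substituting gives exactly $f_\lambda \cdot (f \circ \sigma^n)\sqrt{d\mu_\pi}$, matching the first computation.

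The main obstacle, and the step requiring the most care, is the manipulation of the formal symbols $\sqrt{d\mu}$ inside $\mathcal{H}(\Lambda^\infty)$: one must verify that the equality $\sqrt{d(\mu_\pi \circ \sigma_\lambda^{-1})} = f_\lambda\sqrt{d\mu_\pi}$ is legitimate in the sense of Definition~\ref{def-universal-Hilbert-space}, i.e.\ that the two pairs are genuinely equivalent under $\sim$ (choosing, say, $m = \mu_\pi$ as the dominating measure and checking the Radon--Nikodym compatibility). This is where the non-negativity hypothesis is essential, since it pins down the sign of the square root $f_\lambda$ and makes the two representatives literally equal rather than merely equal in modulus. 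Once this symbolic identity is justified, the intertwining follows and, since $W$ is an isometry onto $\mathcal{L}^2(\mu_\pi)$, the proof is complete; I would close by noting that the range $\mathcal{L}^2(\mu_\pi)$ is automatically invariant under each $S_\lambda^{univ}$ precisely because the intertwining holds.
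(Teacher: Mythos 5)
Your proposal is correct and follows essentially the same route as the paper: reduce to the canonical $\Lambda$-projective form via Theorem \ref{thm-characterization-monic-repres}, use non-negativity to pin down $f_\lambda=\sqrt{\tfrac{d(\mu_\pi\circ\sigma_\lambda^{-1})}{d\mu_\pi}}$, and verify the intertwining by the identity $f_\lambda\sqrt{d\mu_\pi}=\sqrt{d(\mu_\pi\circ\sigma_\lambda^{-1})}$ inside $\mathcal H(\Lambda^\infty)$. Your explicit check that this identity is legitimate under the equivalence relation $\sim$ (taking $m=\mu_\pi$ as dominating measure) is a welcome elaboration of a step the paper performs implicitly.
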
 
 
 \begin{proof} 
 By Theorem \ref{thm-characterization-monic-repres} {and Proposition~\ref{prop:lambda-proj-repn}}, we can assume that $t_\lambda$ is of the form 
 \[ t_\lambda(f) = f_\lambda \cdot (f \circ \sigma^{d(\lambda)}),\]
 {where, since $\pi$ is assumed nonnegative, we may assume $f_\lambda=\sqrt{\frac{d(\mu_\pi\circ (\sigma_\lambda)^{-1})}{d\mu_\pi}}$.}
 By Theorem \ref{them-universal-representation-commutant} and our hypothesis that $\pi$ be nonnegative, we then have
 \[
 \begin{split}
 W (t_\lambda f) &= W( f_\lambda ( f\circ \sigma^{d(\lambda)} ))=  f_\lambda ( f\circ \sigma^{d(\lambda)} )\sqrt{d\mu_\pi}= ( f\circ \sigma^{d(\lambda)} )\sqrt{| f_\lambda|^2d\mu_\pi}\\
 &= ( f\circ \sigma^{d(\lambda)} )\sqrt{d [\mu_\pi \circ (\sigma_\lambda)^{-1}] }= S_\lambda^{univ}(f\sqrt{d\mu_\pi})=S_\lambda^{univ}W(f)
 \end{split}
 \]
Since $W$ is an isometry, it follows that $W$ intertwines $\{t_\lambda\}_{\lambda\in \Lambda}$ and $\{S_\lambda^{univ}|_{\mathcal L^2(\mu_\pi)}\}_{\lambda\in\Lambda}$, as claimed.
 \end{proof}

\section{Purely atomic representations of $C^*(\Lambda)$}
\label{sec:atomic_repn}

In this section, we define purely atomic representations of $C^*(\Lambda)$ in terms of the projection valued measure being purely atomic (c.f. Definition 4.1 of \cite{dutkay-jorgensen-atomic}).

 Nearly all of the properties and characterizations of purely atomic representations which were established in the Cuntz algebra setting \cite{dutkay-jorgensen-atomic} transfer smoothly to the setting of higher-rank graphs.  Given the fundamental structural differences between Cuntz algebras and $k$-graph algebras (for example, the latter need not be purely infinite or simple, nor is their $K$-theory known in general), this was surprising to the authors, and suggests that purely atomic representations might also be fruitfully applied to other classes of $C^*$-algebras which can be described via generators and relations, such as Cuntz-Pimsner algebras or the $C^*$-algebras of topological higher-rank graphs.

 \begin{defn} 
 \label{defatomic}
(c.f. Definition 4.1 of \cite{dutkay-jorgensen-atomic}.) 
Let $\Lambda$ be a row-finite $k$-graph with no sources.
A representation $\{ t_\lambda\}_{\lambda \in \Lambda}$ of  $C^*(\Lambda)$ on a Hilbert space ${\mathcal H}$ is called \emph{purely atomic}  if there exists a Borel subset $\Omega\subset\Lambda^{\infty}$ such that the projection valued measure $P$ defined on the Borel sets of $\Lambda^{\infty}$ as in Definition \ref{conj-palle-proj-valued-measure-gen-case} satisfies
\begin{enumerate}
 \item[(a)] $P(\Lambda^{\infty}\backslash \Omega)\;\;=\; 0_{\mathcal H},$
 \item[(b)] $P(\{\omega\})\not=0_{\mathcal H}$ for all $\omega\in \Omega$,
 \item[(c)] $\bigoplus_{\omega\in \Omega}P(\{\omega\})=\;\text{Id}_{\mathcal H},$
 \end{enumerate}
 where the sum on the left-hand side of (c) converges in the strong operator topology.
 \end{defn}
 Thus, a representation of $C^*(\Lambda)$ is purely atomic if the corresponding projective-valued measure is purely atomic on ${\mathcal B}o(\Lambda^{\infty}).$

\begin{example}
Consider the $2$-graph $\Lambda$ with  $1$-skeleton
\[
\begin{tikzpicture}[scale=1.5]
 \node[inner sep=0.5pt, circle] (u) at (0,0) {$u$};
    \node[inner sep=0.5pt, circle] (v) at (1.5,0) {$v$};
    \draw[-latex, thick, blue] (v) edge [out=50, in=-50, loop, min distance=30, looseness=2.5] (v);
    \draw[-latex, thick, blue] (u) edge [out=130, in=230, loop, min distance=30, looseness=2.5] (u);
\draw[-latex, thick, red, dashed] (v) edge [out=150, in=30] (u);
\draw[-latex, thick, red, dashed] (u) edge [out=-30, in=210] (v);
\node at (-0.75, 0) {\color{black} $e$}; 
\node at (0.7, 0.45) {\color{black} $h$};
\node at (0.7, -0.45) {\color{black} $g$};
\node at (2.25, 0) {\color{black} $f$};
\end{tikzpicture}
\]
and factorization rules given by $eh=hf$ and $fg=ge$. With these factorization rules, there is only one infinite path $x\in u\Lambda^\infty$, namely
\[
x=ehfgehfg\dots = hfgehfge\dots = hgeehgee\dots.
\]
Similarly, one can see that there is only one infinite path $y\in v\Lambda^\infty$, namely
\[
y=fgehfgeh\dots = gehfgehf \dots = ghffghff \dots .
\]
Since the infinite path space consists of two elements, any nontrivial representation of $C^*(\Lambda)$ must be purely atomic. 
\end{example}

We now define the notion of an {\it orbit} of an element $\omega$ in the infinite path space $\Lambda^{\infty},$ motivated by the groupoid characterization of $C^*(\Lambda).$ Recall from Definition 2.7 of \cite{KP} that for any row-finite source-free $k$-graph $\Lambda$, we have a groupoid 
\[ \mathcal G_\Lambda =\{ (x, m-n, y) \in \Lambda^\infty \times \Z^k \times \Lambda^\infty: \sigma^m(x) = \sigma^n(y)\}\]
such that $C^*(\Lambda) \cong C^*(\mathcal G_\Lambda)$.  The groupoid $\mathcal G_\Lambda$ is \'etale \cite[Proposition 2.8]{KP} and amenable \cite[Theorem 5.5]{KP}. (Also see \cite{FMY} for groupoid models of general $k$-graphs and their $C^*$-algebras).
 \begin{defn} 
 \label{deforbit}
 Let $\Lambda$ be a row-finite $k$-graph with no sources, and let $\Lambda^{\infty}$ be the infinite path space of $\Lambda.$
 For any $\omega\in \Lambda^{\infty},$ we set 
 $$\text{Orbit}(\omega)\;=\{\gamma\in\Lambda^{\infty}:(\gamma,n,\omega)\in {\mathcal G}_{\Lambda}\;\text{for some}\;n\in \mathbb Z^k\},$$
 i.e. $\gamma\in \text{Orbit}(\omega)$ if and only if there exist $m,\;\ell\in\mathbb N^k$ such that $\sigma^m(\gamma)\;=\sigma^{\ell}(\omega).$
 \end{defn}
 We note that $\text{Orbit}(\omega)$ is invariant under $\sigma^n$ and $(\sigma^n)^{-1},$ for all $n\in \mathbb N^k.$ 
 {Note also that each orbit is a   Borel set, being a countable union of points (which are countable intersections of cylinder sets).}
 
 The same arguments as we used for Proposition \ref{prop-atomic-basic-equns} give us the following result.

\begin{prop}
\label{prop:atomic-2}  Let $\Lambda$ be a row-finite $k$-graph with no sources, and let $\{t_\lambda\}_{\lambda \in \Lambda}$ generate a purely atomic representation of $C^*(\Lambda).$  Let $P$ be the associated projection valued measure on the Borel subsets of the infinite path space $\Lambda^{\infty}.$ Then we have the following.
\begin{itemize}
\item[(a)] For $\lambda\in\Lambda$ and $\omega\in Z(s(\lambda))\subset\; \Lambda^{\infty}$, we have 
\[
t_{\lambda}P(\{\omega\})t_{\lambda}^*\;=\;P(\{\lambda\omega\}),\]
and for $n\in \mathbb N^k$, we have 
\[
t_{\omega(0,n)}^*P(\{\omega\})t_{\omega(0,n)}\;=\;P(\{\sigma^n(\omega)\}).
\]
\item[(b)] For $\eta\in\Lambda^n$ and $\omega\in \Lambda^\infty$ with $\eta\not=\omega(0,n)$, we have
\[
t_{\eta}^*P(\{\omega\})t_{\eta}=0.
\]
\end{itemize}
\end{prop}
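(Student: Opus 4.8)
The plan is to prove both parts of Proposition~\ref{prop:atomic-2} by reducing the statements about the projection valued measure $P$ on singletons to the already-established relations in Proposition~\ref{prop-atomic-basic-equns}, using a continuity/limiting argument. The key observation is that each singleton $\{\omega\}$ is a countable intersection of cylinder sets: if we write $\omega = \bigcap_{n\in\N} Z(\omega(0,(n,\dots,n)))$, then since $P$ is a projection valued measure (Theorem~\ref{conj-palle-proj-valued-measure-gen-case}), the projections $P(Z(\omega(0,(n,\dots,n))))$ decrease to $P(\{\omega\})$ in the strong operator topology as $n\to\infty$. This lets me transfer identities valid for cylinder sets to the corresponding identities for singletons.

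For part (a), I would proceed as follows. For the first identity, fix $\lambda\in\Lambda$ and $\omega\in Z(s(\lambda))$, and apply Proposition~\ref{prop-atomic-basic-equns}(a), which gives $t_\lambda P(Z(\eta)) t_\lambda^* = P(\sigma_\lambda(Z(\eta)))$ whenever $s(\lambda)=r(\eta)$. Taking $\eta = \omega(0,(n,\dots,n))$ and noting $\sigma_\lambda(Z(\eta)) = Z(\lambda\eta)$, I let $n\to\infty$; since $Z(\lambda\,\omega(0,(n,\dots,n)))$ decreases to $\{\lambda\omega\}$ and multiplication by the fixed bounded operators $t_\lambda,\,t_\lambda^*$ is strongly continuous on bounded sets, I conclude $t_\lambda P(\{\omega\}) t_\lambda^* = P(\{\lambda\omega\})$. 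For the second identity, I would use that $\omega = \omega(0,n)\,\sigma^n(\omega)$ (the factorization of infinite paths from Remark~\ref{rmk:rainbow}), set $\mu:=\omega(0,n)$, and combine the first identity with (CK3): since $t_\mu^* t_\mu = t_{s(\mu)}$ and $P(\{\omega\})=P(\{\mu\sigma^n(\omega)\}) = t_\mu P(\{\sigma^n(\omega)\}) t_\mu^*$, sandwiching by $t_\mu^*$ on the left and $t_\mu$ on the right and using $t_\mu^* t_\mu = t_{s(\mu)} = t_{r(\sigma^n(\omega))}$ together with the fact that $P(\{\sigma^n(\omega)\})$ is supported under $t_{r(\sigma^n(\omega))}$ yields the claim.

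For part (b), fix $\eta\in\Lambda^n$ and $\omega\in\Lambda^\infty$ with $\eta\neq\omega(0,n)$. The natural route is to show $t_\eta^* P(Z(\nu)) t_\eta = 0$ for every cylinder set $Z(\nu)$ with $Z(\nu)\subseteq Z(\omega(0,n))$, and then pass to the limit to get $t_\eta^* P(\{\omega\}) t_\eta = 0$. Here I would use the Cuntz--Krieger relation \eqref{eq:CK4-2}, namely $t_\eta^* t_\nu = \sum_{(\alpha,\beta)\in\Lambda^{\min}(\eta,\nu)} t_\alpha t_\beta^*$, to compute $t_\eta^* P(Z(\nu)) t_\eta = t_\eta^* t_\nu t_\nu^* t_\eta$. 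Since $\nu$ is an extension of $\omega(0,n)$ and $d(\eta)=n$, the condition $\eta\neq\omega(0,n)$ forces $\eta$ and $\nu$ to disagree on their degree-$n$ truncation, so $\Lambda^{\min}(\eta,\nu)=\emptyset$ and hence $t_\eta^* t_\nu = 0$. Letting $\nu$ range so that $Z(\nu)\downarrow\{\omega\}$ and using strong continuity gives the result.

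I expect the main obstacle to be handling the orthogonality bookkeeping in part (b) carefully: I must verify that disagreement of $\eta$ and $\omega(0,n)$ genuinely forces $\Lambda^{\min}(\eta,\nu)=\emptyset$ for all the relevant extensions $\nu$ of $\omega(0,n)$, which requires invoking the factorization property to argue that any minimal common extension of $\eta$ and $\nu$ would have to agree with both $\eta$ and $\omega(0,n)$ in degree $n$, a contradiction. A secondary technical point, recurring in both parts, is justifying the interchange of limits with the fixed operator multiplications; this follows from strong operator continuity together with the fact that $P$ is a genuine (countably additive) projection valued measure, already guaranteed by Theorem~\ref{conj-palle-proj-valued-measure-gen-case}, so it should be routine rather than delicate.
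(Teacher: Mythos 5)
Your proposal is correct and follows essentially the same route as the paper, which proves the proposition precisely by taking strong-operator limits over a nested sequence of cylinder sets decreasing to $\{\omega\}$ and invoking Proposition \ref{prop-atomic-basic-equns}(a); your write-up simply fills in the details (the sandwiching argument for the second identity in (a) and the $\Lambda^{\operatorname{min}}(\eta,\nu)=\emptyset$ computation for (b)) that the paper leaves implicit.
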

\begin{proof}
These statements follow from taking the limit in the strong operator topology on a nested sequence of cylinder sets decreasing to $\{\omega\}$ and using Proposition~\ref{prop-atomic-basic-equns}(a).
\end{proof}

 The following Corollary is an immediate consequence of Proposition \ref{prop:atomic-2} and our observations above.
 
 \begin{cor} Let $\Lambda$ be a row-finite, source-free $k$-graph and let $\{ t_\lambda\}_{\lambda \in \Lambda}$ be a representation of $C^*(\Lambda)$. If  $Orbit(\omega) = Orbit(\gamma)$ then $P(\{\omega\}) = 0$ iff $ P(\{\gamma\}) = 0$.
 
 Moreover, for any purely atomic representation $\{ t_\lambda\}_{\lambda \in \Lambda}$, if \[\Omega = \{ \omega \in \Lambda^\infty: P(\{\omega\}) \not= 0\},\]
  then we can decompose $\Omega$ as a disjoint union of orbits: 
 $
 \Omega\;=\;\bigsqcup_{\omega}\text{Orbit}(\omega).$  In particular, 
 \[\bigoplus_{\omega \in \Omega} P(\text{Orbit}(\omega))= \text{Id}_{\mathcal H}.\]
\label{prop:orbits-of-atoms-are-nonzero} 

 \label{proporbitsatomic}
 \end{cor}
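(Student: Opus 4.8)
The plan is to prove the three claims in order, drawing primarily on Proposition~\ref{prop:atomic-2} and the definition of $\text{Orbit}(\omega)$, together with the purely atomic hypothesis. First I would establish the opening biconditional: if $\text{Orbit}(\omega) = \text{Orbit}(\gamma)$, then $P(\{\omega\}) = 0$ if and only if $P(\{\gamma\}) = 0$. Since $\gamma \in \text{Orbit}(\omega)$, by definition there exist $m, \ell \in \N^k$ with $\sigma^m(\gamma) = \sigma^\ell(\omega)$. I would argue that $P(\{\omega\})$ and $P(\{\gamma\})$ are linked by the relations in Proposition~\ref{prop:atomic-2}(a). Concretely, applying the prefixing relation $t_\lambda P(\{\omega'\}) t_\lambda^* = P(\{\lambda \omega'\})$ and the coding relation $t_{\omega'(0,n)}^* P(\{\omega'\}) t_{\omega'(0,n)} = P(\{\sigma^n(\omega')\})$ repeatedly, one can pass from $P(\{\omega\})$ to $P(\{\sigma^\ell(\omega)\}) = P(\{\sigma^m(\gamma)\})$ and then back up to $P(\{\gamma\})$. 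Since the $t_\lambda$ are partial isometries with $t_\lambda^* t_\lambda = t_{s(\lambda)}$, these maps are injective on the relevant ranges, so one of $P(\{\omega\}), P(\{\gamma\})$ vanishes precisely when the other does.

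Next I would verify the orbit decomposition $\Omega = \bigsqcup_\omega \text{Orbit}(\omega)$. The key observation is that $\gamma \in \text{Orbit}(\omega)$ is an equivalence relation on $\Lambda^\infty$ (symmetry and transitivity follow from the groupoid structure of $\mathcal{G}_\Lambda$, since $(\gamma, n, \omega) \in \mathcal{G}_\Lambda$ composes and inverts), so $\Lambda^\infty$ partitions into orbits. By the first claim, $P(\{\cdot\}) \neq 0$ is constant on each orbit; hence $\Omega$, the set where $P(\{\cdot\})$ is nonzero, is a union of entire orbits, and these orbits are disjoint by the equivalence-relation property. I would then note (as the excerpt already does in Definition~\ref{deforbit}) that each orbit is Borel, being a countable union of singletons.

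For the final equation $\bigoplus_{\omega \in \Omega} P(\text{Orbit}(\omega)) = \text{Id}_{\mathcal H}$, I would combine the purely atomic hypothesis with the orbit decomposition. Condition~(c) of Definition~\ref{defatomic} gives $\bigoplus_{\omega \in \Omega} P(\{\omega\}) = \text{Id}_{\mathcal H}$ (strong operator convergence). Since each orbit is a countable set of atoms, $P(\text{Orbit}(\omega)) = \bigoplus_{\gamma \in \text{Orbit}(\omega)} P(\{\gamma\})$ by countable additivity of the projection valued measure $P$ (Theorem~\ref{conj-palle-proj-valued-measure-gen-case}). Regrouping the sum $\bigoplus_{\omega \in \Omega} P(\{\omega\})$ according to the orbit partition $\Omega = \bigsqcup \text{Orbit}(\omega)$ then yields $\bigoplus_\omega P(\text{Orbit}(\omega)) = \text{Id}_{\mathcal H}$, where the outer sum runs over a choice of one representative per orbit.

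The main obstacle I anticipate is making the ``pass from $P(\{\omega\})$ to $P(\{\gamma\})$'' argument fully rigorous, in particular ensuring the partial-isometry compositions used in the first step genuinely preserve nonvanishing in both directions. One must check that when $\sigma^m(\gamma) = \sigma^\ell(\omega)$, the operators $t_{\gamma(0,m)}$ and $t_{\omega(0,\ell)}$ intertwine $P(\{\gamma\})$ and $P(\{\omega\})$ via $P(\{\sigma^m(\gamma)\}) = P(\{\sigma^\ell(\omega)\})$, and that the relevant source projections are nonzero so that conjugation does not inadvertently kill a nonzero projection. The regrouping of a strongly convergent sum of mutually orthogonal projections into orbit-blocks is routine once orthogonality of the $P(\{\omega\})$ across distinct atoms is noted, which follows from $P$ being a projection valued measure on disjoint singletons.
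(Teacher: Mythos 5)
Your proof is correct and follows essentially the same route the paper intends: the corollary is stated there as an immediate consequence of Proposition~\ref{prop:atomic-2}, and your use of the prefixing/coding conjugation identities to propagate vanishing of $P(\{\cdot\})$ along an orbit, together with the equivalence-relation partition and regrouping of the strongly convergent sum from Definition~\ref{defatomic}(c), is exactly the intended argument. The only point worth noting is that the vanishing claim needs no injectivity: $P(\{\omega\})=0$ forces $P(\{\sigma^\ell(\omega)\})=t_{\omega(0,\ell)}^*P(\{\omega\})t_{\omega(0,\ell)}=0$ and, conversely, $P(\{\omega\})=t_{\omega(0,\ell)}P(\{\sigma^\ell(\omega)\})t_{\omega(0,\ell)}^*$ vanishes whenever $P(\{\sigma^\ell(\omega)\})$ does, so both implications come directly from Proposition~\ref{prop:atomic-2}(a).
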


\begin{example}
\label{exampleatomic}
(c.f \cite{KP}, Proposition 2.11) Recall that for a row-finite, source-free $k$-graph $\Lambda$, the infinite path representation of $C^*(\Lambda)$ first given by A. Kumjian and D. Pask via the partial isometries $\{S_{\lambda}: \lambda\in \Lambda\}$ on the non-separable Hilbert space $\ell^2(\Lambda^{\infty})$ with orthonormal basis $\{h_\omega: \omega\in \Lambda^{\infty}\}$ is given by 
$$S_{\lambda}(h_{\omega})\;=\;\delta_{s(\lambda), r(\omega)}h_{\lambda\omega},\;\;\text{and}\;\;S_{\lambda}^*h_{\omega}=\delta_{\lambda, \omega(0,d(\lambda))}h_{\sigma^{d(\lambda)}(\omega)}.$$
Then one can check that this representation is purely atomic, and we have  for all $\omega\in \Lambda^{\infty},$
$$P(\{\omega\})=\lim_{n} S_{ \omega(0,n)}S_{ \omega(0,n)}^*\;=\;P_{{\mathcal M}_{\omega}}.$$
Here the limit is taken in the strong operator topology using the partially ordered set $\mathbb N^k,$ and ${\mathcal M}_{\omega}$ is the one-dimensional subspace of  $\ell^2(\Lambda^{\infty})$  spanned by $h_{\omega}.$  This is a standard example to keep in mind when considering both purely atomic representations and the permutative representations which we discuss in Section \ref{sec:permutative_repn} below.
\end{example}

We now show that any intertwiner of purely atomic represetations of a $k$-graph algebra intertwines the associated projection valued measures.  This is the analog of  Proposition 2.10 of  Dutkay,  Haussermann and  Jorgensen in  \cite{dutkay-jorgensen-atomic} for the Cuntz algebra case.
\begin{prop}
\label{propintertwine}  Let $\Lambda$ be a row-finite $k$-graph with no sources, and let  $\{ t_\lambda\}_{\lambda \in \Lambda}$ and  $\{ \tilde t_\lambda\}_{\lambda \in \Lambda}$ be two purely atomic representations of $C^{\ast}(\Lambda)$ on the Hilbert spaces ${\mathcal H}$ and ${\mathcal H}',$ respectively.  Suppose that $X:{\mathcal H}\to {\mathcal H}'$ is an intertwining operator for these representations, so that 
\[
\tilde{t_\lambda}X\;=\;X t_{\lambda}\;\;\text{and}\;\;(\tilde t_\lambda)^*X=Xt_{\lambda}^*\quad\text{for all}\;\; \lambda\in \Lambda.
\]
Let $P$ and $\tilde{P}$ be the associated projection valued measures on ${\mathcal B}(\Lambda^{\infty}).$  Then for every $\omega\in \Lambda^{\infty},$
$$\tilde{P}(\{\omega\})X\;=\;X P(\{\omega\}).$$ 
If $X$ is a unitary operator so that the representations are unitarily equivalent, then the supports of $P$ and $\tilde{P}$ are the same.
\end{prop}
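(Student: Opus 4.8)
\textbf{Proof plan for Proposition \ref{propintertwine}.}
The plan is to prove the intertwining relation $\tilde P(\{\omega\}) X = X P(\{\omega\})$ first on cylinder sets and then pass to the atoms by a strong-operator-topology limit. First I would observe that for any $\lambda \in \Lambda$, the defining relations $\tilde t_\lambda X = X t_\lambda$ and $\tilde t_\lambda^* X = X t_\lambda^*$ immediately give
\[
\tilde P(Z(\lambda)) X = \tilde t_\lambda \tilde t_\lambda^* X = \tilde t_\lambda X t_\lambda^* = X t_\lambda t_\lambda^* = X P(Z(\lambda)).
\]
Thus $X$ intertwines the two projection valued measures on every cylinder set. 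Since the cylinder sets generate the Borel $\sigma$-algebra $\mathcal B_o(\Lambda^\infty)$ and $P, \tilde P$ are (finitely additive, hence by Theorem \ref{conj-palle-proj-valued-measure-gen-case} countably additive) projection valued measures, linearity and additivity extend the relation $\tilde P(A) X = X P(A)$ to all Borel sets $A$ that arise as finite, and then countable, disjoint unions of cylinder sets.

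Next I would fix $\omega \in \Lambda^\infty$ and realize the singleton $\{\omega\}$ as a decreasing intersection of cylinder sets: writing $\omega(0, \vec n)$ for the initial segment of $\omega$ of degree $\vec n = (n, \ldots, n)$, we have $\{\omega\} = \bigcap_{n} Z(\omega(0, \vec n))$, and the nested sequence of projections $P(Z(\omega(0,\vec n)))$ decreases to $P(\{\omega\})$ in the strong operator topology (as used in the proof of Proposition \ref{prop:atomic-2}). Applying the already-established cylinder-set relation to each term and using that $X$ is a bounded operator (so it is strong-operator continuous on bounded nets and passes through the SOT limit), I obtain
\[
\tilde P(\{\omega\}) X = \lim_n \tilde P(Z(\omega(0,\vec n))) X = \lim_n X P(Z(\omega(0,\vec n))) = X P(\{\omega\}),
\]
which is the first assertion. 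The main technical point to be careful about here is justifying the interchange of $X$ with the SOT limit; this is routine since $X$ is bounded and the limit is taken along the decreasing net of projections, but it is the one step that genuinely uses boundedness rather than just the algebraic relations.

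Finally, for the second assertion, suppose $X$ is unitary. Recall that for a purely atomic representation the support of the projection valued measure is $\Omega = \{ \omega : P(\{\omega\}) \not= 0\}$ (and similarly $\tilde\Omega$ for $\tilde P$). Given $\omega \in \Omega$, the relation $\tilde P(\{\omega\}) X = X P(\{\omega\})$ together with invertibility of $X$ forces $\tilde P(\{\omega\}) = X P(\{\omega\}) X^{-1} \not= 0$, since conjugation by a unitary sends a nonzero projection to a nonzero projection; hence $\omega \in \tilde\Omega$, giving $\Omega \subseteq \tilde\Omega$. Applying the same argument to $X^{-1}$ (which intertwines the representations in the other direction) yields $\tilde\Omega \subseteq \Omega$, so $\Omega = \tilde\Omega$ and the supports coincide. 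I do not anticipate a serious obstacle in this last part; the only thing worth stating explicitly is that a unitary conjugate of a nonzero projection remains nonzero, which is what converts the intertwining of atoms into equality of supports.
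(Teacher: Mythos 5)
Your proposal is correct and follows essentially the same route as the paper: establish $\tilde P(Z(\lambda))X = XP(Z(\lambda))$ from the two intertwining relations, extend to all Borel sets (in particular singletons, via the SOT limit of the nested cylinder-set projections), and in the unitary case conclude $\tilde P(\{\omega\}) = XP(\{\omega\})X^*$ so the supports agree. The paper states these steps more tersely, but the argument is the same; your explicit remark about passing $X$ through the strong-operator limit is a reasonable elaboration of a point the paper leaves implicit.
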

\begin{proof}
Since $X$ intertwines the representations, we see that for every $\lambda\in \Lambda,$
$$\tilde{P}(Z(\lambda))X=\tilde t_\lambda(\tilde t_\lambda)^*X\;=\;Xt_{\lambda}t_{\lambda}^*\;=\;XP(Z(\lambda)).$$
Therefore $X$ intertwines the projection valued measures on all Borel subsets in $\Lambda^{\infty},$ including the point sets. It follows that if $X$ is unitary, then for every $\omega\in \Lambda^{\infty},$ we have 
$$\tilde{P}(\{\omega\})=XP(\{\omega\})X^*,$$ so that $\text{dim}P(\{\omega\})=\text{dim}\tilde{P}(\{\omega\})$ for all $\omega\in \Lambda^{\infty},$ and  hence $\text{supp}(P)=\text{supp}(\tilde{P}).$
\end{proof}
We now derive some straightforward consequences of Proposition \ref{proporbitsatomic}.

\begin{prop}
Suppose that an irreducible representation $\{t_\lambda\}_{\lambda \in \Lambda}$  of $C^*(\Lambda)$ has an atom $\omega$.  Then $\{t_\lambda\}_{\lambda \in \Lambda}$ is purely atomic and the associated projection valued measure is supported on Orbit$(\omega)$.  
\label{prop-atomic-1}
\end{prop}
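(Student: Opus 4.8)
The plan is to show that the presence of a single atom $\omega$ forces the projection valued measure $P$ to be purely atomic, supported precisely on $\mathrm{Orbit}(\omega)$, by exploiting irreducibility together with the orbit-invariance of $P$ established above.

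First I would observe that the projection $P(\mathrm{Orbit}(\omega))$ is a reducing projection for the representation. Indeed, $\mathrm{Orbit}(\omega)$ is a Borel set which is invariant under all the maps $\sigma^n$ and $(\sigma^n)^{-1}$ (as noted after Definition \ref{deforbit}). Using Proposition \ref{prop:pvm-properties}(a) and (d), together with the limiting arguments of Proposition \ref{prop:atomic-2}, I would show that $t_\lambda P(\mathrm{Orbit}(\omega)) = P(\mathrm{Orbit}(\omega)) t_\lambda$ and likewise for $t_\lambda^*$, for every $\lambda \in \Lambda$. The key point is that $\sigma_\lambda$ maps $\mathrm{Orbit}(\omega)$ into itself and $\sigma_\lambda^{-1}(\mathrm{Orbit}(\omega)) \cap Z(s(\lambda))$ also lands in the orbit, so the intertwining relations of Proposition \ref{prop:pvm-properties} show that $P(\mathrm{Orbit}(\omega))$ commutes with the entire Cuntz--Krieger family. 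Since $\{t_\lambda\}_{\lambda \in \Lambda}$ is irreducible, this commuting projection must be either $0$ or $\mathrm{Id}_{\mathcal H}$.

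Next I would rule out the possibility $P(\mathrm{Orbit}(\omega)) = 0$. Since $\omega$ is an atom, $P(\{\omega\}) \neq 0$, and because $\{\omega\} \subseteq \mathrm{Orbit}(\omega)$, monotonicity of the projection valued measure gives $P(\mathrm{Orbit}(\omega)) \geq P(\{\omega\}) \neq 0$. Hence $P(\mathrm{Orbit}(\omega)) = \mathrm{Id}_{\mathcal H}$, which immediately yields $P(\Lambda^\infty \setminus \mathrm{Orbit}(\omega)) = 0$, i.e.~the measure is supported on $\mathrm{Orbit}(\omega)$.

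Finally I would establish pure atomicity. Because $\mathrm{Orbit}(\omega)$ is a countable set (a countable union of singletons, each a countable intersection of cylinder sets), I can enumerate its points and use countable additivity of $P$ (Theorem \ref{conj-palle-proj-valued-measure-gen-case}) to write $\mathrm{Id}_{\mathcal H} = P(\mathrm{Orbit}(\omega)) = \sum_{\gamma \in \mathrm{Orbit}(\omega)} P(\{\gamma\})$, the sum converging in the strong operator topology. By Corollary \ref{prop:orbits-of-atoms-are-nonzero}, since $\mathrm{Orbit}(\gamma) = \mathrm{Orbit}(\omega)$ for every $\gamma$ in the orbit and $P(\{\omega\}) \neq 0$, we have $P(\{\gamma\}) \neq 0$ for all such $\gamma$. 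Taking $\Omega := \mathrm{Orbit}(\omega)$, conditions (a)--(c) of Definition \ref{defatomic} are then verified, so $\{t_\lambda\}_{\lambda \in \Lambda}$ is purely atomic with support $\mathrm{Orbit}(\omega)$. The main obstacle I anticipate is the first step: carefully justifying, via the strong-operator-topology limits in Proposition \ref{prop:atomic-2}, that $P(\mathrm{Orbit}(\omega))$ genuinely commutes with the partial isometries $t_\lambda$ rather than merely with the spectral projections $P(Z(\lambda))$; this requires checking the orbit-invariance interacts correctly with both the prefixing maps $\sigma_\lambda$ and the coding maps $\sigma^n$ appearing in parts (a), (c), and (d) of Proposition \ref{prop:pvm-properties}.
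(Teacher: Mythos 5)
Your proof is correct, but it routes through a different commuting projection than the paper does, and the difference is worth noting. The paper's proof applies Schur's lemma to $Q := \sum_{x \notin \mathrm{Orbit}(\omega)} P(\{x\})$, the sum of all point masses \emph{off} the orbit: it re-derives from scratch the relation $P(\{x\})t_\lambda = t_\lambda P(\{\sigma^{d(\lambda)}(x)\})$ (or $0$) via an explicit computation with $t_\eta^* t_\lambda$ and $\Lambda^{\operatorname{min}}$, checks that $x \mapsto \sigma^{d(\eta)}(x)$ restricts to a bijection of the complement of the orbit, concludes $Q$ commutes with every $t_\eta$, and forces $Q=0$ since $Q \leq 1 - P(\{\omega\}) < 1$; Corollary \ref{prop:orbits-of-atoms-are-nonzero} then makes every orbit point an atom. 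You instead apply the irreducibility argument to $P(\mathrm{Orbit}(\omega))$ itself, which dominates $P(\{\omega\}) \neq 0$ and hence must equal $\mathrm{Id}_{\mathcal H}$. Your choice buys something concrete: since the orbit is countable, countable additivity of $P$ immediately yields $\mathrm{Id}_{\mathcal H} = \sum_{\gamma \in \mathrm{Orbit}(\omega)} P(\{\gamma\})$ in the strong operator topology, which is exactly condition (c) of Definition \ref{defatomic}; the paper's argument, by ruling out only the \emph{atoms} off the orbit, leaves open in principle a continuous part of $P$ on the complement and so verifies condition (c) only implicitly (one would have to rerun the commutation argument on $\sum_{\gamma \in \mathrm{Orbit}(\omega)} P(\{\gamma\})$ anyway, which is in effect what you do). Two small cautions: Proposition \ref{prop:atomic-2} is stated under a pure-atomicity hypothesis, so you cannot cite it verbatim without circularity — but its proof (SOT limits of Proposition \ref{prop-atomic-basic-equns}(a) over nested cylinder sets shrinking to a point) uses nothing about atomicity, so you should either note this or re-derive the point-mass commutation relation as the paper does; and the passage from the cylinder-set identity of Proposition \ref{prop:pvm-properties}(d) to $t_\lambda P(\mathrm{Orbit}(\omega)) = P((\sigma^{d(\lambda)})^{-1}(\mathrm{Orbit}(\omega)))t_\lambda$ should be justified via the countable-union-of-singletons structure rather than asserted for general Borel sets.
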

\begin{proof}
 We begin by observing that for any $x \in \Lambda^\infty$, 
 \[ P(\{x\}) t_\lambda = \begin{cases} 0, & x \not\in Z(\lambda) \\
 t_\lambda P(\{\sigma^{d(\lambda)}(x)\}), & x \in Z(\lambda).
 \end{cases}\]
  This follows from writing $P(\{x\}) = \lim \{ t_\eta t_\eta^*: x \in Z(\eta)\}$ and observing that if $d(\eta) \geq d(\lambda)$, then 
  \[ t_\eta^* t_\lambda = \sum_{(\rho, \nu) \in \Lambda^{\min}(\lambda, \eta)} t_\rho t_\nu^* = \begin{cases} t_\rho, & \eta = \lambda \rho \\ 
  0, & \text{ else.}
  \end{cases}\]
  Therefore, if $x \not\in Z(\lambda)$ then we can find $Z(\eta) \ni x$ with $d(\eta) \geq d(\lambda)$ and such that $\eta$ does not extend $\lambda$.
  Consequently, if we set 
  \[ P := \sum_{x\not\in Orbit(\omega)} P(\{x\}),\] then $ P t_\lambda   = t_\lambda \sum_{x \in Z(\lambda) \backslash Orbit(\omega)} P(\{ \sigma^{d(\lambda)}(x)\}).$
Note that $P$ is a projection, since $P(\{x\}) P(\{y\}) = \delta_{x,y}$.  
  
  On the other hand, 
  \[t_\lambda P = t_\lambda \sum_{y \not\in Orbit(\omega): r(y) = s(\lambda)} P(\{y\}).\]
  Since $\{y \not\in Orbit(\omega): r(y) = s(\lambda)\} = \{ \sigma^{d(\lambda)}(x): x \in Z(\lambda) \backslash Orbit(\omega)\}$, we have 
  \[ t_\lambda P = P t_\lambda\]
  for any $\lambda \in \Lambda$.  
  Our assumption that $\{t_\lambda\}_{\lambda}$ is irreducible now implies that $P$ must be a multiple of the identity.  However, $P < 1$ since $\omega$ is an atom, so we must have $P =0$.
  
 Proposition \ref{prop:orbits-of-atoms-are-nonzero} now implies that $P(\{x\}) \not= 0$ for every $x \in Orbit(\omega)$, completing the proof that $\{t_\lambda\}_{\lambda \in \Lambda}$ is purely atomic.
\end{proof}

%
%
%

What follows is an analog of Corollary 4.8 of \cite{dutkay-jorgensen-atomic}.
\begin{thm}
\label{thm-atomic-repres}
For a row-finite, source-free $k$-graph $\Lambda$, let $\{t_\lambda\}_{\lambda \in \Lambda}, \{\tilde{t}_{\lambda}\}_{\lambda \in \Lambda}$ generate purely atomic representations of $C^*(\Lambda)$ on the same Hilbert space ${\mathcal H}$, with associated projection valued measures $P,\tilde{P}$.    Then the two representations are unitarily equivalent if and only if the following conditions are satisfied:
\begin{enumerate}
\item[(a)] $\text{supp}(P)\;=\;\text{supp}(\tilde{P})\;=\; :\Omega.$
\item[(b)] For every $x\in \Omega,\; \text{dim}[\text{Range}(P(\{x\}))]\;=\;\text{dim}[\text{Range}(\tilde{P}(\{x\}) )]$  
\end{enumerate}
\end{thm}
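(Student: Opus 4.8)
The plan is to prove both directions using the intertwining result already established in Proposition \ref{propintertwine}, together with the orbit decomposition from Corollary \ref{proporbitsatomic}. First I would dispose of the easy direction: if the two representations are unitarily equivalent via a unitary $X \colon \mathcal H \to \mathcal H$, then Proposition \ref{propintertwine} immediately gives $\tilde P(\{x\}) = X P(\{x\}) X^*$ for every $x \in \Lambda^\infty$, whence $\operatorname{supp}(P) = \operatorname{supp}(\tilde P)$ and $\dim[\operatorname{Range}(P(\{x\}))] = \dim[\operatorname{Range}(\tilde P(\{x\}))]$ for all $x$, since a unitary conjugation preserves the rank of a projection. This establishes the necessity of conditions (a) and (b) essentially for free.

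For the converse I would construct the intertwining unitary explicitly, building it up orbit by orbit. Assume (a) and (b) hold and write $\Omega = \operatorname{supp}(P) = \operatorname{supp}(\tilde P)$. By Corollary \ref{proporbitsatomic}, $\Omega$ decomposes as a disjoint union of orbits, and both representations yield orthogonal decompositions $\mathcal H = \bigoplus_{x\in\Omega} \operatorname{Range}(P(\{x\})) = \bigoplus_{x\in\Omega}\operatorname{Range}(\tilde P(\{x\}))$ in the strong operator topology. The strategy is to pick, for each orbit, a ``base point'' $\omega$ and an arbitrary unitary $U_\omega \colon \operatorname{Range}(P(\{\omega\})) \to \operatorname{Range}(\tilde P(\{\omega\}))$; condition (b) guarantees such a $U_\omega$ exists, since the two ranges have equal dimension. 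I would then propagate $U_\omega$ across the whole orbit using the relations in Proposition \ref{prop:atomic-2}: for any $x$ in the orbit of $\omega$, there are $m, \ell \in \N^k$ with $\sigma^m(x) = \sigma^\ell(\omega)$, and the operators $t_{x(0,m)}$, $t_{\omega(0,\ell)}$ and their adjoints give canonical partial isometries between the one- (or higher-) dimensional pieces $\operatorname{Range}(P(\{x\}))$ and $\operatorname{Range}(P(\{\omega\}))$, and likewise for the tilde representation. Defining $U_x$ on $\operatorname{Range}(P(\{x\}))$ by transporting $U_\omega$ along these partial isometries, one sets $X = \bigoplus_{x\in\Omega} U_x$.

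The key verification is that $X$ is well defined and genuinely intertwines. Well-definedness requires checking that the transported map $U_x$ does not depend on the choice of $m, \ell$ exhibiting $x$ in the orbit of $\omega$; here I would use the factorization property together with the relations of Proposition \ref{prop:atomic-2}(a), namely $t_\lambda P(\{\nu\}) t_\lambda^* = P(\{\lambda\nu\})$ and $t_{\omega(0,n)}^* P(\{\omega\}) t_{\omega(0,n)} = P(\{\sigma^n(\omega)\})$, which show that the composite partial isometries between two points of the orbit are independent of the connecting path degree. The intertwining relations $\tilde t_\lambda X = X t_\lambda$ and $\tilde t_\lambda^* X = X t_\lambda^*$ then follow by checking them block-by-block on each $\operatorname{Range}(P(\{x\}))$, using Proposition \ref{prop:atomic-2} to track how $t_\lambda$ moves $x$ to $\lambda x$ (or annihilates the block, by part (b) of that Proposition, when $\lambda$ is not a prefix of $x$).

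The main obstacle I anticipate is precisely the well-definedness and compatibility bookkeeping in this orbit-transport construction. Because $\N^k$ is only a monoid (not a group) and because two points of an orbit may be related by many different pairs $(m,\ell)$, one must verify a genuine cocycle-type consistency condition to see that the family $\{U_x\}$ patches together coherently and that the resulting $X$ is a single unitary rather than a merely partially defined isometry. The étale and amenable groupoid structure $\mathcal G_\Lambda$ underlying $C^*(\Lambda)$, noted just before Definition \ref{deforbit}, is what ultimately makes this coherence automatic, and I would lean on the factorization property to reduce the consistency check to the case where the two connecting paths share a common extension.
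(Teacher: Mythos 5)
Your proposal is correct and follows essentially the same route as the paper: Proposition \ref{propintertwine} for the forward direction, and for the converse the orbit-by-orbit transport $U_\gamma = \tilde t_a \tilde t_{\omega(0,j)}^* U_\omega t_{\omega(0,j)} t_a^*$ for $\gamma = a\sigma^j(\omega)$, with the bulk of the work being exactly the well-definedness check you flag (which the paper handles by extending both connecting paths to a common $\gamma(0,N\mathbf 1)$ and using SOT-convergence of $P(Z(\omega(0,N\mathbf 1)))$ to $P(\{\omega\})$, not the groupoid structure). The only cosmetic difference is that the paper reduces to a single orbit by assuming irreducibility, whereas you keep the direct sum over orbits explicit.
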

\begin{proof}
Suppose that the purely atomic representations $\{t_\lambda\}_{\lambda \in \Lambda}, \{t'_{\lambda}\}_{\lambda \in \Lambda}$ on the same Hilbert space ${\mathcal H}$ are unitarily equivalent.  Proposition \ref{propintertwine} then implies that $P, \tilde P$ have the same support, and moreover that the intertwining unitary takes $P(\{\omega\})$ to $\tilde P(\{\omega\})$ for every $\omega \in \Omega$.


Now, suppose that conditions $(a)$ and $(b)$ hold; we will show that the representations $\{t_\lambda\}_{\lambda \in \Lambda}$ and  $\{\tilde{t}_{\lambda}\}_{\lambda \in \Lambda}$ of $C^*(\Lambda)$ are unitarily equivalent.

Without loss of generality, we suppose that our representations are irreducible, so that $\Omega$  is just one orbit, $\Omega=\text{Orbit}(\omega)$.  Since
$$\text{dim}[\text{Range}(P(\{\omega\}))]\;=\;\text{dim}[\text{Range}(\tilde{P}(\{\omega\}))]$$
{by hypothesis,}
 there is a unitary isomorphism $U_{\omega}: \text{Range}(P(\{\omega\}))\;\to\;\text{Range}(\tilde{P}(\{\omega\})),$ since Hilbert spaces of the same dimension are isomorphic. 
 For every $\gamma\in \Omega=\text{Orbit}(\omega),$
we now construct a unitary $U_{\gamma}: \text{Range}(P(\{\gamma\})) \to \text{Range}(\tilde{P}(\{\gamma\}))$  as follows.
If $\gamma \in \text{Orbit}(\omega)\subset \Lambda^{\infty}$ satisfies $\gamma = a \sigma^j(\omega) $ 
 for some $a \in \Lambda$, we would like to define 
$U_{\gamma}: \text{Range}(P(\{\gamma\})) \to \text{Range}(\tilde{P}(\{\gamma\}))$
 by
\begin{equation}\label{eq:unitary}
U_{\gamma}:=\; \tilde{t}_a\tilde{t}_{\omega(0,j)}^*U_{\omega}t_{\omega(0,j)}t_a^*.
\end{equation}
We must check that $U_{\gamma}$ is well-defined and unitary, and {that \[U := \bigoplus_{\gamma \in Orbit(\omega)} U_\gamma\]} intertwines the representations.

To see that $U_{\gamma}$ is well-defined, suppose that $\gamma = a \sigma^j(\omega) = a' \sigma^{j'}(\omega)$. Fix $\xi\in {\mathcal H}$ and $\varepsilon>0.$
Since the projections $P(Z(\omega(0,n))$ tend to $P(\{\omega\})$ in the strong operator topology, it is possible to find $N_1\in \mathbb N$ (depending on $\xi\in{\mathcal H}$ and $\varepsilon>0$) such that, if we write ${\bf 1} = (1, \ldots 1) \in \N^k$, then  $ N_1 {\bf 1}  \geq j, j'$ and 
whenever $N\geq N_1,$
\begin{equation}\|P(\{\omega\})t_{\omega(0,j)}t_a^*(\xi)-P(Z(\omega(0,N {\bf 1}))t_{\omega(0,j)}t_a^*(\xi)\| <\varepsilon.\label{eq:zeroth}
\end{equation}

Write $A\;=\;a\omega(j, N {\bf 1}).$ Note that $\gamma(0,N{\bf 1})=A$ and 
$$t_{\omega(0, N {\bf 1})}t_{\omega(0, N{\bf 1})}^*t_{\omega(0,j)}t_a^* 
=\;t_{\omega(0, N{\bf 1})}t_{\omega(j, N {\bf 1})}^*t_a^*=t_{\omega(0, N {\bf 1})}t_A^*.$$
Consequently, \eqref{eq:zeroth} implies that for $N\geq N_1,$ we have 
\begin{equation}\| P(\{\omega\}) t_{\omega(0,j)} t_a^*(\xi) - t_{\omega(0, N{\bf 1})} t_A^*(\xi)\| 
 <\varepsilon\label{eq:first}
\end{equation}
and
\begin{align*}
\| U_{\omega} t_{\omega(0, N{\bf 1})}t_A^*(\xi)\;& -\;U_{\omega}P(\{\omega\})t_{\omega(0,j)}t_a^*(\xi)\|  =\| U_{\omega} t_{\omega(0, N{\bf 1})}t_{\omega(j, N{\bf 1})}^*t_a^*(\xi) \;-\;U_{\omega}P(\{\omega\})t_{\omega(0,j)}t_a^*(\xi)\|\\
& =\|U_{\omega}t_{\omega(0, N{\bf 1})}t_{\omega(0, N {\bf 1})}^*t_{\omega(0,j)}t_a^*(\xi)\;-\;U_{\omega}P(\{\omega\}))t_{\omega(0,j)}t_a^*(\xi)\| \\
&=\|U_{\omega}P(Z(\omega(0,N  {\bf 1})))t_{\omega(0,j)}t_a^*(\xi)-U_{\omega}P(\{\omega\}))t_{\omega(0,j)}t_a^*(\xi)\|<\varepsilon.\end{align*}
In the same way, we can find $N_2$ large enough so that for the same $\xi\in {\mathcal H}$ and the same $\varepsilon>0,$ for all 
$N\geq N_2,$
$$\|\tilde{P}(\{\omega\})U_{\omega}t_{\omega(0,j)}t_a^*(\xi)-\tilde{P}(Z(\omega(0,N{\bf 1})))U_{\omega}t_{\omega(0,j)}t_a^*(\xi)\|<\varepsilon.$$
Then, since $\tilde{t}_a$ and $\tilde{t}_{\omega(0,j)}$ are partial isometries,
$$\|\tilde{t}_a\tilde{t}_{\omega(0,j)}^* \tilde{P}(\{\omega\})U_{\omega}t_{\omega(0,j)}t_a^*(\xi)-\tilde{t}_a\tilde{t}_{\omega(0,j)}^*\tilde{P}(Z(\omega(0,N{\bf 1})))U_{\omega}t_{\omega(0,j)}t_a^*(\xi)\|<\varepsilon.$$
We now write, for $N\geq N_2,$
\begin{equation*}\begin{split}
\tilde{t}_a\tilde{t}_{\omega(0,j)}^*\tilde{P}(Z(\omega(0,N{\bf 1})))U_{\omega}t_{\omega(0,j)}t_a^*\; &=\;\tilde{t}_a\tilde{t}_{\omega(0,j)}^*\tilde{t}_{\omega(0,N{\bf 1})}\tilde{t}_{\omega(0,N{\bf 1})}^*U_{\omega}t_{\omega(0,j)}t_a^*\\
&\;=\;\tilde{t}_a\tilde{t}_{\omega(j,N{\bf 1})}\tilde{t}_{\omega(0,N{\bf 1})}^*U_{\omega}t_{\omega(0,j)}t_a^*\\
&\;=\;\tilde{t}_A\tilde{t}_{\omega(0,N{\bf 1})}^*U_{\omega}t_{\omega(0,j)}t_a^*.
\end{split}\end{equation*}
Therefore for $N\geq 
N_2$ we have
\begin{equation}\|\tilde{t}_a\tilde{t}_{\omega(0,j)}^* \tilde{P}(\{\omega\})U_{\omega}t_{\omega(0,j)}t_a^*(\xi)-\tilde{t}_A\tilde{t}_{\omega(0,N{\bf 1})}^* U_{\omega}t_{\omega(0,j)}t_a^*(\xi)\|<\varepsilon.\label{eq:tech-1}
\end{equation}

Since $U_\omega P(\{\omega\}) = U_\omega$ and $\tilde{P}(\{\omega\}) U_\omega = U_\omega$, Equations \eqref{eq:tech-1} and \eqref{eq:first} combine to give 
%
$$ \|\tilde{t}_a\tilde{t}_{\omega(0,j)}^* U_{\omega}t_{\omega(0,j)}t_a^*(\xi)-\tilde{t}_A\tilde{t}_{\omega(0,N{\bf 1})}^* U_{\omega} t_{\omega(0, N\cdot {\bf 1})}t_A^*(\xi)\|<2\varepsilon$$
whenever $N\geq \text{max}\{N_1,N_2\}.$

In the same way, for the same $\varepsilon > 0$ and $\xi \in \mathcal H$, we can find $M'\in \N$ (depending on $a',\; j',$ and $\xi\in {\mathcal H}$) such that {for any $N' \geq M'$,} setting 
$A'=a'\omega(j', N' {\bf 1}),$ we have  $\gamma(0,N')=A'$ and 
$$\|\tilde{t}_{a'}\tilde{t}_{\omega(0,j')}^*
 U_{\omega}t_{\omega(0,j')}t_{a'}^*(\xi)-\tilde{t}_{A'}\tilde{t}_{\omega(0,N'{\bf 1})}^* U_{\omega} t_{\omega(0, N'\cdot {\bf 1})}t_{A'}^*(\xi)\|<2\varepsilon.$$
{Choosing  $N = N' \geq \max \{ M', N_1, N_2\}$ implies that $A = \gamma(0, N {\bf 1}) = A'$.  Thus, by the triangle inequality,}
\begin{equation*}
\|\tilde{t}_a\tilde{t}_{\omega(0,j)}^* 
U_{\omega}t_{\omega(0,j)}t_a^*(\xi)-\tilde{t}_{a'}\tilde{t}_{\omega(0,j')}^* 
U_{\omega}t_{\omega(0,j')}t_{a'}^*(\xi)\|<4\varepsilon.\end{equation*}
Since $\varepsilon$ {and $\xi$} were arbitrary, it follows that  if $\gamma = a \sigma^j(\omega) = a' \sigma^{j'}(\omega)$,
\[\tilde{t}_a \tilde{t}_{\omega(0, j)}^* U_{\omega}t_{\omega(0,j)}t_a^*=\tilde{t}_{a'}\tilde{t}_{\omega(0,j')}^* 
U_{\omega}t_{\omega(0,j')}t_{a'}^*.\]
Thus, the operator $U_{\gamma} : \text{Range}(P(\{\gamma\})) \to \text{Range}(\tilde{P}(\{\gamma\}))$ of Equation \eqref{eq:unitary}
is well-defined.

Now we want to show that $U_{\gamma}$ is unitary. Since $U_\omega$ is a unitary and hence $U_\omega^* U_\omega = P(\{\omega\})$, using the facts that $ \tilde P(\{\omega\}) U_\omega = U_\omega$ and $\tilde P(\{\omega\}) \tilde P(Z(\omega(0,j))) = \tilde P(\{\omega\})$, one easily computes that $U_\gamma ^* U_\gamma = t_a t_{\omega(0,j)}^* P(\{\omega\}) t_{\omega(0,j)} t_a^*$.

We now note that for $\gamma= a \sigma^j(\omega),$ 
$t_a^*$ takes $\text{Range}P(\{\gamma\})$ to $ \text{Range}P(\{\sigma^j(\omega)\})$
and 
$t_{\omega(0,j)}$ takes $\text{Range}P(\{\sigma^j(\omega)\})$ to $\text{Range}P(\{\omega\}).$ {Recalling that $U_\gamma = U_\gamma P(\{\gamma\})$, we deduce that 
\begin{align*} U_\gamma^* U_\gamma &= t_a t_{\omega(0,j)}^* t_{\omega(0,j)} t_a^* P(\{\gamma\}) = t_a t_a^* P(\{\gamma\}) \\
&= P(Z(a)) P(\{\gamma\}) = P(\{\gamma\}).
\end{align*}
}
Similarly, one can show that
$$U_{\gamma}U_{\gamma}^*\;=\;\tilde{P}(\{\gamma\}),$$ which implies that $U_\gamma$ is unitary from its domain to its range.

To show that $U= \bigoplus_{\gamma \in Orbit(\omega)} U_\gamma$ intertwines the representations, we must establish that for $\lambda\in\Lambda$ and $\omega\in \Lambda^\infty$ with $s(\lambda)=r(\omega)$,
$$\tilde{t}_{\lambda}U_{\omega}\;=\;U_{\lambda\omega}t_{\lambda}.$$
By our construction of $U_{\lambda \omega}$, if $s(\lambda)\;=\; r(\omega),$
\[
U_{\lambda\omega}t_{\lambda}\;=\;\tilde{t}_{\lambda}U_{\omega}t_{\lambda}^*t_{\lambda}.
\]
Then using the fact that $t_\lambda$ is an isometry {and that $U_\omega = U_\omega P(\{\omega\}) = U_\omega P(\{\omega\}) P(Z(\omega(0,n)))$ for any $n \in \N^k$,} we obtain
\[\begin{split}
\tilde{t}_{\lambda} U_{\omega}t_{\lambda}^*t_{\lambda}&=
\tilde{t}_\lambda U_\omega P(Z(s(\lambda)))=\tilde{t}_\lambda U_\omega P(Z(r(\omega)))=\tilde{t}_\lambda U_\omega.
\end{split}\]


Therefore we see that 
$$U= \bigoplus_{\gamma\in\;\text{Orbit}(\omega)}U_{\gamma}:\;\bigoplus_{\gamma\in\;\text{Orbit}(\omega)}\text{Range}P(\{\gamma\})\;\to\; \;\bigoplus_{\gamma\in\;\text{Orbit}(\omega)}\text{Range}\tilde{P}(\{\gamma\})$$
is a unitary operator that intertwines the representations $\{t_{\lambda}: \lambda\in \Lambda\}$ and $\{\tilde{t}_{\lambda}: \lambda\in \Lambda\}.$
\end{proof}

Recall from Definition \ref{def:disjoint} that two representations $\pi, \pi'$ of a $C^*$-algebra $A$ are  \emph{disjoint} if  no nonzero
subrepresentation of $\pi$ is unitarily equivalent to a subrepresentation of $\pi'$.

The following Proposition (the analogue of Proposition 4.5 in \cite{dutkay-jorgensen-atomic}) can now be derived from the previous result.
\begin{prop}\label{prop:irr-rank1}
For a row-finite, source-free $k$-graph $\Lambda,$  let $\{t_\lambda\}_{\lambda \in \Lambda}, \{\tilde{t}_{\lambda}\}_{\lambda \in \Lambda}$ generate purely atomic representations of $C^*(\Lambda)$, with associated projection valued measures $P, \tilde{P}$.  Suppose that $P, \tilde P$ are supported on Orbit$(\gamma)$ and Orbit$(\omega)$ respectively, for some $\gamma, \omega \in \Lambda^\infty$.  
\begin{enumerate}
\item[(a)] If Orbit$(\gamma) \not= \text{Orbit}(\omega)$ then the representations are disjoint.
\item[(b)] If Orbit$(\gamma) = \text{Orbit}(\omega)$, then every  operator $Y$ which intertwines the representations $\{t_\lambda\}_{\lambda\in\Lambda}, \{\tilde{t}_\lambda\}_{\lambda\in\Lambda}$ can be reconstructed from an operator $X: \Ran P(\omega) \to \Ran \tilde{P}(\omega)$ via
\[
Y=\tilde{P}(\{\omega\})XP(\{\omega\}): \operatorname{Range} P(\{\omega\}) \to \operatorname{Range} \tilde{P}(\{\omega\}).
\]
\item[(c)] The representation  $\{t_\lambda\}_{\lambda\in\Lambda}$ is irreducible if and only if the dimension of $\Ran P(\{\omega\})$ is 1.
\end{enumerate}
\end{prop}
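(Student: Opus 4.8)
The statement to prove is Proposition~\ref{prop:irr-rank1}, which has three parts concerning purely atomic representations supported on orbits. The plan is to leverage the substantial machinery already developed, especially Proposition~\ref{propintertwine} (intertwiners preserve the projection valued measure on point sets), Theorem~\ref{thm-atomic-repres} (unitary equivalence criterion), and the orbit structure from Corollary~\ref{proporbitsatomic}. I would prove the three parts in the order (a), then (c), then (b), since (c) is a clean consequence of the intertwiner analysis and (b) contains the technical heart.

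For part (a), I would argue by contradiction: suppose $\text{Orbit}(\gamma) \neq \text{Orbit}(\omega)$ but the representations are not disjoint. Then there exist nonzero subrepresentations that are unitarily equivalent, hence a nonzero partial intertwiner $X$. By Proposition~\ref{propintertwine}, $X$ intertwines the projection valued measures: $\tilde{P}(\{x\}) X = X P(\{x\})$ for all $x \in \Lambda^\infty$. Since $P$ is supported on $\text{Orbit}(\gamma)$ and $\tilde{P}$ on $\text{Orbit}(\omega)$, and these orbits are disjoint, for any $x \in \text{Orbit}(\gamma)$ we have $\tilde{P}(\{x\}) = 0$, forcing $X P(\{x\}) = 0$; summing over the orbit via Corollary~\ref{proporbitsatomic} (where $\bigoplus_{x} P(\{x\}) = \text{Id}$ on the support) gives $X = 0$, a contradiction.

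For part (c), I would use the criterion that a representation is irreducible if and only if its commutant is trivial. Taking $\{\tilde{t}_\lambda\} = \{t_\lambda\}$ with $\tilde{P} = P$, any self-intertwiner $Y$ (element of the commutant) satisfies $P(\{x\})Y = Y P(\{x\})$ by Proposition~\ref{propintertwine}, so $Y$ preserves each $\operatorname{Range} P(\{x\})$. Building on part (b), every such $Y$ is reconstructed from its compression $X := P(\{\omega\}) Y P(\{\omega\})$ to $\operatorname{Range} P(\{\omega\})$. If $\dim \operatorname{Range} P(\{\omega\}) = 1$, then $X$ is a scalar, which propagates to make $Y$ scalar, giving irreducibility; conversely, if the dimension exceeds $1$, one can choose $X$ a nontrivial projection on $\operatorname{Range} P(\{\omega\})$ and reconstruct a nonscalar $Y$ in the commutant.

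The main obstacle, and the reason part (b) should be proved carefully first, is establishing the reconstruction formula $Y = \tilde{P}(\{\omega\}) X P(\{\omega\})$ when the orbits coincide. The content is that an intertwiner is completely determined by its single compression at $\omega$, because the groupoid/orbit action lets us transport the value at $\omega$ to every other point $\gamma = a \sigma^j(\omega)$ of the orbit. Concretely, for $\gamma \in \text{Orbit}(\omega)$ I would use Proposition~\ref{propintertwine} together with the relations $t_\lambda P(\{x\}) = P(\{\lambda x\}) t_\lambda$ and $t_{\omega(0,j)}^* P(\{\omega\}) t_{\omega(0,j)} = P(\{\sigma^j(\omega)\})$ from Proposition~\ref{prop:atomic-2}(a) to write $Y|_{\operatorname{Range} P(\{\gamma\})} = \tilde{t}_a \tilde{t}_{\omega(0,j)}^* X t_{\omega(0,j)} t_a^*$, mirroring the well-definedness argument in the proof of Theorem~\ref{thm-atomic-repres}. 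The delicate point, exactly as in that theorem, is verifying that this expression is independent of the representation $\gamma = a\sigma^j(\omega) = a'\sigma^{j'}(\omega)$; I expect to either invoke that independence argument directly from Theorem~\ref{thm-atomic-repres} or replicate its strong-operator-topology approximation. Once (b) is in hand, parts (a) and (c) follow with relatively little additional effort.
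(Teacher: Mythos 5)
Your proposal is correct, and for the technical heart of the result --- part (b), where an intertwiner is transported around the orbit via $Y|_{\operatorname{Range} P(\{\gamma\})} = \tilde{t}_a \tilde{t}_{\omega(0,j)}^* X\, t_{\omega(0,j)} t_a^*$ with well-definedness borrowed from the argument of Theorem \ref{thm-atomic-repres} --- it coincides with the paper's proof. Where you diverge is in (a) and (c). For (a) the paper simply asserts that under the hypotheses neither representation has a nontrivial subrepresentation and cites Theorem \ref{thm-atomic-repres}; your argument instead applies Proposition \ref{propintertwine} to an arbitrary nonzero intertwiner $X$ and kills it by summing $\tilde{P}(\{x\})X = XP(\{x\})$ over the disjoint supports. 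This is more robust, since a purely atomic representation on a single orbit with $\dim \operatorname{Range} P(\{\omega\}) > 1$ is reducible by part (c), so the paper's premise in (a) is not literally available; your route avoids that issue entirely. For (c) the paper works with unitary seeds $X$ and invokes the Russo--Dye theorem to conclude that the self-intertwiners contain all of $B(\operatorname{Range} P(\{\omega\}))$, then applies Schur's lemma; you instead use the reconstruction of (b) directly for a general bounded (e.g.\ projection) seed, which is more economical but puts the burden on having verified well-definedness of the transport for non-unitary $X$ in (b) --- something you explicitly flag and which the paper itself only sketches by reference to Theorem \ref{thm-atomic-repres}. Both routes are sound; yours trades the Russo--Dye shortcut for a slightly heavier (b).
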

\begin{proof}
{To see (a), note that under our hypotheses,  neither $\{t_\lambda\}_\lambda$ nor $\{\tilde t_\lambda\}_\lambda$ has nontrivial subrepresentations; therefore Theorem \ref{thm-atomic-repres} implies (a).}
For (b),   given that $\text{Orbit}(\gamma)=\text{Orbit}(\omega)$, we can write $\gamma = a \sigma^j(\omega)$ for some $j\in \N^k, \ a \in \Lambda$.  Given an operator $X: \Ran P(\omega) \to \Ran \tilde{P}(\omega)$, we would like to define an operator $Y$ which intertwines the representations by setting 
\[ Y|_{\Ran P(\{\gamma\})} := \tilde{t}_a (\tilde{t}_{\omega(0, j)})^* X t_{\omega(0, j)} t_a^*.\]
The essence of the proof consists in showing that $Y$ is well defined -- that is, independent of our choice of $j, a$.  The idea is essentially the same as the content of Theorem \ref{thm-atomic-repres} in the case where $X$ is unitary, so we omit the details.  

{For (c),} let  ${\mathcal H}=\text{Range}\,P(\{\omega\}),$ and $\tilde{\mathcal H}= \text{Range}\,\tilde{P}(\{\omega\}).$  Recall from Theorem \ref{thm-atomic-repres} above that if ${\mathcal H}$ and $\tilde{\mathcal H}$ have the same dimension,  any unitary $U_{\omega}\in {\mathcal U}({\mathcal H},\tilde{\mathcal H})$ can be used to construct an intertwiner 
$$\bigoplus_{\gamma\in\;\text{Orbit}(\omega)}U_{\gamma}:\;\bigoplus_{\gamma\in\;\text{Orbit}(\omega)}\text{Range}\,P(\{\gamma\})\;\to\; \bigoplus_{\gamma\in\;\text{Orbit}(\omega)}\text{Range}\,\tilde{P}(\{\gamma\}).$$
In  the same way, if $T_{\omega}$ is a finite linear combination of unitary elements in $B({\mathcal H})=B(\text{Range}\,P(\{\omega\})),$ defining for $\gamma=a\sigma^j(\omega)$ the bounded operator 
$$T_{\gamma}=\;t_a t_{\omega(0,j)}^*T_{\omega}t_{\omega(0,j)}t_a^*,$$
Theorem \ref{thm-atomic-repres} shows us that $T_{\gamma}$ is well-defined and that 
$$\bigoplus_{\gamma\in\;\text{Orbit}(\omega)}T_{\gamma}:\;\bigoplus_{\gamma\in\;\text{Orbit}(\omega)}\text{Range}\,P(\{\gamma\})\;\to\; \bigoplus_{\gamma\in\;\text{Orbit}(\omega)}\text{Range}\,P(\{\gamma\})$$
intertwines the representation  $\{t_\lambda\}_{\lambda\in\Lambda}$ with itself.


   Consequently, the set of intertwiners between  $\{t_\lambda\}_{\lambda\in\Lambda}$ and itself contains the span of the unitary elements in 
$B({\mathcal H})=B(\text{Range}\,P(\{\omega\})).$  
However, by Russo-Dye's Theorem, the closure of the span of the unitary elements in $B({\mathcal H})$ is exactly $B({\mathcal H}).$   By Schur's Lemma, our representation is irreducible if and only if the self-intertwiners of   $\{t_\lambda\}_{\lambda\in\Lambda}$ consist solely of scalar multiples of the identity.  But by our preceding construction, we see that this happens if and only if the dimension of ${\mathcal H}=\text{Range}\,P(\{\omega\})$ is equal to $1,$ as desired.
\end{proof}

\subsection{Relation between monic and purely atomic representations}
\label{sec:relation_monic_atomic}

A version of the following result, for the Cuntz algebras $\mathcal O_N$, was established in Theorem 3.15 of \cite{dutkay-jorgensen-monic}. 

\begin{thm}
Let $\Lambda$ be a finite $k$-graph with no sources.
Let $\{t_\lambda:\lambda\in \Lambda\}$ be a purely atomic representation of $C^*(\Lambda)$ on a separable Hilbert space $\mathcal{H}$. Suppose that $t_\lambda t^*_\lambda\ne 0$ for all $\lambda\in\Lambda$. Then the representation is monic if and only if for every atom $x\in \Lambda^\infty$, $P(\{x\})$ is one-dimensional. Moreover, in this case the associated measure $\mu$ arising from the monic representation is atomic.
\label{thm:atomic-1D}
\end{thm}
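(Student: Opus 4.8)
The plan is to prove the equivalence via the projection-valued measure framework, combined with the characterization of monic representations from Theorem \ref{thm-characterization-monic-repres}. The key structural fact I would exploit is that a monic representation of $C^*(\Lambda)$ is, up to unitary equivalence, a representation on $L^2(\Lambda^\infty, \mu_\pi)$ arising from a $\Lambda$-projective system, and that for such a representation the projection-valued measure $P$ acts by multiplication: $P(A) = M_{\chi_A}$ on $L^2(\Lambda^\infty, \mu_\pi)$. The crux of the matter is therefore to relate the dimension of the atoms $P(\{x\})$ to the multiplicity-free (i.e.\ monic) property, and this hinges on the classical fact that a multiplication representation $f \mapsto M_f$ of $C(\Lambda^\infty)$ (or of $L^\infty$) on $L^2(\Lambda^\infty,\mu)$ is multiplicity-free precisely when the projections onto atoms are one-dimensional.

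For the forward direction, I would assume the representation is monic. By Theorem \ref{thm-characterization-monic-repres}, I may assume $\mathcal{H} = L^2(\Lambda^\infty, \mu_\pi)$ with the operators arising from a $\Lambda$-projective system, and the diagonal subalgebra $\overline{\operatorname{span}}\{t_\lambda t_\lambda^*\} = \pi(C(\Lambda^\infty))$ acting by multiplication with cyclic vector $\xi$. For an atom $x \in \Lambda^\infty$, the projection $P(\{x\})$ is the strong limit of $t_{x(0,n)} t_{x(0,n)}^* = M_{\chi_{Z(x(0,n))}}$ as $n \to \infty$; this limit is multiplication by $\chi_{\{x\}}$, which has rank equal to $\mu_\pi(\{x\})$ being positive but whose range is $\mathbb{C}\cdot \chi_{\{x\}}$, hence one-dimensional. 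The essential point is that the existence of a \emph{single} cyclic vector $\xi$ for the abelian algebra forces multiplicity one on each atom: if $P(\{x\})$ had dimension $\geq 2$, then $\pi(C(\Lambda^\infty))$ restricted to $\operatorname{Range} P(\{x\})$ would be a multiple of a character, and no vector could be cyclic across a multiplicity-$\geq 2$ summand. I would make this precise by noting that $\pi(f)\xi = f(x)\, P(\{x\})\xi$ on the atom, so the cyclic subspace generated by $\xi$ within $\operatorname{Range} P(\{x\})$ is at most one-dimensional; cyclicity then forces $\dim \operatorname{Range} P(\{x\}) = 1$.

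For the converse, I would assume every atom $P(\{x\})$ is one-dimensional and construct a cyclic vector. Since the representation is purely atomic with $P(\Lambda^\infty \setminus \Omega) = 0$ and $\bigoplus_{\omega \in \Omega} P(\{\omega\}) = \operatorname{Id}_{\mathcal H}$, separability of $\mathcal{H}$ makes $\Omega$ countable. Choosing a unit vector $\xi_\omega \in \operatorname{Range} P(\{\omega\})$ for each $\omega$ and positive scalars $c_\omega$ with $\sum_\omega c_\omega^2 < \infty$, I would set $\xi = \sum_{\omega \in \Omega} c_\omega \xi_\omega$. Then $\pi(f)\xi = \sum_\omega c_\omega f(\omega) \xi_\omega$, and because each atom is one-dimensional and the $\{\xi_\omega\}$ are mutually orthogonal (as ranges of orthogonal projections), one checks that $\overline{\operatorname{span}}_{\lambda}\{t_\lambda t_\lambda^* \xi\} = \overline{\operatorname{span}}\{P(\{\omega\})\xi : \omega \in \Omega\} = \overline{\operatorname{span}}\{\xi_\omega\} = \mathcal{H}$, using that the characteristic functions of cylinder sets approximate $\chi_{\{\omega\}}$ and that $\dim \operatorname{Range} P(\{\omega\}) = 1$ means $\xi_\omega$ spans the whole atom. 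This yields that $\xi$ is a cyclic vector for $C(\Lambda^\infty)$, hence the representation is monic.

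Finally, for the moreover clause, I would observe that the measure $\mu_\pi$ of Equation \eqref{eq:monic_measure} satisfies $\mu_\pi(Z(\lambda)) = \langle \xi, P(Z(\lambda))\xi\rangle$, and by countable additivity of $P$ (Theorem \ref{conj-palle-proj-valued-measure-gen-case}) together with the pure atomicity of $P$, we get $\mu_\pi(\{\omega\}) = \langle \xi, P(\{\omega\})\xi\rangle = c_\omega^2 > 0$ for each $\omega \in \Omega$ and $\mu_\pi(\Lambda^\infty \setminus \Omega) = \langle \xi, P(\Lambda^\infty \setminus \Omega)\xi\rangle = 0$. Thus $\mu_\pi$ is concentrated on the countable set $\Omega$ with positive mass at each point, i.e.\ $\mu_\pi$ is atomic. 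The main obstacle I anticipate is making the forward-direction multiplicity argument fully rigorous — specifically, carefully justifying that a cyclic vector for the abelian diagonal algebra cannot exist when some atom has dimension $\geq 2$, which requires invoking the multiplicity theory for abelian von Neumann algebras (e.g.\ via the spectral-multiplicity results underlying Arveson's Theorem 2.2.2 cited earlier), rather than a bare-hands computation.
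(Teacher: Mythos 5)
Your proposal is correct, and your converse direction and the ``moreover'' clause coincide essentially verbatim with the paper's proof (countable atom set from separability, $\xi = \sum_n e_n/2^n$, recovery of each $e_n$ as $\lim_j t_{\omega_n(0,j)}t_{\omega_n(0,j)}^*\xi$). The forward direction, however, is genuinely different. The paper argues by contradiction: given a nonzero strict subprojection $Q_\omega \le P(\{\omega\})$, it propagates $Q_\omega$ along the orbit via $Q_\gamma = t_a t_{\omega(0,j)}^* Q_\omega t_{\omega(0,j)} t_a^*$, verifies with some care that $Q = \bigoplus_\gamma Q_\gamma$ commutes with every $t_\eta$, and then invokes Theorem \ref{thm:ergodic}(a) (the commutant of a $\Lambda$-projective representation on $\Lambda^\infty$ consists of multiplication operators) to reach a contradiction. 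Your argument bypasses the orbit construction and the commutant theorem entirely: since $P(\{x\})$ lies in the abelian von Neumann algebra generated by $\pi(C(\Lambda^\infty))$, and $\pi(f)P(\{x\}) = f(x)P(\{x\})$ for continuous $f$ (because the cylinder sets $Z(x(0,n))$ shrink to $\{x\}$ and $\|f\chi_{Z(x(0,n))} - f(x)\chi_{Z(x(0,n))}\|_\infty \to 0$), cyclicity gives $P(\{x\})\mathcal H = \overline{\pi(C(\Lambda^\infty))P(\{x\})\xi} = \C\, P(\{x\})\xi$, which is one-dimensional since $x$ is an atom. This is shorter and more elementary; what the paper's route buys in exchange is the explicit commutant machinery (Theorem \ref{thm:ergodic}) that it reuses elsewhere. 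The obstacle you anticipate at the end is not actually an obstacle: no spectral multiplicity theory is needed, because the two-line computation above already makes the ``character point'' argument rigorous. (Your alternative observation that in the $L^2(\Lambda^\infty,\mu_\pi)$ model $P(\{x\})$ is literally $M_{\chi_{\{x\}}}$, hence rank one, also works on its own, provided you note via Proposition \ref{propintertwine} that the intertwining unitary of Theorem \ref{thm-characterization-monic-repres} carries the projection-valued measure over.)
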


\begin{rmk}
Since $L^2(\Lambda^\infty, \mu)$ is separable for any measure $\mu$ associated to a monic representation, in the setting of Theorem \ref{thm:atomic-1D}
we conclude that the set of atoms for $\mu$ must be countable.
\end{rmk}

\begin{proof}
Suppose that the given purely atomic representation $\{t_\lambda:\lambda\in \Lambda\}$ on $\mathcal{H}$ is monic, with cyclic vector $\xi$ for $\{ t_\lambda t_\lambda^*\}_{\lambda \in \Lambda}$. Then by Theorem~\ref{thm-characterization-monic-repres} we can assume that $\mathcal{H}$ is of the form $L^2(\Lambda^{\infty},\mu)$, where the measure $\mu$ is given by the projection valued measure $P$ determined by the representation, i.e. $\mu(Z(\lambda))=\langle P(Z(\lambda)) \xi, \xi \rangle = \| t_\lambda^* \xi  \|^2$ for $\lambda\in\Lambda$.
Since $\{t_\lambda\}_\lambda$ is purely atomic, $\mu(\{\omega\}) = \|P(\{\omega\})\xi\|^2$ is nonzero iff $\omega \in \Omega$.  In other words, the atoms of $\mu$ are precisely the atoms of $P$.

To show that $P(\{\omega\})$ is always a rank-one projection for an atom $\omega$, we argue by contradiction.  Suppose that there exists $\omega \in \Lambda^\infty$ and a strict subprojection $Q_\omega \leq P(\{\omega\})$ with $Q_\omega \not= P(\{\omega\})$.  For any $\gamma = a \sigma^j(\omega) \in \text{Orbit}(\omega)$, write 
\[ Q_\gamma = t_a t_{\omega(0,j)}^* Q t_{\omega(0,j)} t_a^*,\] and set $Q = \bigoplus_{\gamma \in \text{Orbit}(\omega)} Q_\gamma$.

The fact that the projections $P(\{\gamma\})$ are mutually orthogonal  implies that $Q$ is indeed a sum of orthogonal projections.  Moreover, Proposition \ref{prop:atomic-2} implies that
each summand $Q_\gamma$ is a strict subprojection of $P(\{\gamma\})$.

We will show that $t_\eta Q  = Qt_\eta$ for all $\eta \in \Lambda$.
Since $\{t_\lambda\}_{\lambda \in \Lambda}$ is monic by assumption,   Theorem \ref{thm:ergodic} and Theorem \ref{thm-characterization-monic-repres} will then imply that $Q$ must be a multiplication operator, which contradicts the fact that each $Q_\gamma$ is a strict subprojection of $P(\{\gamma\})$.

Fix $ \eta \in \Lambda$ and $\gamma = a \sigma^j(\omega)$.  As in the proof of Proposition \ref{prop-atomic-basic-equns}, 
\[ Q_\gamma  t_\eta = t_a t_{\omega(0,j)}^*  Q_\omega t_{\omega(0,j)} t_a^* t_\eta =\sum_{(\rho, \zeta) \in \Lambda^{\min}(a, \eta)} t_a t_{\omega(0,j)}^*  Q_\omega t_{\omega(0,j)} t_\rho t_\zeta^*.\]
By Proposition \ref{prop:atomic-2}, $t_{\omega(0,j)}^* Q_\omega t_{\omega(0,j)} = Q_{\sigma^j(\omega)} = Q_{\sigma^j(\omega)} P(Z(\omega(j, j +d(\rho)))),$ and $t_{\omega(j, j+d(\rho))}^* t_\rho =0$ unless $\rho = \omega(j, j+d(\rho))$.  Thus, the sum collapses to (at most) a single term:  Writing $m = d(\rho) = d(a) \vee d(\eta) - d(a)$, 
\[ Q_\gamma t_\eta = \begin{cases}
t_a Q_{\sigma^j(\omega)} t_{\omega(j, j+m)} t_\zeta^*,& \eta \zeta = a \omega(j, j+m)\\
0 ,& \eta \zeta \not= a \omega(j, j+m)
\end{cases}\]
Now, using the fact that $Q_{\sigma^j(\omega)} = t_{\omega(j, j+m)} t_{\omega(j,j+m)}^* Q_{\sigma^j(\omega)}$, we obtain that if $Q_\gamma t_\eta \not= 0$, 
\[ Q_\gamma t_\eta = t_{\eta \zeta} t_{\omega(j,j+m)}^* Q_{\sigma^j(\omega)} t_{\omega(j,j+m)} t_\zeta^* = t_\eta Q_{\zeta \sigma^{m+j}(\omega)}.\]
For each fixed $\eta$, the map 
\[a \sigma^j(\omega) \mapsto\zeta \sigma^{m+j}(\omega), \quad \text{ where }a \omega(j,m+j) =\eta \zeta,\]
is a bijection from $\{\gamma \in \text{Orbit}(\omega): Q_\gamma t_\eta \not= 0\}$ to $\{ \tilde \gamma \in \text{Orbit}(\omega): t_\eta Q_{\tilde \gamma} \not= 0\}$. (Surjectivity follows by observing that, given $\eta \in \Lambda$ and $\tilde\gamma = \zeta \sigma^{q}(\omega)$ with $s(\eta) = r(\zeta)$, we can take $a = \eta \zeta, j = q$ to construct the preimage $\gamma$ of $\tilde \gamma$.)

  It now follows that, as claimed,
\[ Q t_\eta = t_\eta Q.\]

Conversely, suppose that $\{t_\lambda:\lambda\in \Lambda\}$ is a purely atomic representation of $C^*(\Lambda)$ on a separable Hilbert space $\mathcal{H}$ such that for every atom $x\in \Lambda^{\infty},\;P(\{x\})\H$ is one-dimensional. Let $\Omega\subset \Lambda^{\infty}$ be the support of the associated projection valued measure $P$ on $\Lambda^{\infty}.$ Since $\mathcal{H}$ is separable and since $P(\{x\})\H$ is orthogonal to $P(\{y\})\H$ for $x,\;y\in \Omega,\; x\not=y,$ we must have that  $\Omega$ is countable; let us enumerate $\Omega\;=\;\{\omega_n\}_{n=1}^{\infty}.$ Then
$$\sum_{n=1}^{\infty}P(\{\omega_n\}) = Id_{\mathcal{H}},$$
where the convergence is in the strong operator topology.
 For each $n\in\mathbb N,$ choose a unit vector $e_n\in P(\{\omega_n\})\H,$ and note that $\text{span}\{e_n\}\;=\;P(\{\omega_n\})\H.$
Define $\xi\in\H$ by
 $$\xi=\sum_{n=1}^{\infty}\frac{e_n}{2^n}.$$
 We note that $P(\{\omega_n\})(\xi)\;=\;\frac{e_n}{2^n}.$
 It follows that for each $n\in\mathbb N,$
 $$e_n\in\;\overline{\text{span}}\{t_{\lambda}t_{\lambda}^*(\xi)=P(Z(\lambda)(\xi):\;\lambda\in \Lambda\}.$$
This is due to the fact that for each $n\in\mathbb N,$ 
 $$\lim_{j\to \infty}t_{\omega_n(0,j)}t_{\omega_n(0,j)}^*(\xi)\;=\;\lim_{j\to \infty}P(Z(\omega(0,j)))(\xi)$$
 $$=\;P(\{\omega_n\})(\xi)\;=\;\frac{e_n}{2^n}.$$
 
Therefore $\xi$ is a cyclic vector for $\{t_{\lambda}t_{\lambda}^*: \;\lambda\in \Lambda\},$ so that this representation is monic.

\end{proof}

\section{Permutative representations of $C^*(\Lambda)$}
\label{sec:permutative_repn}


Here we  study permutative   representations of $C^*(\Lambda)$.  These are similar, but not precisely equivalent, to the atomic representations of single-vertex $k$-graphs studied by Davidson, Power and Yang in \cite{dav-pow-yan-atomic}; see that paper and the references therein for more details.

\subsection{Definition of permutative representations}

 \begin{defn} 
 \label{defpermutative}
(c.f. Definition 4.9 of \cite{dutkay-jorgensen-atomic}.) 
 Let $\Lambda$ be a row-finite $k$-graph with no sources. 
A representation $\{t_\lambda\}_{\lambda \in \Lambda}$ of $C^*(\Lambda)$ on a Hilbert space ${\mathcal H}$ is called {\it permutative} if ${\mathcal H}$ has an orthonormal basis $\{e_i: i\in I\}$ for some index set $I$ such that for each $\lambda\in \Lambda$ there are subsets $J_{\lambda}$ and $K_{\lambda}$ of $I$ and a bijection 
 $\tilde{\sigma}_{\lambda}:J_{\lambda}\to K_{\lambda}$ satisfying
\begin{enumerate}
 \item[(a)] For each $n\in\mathbb N^k,\;\cup_{\lambda\in \Lambda^n}J_{\lambda}\;=\;\cup_{\lambda\in \Lambda^n}K_{\lambda}=I;$
\item[(b)] For each $\lambda\in \Lambda$ and $\nu\in s(\lambda)
\Lambda,$ we have $K_{\nu}\subset J_{\lambda}$ and $\tilde{\sigma}_{\lambda}\circ \tilde{\sigma}_{\nu}=\tilde{\sigma}_{\lambda\nu}$. (This implies $J_{\lambda\nu}=J_{\nu}$ whenever $s(\lambda)=r(\nu)$).
 \item[(c)] $t_{\lambda}(e_i)\;=\;e_{\tilde{\sigma}_{\lambda}(i)}$ for $i\in J_{\lambda},$ and $t_{\lambda}(e_i)=0,$ for $i\notin J_{\lambda}.$
 \item[(d)] $t_{\lambda}^{\ast}(e_{\tilde{\sigma}_{\lambda}(i)})=e_{i}$ for $i\in J_{\lambda},$ and $t_\lambda^*(e_j) = 0$ for $j \in K_{\lambda'}$, if $\lambda \not= \lambda'$ but $d(\lambda) = d(\lambda')$.
 \end{enumerate}

 \end{defn}

\begin{rmk}
\label{rmk:permutative-vs-SBFS}
Alternatively, one can view a  permutative representation as a $\Lambda$-semibranching representation arising from a countable discrete measure space $(X, m)$, where $m(x) = 1 \ \forall \ x \in X$.  (The coding maps $\tilde{\sigma}^n$ of the $\Lambda$-semibranching function system are defined in Proposition \ref{prop:encoding-map}
below.) In particular, the faithful separable representation of Theorem \ref{prop:faithful-repn} is also a permutative representation.
\end{rmk}

We first prove:
\begin{lemma}  
 Let $\Lambda$ be a row-finite $k$-graph with no sources.
Let $\{ t_\lambda\}_{\lambda \in \Lambda}$ be a permutative representation of $C^*(\Lambda)$ on a Hilbert space ${\mathcal H},$  and let $\{J_{\lambda}\}_{\lambda\in\Lambda}$ and $\{K_{\lambda}\}_{\lambda\in\Lambda}$ be as in Definition~\ref{defpermutative}.  Then for $n\in\mathbb N^k,$ if $\lambda,\;\lambda'\in \Lambda^n$ and $\lambda\not=\lambda',$ then we have 
$K_{\lambda}\cap K_{\lambda'}=\emptyset.$
\label{lem:permutative}

\end{lemma}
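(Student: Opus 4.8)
The plan is to use the Cuntz--Krieger relations encoded in the permutative representation, specifically the orthogonality of the projections $t_\lambda t_\lambda^*$ for distinct $\lambda$ of the same degree. Recall that for a representation of $C^*(\Lambda)$, condition (CK4) gives $t_v = \sum_{\lambda \in v\Lambda^n} t_\lambda t_\lambda^*$ for each $v \in \Lambda^0$ and $n \in \N^k$, and (CK1) tells us the projections $\{t_v : v \in \Lambda^0\}$ are mutually orthogonal. Since each $t_\lambda t_\lambda^*$ is a subprojection of $t_{r(\lambda)}$ and the total sum over $\lambda \in v\Lambda^n$ equals the projection $t_v$, the summands must be mutually orthogonal: for $\lambda \neq \lambda' \in v\Lambda^n$ we have $t_\lambda t_\lambda^* \, t_{\lambda'} t_{\lambda'}^* = 0$. (For $\lambda, \lambda'$ with different ranges this is immediate from the orthogonality of the $t_v$.) Hence $(t_\lambda t_\lambda^*)(t_{\lambda'}t_{\lambda'}^*)=0$ for all distinct $\lambda,\lambda'$ of the same degree $n$.

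The key step is then to translate this operator-level orthogonality into a statement about the index sets. First I would observe, using Definition~\ref{defpermutative}(c) and (d), that for $\lambda \in \Lambda^n$ the operator $t_\lambda t_\lambda^*$ acts on basis vectors by
\[
t_\lambda t_\lambda^*(e_j) = \begin{cases} e_j, & j \in K_\lambda \\ 0, & \text{otherwise.}\end{cases}
\]
Indeed, for $j = \tilde\sigma_\lambda(i) \in K_\lambda$ we have $t_\lambda^*(e_j) = e_i$ by (d), and then $t_\lambda(e_i) = e_{\tilde\sigma_\lambda(i)} = e_j$ by (c); while for $j \in K_{\lambda'}$ with $\lambda' \neq \lambda$ of the same degree, part (d) gives $t_\lambda^*(e_j) = 0$, and by condition (a) every $j \in I$ lies in some $K_{\lambda'}$ with $d(\lambda')=n$. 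Thus $t_\lambda t_\lambda^*$ is precisely the orthogonal projection onto $\overline{\operatorname{span}}\{e_j : j \in K_\lambda\}$.

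Given this identification, the conclusion follows directly: if $\lambda \neq \lambda'$ lie in $\Lambda^n$, then the product $(t_\lambda t_\lambda^*)(t_{\lambda'} t_{\lambda'}^*)$ is the projection onto the closed span of $\{e_j : j \in K_\lambda \cap K_{\lambda'}\}$, and since this product is zero by the argument of the first paragraph, we must have $K_\lambda \cap K_{\lambda'} = \emptyset$. I expect the main (and only mild) obstacle to be the bookkeeping in the first paragraph: one must be careful to handle the case $r(\lambda) \neq r(\lambda')$ separately from $r(\lambda) = r(\lambda')$, and to justify cleanly that the summands of a projection-valued sum equalling a projection are themselves mutually orthogonal. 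This last fact is standard --- if $p = \sum_i p_i$ with $p$ and each $p_i$ a projection, then comparing $\langle p_i \xi, \xi\rangle$ termwise forces $p_i p_j = 0$ for $i \neq j$ --- so no genuine difficulty arises, and the proof should be short.
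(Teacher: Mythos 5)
Your proof is correct and follows essentially the same route as the paper's: both arguments reduce the claim to the orthogonality of the degree-$n$ Cuntz--Krieger partial isometries and then evaluate on basis vectors attached to a hypothetical index $j\in K_\lambda\cap K_{\lambda'}$ to reach a contradiction. The only cosmetic difference is that the paper invokes $t_{\lambda'}^*t_\lambda=\delta_{\lambda,\lambda'}t_{s(\lambda)}$ directly and computes $t_{\lambda'}^*t_\lambda(e_i)=e_k\neq 0$, whereas you derive the equivalent orthogonality $(t_\lambda t_\lambda^*)(t_{\lambda'}t_{\lambda'}^*)=0$ from (CK1) and (CK4) and identify $t_\lambda t_\lambda^*$ as the diagonal projection onto $\overline{\operatorname{span}}\{e_j : j\in K_\lambda\}$.
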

\begin{proof} We recall if $\lambda,\lambda'\in\Lambda^n$ then 
$$t_{\lambda'}^{\ast}t_{\lambda}=\delta_{\lambda',\lambda}t_{s(\lambda)}.$$
So, for $\lambda,\lambda'\in\Lambda^n$ with $\lambda\not=\lambda',$ if there exists $j\in K_{\lambda}\cap K_{\lambda'},$ we could find $i\in J_{\lambda}$ with $\tilde{\sigma}_{\lambda'}(i)=j,$ and $k\in J_{\lambda'}$ with $\tilde{\sigma}_{\lambda}(k)=j.$
But then by definition of permutative representation, we would have 
$$t_{\lambda'}^{\ast}t_{\lambda}(e_i)\;=\;t_{\lambda'}^{\ast}(e_{\tilde{\sigma}_{\lambda}(i)})=t_{\lambda'}^{\ast}(e_j)\;=\;t_{\lambda'}^{\ast}(e_{\tilde{\sigma}_{\lambda}}(k))=e_k\not=0.$$
But this contradicts the fact that $t_{\lambda'}^{\ast}t_{\lambda}=0$ for $\lambda\ne \lambda'$, so we must have $K_{\lambda}\cap K_{\lambda'}=\emptyset.$
\end{proof}
 We define the {\it  encoding map} $E$ from the index set $I$ into $\Lambda^{\infty}$
by 
\begin{equation}\label{eq:encoding}
E(i)((0,n))=\lambda,\;\text{where}\;\lambda\;\text{is the unique element of}\;\Lambda^n\;\text{such that}\; i \;\in\:K_{\lambda}.
\end{equation}
To see that $E(i) \in \Lambda^\infty$ is well defined, we must check that if $m \geq n$ and $\lambda = E(i)((0,n))$, so that $i \in K_\lambda,$  then $E(i)((0,m)) = \lambda \nu$ for some $\nu \in \Lambda$.  Thus, 
suppose that $m\geq n\in \mathbb N^k.$
Write
$E(i)((0,m))=\mu.$  Then there exists a unique $j'\in J_{\mu}$ with $\tilde{\sigma}_{\mu}(j')=i\in K_{\mu}.$

Write $\mu = \lambda' \nu'$ where $d(\lambda') = n$.  We will show that $\lambda' = \lambda$.
Since $J_{\mu}=J_{\lambda'\nu'}\;=J_{\nu'}$ and 
$$\tilde{\sigma}_{\lambda'\nu'}\;=\tilde{\sigma}_{\lambda'}\circ \tilde{\sigma}_{\nu'},$$
we obtain $i = \tilde{\sigma}_{\lambda'} ( \tilde{\sigma}_{\nu'}(j')) \in K_{\lambda'}$.  Lemma \ref{lem:permutative} now implies that $\lambda = \lambda'$.

\begin{prop}\label{prop:encoding-map}
Let $\Lambda$ be a row-finite $k$-graph with no sources and let $I$ be an index set associated to a permutative representation  $\{ t_\lambda\}_{\lambda \in \Lambda}$  of $C^*(\Lambda)$.  Suppose that $E:I\to \Lambda^{\infty}$ is the encoding map given in \eqref{eq:encoding}. Then we have the following.
\begin{itemize}
\item[(a)]For each $i\in I$ and $n\in\mathbb N^k$, there is a unique $\lambda\in \Lambda^n$ and $i_n\in\;J_{\lambda}\subset I$ such that  $\tilde{\sigma}_{\lambda}(i_n)=i.$  Writing $\tilde{\sigma}^n(i)=i_n,$  we have $\tilde{\sigma}_{\lambda}\circ \tilde{\sigma}^n(i)=i$ for all $i\in K_{\lambda},$ and $\tilde{\sigma}^n\circ \tilde{\sigma}_{\lambda}(i)=i$ for all $i\in J_{\lambda}.$  

\item[(b)] The map $E:I\to\Lambda^{\infty}$ defined above satisfies 
\[
\sigma_{\lambda}(E(i))\;=\;E(\tilde{\sigma}_{\lambda}(i))\quad \text{for $i\in J_{\lambda}$},\quad\text{and}\quad \sigma^n(E(i))\;=\;E(\tilde{\sigma}^n(i))\quad \text{for $i\in I$}.
\]
\end{itemize}
\end{prop}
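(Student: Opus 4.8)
The plan is to prove the two parts in sequence, using the defining properties of a permutative representation (Definition \ref{defpermutative}) together with the fact, established in the preceding discussion, that the encoding map $E$ is well defined on all of $I$. For part (a), I would fix $i \in I$ and $n \in \N^k$ and first extract existence of the pair $(\lambda, i_n)$ from Condition (a) of Definition \ref{defpermutative}: since $\bigcup_{\lambda \in \Lambda^n} K_\lambda = I$, there is some $\lambda \in \Lambda^n$ with $i \in K_\lambda$, and since $\tilde\sigma_\lambda : J_\lambda \to K_\lambda$ is a bijection, there is a unique $i_n \in J_\lambda$ with $\tilde\sigma_\lambda(i_n) = i$. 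The uniqueness of $\lambda$ itself is exactly Lemma \ref{lem:permutative}: if $i \in K_\lambda \cap K_{\lambda'}$ for $\lambda, \lambda' \in \Lambda^n$, then $\lambda = \lambda'$. Once $\lambda$ is unique, $i_n$ is unique because $\tilde\sigma_\lambda$ is injective. The two composition identities $\tilde\sigma_\lambda \circ \tilde\sigma^n = \id$ on $K_\lambda$ and $\tilde\sigma^n \circ \tilde\sigma_\lambda = \id$ on $J_\lambda$ then follow directly from the definition $\tilde\sigma^n(i) = i_n$: for $i \in K_\lambda$ we have $\tilde\sigma_\lambda(\tilde\sigma^n(i)) = \tilde\sigma_\lambda(i_n) = i$, and for $i \in J_\lambda$ the element $\tilde\sigma_\lambda(i) \in K_\lambda$ has $\tilde\sigma^n(\tilde\sigma_\lambda(i))$ equal to the unique preimage of $\tilde\sigma_\lambda(i)$ under $\tilde\sigma_\lambda$, which is $i$ itself.

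For part (b), I would prove the prefixing identity $\sigma_\lambda(E(i)) = E(\tilde\sigma_\lambda(i))$ for $i \in J_\lambda$ by checking that the two infinite paths agree on every initial segment $(0, m)$, which suffices since an infinite path is determined by its values on the cylinder data $\{(0,m) : m \in \N^k\}$. Fixing $m$ and writing $d(\lambda) = p$, I would compute $E(\tilde\sigma_\lambda(i))((0, m+p))$: this is the unique $\mu \in \Lambda^{m+p}$ with $\tilde\sigma_\lambda(i) \in K_\mu$. Using Condition (b) of Definition \ref{defpermutative} (namely $\tilde\sigma_{\lambda\nu} = \tilde\sigma_\lambda \circ \tilde\sigma_\nu$ and $K_\nu \subseteq J_\lambda$), I would show that if $\nu = E(i)((0,m))$ so that $i \in K_\nu$, then $\tilde\sigma_\lambda(i) \in K_{\lambda\nu}$, whence $\mu = \lambda\nu$ by the uniqueness from part (a) / Lemma \ref{lem:permutative}. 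On the other hand, $\sigma_\lambda(E(i))((0, m+p)) = \lambda \cdot E(i)((0,m)) = \lambda\nu$ by the definition of the standard prefixing map $\sigma_\lambda(x) = \lambda x$ together with the factorization property. Comparing gives the identity, with a short preliminary remark that $E(\tilde\sigma_\lambda(i))((0,p)) = \lambda$ so that $\sigma_\lambda(E(i))$ and $E(\tilde\sigma_\lambda(i))$ have the correct common prefix $\lambda$.

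For the coding identity $\sigma^n(E(i)) = E(\tilde\sigma^n(i))$ for $i \in I$, I would again verify agreement on each initial segment. Writing $i_n = \tilde\sigma^n(i)$ and letting $\lambda \in \Lambda^n$ be the unique element with $i \in K_\lambda$ (so $i_n \in J_\lambda$, by part (a)), I would observe that for any $m$, the value $\sigma^n(E(i))((0,m)) = E(i)((n, n+m))$ is the second factor in the factorization $E(i)((0, n+m)) = \lambda \cdot E(i)((n, n+m))$, while $E(i_n)((0,m))$ is the unique $\nu \in \Lambda^m$ with $i_n \in K_\nu$. The already-proved prefixing identity, or a direct application of Condition (b) to relate $i \in K_{\lambda\nu}$ with $i_n \in K_\nu$, then shows these agree.

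The main obstacle I anticipate is bookkeeping rather than conceptual difficulty: one must consistently track which index lives in which of the sets $J_\bullet$ or $K_\bullet$ and apply Lemma \ref{lem:permutative} at exactly the right moment to pin down uniqueness of the path $\lambda\nu$. The subtle point is that $E$ is \emph{a priori} defined only through the sets $K_\lambda$, so every claim of the form ``this path equals $\lambda\nu$'' must be justified by producing the index inside $K_{\lambda\nu}$ and invoking uniqueness, rather than by manipulating paths directly; organizing the argument so that existence (from Condition (a)) and uniqueness (from Lemma \ref{lem:permutative}) are cleanly separated will be the key to keeping it readable.
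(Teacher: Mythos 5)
Your proposal is correct and follows essentially the same route as the paper's proof: part (a) via the disjointness of the sets $K_\lambda$ (Lemma \ref{lem:permutative}) and bijectivity of $\tilde\sigma_\lambda$, and part (b) by matching initial segments, producing an index in $K_{\lambda\nu}$ through the multiplicativity $\tilde\sigma_{\lambda\nu}=\tilde\sigma_\lambda\circ\tilde\sigma_\nu$, and deducing the coding identity from the prefixing identity. The only cosmetic difference is that you compare segments of degree $m+d(\lambda)$ where the paper uses $\ell = m\vee d(\lambda)$ with a double factorization; both families are cofinal in $\N^k$, so either works.
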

\begin{proof} For (a), notice that for $n\in \mathbb N^k,$ we have $I=\bigsqcup_{\lambda\in \Lambda^n}K_{\lambda},$ 
so that fixing $i\in I,$  there is a unique $\lambda\in \Lambda^n$ such that  $i\in K_{\lambda}.$ Since $\tilde{\sigma}_{\lambda}$ is a bijection from $J_{\lambda}$ to $K_{\lambda},$ there is a unique $i_n\in J_{\lambda}$ such that $\tilde{\sigma}_{\lambda}(i_n)=i$. 

Also, by definition of $\tilde{\sigma}^n,$ we have that  $\tilde{\sigma}_{\lambda}\circ \tilde{\sigma}^n(i)\;=\;\tilde{\sigma}_{\lambda}(i_n)=i$ for $i\in K_{\lambda},$ and  similarly 
$\tilde{\sigma}^n\circ \tilde{\sigma}_{\lambda}(i)=i,$ for all $i\in J_{\lambda}.$

For (b), recall $ \omega\in \Lambda^{\infty}$ is in the domain of $\sigma_{\lambda}$ if and only if $r(\omega)=s(\lambda).$  So recalling that $E(i)((0,0))$ is the unique $v\in \Lambda^0$ such that $i\in J_v,$ we will have $\sigma_{\lambda}(E(i))$ is defined if and only if $s(\lambda)=E(i)((0,0)),$ i.e $i\in J_{s(\lambda)}=J_{r(E(i))}.$ 

Recall $\tilde{\sigma}_{\lambda}(i)$ is defined only when $i\in J_{\lambda},$  and that Condition (b) of  Definition \ref{defpermutative} implies that  $J_{\lambda}=J_{s(\lambda)}\subset I.$  Also, by definition, if $i\in J_{\lambda},$ we have $\tilde{\sigma}_{\lambda}(i)\in K_{\lambda},$ and then by definition of the map $E,$ we obtain 
$$E( \tilde{\sigma}_{\lambda}(i))((0,d(\lambda)))=\lambda.$$
  On the other hand, notice that Condition (b) of Definition \ref{defpermutative} forces $K_v = J_v$ and $\tilde{\sigma}_v = id$ for all $v \in \Lambda^0$.
 Therefore, the fact that $J_\lambda = J_{s(\lambda)}$ implies that $r(E(i)) = s(\lambda)$.
Consequently, 
$$\sigma_{\lambda}( E(i))((0,d(\lambda)))=\;\lambda.$$
 We thus can see if $n'\in \mathbb N^k$ and $n'\leq d(\lambda),$
$$\sigma_{\lambda}(E(i))((0,n'))=E(\tilde{\sigma}_{\lambda}(i))((0,n'))=\lambda(0, n').$$

Now suppose $m\in\mathbb N^k,\;m\not=n.$  Let $\ell=m\vee n$, the coordinatewise maximum of $m$ and $n$.	
Then $n\leq \ell$ and $m\leq \ell.$  Find $n',\;m'\in \mathbb N^k$ such that $n+n'=m+m'=\ell.$ 
Write $\eta :=\sigma_{\lambda}(E(i))((0,\ell))=\eta;$
by the factorization property, we can find $\lambda',\gamma, \gamma'\in \Lambda$ with $d(\lambda')=n',\;d(\gamma)=m,$ and $d(\gamma')=\; m'$ such that 
$\eta\;=\;\lambda\lambda'\;=\;\gamma\gamma'.$
Then
$$\eta = \sigma_{\lambda}(E(i))((0,\ell))=\sigma_{\lambda}(E(i))((0,n+n'))= \lambda \, E(i)((0, n')).$$
It moreover follows that $E(i)((0, n')) =\lambda'$.

Our earlier argument now implies that $\sigma_{\lambda}(E(i))((0,m))\;=\;\gamma$.
Now, we observe that

$$E(\tilde{\sigma}_{\lambda}(i))((0,\ell))=E( \tilde{\sigma}_{\lambda}(i))((0,n+n'))=\lambda \lambda'=\eta,$$
since an argument similar to the one we used to show that $E(i)$ is well defined will tell us 
that $\eta$ is the unique element of $\Lambda^{\ell}$ such that $\tilde{\sigma}_{\lambda}(i)\in K_{\eta}=K_{\lambda\lambda'}.$
For, we note that finding $i'$ in $J_{\lambda'}$ such that $\tilde{\sigma}_{\lambda'}(i')=i\in K_{\lambda},$ we then obtain from our initial properties of these maps:
$$\tilde{\sigma}_{\lambda}(i)\;=\;\tilde{\sigma}_{\lambda}\circ \tilde{\sigma}_{\lambda'}(i')=\tilde{\sigma}_{\lambda \lambda'}(i')=\;\tilde{\sigma}_{\eta}(i'),$$ 
so that 
$$( \tilde{\sigma}_{\lambda}(i))((0,\ell))\;=\; E( \tilde{\sigma}_{\eta}(i'))((0,\ell))\;=\;\eta.$$
Since $\eta=\gamma \gamma'$ with $d(\gamma)=m,$ we also have  
$$E(\tilde{\sigma}_{\lambda}(i))((0,m))\;=\; E( \tilde{\sigma}_{\eta}(i'))((0,m))\;=\gamma.$$
It follows that for all $m\in \mathbb N^k,$
$$\sigma_{\lambda}(E(i))((0,m))\;=\;\gamma\;=\;E( \tilde{\sigma}_{\lambda}(i))((0,m)),$$
proving the first equality of (b).

From this, we will deduce the second equality.
Let $i\in I,\; n\in \mathbb N^k,$ and suppose that $E(i)((0,n))=\lambda\in \Lambda^n.$
Then setting $i_n=\tilde{\sigma}^n(i),$ we have $i_n\in J_{\lambda},\;i=\; \tilde{\sigma}_{\lambda}\circ \tilde{\sigma}^n(i)\;=\;\tilde{\sigma}_{\lambda}(i_n)\;\in K_{\lambda}.$ Moreover, the first equality of (b) gives
$$\sigma_{\lambda}( E(i_n))\;=\;E(\tilde{\sigma}_{\lambda}(i_n)).$$
We now apply $\sigma^n$ to both sides of this equation to obtain:
$$\sigma^n\circ\;\sigma_{\lambda}\circ E(i_n)\;=\;\sigma^n\circ E\circ \tilde{\sigma}_{\lambda}(i_n),$$
so that 
$E(i_n)\;=\;\sigma^n\circ E\circ \tilde{\sigma}_{\lambda}(i_n).$
But this implies 
\[E\circ \tilde{\sigma}^n(i)\;=\;\sigma^n\circ E(i). \qedhere \]
\end{proof}
When $E$ is injective, we obtain the following corollary.
\begin{cor}
Let $\Lambda$ be a row-finite $k$-graph with no sources.
 Let $\{ S_\lambda\}$ be a permutative representation of $C^*(\Lambda)$ on a Hilbert space ${\mathcal H},$ and let $\{e_i:i\in I\}$ be the ``permuted" basis for ${\mathcal H}.$  Let $E:I\to \Lambda^{\infty}$ be the encoding map of Equation \eqref{eq:encoding}.  Then if $E$ is one-to-one, the set $I$ can be identified with a subset of infinite paths $\Omega := E(I)$ in $\Lambda^{\infty}$ and the maps $\{\tilde{\sigma}_{\lambda}:\lambda\in\Lambda\}$ and $\{\tilde{\sigma}^n\;:n\in\mathbb N^k\}$ can be identified with the corresponding shifts and coding maps on the subset $E(I)=:\Omega$ of $\Lambda^{\infty}.$
\end{cor}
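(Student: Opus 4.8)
The plan is to unpack the three assertions of the Corollary as direct consequences of Proposition \ref{prop:encoding-map}, using injectivity of $E$ to transport the combinatorial structure on $I$ to genuine geometric structure on $\Omega := E(I) \subseteq \Lambda^\infty$. First I would observe that when $E$ is one-to-one, it provides a bijection $E: I \to \Omega$, so we may use $E$ to identify each basis index $i \in I$ with the infinite path $E(i) \in \Lambda^\infty$. This identification is the key move: everything else follows by checking that the abstract maps $\tilde\sigma_\lambda$ and $\tilde\sigma^n$ on $I$ become, under this identification, the restrictions to $\Omega$ of the standard prefixing maps $\sigma_\lambda$ and coding maps $\sigma^n$ on $\Lambda^\infty$.

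The heart of the argument is Part (b) of Proposition \ref{prop:encoding-map}, which states precisely that
\[
\sigma_\lambda(E(i)) = E(\tilde\sigma_\lambda(i)) \quad \text{for } i \in J_\lambda, \qquad \sigma^n(E(i)) = E(\tilde\sigma^n(i)) \quad \text{for } i \in I.
\]
These two intertwining relations say that the following diagrams commute: $E \circ \tilde\sigma_\lambda = \sigma_\lambda \circ E$ on $J_\lambda$, and $E \circ \tilde\sigma^n = \sigma^n \circ E$ on $I$. Given injectivity of $E$, I would then argue that each $\tilde\sigma_\lambda$ is carried to the restriction $\sigma_\lambda|_{E(J_\lambda)}$: for $\omega = E(i) \in \Omega$ with $i \in J_\lambda$, we have $\tilde\sigma_\lambda(i) = E^{-1}(\sigma_\lambda(\omega))$, so under the identification $i \leftrightarrow E(i)$ the map $\tilde\sigma_\lambda$ literally becomes $\omega \mapsto \sigma_\lambda(\omega) = \lambda\omega$. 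The same reasoning applied to the second relation identifies $\tilde\sigma^n$ with $\sigma^n|_\Omega$. I would also note in passing that one must check $\sigma_\lambda(\omega)$ and $\sigma^n(\omega)$ again lie in $\Omega$, which is immediate since they equal $E(\tilde\sigma_\lambda(i))$ and $E(\tilde\sigma^n(i))$ respectively, both in the image of $E$.

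Since this is essentially a bookkeeping argument that repackages Proposition \ref{prop:encoding-map}(b), there is no serious analytic obstacle. The one point requiring a small amount of care is making precise what ``identified with'' means: I would phrase it as the statement that $E$ conjugates the system $(I, \{\tilde\sigma_\lambda\}, \{\tilde\sigma^n\})$ to the system $(\Omega, \{\sigma_\lambda|_\Omega\}, \{\sigma^n|_\Omega\})$, i.e.\ $E$ is an isomorphism of these structures. The mild subtlety is domain-matching: $\tilde\sigma_\lambda$ is defined on $J_\lambda$, and one should verify that $E(J_\lambda) = \Omega \cap Z(s(\lambda)) = \{\omega \in \Omega : r(\omega) = s(\lambda)\}$, which is exactly the domain of $\sigma_\lambda$ restricted to $\Omega$; this follows from the definitions of $J_\lambda$ and the encoding map $E$ together with the fact (used in the well-definedness of $E$) that $i \in J_\lambda = J_{s(\lambda)}$ iff $r(E(i)) = s(\lambda)$. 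With these domain identifications in hand, the Corollary is immediate, so I would keep the write-up brief and simply invoke Proposition \ref{prop:encoding-map} with the injectivity hypothesis supplying the inverse $E^{-1}$.
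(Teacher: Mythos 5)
Your proposal is correct and follows essentially the same route as the paper: both invoke Proposition \ref{prop:encoding-map}(b) and use injectivity of $E$ to form $E^{-1}$ on $\Omega$, giving $E^{-1}\circ\sigma_\lambda\circ E=\tilde\sigma_\lambda$ on $J_\lambda$ and $E^{-1}\circ\sigma^n\circ E=\tilde\sigma^n$ on $I$. Your extra care about matching domains ($E(J_\lambda)=\Omega\cap Z(s(\lambda))$) is a sound refinement but does not change the argument.
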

\begin{proof}
Since $E$ is a bijection from $I$ onto $\Omega\subset \Lambda^{\infty},$ the map $E^{-1}:\Omega\to I$ is well-defined, and we obtain 
from Proposition~\ref{prop:encoding-map}
\[
E^{-1}\circ \sigma_{\lambda}\circ E(i)\;=\;\tilde{\sigma}_{\lambda}(i)\quad\text{for $i\in J_{\lambda}$},\quad\text{and}\quad E^{-1}\circ\sigma^n\circ E(i)\;=\; \tilde{\sigma}^n(i)\quad \text{for $i\in I$.} \qedhere
\]
\end{proof}

\subsection{Decomposition of permutative representations}

The following is an extension of Theorem 4.13 of \cite{dutkay-jorgensen-atomic} from the case of the Cuntz algebras $\mathcal O_N$ to the much broader setting of higher-rank graph $C^*$-algebras.
 
\begin{thm}
\label{them-decomp-perm-repres}
Let $\Lambda$ be a row-finite $k$-graph with no sources and let $\{t_\lambda:\lambda\in \Lambda\}$ be a purely atomic representation of $C^*(\Lambda)$ on a Hilbert space $\mathcal{H}$. If the representation
is supported on an {orbit of aperiodic paths}, then the representation is permutative. Moreover the representation can be
decomposed into a direct sum of permutative representations with injective encoding maps.
\end{thm}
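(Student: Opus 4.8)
The plan is to prove the two assertions in sequence, exploiting the machinery of the projection valued measure $P$ and the orbit structure developed in Proposition \ref{prop:atomic-2} and Corollary \ref{proporbitsatomic}. First I would reduce to the case where the representation is supported on a single orbit $\text{Orbit}(\omega)$ of an aperiodic path $\omega$; the general case of being supported on ``an orbit of aperiodic paths'' then follows by taking the direct sum over the constituent orbits using the decomposition $\text{Id}_{\mathcal H} = \bigoplus_{\omega \in \Omega} P(\text{Orbit}(\omega))$ from Corollary \ref{proporbitsatomic}.

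For the first assertion (permutativity), the key step is to build the permuted orthonormal basis $\{e_i : i \in I\}$ out of the ranges of the atomic projections $P(\{\gamma\})$ for $\gamma \in \text{Orbit}(\omega)$. For each $\gamma$ in the orbit I would choose an orthonormal basis of $\text{Range}(P(\{\gamma\}))$; the index set $I$ is then the disjoint union of these bases as $\gamma$ ranges over the orbit. Setting $J_\lambda = \{ i \in I : e_i \in \text{Range}(P(\{\gamma\})), \ r(\gamma) = s(\lambda)\}$ and $K_\lambda = \{i : e_i \in \text{Range}(P(\{\lambda\gamma\}))\}$, I would use Proposition \ref{prop:atomic-2}(a), namely $t_\lambda P(\{\gamma\}) t_\lambda^* = P(\{\lambda\gamma\})$, to verify that $t_\lambda$ carries $\text{Range}(P(\{\gamma\}))$ isometrically onto $\text{Range}(P(\{\lambda\gamma\}))$, and hence to define the bijection $\tilde\sigma_\lambda : J_\lambda \to K_\lambda$ on basis vectors. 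Conditions (a)--(d) of Definition \ref{defpermutative} then follow from Corollary \ref{proporbitsatomic} (which gives $\bigoplus_\gamma P(\{\gamma\}) = \text{Id}$, yielding (a)), from the multiplicativity $t_\lambda t_\nu = t_{\lambda\nu}$ and Proposition \ref{prop:atomic-2} (yielding (b) and (c)), and from Proposition \ref{prop:atomic-2}(b), $t_\eta^* P(\{\omega\}) t_\eta = 0$ when $\eta \ne \omega(0,n)$ (yielding (d)).

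For the second assertion (decomposition with injective encoding maps), the encoding map $E : I \to \Lambda^\infty$ of Equation \eqref{eq:encoding} sends a basis vector $e_i \in \text{Range}(P(\{\gamma\}))$ to the path $\gamma$ itself, so $E$ is injective precisely when each $P(\{\gamma\})$ is one-dimensional. In general the projections may have higher rank, so I would refine the construction: on each fiber $\text{Range}(P(\{\gamma\}))$ of dimension $d_\gamma$, aperiodicity of $\omega$ guarantees (via Proposition \ref{prop:atomic-2}, since distinct shifts of an aperiodic path are distinct) that the maps $\tilde\sigma_\lambda, \tilde\sigma^n$ preserve a consistent labelling of basis vectors across the whole orbit. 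The plan is to fix, for the base point $\omega$, an orthonormal basis $\{e^{(1)}_\omega, \ldots, e^{(d)}_\omega\}$ of $\text{Range}(P(\{\omega\}))$, and then transport it around the orbit by the partial isometries $t_\lambda, t_\eta^*$ to obtain, for each $1 \le m \le d$, a coherent family of basis vectors $\{e^{(m)}_\gamma : \gamma \in \text{Orbit}(\omega)\}$ spanning a subrepresentation $\mathcal H_m$. Aperiodicity ensures this transport is well-defined (the stabilizer is trivial, so there is no ambiguity coming from $\sigma^m(\omega) = \sigma^n(\omega)$ forcing $m=n$). Each $\mathcal H_m$ is then permutative with a one-dimensional fiber over every point of the orbit, so its encoding map is injective, and $\mathcal H = \bigoplus_{m=1}^d \mathcal H_m$.

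The main obstacle I anticipate is precisely the well-definedness of this transport in the rank-$d$ case: I must check that carrying a basis vector $e^{(m)}_\omega$ to $\text{Range}(P(\{\gamma\}))$ via $t_a t_{\omega(0,j)}^*$ (where $\gamma = a\sigma^j(\omega)$) does not depend on the chosen representation $\gamma = a\sigma^j(\omega) = a'\sigma^{j'}(\omega)$. This is the same coherence issue that drives the intertwiner construction in Theorem \ref{thm-atomic-repres}, and the aperiodicity hypothesis is exactly what is needed to rule out the periodic ambiguities; I would adapt the strong-operator-topology limiting argument of that theorem, using $N \cdot \mathbf{1} \ge j, j'$ and the fact that $\sigma^m(\omega) = \sigma^n(\omega)$ forces $m = n$ for aperiodic $\omega$, to establish that the two transported vectors agree. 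Once coherence is secured, the verification that each $\mathcal H_m$ is invariant and permutative is routine given the first part.
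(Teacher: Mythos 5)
Your proposal is correct and takes essentially the same route as the paper: the paper likewise fixes an orthonormal basis $\{e_{\omega,\ell}\}_{\ell\in\mathcal J}$ of $\operatorname{Range}P(\{\omega\})$ and transports it around the orbit via $t_a t_{\omega(0,j)}^*$, obtaining the permuted basis, the invariant subspaces $\mathcal H_\ell$ with one-dimensional fibers, and the injective encoding maps all at once, with aperiodicity securing well-definedness of the transport exactly as you anticipate. The one caveat is that the arbitrary per-fiber bases of your first paragraph would not by themselves satisfy Definition \ref{defpermutative}(c) (which requires $t_\lambda e_i = e_{\tilde\sigma_\lambda(i)}$ exactly, not merely that $t_\lambda$ be an isometry between the fibers), so the coherent transported bases of your refinement are needed from the outset --- which is precisely what the paper does.
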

\begin{proof}
Let $P$ be the projection valued measure associated to the representation $\{S_\lambda:\lambda\in \Lambda\}$  and suppose it is supported on $\Omega\subset \Lambda^\infty,$ which by our earlier results can be decomposed into orbits corresponding to a decomposition of the original representation.  So let us assume that our set $\Omega$ is equal to a single orbit of the aperiodic path $\omega\in \Omega\subset \Lambda^{\infty}.$ As in the proof of Theorem 4.13 of \cite{dutkay-jorgensen-atomic}, let $\{e_{\omega,\ell}\}_{\ell\in {\mathcal J}}$ be an orthonormal basis for $P(\{\omega\})\mathcal{H}$ for an index set $\mathcal{J}$. 

We know that every point in the orbit of $\omega$ is of the form $a\sigma^j(\omega),$ for some finite path $a$ and an element $j\in\mathbb N^k.$ Moreover, since $\omega$ is an aperiodic path, this decomposition is unique. We define an orthonormal basis on $P(\{a\sigma^j(\omega)\})\mathcal{H}$ by $\{S_aS^*_{\omega(0,j)}e_{\omega,\ell}:=e_{a\sigma^j(\omega),\ell}\}_{\ell\in {\mathcal J}}.$ The results of our previous sections show that this is indeed an orthonormal basis for $P(\{a\sigma^j(\omega)\})\mathcal{H}.$ Since 
$$\Omega=\cup_{\gamma\in \Omega}\{\gamma\}\;=\;\bigcup_{a\in \Lambda: s(a)=r(\sigma^j(\omega))}\bigcup_{j\in \mathbb N^k}\{a\sigma^j(\omega)\},$$
we have 
$$\text{Id}_{\mathcal{H}}\;=\;P(\Omega)=\bigoplus_{a\in \Lambda: s(a)=r(\sigma^j(\omega))}\sum_{j\in \mathbb N^k}P(\{a\sigma^j(\omega)\}),$$
where the sum converges in the strong operator topology.
Therefore, an orthonormal basis for ${\mathcal H}$ is given by 
$$\bigcup_{a\in \Lambda: s(a)=r(\sigma^j(\omega))}\bigcup_{j\in \mathbb N^k}\{S_aS_{\omega(0,j)}^*e_{\omega,\ell}\}_{\ell\in {\mathcal J}}$$
$$=\;\bigcup_{a\in \Lambda: s(a)=r(\sigma^j(\omega))}\bigcup_{j\in \mathbb N^k}\{e_{a\sigma^j(\omega),\ell}\}_{\ell\in {\mathcal J}}.$$  
As in the case for ordinary Cuntz algebras, we can check that setting
$${\mathcal H}_\ell\;=\;\overline{\text{span}}\bigcup_{j\in \mathbb N^k}\{S_aS_{\omega(0,j)}^*e_{\omega,\ell}=e_{a\sigma^j(\omega),\ell}\}\;=\;\{e_{\gamma,\ell}:\;\gamma\in \Omega\},$$
each ${\mathcal H}_\ell$ is an invariant subspace for the representation, and
$${\mathcal H}\;=\;\bigoplus_{\ell \in {\mathcal J}}{\mathcal H}_\ell.$$
Thus, in this case, our index set for the orthonormal basis for ${\mathcal H}$ is given by:
$$I:=\;\{(a\sigma^j(\omega)=\gamma,\ell):\; \gamma\in \Omega,\; \ell\in {\mathcal J}\}.$$
Returning to our notation in Definition 11.1, if $a\in \Lambda,$ 
then 
$$J_a\;=\;\{(\gamma,\ell):\ell\in {\mathcal J},\;\gamma\in \Omega\;|\;s(a)=r(\gamma)\},$$
and 
$$K_a\;=\;\{(\gamma,\ell):\ell\in {\mathcal J},\;\gamma\in \Omega,\;\gamma(0,d(a))=a\}.$$

The encoding map from 
$$I=\;\{(a\sigma^j(\omega)=\gamma,\ell):\; \gamma\in \Omega,\; \ell\in {\mathcal J}\}.$$ is evidently given by
$$E((\gamma,\ell))\;=\;\gamma.$$
The maps 
$$\tilde{\sigma}_a:J_a\to K_a$$
are given by 
$$\tilde{\sigma}_a((\gamma,\ell))\;=\;(a\gamma,\ell),$$ and the coding maps 
$$\tilde{\sigma}^n(\gamma,\ell)\;=\;(\sigma^n(\gamma),\ell).$$
One calculates that the desired conditions of Definition 11.1 hold, and one can verify that taking the afore-mentioned orthonormal basis for the subspace ${\mathcal H}_{\ell},$ and restricting the encoding map $E$ to that basis, the encoding map $E$ will be injective. 

\end{proof}

\end{document}